\newcommandx{\change}[2][1=]{\todo[linecolor=blue,backgroundcolor=blue!25,bordercolor=blue,#1]{#2}}
\newcommand{\res}{\mathord{\upharpoonright}} 
\newcommand{\qq}{\mathbb{Q}}
\newcommand{\rr}{\mathbb{R}}
\newcommand{\ff}{\mathbb{F}}
\newcommand{\nn}{\mathbb{N}}
\newcommand{\uu}{\mathbb{U}}
\newcommand{\zz}{\mathbb{Z}}
\newcommand{\cc}{\mathbb{C}}
\newcommand{\trdeg}{\mathrm{trdeg}}
\newcommand{\la}{\langle}
\newcommand{\ra}{\rangle}
\newcommand{\elsub}{\prec}
\newcommand{\mr}{\mathrm{r}}
\newcommand{\md}{\mathrm{d}}
\newcommand{\gr}{\mathrm{gr}}
\newcommand{\gd}{\mathrm{gd}}
\newcommand{\mmr}{\mathrm{mr}}
\newcommand{\mmd}{\mathrm{md}}
\newcommand{\crk}{\mathrm{cr}}
\newcommand{\cdg}{\mathrm{cd}}
\newcommand{\mcl}{\mathrm{mcl}}
\newcommand{\acl}{\mathrm{acl}}
\newcommand{\mtp}{\mathrm{mtp}}
\newcommand{\atp}{\mathrm{atp}}
\newcommand{\chara}{\mathrm{char}}
\newcommand{\subsim}{\abovebaseline[-2.6pt]{\ \stackanchor[0.3pt]{\subset} {\sim }\  }}
\newcommand{\ndeq}{\abovebaseline[-.5pt]{\ \stackanchor[2pt]{\scriptscriptstyle{\mathrm{nd}}} {=}\  }}
\newcommand{\ACF}{\mathrm{ACF}}
\newcommand{\RN}[1]{\textup{\uppercase\expandafter{\romannumeral#1}}}
\newcommand{\vartu}{\mathrm{var}^{\RN{2}, \overset{ \overset{\scriptstyle{\text{\bf \bf \textvisiblespace}   }}   {\ }} {\ }   }_L }
\newcommand{\vart}[1]{\mathrm{var}^{\RN{2}, #1}_L }
\newsavebox\myboxA
\newsavebox\myboxB
\newlength\mylenA
\newcommand*\xbar[2][0.75]{%
    \sbox{\myboxA}{$\m@th#2$}%
    \setbox\myboxB\null
    \ht\myboxB=\ht\myboxA%
    \dp\myboxB=\dp\myboxA%
    \wd\myboxB=#1\wd\myboxA
    \sbox\myboxB{$\m@th\overline{\copy\myboxB}$}
    \setlength\mylenA{\the\wd\myboxA}
    \addtolength\mylenA{-\the\wd\myboxB}%
    \ifdim\wd\myboxB<\wd\myboxA%
       \rlap{\hskip 0.5\mylenA\usebox\myboxB}{\usebox\myboxA}%
    \else
        \hskip -0.5\mylenA\rlap{\usebox\myboxA}{\hskip 0.5\mylenA\usebox\myboxB}%
    \fi}
\newcommand{\TN}{\mathrm{ACFC}}
\newcommand{\TNp}{\mathrm{ACFC}_p}
\newcommand{\fpa}{\mathbb{F}_{p}^{ac} } 
\newcommand{\qa}{\mathbb{Q}^{ac}}				
\newcommand{\Lprc}{ \mathit{L}_{prc} } 
\newtheorem{thm}{Theorem}[section]
\newtheorem{lem}[thm]{Lemma}
\newtheorem*{thm*}{Theorem}
\newtheorem*{conj*}{Conjecture}
\newtheorem*{prop*}{Proposition}
\newtheorem*{fact*}{Fact}
\newtheorem{prop}[thm]{Proposition}
\newtheorem{cor}[thm]{Corollary}
\newtheorem*{rem*}{Remark}
\newtheorem*{eg*}{Example}
\begin{document}
\sloppy

\title[Algebraically Closed Fields with a Character]{Algebraically Closed Fields with a Generic Multiplicative Character}
\author{Tigran Hakobyan, Minh Chieu Tran}

\address{Department of Mathematics\\
University of Illinois at Urbana-Cham\-paign\\
Urbana, IL 61801\\
U.S.A.}
\email{hakobya2@illinois.edu}

\address{Department of Mathematics, University of Illinois at Urbana-
Champaign, Urbana, IL 61801, U.S.A.}
\curraddr{}
\email{mctran2@illinois.edu}
\subjclass[2010]{03C65, 03B25, 03C10, 12L12}
\date{\today}
\maketitle

\begin{abstract}

\noindent We study the model theory of the $2$-sorted structure $(\mathbb{F}, \mathbb{C};\chi)$, where $\mathbb{F}$ is an algebraic closure of a finite field of characteristic $p$, $\mathbb{C}$ is the field of complex numbers and $\chi: \mathbb{F} \to \mathbb{C}$ is an injective, multiplication preserving map. We obtain an axiomatization $\mathrm{ACFC}_p$ of $\mathrm{Th}(\mathbb{F},\mathbb{C};\chi)$ in a suitable language $L$, classify the models of $\mathrm{ACFC}_p$ up to isomorphism, prove a modified model companion result, give various descriptions of definable sets inside a model of $\mathrm{ACFC}_p$, and deduce that $\mathrm{ACFC}_p$ is $\omega$-stable and has definability of Morley rank in families.

\end{abstract}

\section{Introduction}

\noindent 
Fields with characters occur in many places; see for example 
Kowalski~\cite{Kowalski} for a case where also definability plays a role.
This suggested to us to
look for model-theoretically tame pairs of fields with character maps 
between them.  

Throughout, $(F, K; \chi)$ is a structure where $F$ and $K$ are integral 
domains (usually fields), and $\chi: F \to K$ satisfies 
$\chi(ab)=\chi(a)\chi(b)$ for all $a,b\in F $, $\chi(0)=0 \text{ and } \chi(1)=1.$
Then $(F, K; \chi)$ is naturally a structure in  the $2$-sorted language $L$ which consists of two disjoint copies of the language of rings, augmented by a unary function symbol $\chi$. We call $\chi$ with the above properties a \textbf{character}.

We are particularly interested in the cases where $F$ is an algebraic closure $\ff$ of a finite field, $K$ is the field $\cc$ of complex numbers and $\chi: \ff \to \cc$ is {\em injective}. 
From now on, we let $(\ff, \cc; \chi)$ range over structures with these properties.
Corollary~\ref{UniquenessOfChar} below says 
that for each prime $p$, there is up to $L$-isomorphism exactly one  $(\ff, \cc; \chi)$
such that $\chara(\ff) = p$. 
In this paper we show that the 
$L$-theory $\text{Th}(\ff, \cc; \chi)$ is tame in various ways. For precise statements we
need some more terminology. 

Let $(F,K;\chi)$ be given. For a tuple $\alpha=(\alpha_1,\dots, \alpha_n)\in F^n$, $n \in \nn^{\geq 1}$, and $k=(k_1,\dots,k_n)\in \nn^n$ we set $\alpha^k:=\alpha_1^{k_1}\cdots\alpha_n^{k_n}$. We call $\alpha$ {\bf multiplicatively dependent}
if $\alpha^k=\alpha^l$ for some distinct $k,l\in \nn^n$, and {\bf multiplicatively independent} otherwise.
We say that \( \chi: F \to K\) is { \bf generic} if it is injective and 
for all multiplicatively independent $\alpha \in F^n$, $n \in \nn^{\geq 1}$,
the tuple $\chi(\alpha):=\big(\chi(\alpha_1),\dots, \chi(\alpha_n)\big) \in K^n$ is algebraically independent in the fraction field of $K$ over its prime field.

\begin{thm}
There is a recursive set $\TN$ of $\forall\exists$-axioms in $L$ such that:
\begin{enumerate}
\item for all $(F, K;\chi)$, $(F, K;\chi)\models \TN$ if and only if $F$ and $K$ are algebraically closed fields, $\chara(K)=0$ and $\chi:F \to K$ is generic;
\item for all $p$ prime, if $\chara(\ff)=p$, then $(\ff, \cc; \chi)\models \TN$.
\end{enumerate}
\end{thm}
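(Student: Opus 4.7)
My plan is to exhibit an axiom set $\TN$ and then verify the two claims. I would take $\TN$ to consist of (a) the standard ACF $\forall\exists$-schema on each sort, (b) the universal schema asserting $\chara(K)=0$ (one axiom $1_K+\cdots+1_K\neq 0$ for each $n\geq 1$), (c) the universal sentences $\chi(0)=0$, $\chi(1)=1$, $\chi(ab)=\chi(a)\chi(b)$, and $a\neq b\to\chi(a)\neq\chi(b)$, and (d) a schema expressing genericity of $\chi$. Items (a)--(c) are standard and visibly $\forall\exists$ or universal, and the whole set will be recursive since polynomials with integer coefficients admit a recursive presentation.

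The heart of the matter is (d). For each $n\geq 1$ and each nonzero $P\in\zz[X_1,\dots,X_n]$ I would want an axiom of the form
\[
\forall \alpha\in F^n\colon\ P(\chi(\alpha))=0 \to \alpha \text{ is multiplicatively dependent},
\]
where the conclusion unfolds as the infinite disjunction $\bigvee_{k\neq l\in\nn^n}\alpha^k=\alpha^l$. The main obstacle is precisely that this is not a priori first-order. My plan is to replace the unbounded existential over $\nn^n\times\nn^n$ by an existential quantification over auxiliary elements of $F$ that encode an integer multiplicative relation: in an algebraically closed $F$, such a relation can be witnessed by finitely many additional field elements satisfying specific polynomial equations, which should yield a single $\forall\exists$-sentence per $(n,P)$.

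Granting the schema (d), verification is straightforward. For (1), the forward direction is immediate from the axioms; the reverse amounts to checking each axiom from the hypothesis that $F,K$ are algebraically closed, $\chara(K)=0$, and $\chi$ is generic, which is immediate for (a)--(c) and is just the contrapositive of the definition of generic for (d). For (2), axioms (a)--(c) are manifestly satisfied in $(\ff,\cc;\chi)$, so the only content is (d). Here the decisive observation is that \emph{there are no multiplicatively independent tuples in $\ff^n$}: any $\alpha\in\ff$ lies in some finite subfield $\ff_{p^k}$, whence $\alpha^{p^k}=\alpha^1$ with $p^k\neq 1$, so the singleton $(\alpha)$ is already multiplicatively dependent, and a fortiori any $n$-tuple containing $\alpha$. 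Hence the antecedent in the definition of generic is vacuous in $\ff$, and (d) holds trivially.
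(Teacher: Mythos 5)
Your plan correctly isolates the essential difficulty: the naive genericity schema has conclusion ``$\alpha$ is multiplicatively dependent,'' which is an infinite disjunction and not a priori first-order. However, your proposed repair --- encoding multiplicative dependence by existential quantification over auxiliary elements of $F$ --- does not work, and this is exactly where the real content of the theorem lies. Multiplicative dependence is not a first-order definable relation in an algebraically closed field: already for a single element, ``$\alpha$ is a root of unity'' defines a countably infinite, coinfinite subset of $F^{\times}$, which no formula of $\mathrm{ACF}$ (with or without auxiliary existential witnesses) can define, by strong minimality. No scheme of ``finitely many additional field elements satisfying specific polynomial equations'' can encode an integer relation $\alpha^{k}=\alpha^{l}$ with unbounded $k,l$, because such a scheme would define the torsion locus. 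So the step ``which should yield a single $\forall\exists$-sentence per $(n,P)$'' is a genuine gap, not a routine verification.

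The paper's proof closes this gap in two moves that your argument is missing. First, via the equivalence $(1)\Leftrightarrow(4)$ of Proposition~\ref{GenericEquivalence}, genericity of $G=\chi(F^{\times})$ is reformulated: $G$ is generic iff every non-degenerate solution $g\in G^{n}$ of a unit equation $c\cdot x=1$ with $c\in\qq^{n}$ lies in $U^{n}$, where $U$ is the group of roots of unity. Second, Mann's theorem supplies a recursive bound $d(n)$ such that any such non-degenerate solution in roots of unity must consist of $d(n)$-th roots of unity. This turns the a priori infinitary condition ``$g\in U^{n}$'' into the single first-order (indeed universal) condition ``$g_i^{d(n)}=1$ for all $i$,'' yielding a recursive \emph{universal} axiom scheme for genericity (Proposition~\ref{GenericAxiom}); the $\forall\exists$ shape of $\TN$ then comes only from the $\mathrm{ACF}$ axioms. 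Your verification of part (2) --- that genericity is vacuous over $\ff$ because every element of $\ff^{\times}$ is torsion --- is correct and matches the paper.
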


\noindent If $p$ is either prime or zero, let $\TN_p$ be the set of $\forall\exists$-axioms in $L$ obtained from $\TN$ by adding the statements expressing  $\chara(F)=p$ where $(F, K;\chi)$ is an $L$-structure.

\noindent Let $\kappa, \lambda$ be (possibly finite) cardinals. In Section 3 we prove the following classification result. (If $p=0$, set $\ff_p:=\qq$.) 

\begin{thm} \label{1.2}
For any $p,\kappa$ and $\lambda$, there is up to isomorphism a unique model $(F, K; \chi)$ of $\TN_p$ such that $\trdeg(F \mid \ff_p) = \kappa, \trdeg\big(K \mid \qq( \chi(F))\big)= \lambda$. 
\end{thm}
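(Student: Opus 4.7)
The plan is to prove existence and uniqueness separately, with uniqueness being the substantive part.

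\emph{Existence.} For prescribed $p, \kappa, \lambda$, I would fix an algebraically closed field $F$ of characteristic $p$ with $\trdeg(F \mid \ff_p) = \kappa$, decompose its divisible multiplicative group as $F^\times = T \oplus V$ (with $T$ the torsion subgroup and $V$ a $\qq$-vector space complement with chosen $\qq$-basis $(v_i)_{i \in I}$), and pick $K$ algebraically closed of characteristic $0$ with $\trdeg(K \mid \qq) = |I| + \lambda$. Inside $K$ I would choose a compatible system of roots of unity matching the orders occurring in $T$, algebraically independent transcendentals $(t_i)_{i \in I}$, and a compatible system of $n$-th roots of each $t_i$; defining $\chi$ on these generators and extending multiplicatively (with $\chi(0)=0$) gives an injective generic character, and a cardinality count shows $\trdeg(K \mid \qq(\chi(F))) = \lambda$.

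\emph{Uniqueness.} Given two models $(F_1, K_1; \chi_1)$ and $(F_2, K_2; \chi_2)$ of $\TN_p$ with matching invariants, I would build an $L$-isomorphism $(\sigma, \tau)$ in three stages. First, Steinitz' theorem provides a field isomorphism $\sigma: F_1 \to F_2$ over $\ff_p$. Then injectivity of $\chi_1$ makes the map $\tau_0: \chi_1(F_1) \to \chi_2(F_2)$, $\tau_0(\chi_1(a)) := \chi_2(\sigma(a))$, well-defined---this is to be the restriction to $\chi_1(F_1)$ of the desired $\tau$.

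The crux is extending $\tau_0$ to a field isomorphism $\tau_1: \qq(\chi_1(F_1)) \to \qq(\chi_2(F_2))$. Decompose $F_j^\times = T_j \oplus V_j$ and fix a $\qq$-basis $(a_i)_{i \in I}$ of $V_1$. By genericity $(\chi_j(a_i))_i$ is algebraically independent over $\qq$ in $K_j$, and since $\qq(\chi_j(T_j))$ is algebraic over $\qq$ (being generated by roots of unity), the tuple remains algebraically independent over it; hence $\qq(\chi_j(F_j))$ is a purely transcendental extension of the cyclotomic subfield $\qq(\chi_j(T_j))$. It therefore suffices to extend $\tau_0|_{\chi_1(T_1)}$ to a field iso on the cyclotomic parts and then extend freely over the transcendentals $\chi_1(a_i) \mapsto \chi_2(\sigma(a_i))$. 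For the cyclotomic step: if $\zeta \in T_1$ has order $N$, both $\chi_1(\zeta) \in K_1$ and $\chi_2(\sigma(\zeta)) \in K_2$ are primitive $N$-th roots of unity in characteristic $0$, hence share the minimal polynomial $\Phi_N$ over $\qq$; this ``cyclotomic rigidity'' ensures that every $\zz$-linear relation among elements of $\chi_1(T_1)$ transports under $\tau_0$ to a genuine relation in $\chi_2(T_2)$, yielding the required cyclotomic isomorphism.

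A second application of Steinitz then extends $\tau_1$ to a field isomorphism $\tau: K_1 \to K_2$, using that each $K_i$ is algebraically closed of transcendence degree $\lambda$ over $\qq(\chi_i(F_i))$; the pair $(\sigma, \tau)$ is the sought $L$-isomorphism. I expect the cyclotomic step to be the main obstacle, because $\sigma$ is a priori only an abstract field iso inducing a group iso on torsion, and one must leverage the rigid cyclotomic description of $\qq(\chi_j(T_j))$ to certify that the elementwise correspondence $\chi_1(t) \mapsto \chi_2(\sigma(t))$ extends to a field iso of these subfields of the $K_j$.
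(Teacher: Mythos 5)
Your existence proof is a direct construction rather than the paper's route (the paper produces a large model by compactness when $p=0$ or takes $(\fpa,\qa;\chi)$ when $p>0$, then cuts down via L\"owenheim--Skolem). That is a legitimate alternative, though the cardinal bookkeeping ``$\trdeg(K\mid\qq)=|I|+\lambda$'' doesn't pin down $\trdeg(K\mid\qq(\chi(F)))$ when $\lambda\le|I|$ and $|I|$ is infinite; the clean statement is to take $K$ to be the algebraic closure of $\qq(\chi(F))$ together with $\lambda$ new algebraically independent transcendentals. The paper's cut-down argument sidesteps this.

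The uniqueness part has a genuine gap. You assert that $\qq\big(\chi_j(F_j)\big)$ is a purely transcendental extension of the cyclotomic field $\qq\big(\chi_j(T_j)\big)$ generated by the elements $\chi_j(a_i)$, and you then propose to ``extend freely over the transcendentals.'' This is false: $\chi_j(F_j^\times)$ is a divisible group (being the image of the divisible group $F_j^\times$), so it contains every $\chi_j(a_i^{1/n})$, an $n$-th root of $\chi_j(a_i)$, and these do \emph{not} lie in $\qq\big(\chi_j(T_j)\big)\big(\chi_j(a_i):i\big)$. Indeed $\qq(\mu_\infty)(t,t^{1/2},t^{1/3},\ldots)$ has transcendence degree $1$ over $\qq(\mu_\infty)$ but is a proper increasing union of subfields, hence is not of the form $\qq(\mu_\infty)(s)$; in particular it is not purely transcendental. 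Consequently the field map $\tau_1$ you define does not have domain all of $\qq\big(\chi_1(F_1)\big)$, and your construction of $\tau$ does not go through as written. The hard content you are missing is precisely the paper's Proposition~\ref{GenericEquivalence} ($(1)\Rightarrow(2)$): genericity forces the multiplicative type of any tuple from $\chi(F^\times)$ to determine its algebraic type over $\qq$. The paper applies this (via the argument in Proposition~\ref{CharacterIsomorphism}) to the whole listing $\alpha$ of $F_1^\times$: since $\sigma$ is a group isomorphism, $\mtp\big(\chi_1(\alpha)\big)=\mtp\big(\chi_2(\sigma\alpha)\big)$, whence $\atp\big(\chi_1(\alpha)\big)=\atp\big(\chi_2(\sigma\alpha)\big)$, giving at once the field isomorphism $\qq\big(\chi_1(F_1)\big)\to\qq\big(\chi_2(F_2)\big)$ sending $\chi_1(\alpha)\mapsto\chi_2(\sigma\alpha)$, with all $n$-th roots handled automatically because they are part of the tuple. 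Your ``cyclotomic rigidity'' step is the torsion-only special case of this; what you need is the full equivalence of multiplicative and algebraic types, which is exactly what the genericity proposition supplies.
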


\noindent
By the wealth of results by Shelah in \cite{Shelah}, we can get the following:

\begin{cor}
$\TN_p$ is superstable, shallow, without the dop, without the otop, without the fcp.
\end{cor}

\noindent
By an analogue of Vaught Test, we have:

\begin{thm}
$\TN_p$ is complete.
\end{thm}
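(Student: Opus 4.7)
The plan is to apply an analogue of the Vaught test, with Theorem~\ref{1.2} playing the role of categoricity: rather than showing categoricity in a single cardinality, I exploit the fact that models of $\TN_p$ are classified up to isomorphism by the pair of transcendence-degree invariants. Concretely, given any $M_1, M_2 \models \TN_p$, I aim to produce elementary extensions $N_i \succeq M_i$ ($i = 1, 2$) that are isomorphic as $L$-structures, whence $M_1 \equiv M_2$; since $M_1, M_2$ were arbitrary, this yields completeness of $\TN_p$. Fix a cardinal $\kappa \geq \max(|M_1|, |M_2|, \aleph_0)$, and build each $N_i$ of cardinality $\kappa$ with $\trdeg(F(N_i) \mid \ff_p) = \trdeg(K(N_i) \mid \qq(\chi_i(F(N_i)))) = \kappa$. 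Theorem~\ref{1.2} then delivers $N_1 \cong N_2$ directly.

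To construct $N_i$, I would expand $L$ by $\kappa$ new $F$-sort constants $\{c_\alpha\}_{\alpha < \kappa}$ and $\kappa$ new $K$-sort constants $\{d_\beta\}_{\beta < \kappa}$, and consider the theory $T_i' = \mathrm{Th}(M_i) \cup \Sigma_F \cup \Sigma_K$, where $\Sigma_F$ is the schema asserting algebraic independence of the $c_\alpha$'s over $\ff_p$ and $\Sigma_K$ is the schema asserting algebraic independence of the $d_\beta$'s over $\qq(\chi(F))$---formulated via universal quantifiers over the $F$-sort so that ``$F$'' refers to the full $F$-part of the eventual model. By compactness and downward L\"owenheim--Skolem, the task reduces to showing $T_i'$ is finitely consistent. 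Fragments of $\Sigma_F$ amount to finite systems of polynomial inequalities over $\ff_p$ and are easily realized in the infinite algebraically closed field $F(M_i)$.

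The heart of the argument is the finite consistency of $\Sigma_K$: given polynomials $Q_1, \ldots, Q_s \in \qq[Y_1, \ldots, Y_m, Z_1, \ldots, Z_k]$ and any $M \models \TN_p$, one must exhibit $d_1, \ldots, d_m \in K(M)$ such that $Q_j(d, \chi(a)) \neq 0$ for every $a \in F(M)^k$ for which $Q_j(Y, \chi(a))$ is a nonzero polynomial in $Y$. This is trivial when $\trdeg(K(M) \mid \qq(\chi(F(M)))) \geq m$ but delicate otherwise, and it is the main obstacle. The rescuing observation is that $\qq(\chi(F))$ is never algebraically closed in any model of $\TN_p$: when $\chara(F) = p > 0$, $F^\times$ is torsion so $\chi(F)$ consists of roots of unity in $K$ and hence $\qq(\chi(F)) \subseteq \qq^{\mathrm{cyc}}$; when $\chara(F) = 0$, genericness forces the non-torsion part of $\chi(F)$ to be algebraically independent over $\qq$, so the relative algebraic closure of $\qq$ in $\qq(\chi(F))$ still lies in $\qq^{\mathrm{cyc}} \subsetneq \overline{\qq}$. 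In either case, the algebraic closure of $\qq(\chi(F))$ inside $K$ contains elements of every finite algebraic degree, so I can inductively choose $d_i \in K$ of algebraic degree strictly exceeding $\max_j \deg_Y Q_j$ over $\qq(\chi(F))(d_1, \ldots, d_{i-1})$; a short multivariate argument then shows that any such tuple $(d_1, \ldots, d_m)$ satisfies every required constraint, completing the consistency check and, together with Theorem~\ref{1.2}, the proof.
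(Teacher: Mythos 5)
Your approach is essentially the same as the paper's: use Theorem~\ref{1.2} as a categoricity-style criterion and build, by compactness together with downward L\"owenheim--Skolem, a pair of isomorphic $(\kappa,\kappa)$-transcendental elementary extensions $N_1\succeq M_1$, $N_2\succeq M_2$. The paper packages this as Lemma~\ref{KappaExt}, and the crucial technical input you identify --- that $\qq\big(\chi(F)\big)$ is not algebraically closed in $K$, equivalently $\big[K:\qq(\chi(F))\big]$ is infinite, so one can keep picking new elements of large bounded degree --- is exactly the lemma the paper proves immediately before Lemma~\ref{KappaExt}, via Proposition~\ref{Regularity} and Galois theory.

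The one step worth flagging is the characteristic-$0$ case of your key observation. You write that ``genericness forces the non-torsion part of $\chi(F)$ to be algebraically independent over $\qq$, so the relative algebraic closure of $\qq$ in $\qq(\chi(F))$ still lies in $\qq^{\mathrm{cyc}}$.'' The ``so'' needs more than is given: what is actually required is that $\qq(U)$ (with $U$ the roots of unity in $\chi(F^\times)$) is relatively algebraically closed in $\qq\big(\chi(F^\times)\big)$, and this does not follow merely from the statement that a multiplicative basis of $\chi(F^\times)$ over $U$ maps to algebraically independent elements, since a priori the field generated also contains roots of $U$-monomials in that basis. The paper fills this with Proposition~\ref{Regularity}: given a finite tuple $g$, the subgroup it generates modulo $U$ is finitely generated and torsion-free hence free, one chooses a basis $g'$ of it over $U$, writes each $g_i$ as a $U$-monomial in $g'$ (genuine monomial, not a root), and then $\qq(U)(g')$ is purely transcendental over $\qq(U)$, which introduces no new elements algebraic over $\qq(U)$. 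Once you supply that argument, your proof coincides with the paper's.
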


\noindent
In section 4 we characterize the substructures of models of $\TN_p$:  
\begin{prop}\label{sub} Given $(F, K; \chi)$, the following are equivalent:
\begin{enumerate}
\item $(F, K; \chi)$ is a substructure of a model of $\TNp$;
\item$\chi$ is generic and $\chara(F)=p$.
\end{enumerate}
\end{prop}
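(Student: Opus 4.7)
For $(1)\Rightarrow(2)$, I would verify directly. Suppose $(F,K;\chi)$ is an $L$-substructure of some $(F',K';\chi')\models\TNp$. Since $F$ is a subring of the characteristic-$p$ field $F'$, we have $\chara(F)=p$, and $\chi=\chi'\res F$ is injective. For any tuple $\alpha\in F^n$, the equation $\alpha^k=\alpha^l$ holds in $F$ iff it holds in $F'$, so $\alpha$ is multiplicatively independent in $F$ iff it is in $F'$. Hence if $\alpha$ is multiplicatively independent in $F$, then $\chi(\alpha)=\chi'(\alpha)$ is algebraically independent over $\qq$ in $K'$, and therefore also in its subfield $\mathrm{Frac}(K)\subseteq K'$. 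Thus $\chi$ is generic.

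For $(2)\Rightarrow(1)$, the plan is to produce a model of $\TNp$ extending $(F,K;\chi)$. First I would reduce to the case that $F$ is a field and $K$ is algebraically closed of characteristic $0$: pass to $\mathrm{Frac}(F)$ and extend $\chi$ by $\chi(a/b):=\chi(a)/\chi(b)$ in $\mathrm{Frac}(K)$ (well-defined and multiplicative by injectivity of $\chi$), then replace $K$ by the algebraic closure of $\mathrm{Frac}(K)$. Both steps preserve genericness: multiplicative independence in $\mathrm{Frac}(F)$ reduces to the analogous property for numerators and denominators in $F$, and algebraic independence over $\qq$ is preserved under field extensions of the codomain.

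The key remaining step is to extend $\chi$ to the algebraic closure $\bar F$, possibly enlarging $K$ by fresh transcendentals. I would apply Zorn's lemma to the poset of generic characters $\psi\colon E\to M$ extending $\chi$, with $F\subseteq E\subseteq\bar F$ and $K\subseteq M$ with $M$ algebraically closed of characteristic $0$, ordered by $L$-substructure inclusion. A maximal such $\psi$ will have $E=\bar F$ by a one-step extension lemma: given $\alpha\in\bar F\setminus E$, $\psi$ extends multiplicatively to $E(\alpha)$ via the following prescription. The new multiplicative relations over $E$ take the form $\beta^n=\gamma\in E$ with $\beta\in E(\alpha)^{*}$, so $\psi(\beta)$ may be chosen as an $n$-th root of $\psi(\gamma)$ in the divisible group $M^{*}$; any residual multiplicatively independent generators of $E(\alpha)^{*}$ modulo $E^{*}$ are sent to fresh transcendentals adjoined to $M$. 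The main obstacle will be to make these two kinds of choices coherently so that genericness is preserved, i.e., so that every newly multiplicatively independent tuple in $E(\alpha)$ maps to an algebraically independent tuple over $\qq$. The needed flexibility comes from the divisibility of $M^{*}$ (many $n$-th roots let us avoid spurious algebraic dependencies) and from the freedom to adjoin further transcendentals to $M$ without disturbing the genericness already established on $E$.
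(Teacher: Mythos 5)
Your direction $(1)\Rightarrow(2)$ is fine and matches what the paper treats as immediate. For $(2)\Rightarrow(1)$, the initial reduction to $F$ and $K$ being fields (with $K$ algebraically closed) agrees with the paper. After that, however, you set up a Zorn's lemma argument over partial generic characters and rely on a ``one-step extension lemma,'' and this is exactly where the argument has a genuine gap: you explicitly flag ``the main obstacle will be to make these two kinds of choices coherently so that genericness is preserved'' and then do not resolve it, instead pointing to ``flexibility'' from divisibility and fresh transcendentals. That coherence is precisely the content to be proved, and it is not automatic --- in a single step $E\leadsto E(\alpha)$ one must simultaneously extend on the torsion-over-$E^\times$ part (choosing roots compatibly, keeping injectivity in particular on roots of unity) and on a multiplicative basis of $E(\alpha)^\times$ over $\mcl(E^\times)\cap E(\alpha)^\times$, and then verify that genericity survives. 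You also need chains to have upper bounds in your poset, which itself uses the transitivity-and-unions behavior of genericity.

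The paper sidesteps all of this by doing the extension \emph{in one shot} rather than step by step. It takes $F'$ to be the algebraic closure of $F$, picks $K'\supseteq K$ algebraically closed with $\trdeg(K'\mid K)>|F'|$, chooses a multiplicative basis $\{\alpha_i\}_{i<\kappa}$ of ${F'}^\times$ over $F^\times$, sends the $\alpha_i$ to elements of $K'$ that are algebraically independent over $K$, and then extends multiplicatively using the fact (char $0$) that $\mcl_G(\{\chi'(\alpha_i)\})$ is divisible. Genericity then falls out of the machinery already set up: Corollary~\ref{Equiv1} gives that $\chi'({F'}^\times)$ is $K$-generic over $\chi(F^\times)$ because a multiplicative basis was sent to an algebraically independent set, and Corollary~\ref{GenTrans} gives genericity of $\chi'({F'}^\times)$ outright by transitivity. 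If you want to salvage the Zorn's lemma route, you would essentially need to re-derive these corollaries and apply them at each step; the paper's all-at-once construction is shorter and lets the already-proved equivalences do the work.
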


\noindent
When is a substructure of a model of  $\TNp$ an 
elementary submodel?  It is not enough that the substructure is 
a model of $\TNp$:

\begin{prop}
$\TN_p$ is not model complete.
\end{prop}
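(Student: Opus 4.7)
The plan is to exhibit an inclusion $(F_1, K_1; \chi_1) \subseteq (F_2, K_2; \chi_2)$ of models of $\TN_p$ that is not elementary. Using Theorem \ref{1.2}, I first fix a model $(F_1, K_1; \chi_1) \models \TN_p$ with $\trdeg(K_1 \mid \qq(\chi_1(F_1))) \geq 1$. This positive transcendence degree lets me pick $k \in K_1$ transcendental over $\qq(\chi_1(F_1))$; since $\chi_1(F_1) \subseteq \qq(\chi_1(F_1))$, we have $k \notin \chi_1(F_1)$, so the existential $L$-formula $\varphi(y) := \exists x\,(\chi(x) = y)$ fails at $k$ in $(F_1, K_1; \chi_1)$.

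Next, I aim to build $(F_2, K_2; \chi_2) \models \TN_p$ extending $(F_1, K_1; \chi_1)$ and realizing $\varphi(k)$. By Theorem \ref{1.2}, there is up to isomorphism a unique model of $\TN_p$ with transcendence invariants $\trdeg(F_2 \mid \ff_p) = \trdeg(F_1 \mid \ff_p) + 1$ and $\trdeg(K_2 \mid \qq(\chi_2(F_2))) = \trdeg(K_1 \mid \qq(\chi_1(F_1))) - 1$. I realize this model as an honest extension of $(F_1, K_1; \chi_1)$ in which the new transcendental element $a \in F_2 \setminus F_1$ maps under $\chi_2$ to the prescribed $k$. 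Then $(F_2, K_2; \chi_2) \models \varphi(k)$ via $a$; since existential formulas are preserved upward by embeddings but not downward unless the embedding is elementary, this inclusion witnesses the failure of model completeness.

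The main technical point is carrying out the construction in the second step with $\chi_2(a)=k$. Concretely, I would first build an intermediate substructure $(F', K'; \chi')$ extending $(F_1, K_1; \chi_1)$ by setting $F' = (F_1(a))^{\mathrm{alg}}$ for a single new transcendental $a$, extending $\chi_1$ to a multiplicative $\chi'$ with $\chi'(a) = k$, and choosing $\chi'$-values on a complete set of multiplicatively independent elements of $F' \setminus F_1$ to be algebraically independent over $\qq(\chi_1(F_1), k)$ (inside $K_1$ if the remaining transcendence degree suffices, otherwise inside a mild transcendental extension $K' \supseteq K_1$). After verifying that $\chi'$ is generic in the sense of the introduction, Proposition \ref{sub} ensures that $(F', K'; \chi')$ embeds into some model of $\TN_p$. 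The delicate part, which I expect to be the main obstacle, is to arrange this embedding to be the identity on $(F_1, K_1; \chi_1)$; I would handle it by a back-and-forth argument of the same flavor as the one that will prove the uniqueness part of Theorem \ref{1.2}, yielding the desired $(F_2, K_2; \chi_2)$.
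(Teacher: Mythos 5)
Your approach is correct and takes a genuinely different route from the paper's. The paper starts from a model $(F, K; \chi)$ inside a saturated model $(F', K'; \chi)$, chooses $a, b \in \chi({F'}^\times)$ algebraically independent over $K$, and shows that the inclusion $(F, K(a+b)^{\mathrm{ac}}; \chi) \subseteq (F', K'; \chi)$ of $\TN_p$-models fails to be a \emph{regular} substructure (the pair $a, b$ is multiplicatively independent over $\chi(F^\times)$ yet algebraically dependent over $K(a+b)^{\mathrm{ac}}$); by Corollary~\ref{ElSub1}, elementary implies regular, so the inclusion is non-elementary. You instead pick a model with positive transcendence degree on the $K$-side, name a concrete existential formula $\exists x\,(\chi(x)=y)$ that fails at a witness $k$, and then grow the $F$-side so the formula becomes true. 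Your route is the ``forward'' version of the paper's ``backward'' one (grow $F$ rather than shrink $K$), and it is arguably more transparent: it exhibits the offending formula explicitly and does not need the regular-submodel machinery at all.

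On the one step you flag as delicate: it is not. There is no need to first produce an abstract $\TN_p$-model with prescribed transcendence invariants via Theorem~\ref{1.2} and then identify it with an extension of $(F_1, K_1; \chi_1)$ by back-and-forth. Just build the extension directly: set $F_2 = F_1(a)^{\mathrm{ac}}$, fix a multiplicative basis of $F_2^\times$ over $F_1^\times$ containing $a$, adjoin to $K_1$ one fresh transcendental for each basis element other than $a$ and pass to the algebraic closure to get $K_2$, send $a \mapsto k$ and the other basis elements to those fresh transcendentals, and extend $\chi_1$ multiplicatively to $\chi_2$ using divisibility of $K_2^\times$. Corollary~\ref{Equiv1} shows $\chi_2$ is generic, so $(F_2, K_2; \chi_2) \models \TN_p$, and by construction it literally contains $(F_1, K_1; \chi_1)$. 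Two small cautions in the sketch as written: the required extension $K_2/K_1$ is not ``mild'' --- the multiplicative rank of $F_2^\times$ over $F_1^\times$ is typically $2^{\aleph_0}$ --- and the precise invariants $(\kappa+1,\ \lambda-1)$ are not needed; the argument works as soon as any extension with $k$ in the image of $\chi_2$ exists.
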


\noindent
To deal with the above question we define
a {\bf regular submodel} of a model $(F', K';\chi')$ of $\TN_p$ to be
a substructure $(F, K;\chi)\models \TN_p$ of $(F', K';\chi')$ such that
 $\qq\big(\chi(F')\big) $ is linearly disjoint with $ K$ over $\qq\big(\chi(F)\big)$ in $K'$. 
The more complicated notion of {\bf regular $L$-substructure} will be defined in Section 4. Below we fix a set  $\TNp(\forall)$ of universal 
$L$-sentences whose models are the substructures of models of $\TNp$, as in
Proposition~\ref{sub}. 

\begin{thm}
\(\TNp \) is the {\rm regular} model companion of \( \TNp(\forall) \). That is:
\begin{enumerate}
\item for models of $\TNp$, the notions of {\rm regular submodel} and 
{\rm elementary submodel} are equivalent;
\item every model of \( \TNp(\forall) \) is a regular substructure of a model of \( \TNp \).
\end{enumerate}
\end{thm}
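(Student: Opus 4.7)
The plan is to establish part (2) by a direct construction of a regular extension, then derive part (1) using the categoricity result (Theorem~\ref{1.2}) together with a saturation argument for amalgamation. For (2), I would take $(F_0, K_0; \chi_0) \models \TNp(\forall)$ and build a chain of regular extensions reaching a model of $\TNp$ in three stages: first extend $F_0$ to its algebraic closure, enlarging $K_0$ by adjoining compatible $n$-th roots of the $\chi_0$-images of generators so that $\chi_0$ extends multiplicatively (a Zorn's lemma argument over partial coherent root choices); then close off the $K$-sort algebraically; then freely adjoin transcendentals to $K$ over $\qq(\chi(F))$ to achieve any desired transcendence degree. Genericity preservation at each step is verified by arguing that an alleged algebraic relation among $\chi$-images of multiplicatively independent elements would, using that algebraic closure contributes no new multiplicative rank modulo torsion, force a multiplicative relation already in $F$.

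For the reverse direction of (1), take $(F, K; \chi)$ a regular substructure of $(F', K'; \chi')$ with both modeling $\TNp$, and fix a $(|F'|+|K'|)^{+}$-saturated elementary extension $(F^*, K^*; \chi^*) \succeq (F, K; \chi)$. Using (2) together with the saturation of $(F^*, K^*; \chi^*)$, I would inductively build an $L$-embedding $(F', K'; \chi') \hookrightarrow (F^*, K^*; \chi^*)$ fixing $(F, K; \chi)$ pointwise, respecting multiplicative and algebraic independence via the regularity hypothesis. Since both the image of $(F', K'; \chi')$ and the full $(F^*, K^*; \chi^*)$ are $\TNp$-models sharing $(F, K; \chi)$ as a common regular $L$-substructure, the categoricity of $\TNp$ in each pair of transcendence degrees (Theorem~\ref{1.2}) forces an isomorphism extension argument to go through, making the embedding elementary. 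Combined with $(F, K; \chi) \preceq (F^*, K^*; \chi^*)$ this yields $(F, K; \chi) \preceq (F', K'; \chi')$.

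For the forward direction of (1), suppose $(F, K; \chi) \preceq (F', K'; \chi')$. I would first note that $\chi'(F' \setminus F) \cap K = \emptyset$: if $\chi'(a) = k \in K$ for some $a \in F'$, then the formula $\exists x \in F: \chi(x) = k$ transfers down by elementarity to produce $b \in F$ with $\chi(b) = k = \chi'(a)$, forcing $a = b \in F$ by injectivity of $\chi'$. For the full linear disjointness of $\qq(\chi(F'))$ and $K$ over $\qq(\chi(F))$, I would take a purported $K$-linear relation among finitely many pairwise distinct monomials in some $\chi'(a_1), \dots, \chi'(a_n)$, encode the existence of such a relation by an $L$-formula in the coefficients as parameters from $K$ together with distinctness of the monomials (which is first-order for fixed exponent tuples), and transfer it to $(F, K; \chi)$; genericity of $\chi$ in the small model then forces such a relation to be trivial.

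The main obstacle is the forward direction of (1): multiplicative independence itself is not first-order expressible, so the linear-disjointness transfer must be set up using only the first-order-expressible piece, namely distinctness of finitely many monomials in the candidate witnesses, and care is needed to make sure the descent through elementarity really pins down a multiplicative contradiction rather than just an uninformative tautology in $K$. A secondary subtlety is the coherent choice of roots in the first stage of (2), where each algebraic extension step must be made compatible with all previously fixed multiplicative relations in order to preserve genericity throughout the construction.
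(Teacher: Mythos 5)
The proof has several parts of very different strength, so let me address them separately.

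For part (2), your three-stage construction (algebraically close $F$ while extending $\chi$ via a Zorn argument over compatible root choices, close $K$, adjoin transcendentals) is essentially the argument in the paper's proof of Proposition~\ref{Substructures} / Corollary~\ref{Substructures2}.

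For the direction ``elementary substructure implies regular substructure,'' your strategy is close in spirit to what the paper does, but your concluding step does not hold. You propose to transfer a $K$-linear relation among distinct monomials $\chi'(a)^{k_i}$ down to the small model and then claim that ``genericity of $\chi$ in the small model forces such a relation to be trivial.'' That last claim is false: genericity controls $\qq$-algebraic relations among $\chi(F^\times)$, not $K$-linear relations, and a $K$-linear relation among elements of $\qq(\chi(F))$ contradicts nothing. The correct argument, which is what makes the equivalence $(1)\Leftrightarrow(2)$ in Proposition~\ref{GenericSubstructure} useful, is to phrase the transfer statement as ``if there is $g'$ with $P_1(g'),\ldots,P_n(g')$ not all zero and $\sum a_iP_i(g')=0$, then such a $g$ exists in the small model''; the contradiction at the end comes from the hypothesized linear independence of the coefficients $a_1,\ldots,a_n\in K$ over $\qq(\chi(F^\times))$, not from genericity. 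Your ``distinctness of monomials'' clause is at the wrong level (exponents rather than evaluated values) and loses information about which factors lie in the smaller group, so it does not reproduce the non-vanishing condition that does the work.

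For the direction ``regular substructure implies elementary substructure,'' there is a genuine gap and it is the crux of the theorem. You embed $(F',K';\chi')$ into a saturated elementary extension $(F^*,K^*;\chi^*)$ of $(F,K;\chi)$ over $(F,K;\chi)$ (this step is plausible using linear disjointness to amalgamate $\qq(\chi'(F'^\times))$ and $K$), and then say that ``categoricity\ \ldots\ forces an isomorphism extension argument to go through, making the embedding elementary.'' This is circular: establishing that the image of $(F',K';\chi')$ is an elementary submodel of $(F^*,K^*;\chi^*)$ is precisely the assertion you are trying to prove, since that image is (at best) a regular submodel. Theorem~\ref{1.2} gives an abstract isomorphism of isomorphism types but contains no information about whether a given inclusion is elementary. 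The paper's actual mechanism is quite different: it expands the language to $L^+$ by adding relation symbols $R_{P_1,\ldots,P_n}$ so that the second-order ``regular substructure'' relation on $\TNp$-models becomes the ordinary $L^+$-substructure relation on $\TNp^+$-models, reducing the statement to model completeness of the $\forall\exists$-theory $\TNp^+$; this is then proved by taking a hypothetical non-existentially-closed $\TNp^+$-model $(F,K;\chi,R)$, arranging it $(\kappa,\kappa)$-transcendental, constructing by a countable chain an existentially closed $(\kappa,\kappa)$-transcendental extension, and using Theorem~\ref{ThmIso} to conclude that the original and final models of the chain are isomorphic, which is a contradiction since existential closedness is isomorphism-invariant. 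Without the first-order reformulation (or some equivalent device to make ``regular substructure'' transfer), a naked embedding into a saturated model does not produce elementarity, and the categoricity appeal as you have written it does not close the gap.
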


\noindent In Section $5$, we show that every definable set in a fixed model $(F, K; \chi)$ of $\TN_p$ has a simple description. This is comparable to the fact that every definable set in a model of $\ACF$ is a boolean combination of algebraic sets. A set $S \subseteq K^n$ is {\bf algebraically presentable} if 
$$S\  =\ \bigcup_{\alpha \in D}V_\alpha$$
for some definable $D\subseteq F^m$ and
definable family $\{ V_\alpha\}_{\alpha \in D}$ of $K$-algebraic subsets of $K^n$. 
 Algebraically presentable sets should be thought of as geometrically simple. They are also
existentially definable of a particular form. We also define in Section~$5$ the related notion of {\bf $0$-algebraically presentable} sets. The main result is:

\begin{thm}
If $X \subseteq K^n$ is definable, then $X$ is a boolean combination of algebraically presentable subsets of $K^n$. Furthermore, if $X$ is $0$-definable, $X$ is a boolean combination of 0-algebraically presentable subsets of $K^n$.
\end{thm}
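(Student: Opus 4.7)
The plan is to reduce any defining $L$-formula $\phi(y)$ with $y$ a tuple of $K$-variables to a normal form, by combining $\ACF$ quantifier elimination on the $K$-sort with a case analysis of the $F$-quantifiers driven by the genericity of $\chi$. Each atomic piece of the normal form will visibly define an algebraically presentable set, and negations of these pieces will deliver the boolean combinations required.

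First I would view $\phi(y)$ as an $L$-formula whose $K$-sort terms are polynomials in $y$ whose coefficients are polynomial expressions in $\chi(\bar t)$ for various $F$-terms $\bar t$ together with $K$-parameters. Applying $\ACF$ quantifier elimination to the $K$-sort (treating each $\chi(\bar t)$ as a constant) removes every $\exists z \in K$, leaving a formula whose only remaining quantifiers are over $F$. Next I would bring each such formula into the shape (possibly a boolean combination of)
$$\exists \bar u \in F^m\,\Big(\theta(\bar u,\bar\alpha)\,\wedge\,\bigwedge_i Q_i\big(y,\chi(\bar u),\chi(\bar\alpha)\big)=0\Big),$$
where $\theta$ is an $L$-formula purely in the $F$-sort and $\bar\alpha$ is the tuple of $F$-parameters occurring in $\phi$ (any $K$-parameters, after enlarging $\bar\alpha$, can be absorbed into $\chi(\bar\alpha)$). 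Setting $D := \{\bar u \in F^m : \theta(\bar u,\bar\alpha)\}$ and $V_{\bar u} := \{y \in K^n : \bigwedge_i Q_i(y,\chi(\bar u),\chi(\bar\alpha)) = 0\}$ exhibits such a formula's defining set as $\bigcup_{\bar u \in D} V_{\bar u}$, which is visibly algebraically presentable. For the $0$-definable case, $\bar\alpha$ is empty from the outset and the same argument produces $0$-algebraically presentable sets.

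The main obstacle is bringing the formula into the displayed shape. A naive distribution of $\exists \bar u$ over negations and inequations fails because different witnesses $\bar u$ may be required for different disjuncts or for the positive and negative polynomial conditions. To overcome this I would exploit the genericity axiom of $\TN_p$: partition $F^m$ into definable pieces on which the multiplicative type of $\bar u$ over $\bar\alpha$ is fixed. On each such piece, the algebraic relations among $\chi(\bar u)$ and $\chi(\bar\alpha)$ are, by genericity, completely determined by the multiplicative relations among $\bar u$ and $\bar\alpha$ (and hence by the piece itself). Consequently, the $K$-side polynomial (in)equations on that piece collapse, after absorbing the inequation data into $\theta$, to a single conjunction of equations in $y$ with coefficients in $\chi(\bar u), \chi(\bar\alpha)$; the quantifier $\exists \bar u$ can then be pushed inside uniformly. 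Reassembling the pieces, and taking complements where needed to handle the negations produced during the reduction, yields the desired boolean combination of algebraically presentable sets.
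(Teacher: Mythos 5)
Your proposal takes a genuinely different, syntactic route from the paper, but it has two gaps that keep it from going through as written.

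First, the step ``applying $\ACF$ quantifier elimination to the $K$-sort removes every $\exists z \in K$, leaving a formula whose only remaining quantifiers are over $F$'' is not available in general, because an $L$-formula may have $F$-quantifiers nested inside $K$-quantifiers. For a subformula such as $\exists z\,\forall u\,\psi(y,z,u)$, one cannot eliminate $\exists z$ by $\ACF$ quantifier elimination while treating $\chi(u)$ as a constant: the body $\forall u\,\psi$ is not a quantifier-free condition in the ring language over $K$, and one cannot in general commute $\exists z$ past $\forall u$. So the reduction to the displayed normal form is unjustified at the outset.

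Second, the genericity-driven ``collapse'' at the end does not close the gap you correctly identify. Fixing the multiplicative type of $\bar u$ over $\bar\alpha$ fixes, via genericity, the algebraic type of $\chi(\bar u)$ over $\chi(\bar\alpha)$, and hence lets you absorb (in)equations that involve \emph{only} $\chi(\bar u),\chi(\bar\alpha)$ into $\theta$. But the troublesome inequations are those involving $y$ as well, of the form $Q\bigl(y,\chi(\bar u),\chi(\bar\alpha)\bigr)\neq 0$; fixing the algebraic type of $\chi(\bar u)$ says nothing about $y$, so there is no reason such a condition collapses to an equation or can be moved into $\theta$. Consequently the set defined remains of the form $\bigcup_{\alpha\in D}(V_\alpha \setminus W_\alpha)$, which (as you yourself note) is not $\bigl(\bigcup_\alpha V_\alpha\bigr)\setminus\bigl(\bigcup_\alpha W_\alpha\bigr)$, and you have not produced a boolean combination of algebraically presentable sets. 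This is precisely the point at which a naive quantifier-elimination argument for $\TNp$ must fail, consistent with the fact that $\TNp$ is not model complete.

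For comparison, the paper's proof is semantic: it works with ``$0$-ap-types,'' appeals to Stone representation, and then uses Lemma~\ref{QuanRedLem} to build an automorphism of a saturated model carrying $a$ to $a'$ whenever they lie in the same $0$-algebraically presentable sets. The automorphism is built by first matching the minimal $K$-algebraic loci of $a,a'$ over $\qq(\chi(F^\times))$ (using definability of Morley rank and degree in $\ACF$ and Corollary~\ref{DOE2}) and then matching the $F$-parameters over which these loci are defined, extending to the whole structure via Corollary~\ref{AutExt}. That argument sidesteps both the nested-quantifier problem and the inequation problem entirely, which is why the paper does not attempt the formula-level reduction you propose.
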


\noindent
The general case where a definable set is not a subset of $K^n$ can be easily reduced to the above special case.

\newpage
\noindent We have better results for definable subsets of $F^m$:

\begin{thm}
Let $D\subseteq F^m$. If $D$ is definable, then $D$ is definable in the field $F$. If $D$ is $0$-definable, then $D$ is $0$-definable in the field $F$. 
Suppose $D = \chi^{-1}(V)$ with  $V\subseteq K^m$ a $K$-algebraic set. Then $D$ is $F$-algebraic. If $V$ is defined over $\qq$ in the field sense, then $D$ is defined over $\ff_p$ in the field sense. 
\end{thm}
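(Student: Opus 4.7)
The theorem splits into two halves: (i) every definable (resp.\ $0$-definable) subset of $F^m$ is definable (resp.\ $0$-definable) in the field $F$; (ii) for $K$-algebraic $V\subseteq K^m$, $D:=\chi^{-1}(V)$ is $F$-algebraic, with descent from $\qq$ to $\ff_p$.

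For (i), my plan is to show every $\sigma\in\mathrm{Aut}(F)$ lifts to an $L$-automorphism of $(F,K;\chi)$. Define $\tau_K\colon\chi(F)\to\chi(F)$ by $\tau_K(\chi(\alpha)):=\chi(\sigma(\alpha))$; this is a group automorphism of $\chi(F)$. The key step is extending $\tau_K$ to a ring automorphism of $\qq[\chi(F)]\subseteq K$: any additive $\qq$-linear relation $\sum n_i\chi(a_i)=0$ is, by genericity of $\chi$, determined by the multiplicative data of the $a_i$'s, which $\sigma$ preserves. Concretely, after decomposing $\langle a_i\rangle\cong\mu_n\oplus\zz^r$ with generators $\zeta,\gamma_1,\ldots,\gamma_r$ and writing $a_i=\zeta^{s_i}\gamma^{t_i}$, the algebraic independence of $\chi(\gamma_1),\ldots,\chi(\gamma_r)$ over $\qa$ reduces the relation to cyclotomic divisibility conditions $\Phi_n\mid\sum_{i:t_i=t}n_iX^{s_i}$ (one per $t$) that are invariant under replacing $\chi(\zeta)$ by any other primitive $n$th root of unity; hence $\tau_K$ preserves them. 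Extending $\tau_K$ from $\qq(\chi(F))$ through a transcendence basis and the algebraic closure of $K$ completes the lifting. Surjectivity of $\mathrm{Aut}(F,K;\chi)\to\mathrm{Aut}(F)$ makes any $0$-definable $D\subseteq F^m$ into an $\mathrm{Aut}(F)$-invariant set, hence $0$-definable in the field by quantifier elimination for $\mathrm{ACF}$. For the definable (parameter) version, enlarging $A_F$ to include $\chi^{-1}(A_K\cap\chi(F))$ and finitely many further elements absorbing algebraic dependencies of $A_K$ over $\qq(\chi(A_F))$ and rerunning the lifting handles it.

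For (ii), reduce to $V=Z(P)$ for $P\in K[Y_1,\ldots,Y_m]$ (resp.\ $P\in\qq[Y]$), and partition by which coordinates of $\alpha$ vanish to reduce to $\alpha\in(F^\times)^m$. Expanding
\[
P(\chi(\alpha)) \;=\; \sum_k c_k\,\chi(\alpha^k)
\]
and grouping by the equivalence $k\sim_\alpha l\iff\alpha^k=\alpha^l$, I use the decomposition $G_\alpha:=\langle\alpha_1,\ldots,\alpha_m\rangle\cong\mu_n\oplus\zz^r$ with generators $\zeta,\gamma_1,\ldots,\gamma_r$, plus the algebraic independence of $\chi(\gamma_i)$ over $\qa$, to reduce $P(\chi(\alpha))=0$ to a finite system of $\qq$-linear relations among the roots of unity $\chi(\zeta)^a$. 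When $P\in\qq[Y]$ these relations depend only on the multiplicative type $\sim_\alpha$, so $D$ is a union of multiplicative types, each cut out by equations $\alpha^k=\alpha^l$; this gives $F$-algebraicity and $\ff_p$-definability. For general $P\in K[Y]$, expanding the $c_k$'s over a $\qq(\chi(F))$-generating subset of $K$ and applying the same independence argument still yields $F$-algebraic conditions.

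The main obstacle is establishing $F$-closedness rather than merely $F$-constructibility at the end of (ii). After the regrouping, each surviving condition on $\alpha$ is a conjunction of multiplicative equations of the form $\alpha^k=\alpha^l$, and the dependence of the decomposition $G_\alpha\cong\mu_n\oplus\zz^r$ on $\alpha$ introduces no inequalities; a careful verification that the final condition is intrinsically multiplicative---phrased purely via equalities $\alpha^k=\alpha^l$---closes the gap.
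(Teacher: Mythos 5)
Your part (i) is essentially the paper's approach for the $0$-definable case: the paper proves it (Lemma~\ref{StablyEmbedednessForParameterFreeSet}) via Corollary~\ref{AutExt}, which lifts an automorphism of $F$ to an $L$-automorphism; your computation with $\langle a_i\rangle\cong\mu_n\oplus\zz^r$ and cyclotomic conditions is a self-contained re-proof of the lifting lemma, which is fine. For the definable case you again lift automorphisms, now with parameters; the paper instead goes through Theorem~\ref{QuanRed} (boolean combinations of $0$-algebraically presentable sets) and the continuity lemma (Lemma~\ref{Continuity}), which is arguably cleaner but your alternative can be made to work with care in absorbing the finitely many $F$-parameters over which the minimal polynomials of the $K$-parameters are defined.

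There is, however, a genuine gap in part (ii), and you have put your finger on exactly where it is without resolving it. You assert that ``after the regrouping, each surviving condition on $\alpha$ is a conjunction of multiplicative equations of the form $\alpha^k=\alpha^l$'' and that ``the dependence of the decomposition $G_\alpha\cong\mu_n\oplus\zz^r$ on $\alpha$ introduces no inequalities.'' That is not true as stated: the isomorphism type of $G_\alpha$, and hence the integers $n, s_i, t_i$ feeding into your divisibility conditions $\Phi_n\mid\sum c_k X^{s\cdot k}$, are determined by the \emph{full} multiplicative type of $\alpha$, including the non-relations $\alpha^k\neq\alpha^l$. So each ``piece'' of $D$ is only $F$-constructible, and $F$-closedness of the union requires a separate argument. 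Two ways to supply it: (a)~follow the paper, which uses Mann's theorem to get a uniform torsion bound and then shows that the finitely many non-degenerate solution families, taken over all partitions of the index set, glue to a finite union of $F$-algebraic sets (the boundary points of the non-degenerate strata fall into strata for finer partitions); or (b)~argue directly by specialization: since $\chi$ is generic, $P(\chi(\alpha))=0$ is equivalent to $P$ lying in the ideal $I_{\chi(\alpha)}\subseteq\qq(U)[x]$ generated by $U$-binomials vanishing on $\chi(\alpha)$ (equivalence $(1)\Leftrightarrow(3)$ of Proposition~\ref{Equiv2}); a $U$-binomial $M-\zeta N$ vanishes on $\chi(\alpha)$ iff the $F$-algebraic relation $M^\chi(\alpha)=\chi^{-1}(\zeta)\,N^\chi(\alpha)$ holds, and such relations are inherited by any specialization $\alpha'$ of $\alpha$ on an irreducible $F$-variety, giving $I_{\chi(\alpha)}\subseteq I_{\chi(\alpha')}$ and hence $P(\chi(\alpha'))=0$. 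Either route closes the hole; as written, your claim that no inequalities survive is not justified and is, on its face, incorrect. Separately, for general $P\in K[Y]$ the paper's continuity lemma first expands $P$ over a $\qq(\chi(F^\times))$-linear basis of $K$ to reduce to the case of coefficients in $\qq(\chi(F^\times))$, and your one-line ``expanding the $c_k$'s over a $\qq(\chi(F))$-generating subset'' should be fleshed out along those lines.
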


\noindent
Still working in a fixed model $(F, K; \chi)$ of $\TN_p$, we show in Section $6$ that every definable set has another description which is comparable to the fact that every definable set in a model of $\ACF$ is a finite union of quasi-affine varieties. A special case of the above description is when $X = \bigcup_{\alpha \in D} V_\alpha$ where $\{ V_\alpha\}_{\alpha \in D}$ is a definable family of varieties over $K$ such that 
\begin{enumerate}
\item for some $k\in \nn$, $D$ is a subset of $F^k$ definable in the language of rings,
\item $V_\alpha$  and $V_\beta$ are disjoint for distinct $ \alpha, \beta \in D $.
\end{enumerate}

\noindent We can show that not every definable set $X \subseteq K^n$ has a description as in the above special case.
However, a description approximating the above picture can be obtained. This in particular allows us to define a {\bf geometric rank} $\gr$  and a {\bf geometric degree} $\gd$ on the definable sets. 

In Section $7$, we show that the geometric rank and the geometric degree defined in the previous section coincide with the model-theoretic notions of Morley rank and Morley degree.
Using this, it is easy to deduce that the theory is $\omega$-stable. We then study some behaviors of these notions of ranks.

\begin{prop}
If $X \subseteq K^n$ and $ X'\subseteq K^{n'}$ are definable, $\gr(X) = \omega\cdot \rho_{{}_K} + \rho_{{}_F}$ and $  \gr(X') = \omega\cdot \rho'_{{}_K} + \rho'_{{}_F}$ with $\rho_{{}_K}, \rho_{{}_F}, \rho'_{{}_K}, \rho'_{{}_F} \in \nn$,  then  $\gr(X\times X') = \omega\cdot (\rho_{{}_K}+\rho'_{{}_K}) +\rho_{{}_F}+\rho'_{{}_F}$.
\end{prop}

\begin{thm}
Suppose $ \{ X_b\}_{b \in Y}$ is a definable family of subsets of $K^n$. Then for each ordinal \(\rho\), the set \( \big\{ b \in Y :  \gr(X_b)= \rho \big\} \) is definable.
\end{thm}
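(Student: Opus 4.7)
The plan is to combine the geometric description of definable sets from Section~6 with the classical definability of Morley rank in families for $\ACF$. By the preceding proposition the geometric rank of any definable subset of $K^n$ has the form $\omega \cdot m + d$ with $m, d \in \nn$, so for ordinals $\rho$ not of this form the set in question is empty, and it suffices to show that for each $(m, d) \in \nn^2$ the set $\{b \in Y : \gr(X_b) = \omega\cdot m + d\}$ is definable.

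First I would apply a uniform-in-$b$ version of the structural result from Section~6 to the family $\{X_b\}_{b \in Y}$. The expected output is an integer $k$, a definable family $\{D_b\}_{b \in Y}$ with each $D_b \subseteq F^k$ definable in the language of rings on the $F$-sort, and a definable family of $K$-varieties $\{V_{b, \alpha}\}$ indexed by pairs $(b, \alpha)$ with $b \in Y$ and $\alpha \in D_b$, together with the rank formula $\gr(X_b) = \omega \cdot \rho_{K, b} + \rho_{F, b}$, where $\rho_{K, b}$ is the maximum Morley rank (in the field $K$) realized by $V_{b, \alpha}$ as $\alpha$ ranges over $D_b$, and $\rho_{F, b}$ is the Morley rank (in the field $F$) of the sublocus of $D_b$ on which that maximum is attained.

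Next I would invoke the classical definability of Morley rank in families in $\ACF$, applied separately on each sort. On the $K$-side, the function $(b, \alpha) \mapsto \mathrm{MR}_K(V_{b, \alpha})$ is definable, so for each $m \in \nn$ the set $D_{b, m} := \{\alpha \in D_b : \mathrm{MR}_K(V_{b, \alpha}) = m\}$ defines a definable family in $F$ parametrized by $b$. On the $F$-side, applying definability of Morley rank in families to $\{D_{b, m}\}_b$, the condition $\mathrm{MR}_F(D_{b, m}) = d$ becomes definable in $b$. Combining, the event ``$D_{b, m'} = \emptyset$ for every $m' > m$, $D_{b, m} \neq \emptyset$, and $\mathrm{MR}_F(D_{b, m}) = d$'' is a definable condition on $b$ equivalent to $\gr(X_b) = \omega \cdot m + d$.

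The main obstacle will be establishing the parametric, uniform-in-$b$ version of Section~6's structure theorem. Section~6 furnishes an approximating decomposition of a single definable set; here one must ensure that when the construction is run inside a definable family both the indexing locus $D_b$ and the fibre varieties $V_{b, \alpha}$ vary definably in $b$, and that the low-rank ``error'' tolerated by the approximation never contributes to the leading $\omega$-coefficient of the geometric rank. Once this uniform presentation is secured, the remainder of the argument reduces to applying definability of Morley rank in families in $\ACF$ on each sort, as above.
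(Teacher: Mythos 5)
Your high-level strategy — reduce to ordinals $\omega\cdot m + d$, push the Section~6 decomposition into a $b$-parametrized family, then invoke definability of Morley rank in families for $\mathrm{ACF}$ separately on the $F$-sort and the $K$-sort — is in the right spirit and is indeed what the paper does. But there are two genuine problems with the way you set it up.

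First, the decomposition you ask for does not exist, even for a single set. You propose a definable family of $K$-varieties $\{V_{b,\alpha}\}_{\alpha \in D_b}$ and compute $\rho_{K,b}$ as the maximal $\mathrm{MR}_K(V_{b,\alpha})$ and $\rho_{F,b}$ as $\mathrm{MR}_F$ of the sublocus of $D_b$ where that maximum is attained. This is the ``special case'' description from the introduction, and the paper explicitly observes that not every definable $X \subseteq K^n$ admits such a presentation. The correct object is an \emph{essentially disjoint pc-presentation} $\{T_\alpha\}_{\alpha \in D}$ — the $T_\alpha$ are pc-sets, not varieties — together with a \emph{primary index quotient} $\widetilde{D}$ of the primary index set $\hat{D}$, i.e.\ the image of $\hat{D}$ under a map $\pi$ that identifies $\alpha,\beta$ when $T_\alpha \sim T_\beta$. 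Crucially, $\gr_F(X) = \mr_F(\widetilde{D})$, not $\mr_F(\hat{D})$; these can differ because the $\sim$-classes may be infinite. Your $\rho_{F,b}$ is $\mr_F$ of (your analogue of) $\hat{D}_b$, which is the wrong quantity.

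Second, you flag establishing the ``uniform-in-$b$'' presentation as the main obstacle and then leave it open; this is precisely the step the paper replaces with a different idea, so simply postponing it leaves a real gap. The paper never produces a single presentation uniform in $b$. Instead it enumerates countably many syntactic ``choices'' $\mathscr{C}$ (fpc-formulas for the $T_\alpha$, an $L_{\mathrm{r}}$-formula for $\widetilde{D}$, an $L$-formula for $\pi$, etc.), shows via Corollary~\ref{Definability} that for each fixed $\mathscr{C}$ the set $R^5_{\mathscr{C}} \subseteq Y$ of $b$ for which $X_b$ admits an essentially disjoint pc-presentation of shape $\mathscr{C}$ is definable, and then appeals to compactness to cover $Y$ by finitely many of these. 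Within a single $\mathscr{C}$ the definability of $\gr(X_b)=\rho$ then follows from definability of Morley rank in $\mathrm{ACF}$-families on each sort, as you anticipate. You should adopt this choices-plus-compactness mechanism, and when you do, be sure the condition you test on the $F$-side is $\mr_F$ of the primary index \emph{quotient}, not of the maximal-rank sublocus.
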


\noindent
The technique we developed might also provide a step towards proving that $\TN_p$ has the
{\em definable  multiplicity} property. 
We use the geometric understanding of Morley rank and Morley degree to classify strongly minimal sets up to non-orthogonality:

\begin{prop}
For any strongly minimal set $X \subseteq K^n$ there is a finite-to-one definable map from $X$ to $F$.
\end{prop}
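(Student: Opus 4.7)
\noindent The plan is to combine the structural description of definable subsets of $K^n$ from Section~6 with the fact, from the theorem on definable subsets of $F^m$ stated above, that any definable $D \subseteq F^m$ is in fact definable in the pure field $F$. Since $X$ is strongly minimal its Morley rank equals $1$; by the identification of Morley rank with geometric rank in Section~7 this gives $\gr(X)=1$. Writing $\gr(X) = \omega \cdot \rho_{{}_K} + \rho_{{}_F}$ with $\rho_{{}_K},\rho_{{}_F} \in \nn$ forces $\rho_{{}_K}=0$ and $\rho_{{}_F}=1$.

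\noindent First I would, after discarding a definable set of strictly smaller Morley rank---which for a rank $1$ set is merely finite---arrange $X$ in the ``special case'' form of Section~6, namely $X = \bigcup_{\alpha \in D} V_\alpha$, where $D \subseteq F^k$ is definable in the language of rings on $F$ and $\{V_\alpha\}_{\alpha \in D}$ is a definable family of pairwise disjoint $K$-varieties. The condition $\rho_{{}_K}=0$ forces each fiber $V_\alpha$ to be finite, while $\rho_{{}_F}=1$ together with $X$ infinite gives $\mr_F(D)=1$. Sending $x \in V_\alpha$ to $\alpha$ then defines a finite-to-one definable map $\pi \colon X \to D$.

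\noindent To finish, I need a finite-to-one definable map $D \to F$. Since $D$ is definable in the algebraically closed field $F$ with Morley rank $1$, its Zariski closure in $F^k$ is a $1$-dimensional $F$-algebraic set, and every such set admits a finite-to-one map to $F$ via a suitably generic $F$-linear projection (no irreducible component of dimension $1$ can be contained in every fiber of such a projection, and finitely many components give finitely many conditions to avoid in the infinite field $F$). Composing with $\pi$ yields the required finite-to-one definable map $X \to F$.

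\noindent The main obstacle is reducing to the clean special-case form: Section~6 only provides such a decomposition up to a definable set of strictly smaller geometric rank. For strongly minimal $X$ the discarded error is finite and can be absorbed without harm, but handling this approximation carefully is the substantive technical step.
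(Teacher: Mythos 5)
Your overall plan matches the paper's: decompose $X$ into finite fibers indexed by a strongly minimal subset of some $F^k$, use stable embeddedness of $F$ and a Noether-normalization-type argument to map that index set finite-to-one into $F$, and compose. But the key reduction you flag as ``the substantive technical step'' is also where your description of the available tools goes wrong, and that misdescription matters.

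Section~6 does \emph{not} deliver the ``special case'' decomposition up to an error set of smaller geometric rank. What it delivers is a \emph{geometric pc-presentation} $\{T_\alpha\}_{\alpha\in D}$ of $X$ (with no discarded error), together with a primary index quotient $\widetilde{D}$. For $\tilde\alpha\ne\tilde\beta$ in $\widetilde{D}$, the pieces $T_\alpha$, $T_\beta$ are only ``almost disjoint'' ($T_\alpha\simperp T_\beta$), not disjoint, so in general the map $x\mapsto\alpha$ is not well-defined even after deleting a small set. The saving observation, which your proposal does not make, is specific to the strongly minimal case: $\gr(X)=1$ forces $\gr_K(X)=0$, so the $T_\alpha$ for $\tilde\alpha\in\widetilde{D}$ are \emph{finite}, and for finite sets, almost disjoint means genuinely disjoint (and almost equal means equal). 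Thus the essentially-disjoint presentation becomes, after reindexing by $\widetilde{D}$, an honest partition of $X$ into finite algebraic fibers, with $\widetilde{D}$ a strongly minimal subset of $F^l$ (since $\mr_F(\widetilde{D})=\gr_F(X)=1$ and $\md_F(\widetilde{D})=\gd(X)=1$). No error set needs to be discarded. A second, smaller inaccuracy: the fibers are finite algebraic sets, not ``$K$-varieties'' --- a finite set with more than one element cannot in general be definably refined into irreducible singletons. With these two fixes, your argument goes through and is essentially the paper's; the paper formalizes your generic-linear-projection step by appealing to Noether normalization applied to the Zariski closure of $\widetilde{D}$ after invoking stable embeddedness (Theorem~\ref{StablyEmbbed}) and quantifier elimination in $\mathrm{ACF}$.
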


\noindent
The structure $(\ff, \cc; \chi)$ is similar to various known structures, 
for example $(\cc, \qa)$ where $\qa$ is the set of algebraic numbers regarded as an additional unary relation on $\cc$.
The study of the latter stretches back to Robinson (see ~\cite{Robinson}). Analogues of some of our results for $(\cc, \qa)$ seem to be known as folklore; see
for example \cite{Louominpair}. However, our structure is mathematically even more closely related to $( \cc, \uu)$ where $\uu \subseteq \cc$ is the group of all roots of unity regarded as an additional unary relation.
In fact, we can almost view $(\ff, \cc; \chi)$ as $( \cc, \uu)$ with some 
extra relations on $\uu$.
In consequence, several results of this paper are either directly implied or easy adaptations of results in \cite{Zilber} and \cite{DriesGun}.
These include axiomatization, $\omega$-stability, quantifier reduction; the corresponding result of $(\kappa, \lambda)$-transcendental categoricity is known, according to Pillay,  but not written down anywhere.
There are also several results that hold in the above mentioned two structures and ought to have suitable analogues in our structure but we have not proven them yet.
These include the study of imaginaries and definable groups;  see \cite{Pillay} and \cite{Haydar}.

\noindent On the other hand, some of our results are new, which also yield more information on the structures $(\cc, \qa)$ and $( \cc, \uu)$ as well.
Through the notion of genericity, we obtain a more conceptual characterization of the class of models of $\TN_p$ other than using the axioms.  From \cite{Zilber},
we can already see that every model of $\TN_p$  satisfies the properties of this characterization. In this paper, we show the reverse direction.
It is clear that one can obtain a characterization of the class of models of $\text{Th}( \cc, \uu)$  in the same way.
Even though both use Mann's theorem in an essential way, our axiomatization strategy is slightly different from the strategy used in \cite{Zilber} and \cite{DriesGun}. 
This modification, in particular, allows us to also axiomatize the class of substructures of the models and achieve the regular model companion result mentioned above. 
The regular model companion result should have analogues for $( \cc, \qa)$ and $(\cc, \uu)$ as well. To our knowledge, the method in Sections $6$ and $7$ and the result about definability of Morley Rank in families is not known.
We expect that this method  can be applied to $( \cc, \qa)$ and $(\cc, \uu)$ and generalized further to study the appropriate notions of dimension and multiplicity in other types of pairs.

A natural continuation of our project is to study the expansion $(\ff, \cc; \chi, \rr)$ of $(\ff, \cc; \chi)$ where $\rr\subseteq \cc$ is the set of real numbers.  At the definability level, this amounts to also including the metric structure on $\cc$ into the picture. Towards this end, the second author has considered a reduct of $(\ff, \cc; \chi, \rr)$; see \cite{Minh} for details.

\section{Axiomatization}

\noindent Throughout, let $m, n$ ranges over the set of natural numbers (which includes zero), $p$ be either a prime number or zero, $t =(t_1, \ldots, t_n)$, $u =(u_1, \ldots, u_m)$ be tuples of variables of the first sort and $x= (x_1, \ldots, x_n)$, $y = (y_1, \ldots, y_m)$ be tuples of variables of the second sort. If $a$ is in $X^n$, then $a = (a_1, \ldots, a_n)$ with $a_i \in X$ for $i \in \{1, \ldots n\}$. If $A \subseteq K^\neq := K \setminus \{ 0 \}$, set $\la A \ra_K $ to be the set of elements in $K^\neq$ which are in the subgroup generated by $A$ in the fraction field of $K$; when the context is clear, we will write $\la A \ra $ instead of $\la A \ra_K $. We note that in the preceding definition, $\la A \ra $  is a submonoid of $K^\neq$ and is a subgroup of $K^\times$ if $K$ is a field. If $P_1, \ldots, P_m$ are systems of polynomial in $K[x]$, we let $Z(P_1, \ldots, P_m) \subseteq K^n$ be the set of their common zeros.

In this section, we also assume that $A, B \subseteq K^\neq$ and $C \subseteq K$. Let \( \acl_C(A) \) denote the elements of $K$ satisfying a nontrivial polynomial equation with coefficients in $\zz[A,C]$. We will give a definition of the notion of genericity which is slightly more general than what was given in the introduction.
This is necessary  for the purpose of axiomatization and will also play an important role in the next two sections.


The \textbf{multiplicative closure} of $A$ \textbf{over} $B$, denoted by $\mcl_B(A)$, is the set 
$$ \big\{ a \in K^\neq : a^n \in \la A \cup B \ra \text{ for some } n \big\}. $$ 
%
%
We note that if $K$ is a field, the notion of multiplicative closure over $B$ coincides with the notion of divisible closure over $B$, viewing \(K^\times\) as a \( \zz\)-module. 
We say \(A\) is {\bf multiplicatively independent over} \(B\) if 
$$ a \notin \mcl_B \big(A \backslash \{a\}\big) \text{ for all } a \in A. $$
A {\bf multiplicative basis} of \( A\) \textbf{over} \(B\) is an \(A'\subseteq A\) such that $A'$ is multiplicatively independent over \(B\) and \( A \subseteq \mcl_B(A') \).
General facts about pregeometry give us that
there is a multiplicative basis of \(A\) over \(B\); furthermore, any two such bases have the same cardinality.
When $B =\emptyset$, we omit the phrase {\it over $B$} in the definition and the subscript $B$ in the notation. We also note that $\mcl( \emptyset)= \{ 1\} $.
\newpage

\noindent We say \( A \) is {\bf generic} if for all multiplicatively independent $a \in \la A \ra^n$ we also have $a$ is algebraically independent.
We say $A$ is \(C\)-\textbf{generic over} \(B\) if  for all $B$-multiplicatively independent $a \in \la A \ra^n$ we also have $a$ is algebraically independent over $B \cup C$. The following follows easily from the exchange property of $\text{mcl}$:


\begin{lem} \label{GenericRobust}
Suppose $A$ is $C$-generic over $B$. Then the preceding statement continues to hold as we:
\begin{enumerate}
\item replace $K$ by an integral domain $K'$ such that $A, B \subseteq {K'}^\times$ and $C \subseteq K'$,
\item replace $B$ with $B' \subseteq K^\neq$ such that $\mcl(B') = \mcl(B)$,
\item replace $C$ with $C' \subseteq K$ such that $\acl_B(C') =\acl_B(C)$,
\item replace $A$ with $A'\subseteq K^\neq$ such that $\mcl_B(A') = \mcl_B(A)$.
\end{enumerate}
\end{lem}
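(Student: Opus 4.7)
The plan is to handle the four replacements in turn, in each case converting a $B$-multiplicatively independent tuple in the new setting into one to which the original genericity of $A$ directly applies, and then transferring the algebraic independence back. Two general facts will be used throughout: if $\mcl(B_1) = \mcl(B_2)$ then $B_1$- and $B_2$-multiplicative independence coincide (a relation modulo $\la B_2 \ra$ becomes one modulo $\la B_1 \ra$ after raising to a large enough power, and conversely), and if $\acl(B \cup C_1) = \acl(B \cup C_2)$ then algebraic independence over the two sets coincides (standard matroid property of $\acl$).

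Parts (2) and (3) reduce essentially to these two facts: for (2), $\mcl(B) = \mcl(B')$ also forces $\acl(B) = \acl(B')$, since every element of one set satisfies a binomial equation over the other, hence $\acl(B \cup C) = \acl(B' \cup C)$; for (3), $\acl_B(C) = \acl(B \cup C)$ by definition, so the hypothesis is exactly what is needed. For (1), given $a \in \la A \ra_{K'}^n$ that is $B$-multiplicatively independent, each $a_i$ is a monomial in a finite subset $S \subseteq A$ with integer (possibly negative) exponents, so $a$ lies in the fraction field of $K$; I would extract a maximal $B$-multiplicatively independent subset $T \subseteq S$, apply the hypothesis in $K$ to conclude that $T$ is algebraically independent over $B \cup C$, and then use that the integer matrix expressing suitable powers of the $a_i$ in the basis $T$ has $\zz$-rank $n$ (because $a$ is $B$-multiplicatively independent) to obtain $\trdeg(a \mid B \cup C) = n$ by a transcendence-degree computation.

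The substantive step is (4). Given a $B$-multiplicatively independent $a' \in \la A' \ra^n$, each $a'_i \in \mcl_B(A') = \mcl_B(A)$ has some power in $\la A \cup B \ra$. Choose a common exponent $m \geq 1$ so that $(a'_i)^m \in \la A \cup B \ra$ for all $i$, and factor $(a'_i)^m = a_i b_i$ with $a_i \in \la A \ra$ and $b_i \in \la B \ra$. I would first verify that $(a_1, \dots, a_n)$ is itself $B$-multiplicatively independent: any nontrivial relation $\prod a_i^{k_i} \in \la B \ra$ would give $\prod (a'_i)^{m k_i} \in \la B \ra$, contradicting the $B$-multiplicative independence of $a'$ since $mk \neq 0$. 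The hypothesis on $A$ then delivers algebraic independence of $a$ over $B \cup C$. Finally, the relations $(a'_i)^m = a_i b_i$ with $b_i \in \la B \ra$ make $a_i$ and $a'_i$ mutually algebraic over $B \cup C$ together with the other, so the transcendence degree of $a'$ over $B \cup C$ equals that of $a$, namely $n$.

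The main obstacle I expect is (4): one must keep careful track of non-triviality after multiplying exponents by $m$, and then transfer algebraic independence across the $m$-th root relations $(a'_i)^m = a_i b_i$. For (1) the main subtlety is the transcendence-degree bookkeeping, but it is essentially matroid-theoretic once $T$ has been chosen; (2) and (3) are formal once the $\mcl$ and $\acl$ equivalences have been noted.
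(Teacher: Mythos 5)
Your proof follows the same route as the paper's own proof: parts (2) and (3) reduce to basic closure properties of $\mcl$ and $\acl$; part (4) passes to a common power $(a')^m$ lying in $\la A\cup B\ra$, splits off the $B$-contribution, and transfers multiplicative and then algebraic independence across those monomial relations (the paper phrases this as the exchange property of $\mcl_B$ and of $\acl_{B\cup C}$; your explicit computations do the same thing). Your argument for (1), via a maximal multiplicatively $B$-independent $T\subseteq S$ and a rank count on the exponent matrix, is more elaborate than the paper's bare assertion that (1) is clear, but it is correct.

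There is one genuine gap, in (4). The factoring $(a'_i)^m=a_ib_i$ with $a_i\in\la A\ra$ and $b_i\in\la B\ra$ requires $\la A\cup B\ra=\la A\ra\cdot\la B\ra$. Since $\la A\cup B\ra_K$ (and likewise $\la A\ra_K$, $\la B\ra_K$) is by definition only the set of elements of $K^\neq$ lying in the group generated in the fraction field of $K$, this identity can fail when $K$ is an integral domain that is not a field. For instance, with $K=\zz$, $A=\{6\}$, $B=\{2\}$, the element $3=6\cdot 2^{-1}$ lies in $\la A\cup B\ra$, while $\la A\ra=\{6^i:i\geq 0\}$ and $\la B\ra=\{2^j:j\geq 0\}$, so $3$ is not a product of the two. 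Without this factoring you cannot land on a tuple $a\in\la A\ra^n$ to which the hypothesis applies. The paper's own proof of (4) therefore begins by arranging that $K$ is a field, $A$ and $B$ are groups, and $A'=\mcl_B(A)$, invoking (1) and (2) first. You should state this reduction explicitly; once $K$ is a field the factoring is automatic and the rest of your argument for (4) goes through unchanged.
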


\begin{cor} \label{Equiv1}
The following equivalence holds: \(A\) is \(C\)-generic over \(B\) if and only if there is a family \( A' \subseteq A\) such that $A'$ is algebraically independent over $B\cup C$ and  \( A \subseteq \mcl_B(A')  \).
\end{cor}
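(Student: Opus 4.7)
The plan is to prove the two implications separately, combining the existence of a multiplicative basis (from the pregeometry remarks preceding the statement) with a monomial change-of-variables argument. For the forward direction, suppose \(A\) is \(C\)-generic over \(B\) and take \(A'\) to be a multiplicative basis of \(A\) over \(B\). By definition \(A' \subseteq A\), \(A \subseteq \mcl_B(A')\), and \(A'\) is \(B\)-multiplicatively independent as a set. Given any distinct \(a'_1, \ldots, a'_n \in A'\), the tuple \((a'_1, \ldots, a'_n)\) is also \(B\)-multiplicatively independent in the tuple sense: a nontrivial integer relation \(\prod_i (a'_i)^{k_i} \in \la B \ra\) with, say, \(k_1 \ne 0\) would produce, after collecting positive and negative exponents, an identity \((a'_1)^{|k_1|} \in \la \{a'_2, \ldots, a'_n\} \cup B\ra\), contradicting \(a'_1 \notin \mcl_B(A' \setminus \{a'_1\})\). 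Since the tuple lies in \(\la A \ra^n\), \(C\)-genericity of \(A\) over \(B\) yields algebraic independence over \(B \cup C\), so \(A'\) is algebraically independent over \(B \cup C\).

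For the reverse direction, suppose such an \(A'\) exists. Since \(A' \subseteq A\) and \(A \subseteq \mcl_B(A')\), we have \(\mcl_B(A) = \mcl_B(A')\); by Lemma~\ref{GenericRobust}(4), whose hypothesis is symmetric in the two sets, it suffices to show \(A'\) itself is \(C\)-generic over \(B\). Let \(a' \in \la A' \ra^n\) be \(B\)-multiplicatively independent and write \(a'_i = \prod_{j=1}^m b_j^{k_{ij}}\) for some distinct \(b_1, \ldots, b_m \in A'\) and \(k_{ij} \in \zz\). Let \(L\) denote the field generated by \(B \cup C\) over the prime field. Because \(b_1, \ldots, b_m\) are algebraically independent over \(L\), no nontrivial Laurent monomial \(\prod_j b_j^{n_j}\) lies in \(L\), hence not in \(\la B \ra \subseteq L\); so \(\prod_i (a'_i)^{l_i} = \prod_j b_j^{\sum_i l_i k_{ij}} \in \la B \ra\) forces \(\sum_i l_i k_{ij} = 0\) for all \(j\). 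The \(B\)-multiplicative independence of \(a'\) thus amounts to the exponent matrix \((k_{ij})\) having rank \(n\).

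To convert this rank condition into algebraic independence, extend \((k_{ij})\) to an \(m \times m\) integer matrix \(M\) invertible over \(\qq\), and set \(c_i := \prod_j b_j^{M_{ij}}\), so that \(c_i = a'_i\) for \(i \le n\). Choosing an integer matrix \(M'\) with \(M'M = (\det M)\, I\) gives \(b_j^{\det M} = \prod_i c_i^{M'_{ji}} \in L(c_1, \ldots, c_m)\), whence \(L(b_1, \ldots, b_m)\) is algebraic over \(L(c_1, \ldots, c_m)\). Since the former has transcendence degree \(m\) over \(L\), so does the latter, forcing \(c_1, \ldots, c_m\) and in particular \(a'_1, \ldots, a'_n\) to be algebraically independent over \(B \cup C\). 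The main subtlety is precisely this last translation from linear independence of exponent rows to algebraic independence of the resulting monomials; everything else is bookkeeping with pregeometry and Lemma~\ref{GenericRobust}.
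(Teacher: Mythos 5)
Your proof is correct and follows the same two-step structure as the paper's (commented-out) argument: multiplicative basis plus $C$-genericity for the forward direction, and Lemma~\ref{GenericRobust}(4) for the reverse. The one place you go beyond the paper is that you actually prove the key reduction step --- that an $A'$ which is algebraically independent over $B\cup C$ is automatically $C$-generic over $B$ --- via the exponent-matrix/adjugate and transcendence-degree argument, whereas the paper dismisses this with ``Clearly, $A'$ is $C$-generic over $B$.'' That assertion is not really immediate, and your spelled-out argument (translating $B$-multiplicative independence of a tuple in $\la A'\ra$ into full row rank of the integer exponent matrix, then passing through a nonsingular integer completion and its adjugate to deduce algebraic independence over $\qq(B\cup C)$) is exactly the content being glossed over, so this is a genuine improvement in rigor rather than a departure in method.
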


%
\noindent The notions of {\it multiplicative closure} and {\it multiplicative independence} can be understood using polynomials. A {\bf monomial} in $x$ is an element of $\qq[x]$ of the form  $x^k$ with $k \in \nn^n$. Likewise, a {\bf $B$-monomial} in $x$ is an element of $K[x]$ the form $b^l x^k$ with $b\in B^m$, $l \in \nn^m$ and $k \in \nn^n$. In this section $M$ and $N$ are $B$-monomials. 
A {\bf $B$-binomial} is a polynomial of the form $M-N$. If, moreover, $M$ and $N$ are monomials, we call $M-N$ a {\bf binomial}.
We call a $B$-binomial $M-N$ {\bf nontrivial}  if 
$$M\ =\ b^{l_M}x^{k_M}\ \text{ and }\ N\ =\ b^{l_N}x^{k_N}\ \text{ for some }\ b \in B^m\ \text{ and distinct }\ k_M, k_N \in \nn^n.$$ 
It is easy to see that for $a \in K^\neq$, $a$ is in $\mcl_B(A)$ if and only if $a$ is a zero of a non-trivial $(A \cup B)$-binomial of one variable. 
Then $A$ is multiplicatively independent over $B$ if for all $n$, for all $a \in A^n$, $a$ is not in the zero-set of a nontrivial $B$-binomial of $n$ variables.


Suppose $K$ is a field, $H \subseteq G \subseteq K^\times $ are groups, $C$ is a subfield of $K$, $g \in G^n$, and $a \in K^n$.
The  \textbf{multiplicative type} of $g$ \textbf{over} \(H\), denoted by \( \mtp_H(g)\), is the quantifier free type of \(g\) in the language of groups with parameters from \(H\).
We can easily see that \( \mtp_H(g)\) is completely characterized by the $H$-binomials vanishing on \(g\). 
If \(H=\{1\}\), we simply call this the  \textbf{multiplicative type} of $g$, and denote this as \( \mtp(g)\).
Likewise, the  \textbf{algebraic type} of $a$ \textbf{over} \(C\), denoted by \( \atp_C(a)\), is the quantifier free type of \(a\) in the language of rings with parameters from \(C\).
Then \( \atp_C(a)\) is completely characterized by the polynomials in $C[x]$ vanishing on \(a\).
If \(C=\qq\), we call this the  \textbf{algebraic type} of $a$, and denote this by \( \atp(a)\). Suppose \( c\) is an $n$-tuple of elements in \( K\) and $d$ is an element in $K$. 
A solution \(a\) of the equation \(c\cdot x =d \) is called {\bf non-degenerate} if we have \(c_{i_1}a_{i_1}+\cdots+c_{i_m}a_{i_m} \neq 0 \) for all \( \{i_1, \ldots, i_m  \} \subsetneq
 \{1, \ldots, n\} \).

\begin{prop}\label{Equiv2}
 Suppose $K$ is a field, $H \subseteq G \subseteq K^\times $ are groups, and $C$ is a subfield of $K$. Moreover, suppose $\mcl(H)\cap G = H$. The following are equivalent:
\begin{enumerate}

\item $G$ is $C$-generic over \(H\);

\item for all $g, g' \in G^n$, if \( \mtp_H(g)= \mtp_H(g')\) then \( \atp_{C(H)}(g)= \atp_{C(H)}(g') \);

\item for all $ g \in G^n$ and all $P\in C(H)[x]$, $P$ vanishes on $g$ if and only if $P$ is in the ideal $I_g$ of $C(H)[x]$ generated by $H$-binomials vanishing on $g$;

\item if $c\in C^n$, and \(g \in G^n\) is a non-degenerate solution of the equation \( c\cdot x =1 \), then \(g\) is in \(H^n\).

\end{enumerate}
Without the condition  $\mcl(H)\cap G = H$, we still have \((4) \Rightarrow (3) \Rightarrow (2) \Rightarrow (1)\).
\end{prop}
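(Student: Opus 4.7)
The plan is to establish the cycle (4) $\Rightarrow$ (3) $\Rightarrow$ (2) $\Rightarrow$ (1) in general, then close with (1) $\Rightarrow$ (4) under the purity hypothesis $\mcl(H)\cap G = H$. The step (3) $\Rightarrow$ (2) should be immediate: $\mtp_H(g) = \mtp_H(g')$ means the same $H$-binomials vanish on $g$ and $g'$, so $I_g = I_{g'}$, and (3) upgrades this to equality of the full vanishing ideals in $C(H)[x]$, hence of algebraic types over $C(H)$.

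For (4) $\Rightarrow$ (3), I would start with $P \in C(H)[x]$ annihilating $g$ and reduce it modulo $I_g$ by iteratively merging monomial terms $c_i M_i, c_j M_j$ with $M_i(g)/M_j(g) \in H$, using the witnessing $H$-binomial in $I_g$. The reduction halts at $P' \equiv P \pmod{I_g}$ whose distinct monomials are pairwise $H$-inequivalent at $g$, with $P'(g) = 0$. After clearing denominators and expanding each coefficient of $P'$ as a $C$-linear combination of elements of $H$, the identity $P'(g) = 0$ rewrites as a $C$-linear relation $\sum_\ell d_\ell g_\ell = 0$ with $d_\ell \in C$ and $g_\ell \in G$; extracting a non-degenerate sub-relation (after a suitable normalization) and applying (4) then puts the participating $g_\ell$'s in $H$, while the inequivalence of the $M_i$'s collapses $P'$ to $0$, so $P \in I_g$.

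For (2) $\Rightarrow$ (1), take $a \in G^n$ multiplicatively independent over $H$. Any other $H$-m.i.\ $a' \in G^n$ shares $\mtp_H(a)$, hence by (2) shares $\atp_{C(H)}(a)$. Applying this to the translates $(a_1 h_1, \ldots, a_n h_n)$ for $h \in H^n$ (which remain in $G^n$ and $H$-m.i.), any $P \in C(H)[x]$ annihilating $a$ satisfies $\sum_k c_k a^k h^k = 0$ for every $h \in H^n$; a Vandermonde-type argument forces $c_k = 0$ when $H$ is infinite, and the finite-$H$ case reduces to the infinite one by passing to an elementary extension in which $H$ becomes infinite, or alternatively by exploiting the additional m.i.\ tuples obtained by extending $a$ to a multiplicative basis of $G$ over $H$. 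Hence $a$ is algebraically independent over $C(H)$.

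The main obstacle is (1) $\Rightarrow$ (4). Assuming (1) and purity, suppose $c \cdot g = 1$ is a non-degenerate solution with some $g_i \notin H$. Separating the $H$-coordinates and consolidating $H$-equivalent entries yields $\sum_{i\le k} c_i g_i = d$ with $d \in C(H)$ and the $g_i$'s pairwise distinct modulo $H$, none in $H$. Pick a multiplicative basis $(g_{i_1}, \ldots, g_{i_r})$ of $\{g_1,\ldots, g_k\}$ over $H$; by (1) it is algebraically independent over $C(H)$, while the purity assumption $\mcl(H)\cap G = H$ ensures that the exponent vectors $(a_{i,t})_t$ in the relations $g_i^N = h_i \prod_t g_{i_t}^{a_{i,t}}$ (for a common $N$) are pairwise distinct modulo $N$. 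The Galois twist $g_{i_t} \mapsto \zeta_t g_{i_t}$ for $N$-th roots of unity $\zeta_t$ extends, thanks to algebraic independence, to a $C(H)$-automorphism of the radical extension containing the $g_i$'s, transforming the relation into $\sum_i c_i \eta_i g_i = d$ with distinct characters $\eta_i$ determined by the $(a_{i,t})_t$. A Vandermonde elimination over the family of twists then isolates each $c_i g_i$, forcing $c_i = 0$ and contradicting non-degeneracy; hence $g \in H^n$.
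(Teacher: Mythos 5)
Your cycle (4)$\Rightarrow$(3)$\Rightarrow$(2)$\Rightarrow$(1) and then (1)$\Rightarrow$(4) matches the paper's structure, and your (3)$\Rightarrow$(2) and (4)$\Rightarrow$(3) arguments are essentially the paper's. However, your (2)$\Rightarrow$(1) and (1)$\Rightarrow$(4) diverge from the paper, and both have genuine problems.

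In (2)$\Rightarrow$(1) you apply (2) to the translates $(a_1h_1,\ldots,a_nh_n)$ for $h\in H^n$ and then invoke linear independence of the characters $h\mapsto h^k$ on $H^n$. This works only when those characters are pairwise distinct, which requires $H$ to have unbounded exponent. Your proposed fix for finite $H$ --- passing to an elementary extension in which $H$ becomes infinite --- fails outright: if $H$ is finite then ``$|H|=m$'' is a first-order sentence in the language of the structure $(K;G,H,C)$, so $H$ remains the same finite group in every elementary extension. (And you cannot enlarge $H$ by hand, since hypothesis (2) is tied to $G$ and $H$.) In the extreme case $H=\{1\}$ your argument produces no translates at all. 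The paper sidesteps the issue entirely by inducting on $n$ and comparing $g$ with $(g_1,\ldots,g_{n-1},g_n^m)$: multiplicative independence over $H$ forces $g_n$ to have infinite order, so the $g_n^m$ are genuinely distinct witnesses for all $m$, with no hypothesis on $H$ at all. Your alternative (``extend $a$ to a multiplicative basis of $G$ over $H$'') is too vague to assess and does not obviously repair the character argument.

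In (1)$\Rightarrow$(4) you reach for $N$-th roots of unity and a Galois twist of a radical extension. Two difficulties: (a) the claim that purity forces the exponent vectors $(a_{i,t})_t$ to be pairwise distinct modulo $N$ is not justified --- from $(a_{i,\cdot})\equiv(a_{j,\cdot})\pmod N$ purity only gives $g_i/g_j\in H\cdot\langle g_{i_1},\ldots,g_{i_r}\rangle$, which does not contradict $g_i\not\equiv g_j\pmod H$; (b) the twist $g_{i_t}\mapsto\zeta_t g_{i_t}$ needs enough roots of unity and a consistent lift to the $g_i$'s, neither of which is available in $K$ or $C(H)$ without further hypotheses. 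The paper's route is both simpler and gap-free: since $g$ generates a finitely generated subgroup $G'$ and purity makes $G'/(H\cap G')$ torsion-free (hence free, being finitely generated), one can write each $g_i$ \emph{exactly} as $h_i(g')^{k_i}$ for a free basis $g'$; then (1) makes the $g'$ algebraically independent over $C(H)$, and grouping the relation $\sum c_i h_i (g')^{k_i}=1$ by exponent vector and invoking non-degeneracy forces every $k_i=0$ directly, with no Kummer theory at all. You should adopt that linear-algebraic argument; the Galois twist machinery is both harder to justify and unnecessary.
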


\begin{proof}

Throughout the proof, we suppose $K, C, G$ and $H$ are as given. We first show that $(4)$ implies $(3)$.
Suppose \((4)\) holds, and \(P\) is in \(C(H)[x] \) such that \(P(g)=0\). 
For our purpose, we can arrange that
$$P\ =\ \sum_{i=1}^{k} c_iM_i- M_{k+1}\ \text{ where } c_i \text{ is in } C^\times \text{ and } M_i \text{ are } H\text{-monomials for } i \in \{1, \ldots, k\}.$$ 
The cases where $k=0, 1$ are immediate. Using induction, suppose $k>1$ is the least case the statement has not been proven.
Then \( \big( M_1(g), \ldots, M_{k+1}(g)\big)\) is a non-degenerate solution of \( c_1y_1+ \cdots+ c_{k}y_{k} - y_{k+1} =0 \).
Hence, $M_{k+1}(g) \neq 0$ and 
$$ \left( M_1(g)M_{k+1}^{-1}(g), \ldots, M_{k}(g)M_{k+1}^{-1}(g) \right) $$
is a non-degenerate solution of $c_1y_1+ \cdots+c_{k}y_{k} = 1 $.
Hence, it follows from (4) that  $M_i(g) M_{k+1}^{-1}(g) = h_i \in H$ for $i \in \{ 1,  \ldots, k\}$.
As a consequence,  $$( c_1h_1+ \cdots+ c_{k}h_{k} - 1) M_{k+1} \ =\ P - \sum_{i=1}^{k}c_i(M_i-h_iM_{k+1}) $$ vanishes on $g$. 
As $M_{k+1}(g) \neq 0$, the above implies $c_1h_1+ \cdots+ c_{k-1}h_{k-1} - 1 =0$. Thus 
$P = \sum_{i=1}^{k}c_i(M_i-h_iM_{k+1})$ which is in $I_g$. The conclusion follows.

To show that $(3)$ implies $(2)$, let $g$ and $g'$ be as in $(2)$.
Then a $H$-binomial vanishes on \(g\) if and only if it vanishes on \( g'\), and so \( I_g =I_{g'}\). 
The desired conclusion then follows from \( (2) \).


We now show that $(2)$ implies $(1)$.
Suppose we have \((2)\) and $g\in G^n$ is multiplicatively independent over $H$. 
We can arrange that \(K\) is algebraically closed by $(1)$ of Lemma~\ref{GenericRobust}. 
The case where $n=0$ is trivial.
Using induction, suppose $n>0$ is the least case the statement has not been proven. 
Then $g_1, \ldots, g_{n-1}$ are algebraically independent over $C(H)$.
Assume \(P \in C(H)[x]\) is non-trivial. 
As \( g_1, \ldots, g_{n-1} \) are algebraically independent over \(C(H)\), we get that 
$$P(g_1, \ldots, g_{n-1}, x_n)\ \neq\ 0 \ \text{ in } C(H, g_1, \ldots, g_{n-1})[x_n],$$ and so it has at most finitely many roots. 
As a consequence, \( P(g_1, \ldots, g_{n-1}, g_n^m) \ \neq\  0 \) for some $m > 0$. 
Because \(g = (g_1, \ldots, g_n)\) is multiplicatively independent over \(H\), for all $m$, \((g_1, \ldots, g_{n-1},g_n^m)\) has the same multiplicative type over \(H\) as \((g_1, \ldots, g_n)\). By (2), for all $m$, \((g_1, \ldots, g_{n-1}, g_n^m)\) has the same algebraic type over $C(H)$ as \((g_1, \ldots, g_n)\).
Therefore, \(P(g_1, \ldots, g_n) \neq 0\). Since \(P\) is chosen arbitrarily, \(g\) is algebraically independent over \( C(H)\), and so we have \((1) \).


We show that  $(1)$ implies $(4)$.
Suppose we have \( (1)\), \(\mcl(H) \cap G=H \), and $g \in G^n$ is a non-degenerate solution of  \( c\cdot x =1 \).
Let $G'$ be the subgroup of $G$ generated by $g$. 
As $\mcl(H) \cap G=H $, the group $G'\slash (H \cap G')$ is torsion-free of finite rank, and so we can choose $g'_1, \ldots, g'_k$ in $G'$ multiplicatively independent over $H$ such that 
$$ g_i\ =\ M'_i(g'_1, \ldots, g'_k)\ \text{ for some } H\text{-monomial  }M'_i \text{ for } i \in \{1, \ldots, n\}.$$
As $g'_1, \ldots, g'_k$ are multiplicatively independent over $H$, they are algebraically independent over $C(H)$ by (1).
As 
$$g\ =\ \big(M'_1(g'_1, \ldots, g'_k), \ldots, M'_n(g'_1, \ldots, g'_k)\big)$$ is a non-denegerate solution of the equation $c\cdot x =1$, $g'_j$ must appear with power $0$ in all $M'_i$ for all $i \in \{ 1, \ldots, n\}$ and $j \in \{1, \ldots, k\}$.
Hence $g$ is in $H^n$.  

Finally, we observe that the condition \(\mcl(H) \cap G=H \) is only used in showing $(1)$ implies $(4)$. Thus, the other implications still hold without this condition.
\end{proof}

\noindent
Here, we present another property of genericity as a corollary of the previous proposition.

\begin{cor} \label{GenTrans} 
We have the following:
\begin{enumerate}
\item for \(A \subseteq A' \subseteq A'' \subseteq K^ \times\), \( A'\) is \(C\)-generic over \(A\) and \( A''\) is \(C\)-generic over \(A'\) if and only if \(A''\) is \(C\)-generic over \(A\);
\item suppose $\{A_\alpha\}_{\alpha < \kappa} \) is a sequence of subsets of $K^\times$ such that   $A_\alpha \subseteq A_{\alpha+1}$ and \( A_{\alpha+1}\) is \(C\)-generic over \(A_\alpha\) for all $\alpha < \kappa$, and $A_\beta = \bigcup_{\alpha < \beta} A_\alpha$  for all limit ordinals $\beta$. If $A = \bigcup_{\alpha < \kappa} A_\alpha$, then $A$ is $C$-generic over $A_\alpha$ for all $\alpha < \kappa$.
\end{enumerate}

\end{cor}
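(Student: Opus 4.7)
The plan is to reduce both parts to Corollary~\ref{Equiv1}, which recasts $C$-genericity of $A$ over $B$ as the existence of a subset $A'\subseteq A$ algebraically independent over $B\cup C$ with $A\subseteq \mcl_B(A')$. For the transitivity direction of (1) — that $A'$ being $C$-generic over $A$ and $A''$ being $C$-generic over $A'$ together imply $A''$ is $C$-generic over $A$ — I apply this characterization twice to fix $A^{*}\subseteq A'$ and $A^{**}\subseteq A''$ algebraically independent over $A\cup C$ and $A'\cup C$ respectively, with $A'\subseteq \mcl_A(A^{*})$ and $A''\subseteq \mcl_{A'}(A^{**})$. Then $A^{*}\cup A^{**}$ should witness $C$-genericity of $A''$ over $A$: algebraic independence of the union over $A\cup C$ holds because $A^{**}$, being independent over $A'\cup C$, is in particular independent over $A\cup A^{*}\cup C$; and $A''\subseteq \mcl_{A'}(A^{**})\subseteq \mcl_{\mcl_A(A^{*})}(A^{**})\subseteq \mcl_A(A^{*}\cup A^{**})$ by a routine common-power manipulation inside the groups $\la\cdot\ra$.

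For the converse direction of (1), that $A'$ is $C$-generic over $A$ is immediate from the containment $\la A'\ra^n\subseteq \la A''\ra^n$. The part ``$A''$ is $C$-generic over $A'$'' is the crux: my plan is to first use the just-derived $A'/A$ genericity together with Corollary~\ref{Equiv1} to fix $A^{\dagger}_1\subseteq A'$ algebraically independent over $A\cup C$ with $A'\subseteq \mcl_A(A^{\dagger}_1)$, and then invoke Zorn's lemma to extend to $A^{\dagger}=A^{\dagger}_1\cup A^{\dagger}_2\subseteq A''$ maximal algebraically independent over $A\cup C$. Using the contrapositive of $C$-genericity of $A''$ over $A$ together with the exchange property of $\mcl_A$, maximality forces $A''\subseteq \mcl_A(A^{\dagger})$: for $b\in A''\setminus \mcl_A(A^{\dagger})$, $A^{\dagger}\cup\{b\}$ would be multiplicatively independent over $A$, hence algebraically independent over $A\cup C$, contradicting maximality. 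Then $A^{\dagger}_2$ is the required basis for $A''$ over $A'$: it is algebraically independent over $A'\cup C$ because $A'\subseteq \mcl_A(A^{\dagger}_1)$ lies in the algebraic closure of $A\cup A^{\dagger}_1\cup C$, over which $A^{\dagger}_2$ is already independent; and $A''\subseteq \mcl_A(A^{\dagger}_1\cup A^{\dagger}_2)\subseteq \mcl_{A'}(A^{\dagger}_2)$ since $A^{\dagger}_1\subseteq A'$.

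For (2), a transfinite induction on $\beta\geq\alpha$ establishes that $A_\beta$ is $C$-generic over $A_\alpha$: the successor step is immediate from (1), while at a limit $\beta$ any candidate witness of failure involves only finitely many elements and so already lives in some $A_\gamma$ with $\gamma<\beta$. The claim for $A=\bigcup_\alpha A_\alpha$ follows by the same finitariness. I expect the main obstacle to be the converse direction of (1): a naive approach that fixes an arbitrary witnessing basis of $A''$ over $A$ and then splits it across $A'$ collapses, because algebraic independence over $A\cup C$ need not persist over $A'\cup C$. The saving trick is to start the basis \emph{inside} $A'$ and only then extend outward, so that the supplementary coordinates are algebraically independent of $A'$ by construction.
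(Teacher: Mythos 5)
Your proof is correct, but it takes a genuinely different route from the paper. The paper first invokes Lemma~\ref{GenericRobust} to reduce to the case where $K$ and $C$ are fields and each $A_\alpha$ is multiplicatively closed (hence a group, with $\mcl(H)\cap G = H$ automatic), and then applies the equivalence $(1)\Leftrightarrow(4)$ of Proposition~\ref{Equiv2}: $G$ is $C$-generic over $H$ iff every non-degenerate solution in $G^n$ of an equation $c\cdot x = 1$ with $c\in C^n$ lies in $H^n$. Once genericity is recast this way, both directions of (1) become one-line chases of non-degenerate solutions through $A\subseteq A'\subseteq A''$, and (2) is immediate because a non-degenerate solution is a finite tuple, hence lives in some $A_\gamma$. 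You instead work with the basis characterization of Corollary~\ref{Equiv1} and argue by constructing and splitting bases; the crucial and nontrivial observation, which you correctly identify, is that for the converse of (1) one must start the algebraic basis inside $A'$ (via the already-established genericity of $A'$ over $A$) and only then extend it out to $A''$ by Zorn's lemma, so that the outer coordinates $A^\dagger_2$ are automatically independent over $A'\cup C$; a naive split of an arbitrary basis would fail. The tradeoffs: the paper's argument is much shorter once the $(4)$-characterization is available, while yours is more elementary and avoids the reduction to multiplicatively closed groups and the $\mcl(H)\cap G=H$ hypothesis entirely, at the cost of more pregeometry bookkeeping.
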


\begin{proof}
By Lemma~\ref{GenericRobust}, we can arrange that $C$ and $K$ are fields and all the $A_\alpha$'s involved are multiplicatively closed in $K$. In particular, each $A_\alpha$ with the multiplication is a group. The conclusions follow easily from the equivalence of $(1)$ and $(4)$ of Proposition~\ref{Equiv2}.
\end{proof}

\noindent We call a polynomial in $\qq[x]$ {\bf special} if it has the form $\prod_\zeta (M- \zeta N)$ where $\zeta$ ranges over the set of $k$-th primitive roots of unity for some $k>0$ and some monomials $M$ and $N$.

\begin{prop}\label{GenericEquivalence}
Suppose $K$ is a field, $G \subseteq K^\times $ is a group, and $U$ is the set of all roots of unity in $K$. Moreover, suppose \(\chara(K)=0  \). Then the following are equivalent:
\begin{enumerate}
\item G is generic;
\item for all \(g, g' \in G^n\), if \(\mtp(g)= \mtp(g') \) then \(\atp(g)=\atp(g') \); 
\item for all $g \in G^n$ and $P \in \qq[x]$, $P$ vanishes on $g$ if and only if $P$ is in $\sqrt{J_g}$ where $J_g \subseteq  \qq[x]$ is the ideal generated by the special polynomials vanishing on $g$;

\item if $c$ is in $\qq^n$, and $g \in G^n$ is a non-degenerate solution of the equation $ c\cdot x =1 $ then $g$ is in $U^n$.

\end{enumerate}
\end{prop}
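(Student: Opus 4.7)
The proof is organized as a cycle $(1) \Rightarrow (4) \Rightarrow (3) \Rightarrow (2) \Rightarrow (1)$, with $(1) \iff (4)$ obtained by reducing to Proposition~\ref{Equiv2}, $(3) \Rightarrow (2)$ and $(2) \Rightarrow (1)$ direct adaptations of the corresponding steps there, and $(4) \Rightarrow (3)$ the new technical content. The guiding observation is that in characteristic zero roots of unity are algebraic over $\qq$ and the special polynomial $\prod_\zeta (M - \zeta N)$ is precisely a Galois-averaged $H$-binomial for $H := G \cap U$, landing in $\qq[x]$.

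Set $H := G \cap U$ and $C := \qq$. Since every root of unity has finite order, $\mcl(H) = U$, and so $\mcl(H) \cap G = H$; Proposition~\ref{Equiv2} applies. Moreover $\mcl(\emptyset) = U = \mcl(H)$, and in characteristic zero $\acl_H(\emptyset) = \acl_H(\qq)$ (both equal the algebraic closure of $\qq(H)$ in $K$), so Lemma~\ref{GenericRobust}(2)-(3) yields that $G$ is generic iff $G$ is $\qq$-generic over $H$. For $g \in G^n$, $g \in U^n$ iff $g \in H^n$, so condition (4) here matches condition (4) of Proposition~\ref{Equiv2} applied with these $H, C$; hence $(1) \iff (4)$. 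For $(3) \Rightarrow (2)$, the vanishing of $\prod_\zeta (M - \zeta N)$ at $g$ is equivalent to the $\mtp$-expressible condition that $(M/N)(g)$ is a primitive $d$-th root of unity, so $J_g$ depends only on $\mtp(g)$; thus $\mtp(g) = \mtp(g')$ implies $\sqrt{J_g} = \sqrt{J_{g'}}$, and (3) gives $\atp(g) = \atp(g')$. For $(2) \Rightarrow (1)$, I mirror the corresponding step in Proposition~\ref{Equiv2}: assuming $K$ algebraically closed via Lemma~\ref{GenericRobust}(1), I induct on the length $n$ of a multiplicatively independent $g \in G^n$. Each tuple $(g_1, \ldots, g_{n-1}, g_n^m)$ shares the trivial multiplicative type with $g$ for all $m \geq 1$, so by (2) their algebraic types agree; but a nontrivial $P \in \qq[x]$ vanishing on $g$ would then vanish on all these tuples, contradicting that $P(g_1, \ldots, g_{n-1}, y) \in \qq(g_1, \ldots, g_{n-1})[y]$ (nonzero by inductive algebraic independence of $g_1, \ldots, g_{n-1}$) has finitely many roots while $\{g_n^m : m \geq 1\}$ is infinite.

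For the main step $(4) \Rightarrow (3)$, I induct on the number of monomials of $P \in \qq[x]$ vanishing at $g$. Write $P = \sum_{i=1}^{k+1} c_i M_i$ with distinct $M_i$ and $c_i \in \qq^\times$. If some nonempty proper subsum vanishes at $g$, split $P$ and invoke induction. Otherwise, after normalizing $c_{k+1} = -1$, the tuple $\big(M_i(g)/M_{k+1}(g)\big)_{i=1}^k$ is a non-degenerate solution of $c \cdot y = 1$, so (4) yields $\zeta_i := M_i(g)/M_{k+1}(g) \in U$, say of order $d_i$, satisfying $\sum_{i=1}^k c_i \zeta_i = 1$. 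To show $P \in \sqrt{J_g}$, by the Nullstellensatz over $\bar{\qq}$ together with the faithful-flatness identity $\sqrt{J_g \cdot \bar{\qq}[x]} \cap \qq[x] = \sqrt{J_g}$, it suffices to prove that $P$ vanishes on $V(J_g) \subseteq \bar{\qq}^n$. For a torus point $q \in V(J_g) \cap (\bar{\qq}^\times)^n$, setting $\eta_i := M_i(q)/M_{k+1}(q)$, the vanishing of all special polynomials of $J_g$ at $q$ forces $\mtp(\eta_1, \ldots, \eta_k) = \mtp(\zeta_1, \ldots, \zeta_k)$. Both tuples lie in the unique cyclic subgroup $U_N \subseteq U$ of order $N := \mathrm{lcm}(d_i)$, and this multiplicative-type equality induces an automorphism of $U_N$ sending $\zeta_i \mapsto \eta_i$, necessarily of the form $\zeta \mapsto \zeta^e$ for some $e \in (\zz/N)^\times$ (using surjectivity of $(\zz/N)^\times \to (\zz/d)^\times$ for $d \mid N$). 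This $e$ corresponds to $\sigma \in \mathrm{Gal}(\qq(\zeta_N)/\qq)$ with $\eta = \sigma(\zeta)$; applying $\sigma$ to $\sum c_i \zeta_i = 1$ yields $\sum c_i \eta_i = 1$, and hence $P(q) = M_{k+1}(q) \cdot (\sum c_i \eta_i - 1) = 0$. Vanishing on all of $V(J_g)$ then follows by density of the torus part, which can be checked from the structure of the special polynomial generators (no minimal prime of $J_g$ contains a variable $x_j$).

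The main obstacle is Case B of $(4) \Rightarrow (3)$: extracting the Galois-conjugacy structural result on $(\eta_1, \ldots, \eta_k)$ at torus points of $V(J_g)$ from the apparently weaker hypothesis that the special polynomials vanish there, and then cleanly handling the passage from torus points to all of $V(J_g)$.
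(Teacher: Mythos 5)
Your cycle $(1)\Rightarrow(4)\Rightarrow(3)\Rightarrow(2)\Rightarrow(1)$ is a genuinely different decomposition from the paper's (which proves $(1)\Rightarrow(3)\Rightarrow(2)\Rightarrow(1)$ and separately $(1)\Leftrightarrow(4)$). Your $(1)\Leftrightarrow(4)$ reduction, $(3)\Rightarrow(2)$, and $(2)\Rightarrow(1)$ all match the paper essentially verbatim. The new content is your direct $(4)\Rightarrow(3)$, and the Mann-split induction and the Galois-conjugacy argument at torus points are both sound (I checked that $\mtp(\eta)=\mtp(\zeta)$ does follow from vanishing of the special polynomials at a torus point $q$, and that the induced automorphism of $U_N$ gives $P(q)=0$). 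However, there is a genuine gap exactly where you flag it: the claim that every irreducible component of $V(J_g)$ over $\overline{\qq}$ meets the torus. This is not something that can be ``checked from the structure of the special polynomial generators'' in any obvious way --- binomial ideals (and products thereof) routinely have minimal primes containing coordinate variables (e.g.\ $(x_1-x_2x_3,\ x_2-x_1x_3)$ has $(x_1,x_2)$ as a component). What prevents this for $J_g$ is that $J_g$ is generated by \emph{all} special polynomials vanishing on $g$, not a handful; and the cleanest argument I can see for torus-density passes through knowing that $\sqrt{J_g}$ is the (prime) kernel of $\qq[x]\to\qq[g]$ together with Galois descent --- which is exactly the content of (3). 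So as written the step is circular or at least unproved.

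The paper sidesteps this entirely by a different strategy for producing (3): it first applies Proposition~\ref{Equiv2}$(1)\Rightarrow(3)$ with $H=U$ to land $P$ in the ideal $I_g\subseteq\qq(U)[x]$ generated by $U$-binomials vanishing on $g$; then by Noetherianity takes finitely many binomial generators $M_i-\zeta_i N_i$, chooses $\zeta$ a generator of $\langle\zeta_1,\ldots,\zeta_l\rangle$, and rewrites the generating set so that only a single binomial carries a nontrivial root of unity. Forming the norm $S_1=\prod_\varepsilon(M'-\varepsilon N')$ plus the remaining $\qq$-binomials, it then shows $Z(S_1,\ldots,S_l)\subseteq Z(P)$ by a single field automorphism $\sigma$ with $\sigma(\varepsilon)=\zeta$. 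Crucially, this automorphism argument works at \emph{every} point of $Z(S_1,\ldots,S_l)$, including boundary points where $M_1(a)=N_1(a)=0$: one still has $M_1(a)-\varepsilon N_1(a)=0$ for some primitive $\varepsilon$, and the conclusion $P(a)=0$ follows from $P$ having rational coefficients. So the paper never needs torus-density of the zero set. If you want to salvage your approach, you would either need to prove $(x_1\cdots x_n)$-saturation of $\sqrt{J_g}$ independently of (3), or --- more realistically --- replace the abstract $V(J_g)$ by the zero set of finitely many carefully chosen special polynomials as the paper does, so that a single automorphism handles boundary points uniformly.
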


\begin{proof}
Throughout the proof, we suppose $K, G$ and $U$ are as stated.
We first prove that $ (1)$ implies $(3)$. As the statement is independent of the ambient field, we can arrange that \(K\) is algebraically closed. It is clear even without assuming (1) that the backward implication of (3) holds. 
Now we suppose $(1)$ and prove the forward implication of $(3)$. We reduce the problem to finding finitely many special polynomials \(S_1, \ldots, S_l \) such that 
$$ Z(S_1, \ldots, S_l)\ \subseteq\ Z(P) .$$ 
Indeed, suppose we managed to do so. Then, by the Nullstellensatz, this implies \( P^m \) is in the ideal generated by \( S_1, \ldots, S_l\) in \(K[x]\). Hence \(P^m\) is a \(K\)-linear combination of products \(M_i S_j\) for $i \in \{1, \ldots, k\} $, $j \in \{ 1,  \ldots, l\}$ and each $M_i$ a monomial in $x$. By taking a linear basis of \(K\) over \(\qq\) and taking into account the assumption that \( P\) is in \( \qq(x) \), we get \(P^m\) is a \(\qq\)-linear combination of products of \( M_iS_j\) as above. Therefore, \(P\) is in \(\sqrt{J_g}\).

By equivalence of $(1)$ and $(3)$ in Proposition~\ref{Equiv2}, we have that \( P\) lies in the ideal \(I_g\) of \(\qq(U)[x]\) generated by polynomials of the form \( M - \zeta N \) vanishing on $g$ with \(M, N\) monomials in \(x\) and \( \zeta\) a root of unity.
As \(\qq(U)[x]\) is Noetherian, there are binomials \( M_1 - \zeta_1 N_1, \ldots,  M_l - \zeta_l N_l\) generating \(I_g\).
Hence, 
$$ Z(M_1 - \zeta_1 N_1, \ldots,  M_l - \zeta_l N_l)\ \subseteq\ Z(P). $$ Let $\zeta$ be a generator of the subgroup of $U$ generated by $\zeta_1, \ldots, \zeta_l$.
Then there are  natural numbers $s_1, \ldots, s_l$ and $t_1, \ldots, t_l$ such that  $\zeta = \zeta_1^{s_1}\ldots \zeta_l^{s_l}$ and $\zeta_i = \zeta^{t_i}$ for all $i \in \{1, \ldots l \}$. 
Let $$M'\ =\ \prod_{i=1}^l(M_i)^{s_i}\ \text{ and }\ N'\ =\ \prod_{i=1}^l(N_i)^{s_i}.$$
We note that $Z(M_1 - \zeta_1 N_1, \ldots,  M_l - \zeta_l N_l)$ is equal to $$ Z \big(M' - \zeta N', (N')^{t_1}M_1- (M')^{t_1}N_1, \ldots, (N')^{t_l}M_l- (M')^{t_l}N_l\big).$$
Therefore, we might as well assume $P$ vanishes on the zero set of polynomials $M_1(x) - \zeta N_1(x)$, $M_2(x)-N_2(x), \ldots, M_l(x) - N_l(x)$.

With \(\zeta, M_i, N_i \) as in the preceding statement, let \( \zeta\) be a primitive \(k\)-th root of unity. Set 
$$ S_1\ =\ \prod_\varepsilon (M_1- \varepsilon N_1 )\ \text{ where } \varepsilon \text{ ranges over the primitive } k\text{-th roots of unity }$$
and \( S_2 = M_2(x)-N_2(x), \ldots, S_l = M_l(x)-N_l(x) \). 
Note that each \(S_i\) is special. 
Suppose,  \(a \in K^n \) is in the zero set of the ideal of $\qq[x]$ generated by \( S_1, \ldots, S_l\). 
Then there is a primitive \(k\)-th root of unity \( \varepsilon\) such that \( M_1(a)- \varepsilon N_1(a)=0 \). Since \(\chara(K)=0\), there is an automorphism \(\sigma\) of \(K\) such that \( \sigma( \varepsilon) = \zeta \). 
Hence, $$ M_1\big(\sigma(a)\big)- \zeta N_1\big(\sigma(a)\big) \ =\ S_2\big( \sigma(a)\big)\ =\ \ldots\ =\ S_l\big( \sigma(a) \big)\ =\ 0.$$
By the choice of \(\zeta, M_i, N_i \), we have \( P\big( \sigma(a) \big) =0\). As \(P \) is in \(\qq[x]\), \(P(a) =0\). Thus, we have proven the reduction and hence (3).

Next, we prove that $(3)$ implies $(2)$. Suppose \( (3) \), and \(g, g'\) have the same multiplicative type.
Let $ S $ be a special polynomial such that 
$S(g)=0$ and $S = \prod_\zeta (M- \zeta N)$ where $\zeta$ ranges over all the primitive $k$-th roots of unity for some $k>0$.
Then $M(g)N^{-1}(g)$ is a primitive $k$-th $k$-th root of unity, so $M^k-N^k$ vanishes on $g$ but $M^l-N^l$ does not vanish on $g$ for $0<l<k$.
As $g, g'$ have the same multiplicative type, $$M^k(g')-N^k(g')\ =\ 0\ \text{ but }\ M^l(g')-N^l(g')\ \neq\ 0\ \text{ for }0<l<k.$$
So $M(g')N^{-1}(g')$ is a primitive $k$-th root of unity and $S(g')=0$. 
Hence \( J_g = J_{g'} \), and so \(\atp(g)=\atp(g') \). Thus, we have \( (2)\). 

The argument for (2) implying (1)  is the same as the argument for (2) implying (1) in Proposition~\ref{Equiv2}. Finally, by $(2)$ of Lemma~\ref{GenericRobust}, \( G\) is generic if and only if \(G\) is generic over \( G \cap U\). We note that \(\mcl(G\cap U) \cap G = G \cap U \), so the equivalence between \( (1) \) and \( (4) \) follows immediately from the equivalence between \( (1) \) and \( (4) \) in Proposition~\ref{Equiv2}. 
\end{proof}

\begin{prop} \label{Regularity}
Suppose $K$ is a field, $G \subseteq K^\times $ is a group, $g$ is in $G^n$ and $H$ is a subgroup of $G$ such that $G$ is generic over $H$. Moreover, suppose \(\chara(K)=0  \), and $\mcl(H)\cap G = H$. Then $\qq(G)$ is a regular field extension of $\qq(H)$.
\end{prop}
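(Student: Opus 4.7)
The plan is to use the standard characterization that in characteristic $0$, an extension $L/F$ is regular if and only if $F$ is algebraically closed in $L$. Hence it suffices to show that every $\alpha \in \qq(G)$ algebraic over $\qq(H)$ lies in $\qq(H)$. I would first reduce to a finitely generated situation: such an $\alpha$ is a rational function of only finitely many elements of $G$, so letting $G_1 \subseteq G$ be the subgroup generated by $H$ together with those finitely many elements, we have $\alpha \in \qq(G_1)$, and it is enough to prove that $\qq(H)$ is algebraically closed in $\qq(G_1)$.

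Next, I would extract structure from the hypothesis $\mcl(H) \cap G = H$. Since $H$ is itself a group, $\mcl(H)$ is precisely the set of elements of $K^\times$ some positive power of which lies in $H$, so the hypothesis is equivalent to saying that $G/H$ is torsion-free. Hence $G_1/H$ is a finitely generated torsion-free abelian group, necessarily isomorphic to $\zz^k$ for some $k \ge 0$. Lifting a $\zz$-basis to elements $g_1, \ldots, g_k \in G_1$, one obtains $G_1 = H \cdot \langle g_1, \ldots, g_k \rangle$ with $g_1, \ldots, g_k$ multiplicatively independent over $H$, so $\qq(G_1) = \qq(H)(g_1, \ldots, g_k)$.

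Finally, applying the genericity of $G$ over $H$ to the tuple $(g_1, \ldots, g_k) \in G^k$ yields that $g_1, \ldots, g_k$ are algebraically independent over $\qq(H)$. Thus $\qq(G_1)/\qq(H)$ is purely transcendental, forcing $\qq(H)$ to be algebraically closed in $\qq(G_1)$, and so $\alpha \in \qq(H)$ as required. I do not foresee a serious obstacle: the proof is a clean two-step reduction, and the key conceptual observation is simply that $\mcl(H) \cap G = H$ is exactly torsion-freeness of $G/H$, which unlocks the structure theorem for finitely generated abelian groups and lets genericity produce pure transcendence at once.
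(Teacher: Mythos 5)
Your proposal is correct and follows essentially the same route as the paper's proof: both reduce to a finitely generated quotient, use $\mcl(H)\cap G=H$ to get torsion-freeness of $G/H$ and hence a multiplicative basis over $H$, apply genericity to get algebraic independence over $\qq(H)$, and conclude. The only cosmetic difference is that the paper works with the subgroup $G'$ generated by the finite tuple alone (rather than $H$ together with it) and re-derives inline the fact that a purely transcendental extension is algebraically closed in itself (via the coprime $P', Q'$ degree argument), whereas you cite that fact directly.
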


\begin{proof}
As $\chara(K)=0$, $\qq(G)$ is a separable field extension of $\qq(H)$, so it suffices to check that $\qq(H)$ is algebraically closed in $\qq(G)$. 
Suppose $P, Q \in \qq[x]$, and $g \in G^n$ is such that $P(g)Q^{-1}(g)$ is algebraic over $\qq(H)$. Let $G'$ be the subgroup of $G$ generated by $g$. 
As $\mcl(H) \cap G=H $, $G'\slash (H \cap G')$ is torsion-free of finite rank, we can choose $g'_1, \ldots, g'_k$ in $G'$ multiplicatively independent over $H$ such that $$g_i\ =\ M'_i(g'_1, \ldots, g'_k)\ \text{ where }  M'_i \text{ is } H\text{-monomial for }i \in \{1, \ldots, n\}$$
Hence we can find  $P', Q'$ coprime in $\qq(H)[y_1, \ldots, y_k]$ such that $P'(g')Q'^{-1}(g')$ is equal to  $P(g)Q^{-1}(g)$.
As $g'_1, \ldots, g'_k$ are multiplicatively independent over $H$, they are algebraically independent over $\qq(H)$. 
Therefore, in order to have $P'(g')Q'^{-1}(g')$ algebraic over $\qq(H)$, the polynomials $P', Q'$ must have degree $0$ and  so $P'(g')Q'^{-1}(g')$ is in $\qq(H)$. The conclusion follows.
\end{proof}

\noindent We recall the following version of a theorem of Mann from \cite{Mann}:

\begin{thm*}[Mann] 
Let $U$ be the group of roots of unity in $\qa$. There is a recursive function \( d: \nn \to \nn \) such that if \(a_1, \ldots, a_n\) are in \( \qq\)  and \((y_1, \ldots, y_n)\) in \(U^n\) is a tuple of non-degenerate solution of the equation $a_1y_1+\cdots+a_ny_n=1$, then \( y_i^{d(n)}=1 \) for all \(i\).
\end{thm*}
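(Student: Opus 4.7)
The plan is to reduce the inhomogeneous equation $\sum_{i=1}^n a_i y_i = 1$ to a homogeneous vanishing sum of roots of unity with no proper subsum vanishing, and then invoke a classical Galois-theoretic argument on cyclotomic fields. Set $y_0 := 1 \in U$ and $a_0 := -1 \in \qq^\times$, so that $\sum_{i=0}^n a_i y_i = 0$. A short case analysis on whether a subsum index $J \subseteq \{0,\ldots,n\}$ contains $0$ shows that non-degeneracy of the given solution forces no proper non-empty subsum of the new equation to vanish: for $J$ with $0 \notin J$, the conclusion is immediate from the hypothesis (or yields the constant $1$ when $J = \{1,\ldots,n\}$), while for $J = \{0\} \cup J'$ with $J' \subsetneq \{1,\ldots,n\}$, the subsum equals $-\sum_{i \notin J'} a_i y_i$, which is nonzero by non-degeneracy (or equals $-1$ when $J' = \emptyset$).

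Next I would invoke Mann's classical bound: for a vanishing sum $\sum_{i=0}^{n} c_i \zeta_i = 0$ with $c_i \in \qq^\times$, $\zeta_i \in U$, and no proper non-empty subsum vanishing, every ratio $\zeta_i \zeta_j^{-1}$ has order dividing $D(n+1) := \prod_{p \leq n+1,\ p \text{ prime}} p$. Applied to our reduced equation (with $\zeta_0 = 1$), this yields $y_i^{D(n+1)} = 1$ for each $i$, so we may take $d(n) := D(n+1)$. The function $d$ is plainly recursive, since its evaluation requires only enumerating primes up to $n+1$ and multiplying them.

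The proof of Mann's classical bound proceeds by induction on the number of terms, using the Galois theory of cyclotomic fields. Let $N$ be the lcm of the orders of the $\zeta_i$; for a prime $p$ dividing $N$, write $N = p^a M$ with $\gcd(p, M) = 1$ and choose $\sigma \in \mathrm{Gal}(\qq(\zeta_N)/\qq)$ that fixes $\qq(\zeta_M)$ and acts non-trivially on $\qq(\zeta_{p^a})$. Decomposing each $\zeta_i = \omega_i \eta_i$ into its $p$-power and prime-to-$p$ parts and grouping the equation's terms by the residue class of $\omega_i$ modulo lower $p$-power roots of unity, the fact that $1, \zeta_p, \ldots, \zeta_p^{p-1}$ satisfy only the relation $\Phi_p(\zeta_p) = 0$ over $\qq(\zeta_M)$ forces each residue class to contribute a proportional vanishing subsum; by minimality, at most one class can be nonempty, and a counting argument then forces $p \leq n+1$. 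A refinement of the same averaging argument applied to $\mathrm{Gal}(\qq(\zeta_{p^a})/\qq(\zeta_p))$ forces the $p$-adic valuation of $N$ down to $1$, yielding the squarefree bound. The main obstacle is precisely this squarefree refinement: the naive prime-by-prime argument only identifies the set of primes dividing $N$, and controlling prime powers requires additional care. For our statement any recursive bound suffices, so one could alternatively establish a coarser non-optimal bound (for instance, $N$ divides $((n+1)!)^{n+1}$) by a cruder Galois-theoretic estimate, which still yields a recursive $d$.
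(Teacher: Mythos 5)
The paper does not actually prove this statement: it is recalled as a theorem of Mann with a citation to \cite{Mann}, so there is no in-paper argument to compare against. Your proposal supplies a proof in the standard way, and it is correct. In particular, the reduction step is sound: appending $y_0=1$, $a_0=-1$ turns the inhomogeneous equation into $\sum_{i=0}^n a_i y_i = 0$, and your case analysis (using that $\sum_{i\notin J'} a_i y_i$ equals $1$ when $J'=\emptyset$ and is nonzero by non-degeneracy otherwise) correctly shows that no proper non-empty subsum of the homogenized equation vanishes. Note also that non-degeneracy forces every $a_i$ to be nonzero, so Mann's hypothesis on the coefficients is met. Applying the classical bound that in an irreducible vanishing sum of $n+1$ roots of unity with rational coefficients the ratios $\zeta_i\zeta_j^{-1}$ have order dividing $\prod_{p\le n+1}p$, and using $\zeta_0=1$, gives exactly $y_i^{d(n)}=1$ with $d(n)=\prod_{p\le n+1}p$, which is plainly recursive. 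Your final paragraph sketching the Galois-theoretic proof of Mann's bound is the usual argument (and your remark that any recursive, not necessarily optimal, bound suffices for the paper's purpose is well taken), but since the paper only cites \cite{Mann}, that part is supererogatory here.
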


\noindent
For an $L$-structure $(F, K; \chi)$, it is easy to see that $\chi$ in generic if and only if $\chi(F^\times)$ is generic  in the sense of this section. As a consequence we have:

\begin{prop} \label{GenericAxiom}
There is a recursive set of universal statements in $L$ whose models $(F, K, \chi)$ are precisely the $L$-structures with $\chi$ generic.
\end{prop}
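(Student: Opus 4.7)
The plan is to combine Mann's theorem with the equivalence $(1) \Leftrightarrow (4)$ in Proposition~\ref{GenericEquivalence} to write down the axioms explicitly. By the remark immediately preceding the proposition, $\chi$ is generic iff $\chi$ is injective and the subgroup $G := \chi(F^\times) \subseteq K^\times$ is generic in the sense of Section~$2$. Injectivity of $\chi$ is the universal $L$-sentence $\forall \alpha\, \forall \beta\,(\alpha = \beta \vee \chi(\alpha)\neq \chi(\beta))$, so the substance of the argument is to axiomatize genericity of $G$ by a recursive set of universal sentences.

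For this, I would apply Proposition~\ref{GenericEquivalence}(4): in a field $K$ of characteristic zero, $G$ is generic iff every non-degenerate solution $g \in G^n$ of an equation $c \cdot x = 1$ with $c \in \qq^n$ has each component in the group $U$ of roots of unity of $K$. The condition ``$g_i \in U$'' is existential (``$g_i$ has finite order''), but Mann's theorem makes it uniformly bounded: there is a recursive $d : \nn \to \nn$ such that any such $g_i$ in fact satisfies $g_i^{d(n)} = 1$, a purely universal condition. Thus, besides the injectivity sentence and the standard universal scheme expressing $\chara(K)=0$, the intended axioms are, for each $n \geq 1$ and each $c = (c_1,\dots,c_n) \in \qq^n$ (rendered in $L$ by clearing denominators to an integer equation), the universal sentence $\phi_{n,c}$:
\[
\forall \alpha_1 \cdots \forall \alpha_n \left[ \Big(\textstyle \sum_i c_i \chi(\alpha_i) = 1 \wedge \bigwedge_{\emptyset \neq I \subsetneq \{1,\dots,n\}} \sum_{i \in I} c_i \chi(\alpha_i) \neq 0 \Big) \to \bigwedge_{i=1}^n \chi(\alpha_i)^{d(n)} = 1 \right].
\]

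The forward direction is immediate: if $\chi$ is generic, then injectivity and $\chara(K)=0$ hold by definition, and Proposition~\ref{GenericEquivalence}(4) combined with Mann yields each $\phi_{n,c}$. For the converse, if $(F,K;\chi)$ satisfies all these sentences, then any non-degenerate $\qq$-solution in $G^n$ has components of order dividing $d(n)$ and so lies in $U^n$; Proposition~\ref{GenericEquivalence}(4) then gives genericity of $G$, hence of $\chi$. Recursiveness of the axiom set is clear since $\qq^n$ is recursively enumerable and $d$ is recursive. The main obstacle is not conceptual but purely bookkeeping: encoding rational coefficients in $L$ (by clearing denominators), writing out the non-degeneracy conjunction cleanly, and remembering that Proposition~\ref{GenericEquivalence} assumes $\chara(K)=0$, which forces us to include the characteristic-zero scheme among the axioms for the two notions to line up.
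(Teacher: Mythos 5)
Your approach is essentially the paper's: combine the equivalence $(1) \Leftrightarrow (4)$ of Proposition~\ref{GenericEquivalence} with Mann's theorem to get a recursive universal axiom scheme, with injectivity and non-degeneracy expressed by quantifier-free conditions. Your explicit scheme $\phi_{n,c}$ matches what the paper's proof sketches tersely at the end.

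There is, however, a genuine gap. The proposition concerns arbitrary $L$-structures $(F, K; \chi)$, where by the paper's standing convention $F$ and $K$ are integral domains, not necessarily fields. You work throughout with $G := \chi(F^\times)$ as a \emph{subgroup} of $K^\times$, which is needed to invoke Proposition~\ref{GenericEquivalence} but presupposes $K$ is a field. The paper's proof handles this first: it passes to the fraction fields $F'$, $K'$ with the naturally extended character $\chi'$, observes that genericity of $\chi$ is equivalent to genericity of $\chi'$ (via Corollary~\ref{Equiv1} and Lemma~\ref{GenericRobust}), and crucially notes that $(F', K'; \chi')$ is interpretable in $(F, K; \chi)$ so that a universal scheme written for the field case pulls back to a universal scheme in the original language. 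Without that reduction your argument justifies the axioms only when $F$ and $K$ happen already to be fields.

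A secondary point: you add the $\chara(K)=0$ scheme to your axioms because Proposition~\ref{GenericEquivalence} assumes it, whereas the paper's proof does not include such axioms. As stated, the proposition claims the models are \emph{precisely} the $L$-structures with $\chi$ generic, and genericity as defined does not by itself force $\chara(K)=0$ (consider a vacuously generic character such as the identity on $\ff_p$, where every tuple is multiplicatively dependent). Adding $\chara(K)=0$ to the scheme therefore changes what is characterized. You have noticed a real subtlety here — the paper's own reliance on Proposition~\ref{GenericEquivalence} seems to assume $\chara(K)=0$ implicitly — but inserting it as an extra axiom is not the intended fix and deviates from the stated claim.
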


\begin{proof}
Suppose $F'$ and $K'$ are respectively the fraction fields of $F$ and $K$.
Using only the conditions that $\chi$ is multiplication preserving, $\chi(0) =0$ and $\chi$ is injective, we can extend $\chi$ to an injective character $\chi': F' \to K'$; moreover, $\chi'$ maps multiplicatively independent elements to algebraically independent elements if and only if $\chi$ does so by Lemma~\ref{Equiv1}.
We also note that $(F', K'; \chi')$ is interpretable in $(F, K; \chi)$ in the obvious way.
Hence we can reduce the problem to the case where  $F$ and $K$ are fields.

 Combining  the equivalence between \((1)\) and \((4) \) of Proposition~\ref{GenericEquivalence} and Mann's theorem, \(\chi\) is generic if and only if for all $n$ and all non-degenerate solutions of $a_1x_1+\cdots+a_nx_n=1$ in \(\big(\chi(F)^\times\big)^n\) with $a$ in \(\qq^n\), we have \( x_i^{d(n)} =1 \) for \(i \in \{ 1, \ldots, n\}\).
It is clear that being a non-degenerate solution is definable by a quantifier-free formula.
So we have the desired universal axiom scheme.
\end{proof}

\begin{thm}
There is a recursive set $\TN$ of $\forall\exists$-axioms in $L$ such that:
\begin{enumerate}
\item for all $(F, K;\chi)$, $(F, K;\chi)\models \TN$ if and only if $F$ and $K$ are algebraically closed fields, $\chara(K)=0$ and $\chi:F \to K$ is generic;
\item for $p>0$, $(\fpa, \qa; \chi)\models \TN$.
\end{enumerate}
\end{thm}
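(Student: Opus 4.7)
The plan is to take $\TN$ to be the union of four recursive sets of $L$-sentences: first, the standard $\forall\exists$-axiomatization of ACF applied to each of the two sorts (expressing that $F$ and $K$ are algebraically closed fields); second, the universal axioms $\underbrace{1+\cdots+1}_{n}\neq 0$ in the $K$-sort for each $n\ge 1$ (expressing $\chara(K)=0$); third, the universal axioms encoding that $\chi$ is a character and is injective, namely $\chi(st)=\chi(s)\chi(t)$, $\chi(0)=0$, $\chi(1)=1$, and $\forall s\,\forall t\,(s\neq t\to \chi(s)\neq \chi(t))$; and fourth, the recursive set of universal axioms supplied by Proposition~\ref{GenericAxiom} that captures exactly the $L$-structures in which $\chi$ is generic. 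Each of the four pieces is recursive and $\forall\exists$, so the union is as well.

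For part (1), the forward direction is immediate from how the axioms were chosen. For the converse, given any $(F,K;\chi)$ with $F,K$ algebraically closed, $\chara(K)=0$ and $\chi$ generic, each of the four axiom blocks holds by hypothesis (the genericity block via Proposition~\ref{GenericAxiom}). This takes care of the equivalence in (1).

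For part (2), the only point needing actual work is the genericity of $\chi$ in $(\fpa,\qa;\chi)$; algebraic closure of $\fpa$ and $\qa$ and $\chara(\qa)=0$ are clear, and injectivity and multiplicativity of $\chi$ are hypotheses. Since $\fpa=\bigcup_k \ff_{p^k}$, every element of $\fpa^\times$ has finite order, so a multiplication-preserving injective $\chi$ sends $\fpa^\times$ into the group $U$ of roots of unity in $\qa$. By the equivalence of (1) and (4) in Proposition~\ref{GenericEquivalence} (together with Mann's theorem, as used in the proof of Proposition~\ref{GenericAxiom}), to show genericity it suffices to check that whenever $a\in\qq^n$ and $\big(\chi(\alpha_1),\ldots,\chi(\alpha_n)\big)\in \chi(\fpa^\times)^n$ is a non-degenerate solution of $a_1x_1+\cdots+a_nx_n=1$, the entries $\chi(\alpha_i)$ lie in $U$. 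But this is automatic, since $\chi(\fpa^\times)\subseteq U$ to begin with, so Mann's theorem applies and the universal axiom scheme from Proposition~\ref{GenericAxiom} is satisfied.

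The main obstacle is conceptual rather than technical: one has to recognize that the earlier results (especially Propositions~\ref{Equiv2}, \ref{GenericEquivalence}, and \ref{GenericAxiom}) have already reduced genericity to a universal, recursively enumerable condition via Mann's theorem, so the present theorem is essentially a packaging step. The only content beyond packaging is the observation that in the characteristic-$p$ case $\chi(\fpa^\times)$ automatically lands inside the group of roots of unity of $\qa$, which is what lets Mann's theorem be applied without any extra hypothesis on $\chi$ beyond injectivity.
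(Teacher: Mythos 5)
Your construction of $\TN$ as the union of ACF axioms for both sorts, the characteristic-zero scheme, the character/injectivity axioms, and the genericity scheme from Proposition~\ref{GenericAxiom} matches the paper's intent, and each block is recursive and $\forall\exists$ as you say. The real point of comparison is part (2). You argue that $\chi(\fpa^\times)$ lands inside the group $U$ of roots of unity of $\qa$, then invoke the equivalence $(1)\Leftrightarrow(4)$ of Proposition~\ref{GenericEquivalence} to note that condition (4) holds trivially (the conclusion $g\in U^n$ is automatic), and hence Mann's theorem forces the axiom scheme to be satisfied. The paper's own proof is shorter and avoids routing through Proposition~\ref{GenericEquivalence} entirely: since $\fpa^\times$ is a torsion group, every nonempty tuple of elements is multiplicatively dependent, so the defining condition of genericity (that multiplicatively independent tuples map to algebraically independent ones) is vacuously true. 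Both arguments are correct; yours buys explicitness about why the universal Mann-type axioms hold, at the cost of invoking a nontrivial equivalence where a one-line vacuity observation suffices. One small remark on your write-up: once you note $\chi(\fpa^\times)\subseteq U$, condition (4) of Proposition~\ref{GenericEquivalence} is satisfied outright and genericity follows; the appeal to Mann's theorem is only needed if you want to check the literal axiom scheme of Proposition~\ref{GenericAxiom} directly rather than the abstract genericity condition, so it is slightly redundant in the form you present it, though not wrong.
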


\begin{proof}
It follows easily from proposition~\ref{GenericAxiom} that we have the desired axiomatization. 
When $F= \fpa, K = \qa$ and $\chi: F \to K$ is injective, we note that $\chi$ is automatically generic because there is no multiplicative independence between elements of $\chi(F^\times)$.
\end{proof}

\noindent Let $\mathscr{Q}$ be the set of prime powers. For each $q \in \mathscr{Q} $, let $\chi_q: \ff_q \to \qa$ be an injective map with $\chi_q(0)=0$ and $\chi_q(ab)=\chi_q(a)\chi_q(b)$ for all $a,b\in \ff_q$. With exactly the same method we get: 

\begin{prop}
There is a recursive set of $\forall\exists$-axioms $T$ in $L$ with the following properties:
\begin{enumerate}
\item for all $(F, K;\chi)$, $(F, K;\chi)\models T$ if and only if $K$ is an algebraically closed fields with $\chara(K)=0$, $F$ is a pseudo-finite field and $\chi:F \to K$ is generic;
\item  if $\mathscr{U}$ is a non-principal ultrafilter on $\mathscr{Q}$, then $\left( \prod_{q \in \mathscr{Q}}(\ff_q, \qa; \chi_q) \right) \slash \mathscr{U} \models T$.
\end{enumerate}
\end{prop}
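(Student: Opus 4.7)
\noindent The plan is to let $T$ be the union of three recursive $\forall\exists$-axiom schemes: the universal scheme from Proposition~\ref{GenericAxiom} expressing that $\chi$ is generic; the standard $\forall\exists$-axioms saying that $K$ is an algebraically closed field of characteristic zero; and Ax's classical recursive $\forall\exists$-axiomatization of pseudo-finite fields applied to the $F$-sort (namely that $F$ is perfect, has exactly one algebraic extension of each finite degree, and is pseudo-algebraically closed). Then $T$ is a recursive set of $\forall\exists$-sentences in $L$, and by construction its models are exactly the structures described in (1).

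For (2), I would verify the three pieces on the ultraproduct
\[
(F^*, K^*; \chi^*)\ :=\ \left( \prod_{q \in \mathscr{Q}}(\ff_q, \qa; \chi_q) \right) \slash \mathscr{U}
\]
separately. That $F^* = \prod_q \ff_q/\mathscr{U}$ is pseudo-finite is precisely Ax's theorem on non-principal ultraproducts of finite fields. Next, $K^* = \prod_q \qa/\mathscr{U}$ is an ultrapower of $\qa$, hence is still an algebraically closed field of characteristic zero. Finally, since each $\ff_q$ is finite, its multiplicative group is torsion, so for $n \geq 1$ no tuple in $\ff_q^\times$ of length $n$ is multiplicatively independent; hence the genericity condition on $\chi_q$ holds vacuously in each factor. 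Because the genericity axiom scheme is universal, \L o\'s's theorem transfers it to $\chi^*$.

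The only nontrivial external ingredient is Ax's pseudo-finiteness theorem, which supplies both the recursive $\forall\exists$-axiomatization of pseudo-finite fields used in (1) and the pseudo-finiteness of the $F$-reduct of the ultraproduct in (2). Apart from this input, the argument closely parallels the proof of the preceding theorem on $\TN$, with the axiom scheme for $F$ being algebraically closed replaced by the axiom scheme for $F$ being pseudo-finite.
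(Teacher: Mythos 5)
Your proposal reconstructs exactly the argument the paper has in mind: the paper disposes of this proposition with "With exactly the same method we get" after the $\TN$ axiomatization theorem, and your decomposition --- the universal genericity scheme from Proposition~\ref{GenericAxiom}, the $\ACF$-of-characteristic-$0$ axioms for $K$, and an axiom scheme making $F$ pseudo-finite --- together with the \L o\'s verification of (2), including the observation that genericity holds vacuously in each factor because $\ff_q^\times$ is torsion, is precisely that method spelled out.

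There is, however, a technical flaw that you inherit from the paper's own statement: pseudo-finite fields do not admit a $\forall\exists$-axiomatization, so the "Ax's classical recursive $\forall\exists$-axiomatization" you invoke does not exist. By the Chang--\L o\'s--Suszko theorem, a $\forall\exists$-axiomatizable class must be preserved under unions of increasing chains. But take a pseudo-finite $F_0$ and let $F_{i+1}$ be its unique degree-$2$ extension; each $F_i$ is pseudo-finite (perfect and PAC pass to algebraic extensions, and the absolute Galois group $2^i\widehat{\zz}$ is still isomorphic to $\widehat{\zz}$), while the union $F_\infty=\bigcup_i F_i$ has absolute Galois group $\bigcap_i 2^i\widehat{\zz}$, which has trivial pro-$2$ part and hence no quotient of order $2$; so $F_\infty$ has no degree-$2$ extension and is not pseudo-finite. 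The culprit is the existence half of "exactly one extension of degree $n$", which is genuinely $\exists\forall$. Dropping the "$\forall\exists$" qualifier (keeping "recursive") repairs both the statement and your argument, and part (2) together with the vacuous-genericity observation is unaffected.
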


\noindent
This also allows us to conjecture that for every $T$-model $(F,K;\chi)$, there is an ultrafilter $\mathscr{U}$ on $\mathscr{Q}$ such that  $ (F,K;\chi) \equiv \left( \prod_{q \in \mathscr{Q}}(\ff_q, \qa; \chi_q) \right) \slash \mathscr{U}$.

\section{Classification, completeness and decidability}

\noindent
We keep the notation conventions in the first paragraph of the preceding section and moreover assume in this section that $(F, K; \chi) \models \TN_p$. For a field $K$, we let $K^{\text{ac}}$ denote an algebraic closure of $K$.
We classify the models of $\TN_p$ up to isomorphism. From this we deduce that the theory $\TN_p$ is complete and decidable.

\begin{prop}\label{CharacterIsomorphism}
Suppose $(F, K; \chi_1)$ and $(F, K, \chi_2)$ are models of $\TN_p$ with \( \qq\big(\chi_1(F) \big)= \qq\big(\chi_2(F) \big)\). Then there is an automorphism  \(\sigma\) of \(K\) with \( \chi_2 = \sigma \circ \chi_1 \). 
\end{prop}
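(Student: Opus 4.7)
The plan is to construct $\sigma$ by extending the group isomorphism $\tau := \chi_2 \circ \chi_1^{-1}: G_1 \to G_2$, where $G_i := \chi_i(F^\times) \subseteq K^\times$, to a field automorphism of $K$. First I would promote $\tau$ to a ring isomorphism $\qq[G_1] \to \qq[G_2]$, pass to fraction fields to obtain a field isomorphism $\tau^* : \qq(G_1) \to \qq(G_2)$ which is an \emph{automorphism} of $\qq(G_1) = \qq(G_2)$ by hypothesis, and finally extend $\tau^*$ to $\sigma \in \mathrm{Aut}(K)$ by standard ACF theory. By construction $\sigma|_{G_1} = \tau$, so $\sigma(\chi_1(a)) = \chi_2(a)$ for all $a \in F^\times$, and both maps send $0 \mapsto 0$.

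The heart of the argument is the first extension: for every $g = (g_1,\dots,g_n) \in G_1^n$ and every $P \in \qq[x_1,\dots,x_n]$, one must check $P(g) = 0 \iff P(\tau(g)) = 0$. Both $G_1$ and $G_2$ are generic subgroups of $K^\times$, so Proposition~\ref{GenericEquivalence}(3) gives $\{P \in \qq[x] : P(g)=0\} = \sqrt{J_g}$, with $J_g$ the ideal generated by the special polynomials vanishing on $g$, and analogously for $\tau(g)$. It therefore suffices to verify $J_g = J_{\tau(g)}$. Given a special polynomial $S = \prod_\zeta(M - \zeta N)$, with $\zeta$ ranging over the primitive $k$-th roots of unity in $K$, one has $S(g) = 0$ iff $M(g)/N(g)$ is a primitive $k$-th root of unity, i.e.\ an element of $G_1$ of order exactly $k$ in $K^\times$. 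Since $\tau$ is a group isomorphism it preserves orders, so $\tau\bigl(M(g)/N(g)\bigr) = M(\tau(g))/N(\tau(g))$ also has order $k$; as $K$ is algebraically closed of characteristic $0$, this means it is again a primitive $k$-th root of unity, whence $S(\tau(g)) = 0$. Applying the same reasoning to $\tau^{-1}$ gives the converse, so $J_g = J_{\tau(g)}$, and the well-defined ring isomorphism follows.

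Once $\tau^* \in \mathrm{Aut}(\qq(G_1))$ has been produced, extending to $\sigma \in \mathrm{Aut}(K)$ is routine ACF theory: lift $\tau^*$ to the relative algebraic closure of $\qq(G_1)$ inside $K$ (every automorphism of a field extends to any algebraic closure), pick a transcendence basis of $K$ over that closure, declare $\sigma$ to fix the basis pointwise, and extend algebraically once more. The main obstacle is the ideal-comparison step in the middle paragraph; it is the unique place where the genericity hypotheses on \emph{both} $\chi_1$ and $\chi_2$ are used (each feeding Proposition~\ref{GenericEquivalence} for its own group), and once $J_g = J_{\tau(g)}$ is in hand the rest is formal.
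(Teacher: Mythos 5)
Your proof is correct and takes essentially the same route as the paper: both show that corresponding tuples under $\chi_1$ and $\chi_2$ have equal algebraic types via Proposition~\ref{GenericEquivalence}, then build the field isomorphism on $\qq(G_1) = \qq(G_2)$ and extend to an automorphism of $K$. The one cosmetic difference is that you invoke part~(3) and check $J_g = J_{\tau(g)}$ directly via order preservation, whereas the paper cites part~(2); your variant is slightly more careful since it applies the proposition to $G_1$ and $G_2$ separately rather than to tuples drawn from two \emph{a priori} different generic groups.
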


\begin{proof}
Suppose $F, K,\chi_1$ and $\chi_2$ are as stated. Let \(\alpha= (\alpha_i)_{i \in I}\) be a listing of the elements of \(F^\times\). As \( \chi_1, \chi_2 \) are group homomorphisms, \( \text{mtp}\big(\chi_1(\alpha)\big) = \text{mtp}\big(\chi_2(\alpha)\big)  \). By Proposition~\ref{GenericEquivalence}, \( \text{atp}\big(\chi_1(\alpha)\big) = \text{atp}\big(\chi_2(\alpha)\big) \),  and so there is a field automorphism 
$$\sigma: \qq\big(\chi_1(F) \big) \to \qq\big(\chi_2(F) \big) $$ 
such that \( \chi_2 = \sigma \circ \chi_1 \).
We can further extend $\sigma$ to a field automorphism of \(\qq\big(\chi_1(F) \big)^{ac}= \qq\big(\chi_2(F) \big)^{ac}  \) and then to an automorphism of \(K\).
\end{proof}

\begin{cor} \label{UniquenessOfChar}
If $p$ is prime, $F = \fpa$ and  $K = \qa$, then there is a unique injective character from $F$ to $K$ up to isomorphism.
\end{cor}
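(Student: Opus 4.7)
The plan is to reduce the statement to Proposition~\ref{CharacterIsomorphism}. Let $\chi_1, \chi_2 \colon \fpa \to \qa$ be two injective characters, so that $(\fpa, \qa; \chi_i) \models \TN_p$ for $i \in \{1,2\}$ by the axiomatization theorem established earlier. By Proposition~\ref{CharacterIsomorphism} applied to $(\fpa, \qa; \chi_1)$ and $(\fpa, \qa; \chi_2)$, it suffices to verify that $\qq\big(\chi_1(\fpa)\big) = \qq\big(\chi_2(\fpa)\big)$, and for this I would show that both images coincide with $W \cup \{0\}$, where $W \subseteq \qa^\times$ is the group of roots of unity of order coprime to $p$.

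The identification $\chi_i(\fpa^\times) = W$ is a routine computation in the structure of divisible torsion abelian groups. On the one hand, every element of $\fpa^\times$ has finite order coprime to $p$, and group homomorphisms preserve order, so $\chi_i(\fpa^\times) \subseteq W$. On the other hand, one decomposes $\fpa^\times = \bigoplus_{\ell \neq p} T_\ell$ into its $\ell$-primary parts, each $T_\ell$ being the Pr\"ufer $\ell$-group of $\ell^k$-th roots of unity in $\fpa$. Since $\chi_i$ is an injective homomorphism and respects $\ell$-primary components, its restriction to $T_\ell$ gives an injective copy of a Pr\"ufer $\ell$-group inside the $\ell$-primary part $W_\ell$ of $W$. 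Because every proper subgroup of a Pr\"ufer $\ell$-group is finite, such an injection must be surjective, giving $\chi_i(T_\ell) = W_\ell$; taking the union over $\ell \neq p$ yields $\chi_i(\fpa^\times) \supseteq W$.

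With $\chi_1(\fpa) = \chi_2(\fpa) = W \cup \{0\}$ in hand, the hypothesis of Proposition~\ref{CharacterIsomorphism} is met and produces a field automorphism $\sigma$ of $\qa$ with $\chi_2 = \sigma \circ \chi_1$; then $(\mathrm{id}_{\fpa}, \sigma)$ is an $L$-isomorphism from $(\fpa, \qa; \chi_1)$ to $(\fpa, \qa; \chi_2)$, which is the desired conclusion. There is no real obstacle here: the main content of the corollary has been packaged into Proposition~\ref{CharacterIsomorphism}, and the only additional input is the image computation above, which is a direct consequence of the structure of the multiplicative group of $\fpa$.
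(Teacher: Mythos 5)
Your proof is correct and follows the route the paper clearly intends: the paper states the corollary immediately after Proposition~\ref{CharacterIsomorphism} with no written proof, and the only missing ingredient is exactly the one you supply, namely that any injective character $\chi_i\colon \fpa \to \qa$ has image $W \cup \{0\}$ with $W$ the prime-to-$p$ roots of unity, so the hypothesis $\qq(\chi_1(\fpa)) = \qq(\chi_2(\fpa))$ of that proposition is automatic. Your Pr\"ufer-group argument is the standard way to see this, and the reduction via the end-of-Section-2 observation (injective $\Rightarrow$ generic for $\fpa \to \qa$) is also what the paper relies on.
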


\begin{cor} \label{AutExt}
If $\chi: F \to K$ is generic and if $\sigma$ is an automorphism of $F$, then $\sigma$ can be extended to an automorphism of \((F, K;\chi)\).
\end{cor}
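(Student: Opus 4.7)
The plan is to reduce this directly to Proposition~\ref{CharacterIsomorphism}. Given $\sigma \in \mathrm{Aut}(F)$, define a second character $\chi' := \chi \circ \sigma : F \to K$. Because $\sigma$ is a bijection of $F$ with itself, the image sets satisfy $\chi'(F) = \chi(\sigma(F)) = \chi(F)$, and in particular
\[
\qq\bigl(\chi'(F)\bigr)\ =\ \qq\bigl(\chi(F)\bigr).
\]
So the two characters generate the same subfield of $K$, which is exactly the hypothesis of Proposition~\ref{CharacterIsomorphism}.

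Next I would verify that $(F,K;\chi')$ is again a model of $\TNp$. The field-theoretic part of the axioms is unchanged, since $F$ and $K$ are the same, and only $\chi'$ needs to be checked to be generic. But genericity depends only on the multiplicative structure of the image together with injectivity: $\chi'$ is injective because $\sigma$ and $\chi$ are, and if $\chi'(\alpha) = \chi(\sigma(\alpha))$ for a multiplicatively independent tuple $\alpha \in (F^\times)^n$, then $\sigma(\alpha)$ is multiplicatively independent as well (since $\sigma$ is a field automorphism, hence a group automorphism of $F^\times$), so $\chi(\sigma(\alpha))$ is algebraically independent over $\qq$ by genericity of $\chi$. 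Thus $\chi'$ is generic.

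Now Proposition~\ref{CharacterIsomorphism} applied to $(F,K;\chi)$ and $(F,K;\chi')$ yields an automorphism $\tau$ of $K$ such that $\chi' = \tau \circ \chi$, i.e.\ $\chi \circ \sigma = \tau \circ \chi$. The pair $(\sigma,\tau)$ is then an $L$-automorphism of $(F,K;\chi)$ extending $\sigma$, and the corollary is proved.

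There is essentially no obstacle here beyond checking that $\chi \circ \sigma$ is still a generic character, which, as noted above, is immediate from the fact that $\sigma$ preserves multiplicative independence. The corollary is really a packaging of Proposition~\ref{CharacterIsomorphism} together with the trivial observation that precomposition by an automorphism of $F$ preserves the subfield $\qq(\chi(F)) \subseteq K$.
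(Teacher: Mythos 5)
Your proof is correct and is exactly the argument the paper intends: defining $\chi' = \chi\circ\sigma$, observing that $\qq(\chi'(F))=\qq(\chi(F))$ and that $\chi'$ is again generic (because $\sigma$ preserves multiplicative independence), then invoking Proposition~\ref{CharacterIsomorphism} to produce the automorphism $\tau$ of $K$ with $\tau\circ\chi=\chi\circ\sigma$. This is the same route the paper takes, with the genericity check spelled out rather than left implicit.
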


\noindent
We say  $(F,K;\chi) \models \TNp$ is {\bf $(\kappa,\lambda)$-transcendental} if $\trdeg(F \mid \ff_p)=\kappa $ and $\trdeg\big(K \mid \qq(G)\big)  = \lambda $ with $G=\chi(F^\times)$.

\begin{thm}\label{ThmIso}

For any $p$,$\kappa$ and $\lambda$, there is a unique $(\kappa,\lambda)$-transcendental model of $\TNp$ up to isomorphism .
\end{thm}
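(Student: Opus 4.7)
The plan is to prove existence and uniqueness separately, with Steinitz's theorem as the main workhorse in both directions and Propositions~\ref{GenericEquivalence} and~\ref{CharacterIsomorphism} to handle the transfer of the character.

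For uniqueness, let $(F_i, K_i; \chi_i)$, $i = 1, 2$, be two $(\kappa, \lambda)$-transcendental models. Since $F_1$ and $F_2$ are algebraically closed fields of characteristic $p$ with $\trdeg(F_i \mid \ff_p) = \kappa$, Steinitz gives a field isomorphism $\tau: F_1 \to F_2$; after identifying via $\tau$ I reduce to the case $F_1 = F_2 =: F$. I would then run the argument of Proposition~\ref{CharacterIsomorphism} across two different target fields: enumerating $F^\times$ as $\alpha = (\alpha_i)_{i \in I}$, the tuples $\chi_1(\alpha)$ and $\chi_2(\alpha)$ share the same multiplicative type since $\chi_j$ are injective group homomorphisms, and by the equivalence $(1) \Leftrightarrow (2)$ of Proposition~\ref{GenericEquivalence} they then share the same algebraic type over $\qq$. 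This produces a field isomorphism $\sigma_0: \qq(\chi_1(F)) \to \qq(\chi_2(F))$ with $\chi_2 = \sigma_0 \circ \chi_1$. Extending $\sigma_0$ to the algebraic closures of these subfields inside $K_1$ and $K_2$, and then to an isomorphism $\sigma: K_1 \to K_2$ by a final Steinitz argument (both $K_i$ are algebraically closed of characteristic $0$ with transcendence degree $\lambda$ over $\qq(\chi_i(F^\times))$, unchanged by passing to the algebraic closure), completes this direction.

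For existence, take $F$ to be the algebraic closure of $\ff_p(t_\alpha : \alpha < \kappa)$. Since $F^\times$ is a divisible abelian group, write $F^\times = T \oplus A$ with $T$ the torsion subgroup (roots of unity in $F$) and $A$ a $\qq$-vector space; fix a $\qq$-basis $\{e_\gamma\}_{\gamma < \mu}$ of $A$ and an injective homomorphism $\iota: T \hookrightarrow \qq/\zz$ (possible because for $p > 0$ the group $T$ consists of roots of unity of orders prime to $p$, and $\qq/\zz$ is the maximal divisible torsion abelian group). Let $K$ be the algebraic closure of $\qq(\iota(T), \{y_\gamma\}_{\gamma < \mu}, \{z_\delta\}_{\delta < \lambda})$ inside some large algebraically closed field of characteristic $0$, with $\{y_\gamma\} \cup \{z_\delta\}$ algebraically independent over $\qa$. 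Define $\chi: F \to K$ by $\chi(0) = 0$, $\chi|_T = \iota$, $\chi(e_\gamma) = y_\gamma$, extended multiplicatively. Then $\trdeg(F \mid \ff_p) = \kappa$ by construction, and $\trdeg(K \mid \qq(\chi(F^\times))) = \lambda$ since $\iota(T) \subseteq \qa$ contributes nothing transcendental.

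The only nontrivial step is to verify that $\chi$ so defined is generic, and the cleanest tool for this is Corollary~\ref{Equiv1}: it suffices to exhibit $A' \subseteq \chi(F^\times)$ that is algebraically independent over $\qq$ and satisfies $\chi(F^\times) \subseteq \mcl(A')$. Taking $A' = \{y_\gamma : \gamma < \mu\}$ works: the $y_\gamma$ are algebraically independent by construction, and every $\iota(u)\prod_\gamma y_\gamma^{k_\gamma} \in \chi(F^\times)$, raised to the order $n$ of $u$, yields $\prod_\gamma y_\gamma^{n k_\gamma} \in \la A' \ra$. I expect the main obstacle to be essentially conceptual: noticing that genericity is testable via a single algebraically independent multiplicative basis (Corollary~\ref{Equiv1}) bypasses any direct appeal to Mann's theorem in the existence argument. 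Without that observation one would have to verify condition (4) of Proposition~\ref{GenericEquivalence} by hand, which becomes awkward given how roots of unity and transcendentals interact inside $\chi(F^\times)$.
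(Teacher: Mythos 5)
Your uniqueness argument matches the paper's: Steinitz on $F$, then the observation that the algebraic type of $\chi_j(\alpha)$ over $\qq$ is determined by its multiplicative type via genericity, then Steinitz on $K$. One small miscue: the equivalence $(1)\Leftrightarrow(2)$ of Proposition~\ref{GenericEquivalence} is phrased for two tuples in a single group $G \subseteq K^\times$, while you compare $\chi_1(\alpha)$ and $\chi_2(\alpha)$ living in different fields $K_1$, $K_2$. What you really need is the content of $(1)\Rightarrow(3)$: for a generic group, the vanishing ideal over $\qq$ is $\sqrt{J_g}$, and $J_g$ is visibly determined by the multiplicative type alone, hence field-independent. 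The paper commits the same elision (``in a similar way as in the proof of Proposition~\ref{CharacterIsomorphism}''), so this is a shared imprecision rather than a gap specific to you.

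Your existence argument, on the other hand, is a genuinely different route. The paper shows $\TN_p$ has \emph{some} model (canonical example for $p>0$, compactness via the prime cases for $p=0$), then blows it up and cuts down the two transcendence degrees separately. You instead build $(F,K;\chi)$ from scratch: take $F = \overline{\ff_p(t_\alpha)}_{\alpha<\kappa}$, split $F^\times = T \oplus A$ into torsion and a $\qq$-vector space, send $T$ injectively into roots of unity and a $\qq$-basis of $A$ to algebraically independent transcendentals, and verify genericity via Corollary~\ref{Equiv1}. This is more explicit (in particular it produces a concrete model of $\TN_0$ without passing through a compactness argument), and the use of Corollary~\ref{Equiv1} to check genericity through an algebraically independent multiplicative basis is exactly the right tool. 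The cost is group-theoretic bookkeeping that you leave implicit: ``extended multiplicatively'' requires a consistent choice of $y_\gamma^q$ for $q\in\qq$, and injectivity of $\chi$ requires $\chi(A)$ to avoid the roots of unity of $K$. Both are handled by fixing a $\qq$-vector space complement $B$ of the torsion in the ambient $K^\times$ and choosing the $y_\gamma$ inside $B$ (replacing each chosen $y_\gamma$ by a root-of-unity multiple does not disturb algebraic independence over $\qa$, so this is always possible). Without this adjustment the map is not well defined; with it, everything you wrote goes through.
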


\begin{proof}

We first prove the uniqueness part of the lemma. Suppose $(F_1,K_1;\chi_1)$ and $(F_2,K_2;\chi_2)$ are $(\kappa,\lambda)$-transcendental models of $\TNp$. Let $G_1$ be $\chi (F_1^\times)$ and $G_2$ be $\chi (F_2^\times)$. As $F_1$ and $F_2$ are algebraically closed of same characteristic and  $\trdeg(F_1 \mid\ff_p) = \trdeg(F_2 \mid \ff_p) $, there is an isomorphism $$\sigma: F_1 \to F_2.$$ Using Proposition~\ref{GenericEquivalence} in a similar way as in the proof of Proposition~\ref{CharacterIsomorphism}, $\sigma$ induces an isomorphism between $ \qq(G_1)$ and $\qq(G_2)$; we will also call this $\sigma$. Finally, since $\trdeg\big(K_1\mid\qq(G_1)\big)$ is equal to $\trdeg\big(K_2\mid \qq(G_2)\big)$ we can extend $\sigma$ to a field isomorphism from $K_1$ to $K_2$. It is easy to check that this is an isomorphism of $L$-structures.

We next prove the existence part of the lemma. For $p>0$, $\TN_p$ clearly has a model. For $p=0$, $\TN_p$ has a model by compactness. We can arrange to have for each $p$ a model $(F, K; \chi)$ of $\TN_p$ such that $|F|, |K| > \max\{\kappa, \lambda, \aleph_0\}$. Choose an algebraically closed subfield $F'$  of $F$ with $\trdeg(F' \mid \ff_p) = \kappa$. Then we have $\trdeg\big(K \mid \qq( \chi(F'))\big) > \lambda$. Choose an algebraically closed subfield $K'$ of $K$ containing $\chi(F')$ with $\trdeg\big(K' \mid \qq(\chi(F'))\big) =\lambda$. We can check that $(F', K'; \chi\upharpoonright_{F'})$ is a $(\kappa,\lambda)$-transcendental model of $\TNp$.
\end{proof}

\begin{cor}
$\TN_p$ is superstable, shallow, without the dop, without the otop, without the fcp.
\end{cor}

\begin{proof}
The first four properties follow from Shelah's main gap theorem \cite[XII.6.1]{Shelah}. The last property follows from  \cite[VII.3.4]{Shelah}.
\end{proof}

\noindent
Next we prove an analog of upward L\"owenheim-Skolem theorem.
\begin{lem}
For $\chi: F \to K$ generic, $K$ is an infinite extension of $\qq\big(\chi(F)\big)$.
\end{lem}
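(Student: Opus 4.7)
The plan is to combine the Regularity Proposition (Proposition~\ref{Regularity}) with a short Kummer-theoretic degree computation. Write $G := \chi(F^\times) \subseteq K^\times$, let $U$ denote the group of roots of unity in $K$, and set $H := G \cap U$. Because $F$ is algebraically closed of characteristic $p$, its group of roots of unity consists of all elements of order coprime to $p$, and $\chi$ restricts to an order-preserving bijection of these onto the subgroup $U_0 \subseteq U$ of $p'$-roots of unity in $K$ (all of $U$ when $p=0$); thus $H = U_0$, and $E := \qq(H) = \qq(U_0)$ is a cyclotomic subfield of $K$.

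First I would verify the hypotheses of Proposition~\ref{Regularity}. Since $H$ is torsion, $\mcl(H) \subseteq U$, giving $\mcl(H) \cap G \subseteq U \cap G = H$; the reverse inclusion is immediate. That $G$ is generic over $H$ follows from Corollary~\ref{GenTrans}(1) applied to the chain $\emptyset \subseteq H \subseteq G$, using that $G$ is generic over $\emptyset$. Proposition~\ref{Regularity} then yields that $\qq(\chi(F)) = \qq(G)$ is a regular extension of $E$.

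A regular extension is linearly disjoint from the algebraic closure of its base, so inside $K$ we obtain
$$ [K : \qq(\chi(F))] \;\geq\; [\qq(\chi(F))\cdot\overline{\qq} : \qq(\chi(F))] \;=\; [\overline{\qq} : E]. $$
To finish, I would show $[\overline{\qq} : E]$ is infinite by exhibiting arbitrarily large finite subextensions. For each prime $\ell \geq 3$ with $\ell \neq p$, we have $\zeta_\ell \in E$, so by Kummer theory $[E(\sqrt[\ell]{2}) : E]$ is $1$ or $\ell$, and it equals $\ell$ unless $\sqrt[\ell]{2} \in E$. But $E$ is contained in the maximal abelian extension of $\qq$, while $\qq(\sqrt[\ell]{2})/\qq$ is not Galois (hence not abelian) for $\ell \geq 3$, ruling out $\sqrt[\ell]{2} \in E$. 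Since $\ell$ can be chosen arbitrarily large among the primes distinct from $p$, $[\overline{\qq} : E] = \infty$, completing the proof.

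I expect the main technical subtlety to be the careful check of the hypotheses of Proposition~\ref{Regularity}, in particular the identity $\mcl(H) \cap G = H$; once this bookkeeping is done, the rest of the argument is a direct synthesis of regularity with a standard Kummer-theoretic lower bound.
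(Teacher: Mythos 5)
Your argument is essentially the paper's proof: both apply Proposition~\ref{Regularity} with $H = G \cap U$ (the roots of unity in $G$) to conclude that $\qq(\chi(F))$ is a regular extension of $\qq(H)$, and then deduce $[K : \qq(\chi(F))] \geq [\qa : \qq(H)] = \infty$. Your explicit check of the hypothesis $\mcl(H)\cap G = H$ and your Kummer-theoretic justification of $[\qa:\qq(H)]=\infty$ (which the paper attributes simply to ``Galois theory'') supply details the paper leaves implicit, but the route is the same.
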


\begin{proof}
Suppose $F, K$ and $\chi$ are as stated. Let $G= \chi( F)$. By Proposition~\ref{Regularity}, if $U$ consists of the roots of unity in $G$, then $\qq(G)$ is a regular extension of $\qq(U)$. Hence, 
$$\big[\qa:\qq(U)\big]\ \leq\ \big[\qa\qq(G): \qq(G) \big].$$ By Galois theory, $\big[\qa:\qq(U)\big] = \infty$. Therefore, $\big[\qa\qq(G): \qq(G)\big]= \infty$ and so $\big[K: \qq(G)\big]= \infty$.
\end{proof}

\begin{lem} \label{KappaExt}
Every model $(F, K; \chi)$ of $\TNp$  has a $(\kappa ,\kappa)$-transcendental elementary extension $(F', K'; \chi')$ for any cardinal $\kappa \geq \max(|F|, |K|)$.
\end{lem}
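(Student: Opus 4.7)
I would prove this by a compactness argument combined with downward L\"owenheim--Skolem. Expand the language $L$ to $L^* = L \cup \{c_\alpha : \alpha < \kappa\} \cup \{d_\alpha : \alpha < \kappa\}$, where the $c_\alpha$ are new constants of the first sort and the $d_\alpha$ are new constants of the second sort. Let $T^*$ be the $L^*$-theory consisting of the elementary diagram of $(F,K;\chi)$ together with
\begin{itemize}
\item $(\Sigma_F)$: for each finite $A\subseteq\kappa$ and each nonzero $P\in F[t_\alpha:\alpha\in A]$, the axiom $P((c_\alpha)_{\alpha\in A})\neq 0$, expressing that $\{c_\alpha\}$ is algebraically independent over $F$;
\item $(\Sigma_K)$: the universal scheme expressing that $\{d_\alpha\}$ is algebraically independent over the subfield generated by $K$ together with $\{\chi(x): x \text{ first sort}\}$; concretely, for each finite $B\subseteq\kappa$, each $n$, and each nonzero $Q\in K[\bar y,\bar z]$, the axiom $\forall \bar x\ \bigl(Q(\bar d,\chi(\bar x))=0\to Q(\bar y,\chi(\bar x))\equiv 0\text{ in }\bar y\bigr)$.
\end{itemize}

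The heart of the proof is finite satisfiability of $T^*$. A finite fragment mentions only finitely many $c_{\alpha_1},\dots,c_{\alpha_s}$ and $d_{\beta_1},\dots,d_{\beta_t}$. The plan is to realize it inside a sufficiently saturated elementary extension $(F^*,K^*;\chi^*)$ of $(F,K;\chi)$, whose existence is standard. Saturation immediately yields $c_1,\dots,c_s\in F^*$ algebraically independent over $F$ (the type ``transcendental over $F$ and distinct from the previously chosen $c_j$'' is consistent, as it is realized in any ACF-extension of $F$). For the $d$'s one wants to realize the type ``transcendental over $K\cdot\qq(\chi^*(F^*))$'', and the delicate point is the consistency of this type: one must exhibit an elementary extension of $(F^*,K^*;\chi^*)$ in which the second-sort transcendence degree over $K^*\cdot\qq(\chi^*(F^*))$ is positive. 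This is done by an ad hoc construction using Proposition~\ref{GenericEquivalence}: adjoin a fresh transcendence basis to $F^*$, use the characterization of generic characters to extend $\chi^*$ to this enlargement sending the new basis to elements algebraically independent over $K^*$, and then adjoin further algebraically independent elements on the $K$-side; the resulting structure satisfies $\TN_p$ and realizes the type in question, and an iteration or compactness pass over such structures shows the required type is consistent.

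Once $T^*$ is consistent, pick a model $(\tilde F,\tilde K;\tilde\chi)$ and apply downward L\"owenheim--Skolem to extract an elementary submodel $(F',K';\chi')\prec(\tilde F,\tilde K;\tilde\chi)$ of cardinality $\kappa$ containing $F\cup K\cup\{c_\alpha,d_\alpha:\alpha<\kappa\}$. Since the elementary diagram of $(F,K;\chi)$ sits inside $\mathrm{Th}(\tilde F,\tilde K;\tilde\chi)$ and $(F',K';\chi')$ is an elementary substructure of $(\tilde F,\tilde K;\tilde\chi)$ containing $F\cup K$, we obtain $(F,K;\chi)\prec(F',K';\chi')$.

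Finally I verify the transcendence degrees. The bounds $|F'|=|K'|=\kappa$ force $\trdeg(F'\mid\ff_p)\le\kappa$ and $\trdeg(K'\mid\qq(\chi'(F')))\le\kappa$. Conversely the $c_\alpha$'s witness $\trdeg(F'\mid\ff_p)\ge\kappa$, and for the second bound: by $(\Sigma_K)$ the $d_\alpha$'s are algebraically independent in $\tilde K$ over $K\cdot\qq(\tilde\chi(\tilde F))$, which contains $\qq(\chi'(F'))=\qq(\tilde\chi(F'))$ as a subfield, so the $d_\alpha$'s remain algebraically independent over $\qq(\chi'(F'))$ inside $K'$. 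Hence $(F',K';\chi')$ is the desired $(\kappa,\kappa)$-transcendental elementary extension. The principal obstacle, as indicated, is the consistency step in finite satisfiability, since it is what forces us to actually enlarge the second sort beyond the image of the (extended) character rather than staying inside the algebraic closure of $K\cdot\qq(\chi(F))$.
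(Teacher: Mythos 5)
Your overall framework---elementary diagram plus $\kappa$ new constants of each sort, compactness, then downward L\"owenheim--Skolem---is the same as the paper's. The problem is precisely where you flag it: finite satisfiability of $\Sigma_K$. Your resolution does not work, and for a structural reason. You propose to build a model of $\TN_p$ extending $(F^*,K^*;\chi^*)$ that contains a fresh element transcendental over $K^*\cdot\qq(\chi^*(F^*))$, and then conclude that the type is consistent with the elementary diagram of $(F^*,K^*;\chi^*)$. But that conclusion requires the new structure to be an \emph{elementary} extension of $(F^*,K^*;\chi^*)$, not merely a model of $\TN_p$ containing it as a substructure. The paper later proves that $\TN_p$ is \emph{not} model complete, so this implication is genuinely false in general; and the tool that would repair it---the regular model companion theorem---comes later in the paper and itself relies on this lemma and on completeness of $\TN_p$, which in turn is deduced from this lemma. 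So the ``iteration or compactness pass'' you invoke is circular.

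The paper avoids the issue entirely with a much simpler observation: for finite satisfiability you do not need elements transcendental over $\qq(\chi(F))$. A finite fragment of $\Sigma_K$ only demands that a tuple $a\in K^n$ avoid the zero sets of finitely many polynomials $P_i(\chi(\beta),x)$ of bounded degree $N$, uniformly in $\beta$. By the preceding lemma (``for $\chi$ generic, $K$ is an infinite extension of $\qq(\chi(F))$''), one can pick $a_1,\dots,a_n$ successively with $[\qq(G,a_1,\dots,a_j):\qq(G,a_1,\dots,a_{j-1})]>N$; such elements already lie in $K$ and satisfy the fragment, and the compactness argument then closes without ever leaving the original model or its saturated elementary extensions. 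So the key missing idea in your plan is that algebraic elements of large degree over $\qq(\chi(F^\times))$ suffice for the finite fragments; insisting on transcendence is what pushes you into a circular argument. A small additional remark: for $\Sigma_F$ you require algebraic independence of the $c_\alpha$, but distinctness already suffices since $\kappa$ is infinite (as $|F|,|K|\geq\aleph_0$) and for infinite algebraically closed fields cardinality equals transcendence degree; this is what the paper uses.
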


\begin{proof}
Let $(F, K; \chi)$ and $\kappa$ be as stated.
We construct an elementary extension $(F'', K''; \chi'')$ of $(F, K; \chi)$ with $\text{trdeg}(F''\mid \ff_p) \geq \kappa$ and $\text{trdeg}(K''\mid G'') \geq \kappa$ with $G'' = \chi({F''}^\times)$.
For the later two conditions to hold, it suffices to ensure there are $$\alpha\ \in\ (F'')^\kappa\ \text{ and }\ a\ \in\ (K'')^\kappa$$ such that components of $\alpha$ are all distinct and the components of $a$ are algebraically independent over $G''$. 
Using compactness, we can reduce the problem to verifying the following: for arbitrary $k,m,n$, $w$  of length $m$, $x$ of length $n$ and arbitrary polynomials $P_1, \ldots, P_l $ in $\qq[w,x]$, there are $\alpha$ in $F^k$ and $a$ in $K^n$ such that components of $\alpha$ are pairwise different, and
$$P_i\big(\chi(\beta), a\big)\ \neq\ 0\ \text{ for all } \beta \in F^m \text{ and } i \in \{1 \ldots l\}.$$
It is easy to find $\alpha$ with the desired property. By preceding lemma, $ \big[K: \qq(G)\big]$ is infinite, so we can choose  $a$ so that $\big[\qq(G, a_1, \ldots, a_j): \qq(G, a_1 \ldots,a_{j-1})\big]>N$ for $j \in \{ 1, \ldots, n\}$ where $N$ is the maximum degree of $P_i$ for $ i \in \{1 \ldots l\} $.
We see that this choice of $a$ works. We then get the desired $(F', K'; \chi')$ from $(F'', K''; \chi'')$ by taking the Skolem Hull of the suitable elements.
\end{proof}

\begin{thm}\label{CompAxiom}
For all $p$, $\TNp$ is complete and  decidable. When $p>0$, $\TNp$ axiomatizes $\mathrm{Th}(\ff, \cc; \chi)$ where $\chara(\ff) = p$.
\end{thm}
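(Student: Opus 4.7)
The plan is to deduce completeness from the classification theorem (Theorem~\ref{ThmIso}) together with the upward Löwenheim--Skolem-type result (Lemma~\ref{KappaExt}), and then to extract decidability and the axiomatization statement as essentially formal consequences.

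First, I would prove completeness by a Vaught-style argument. Let $(F_1, K_1; \chi_1)$ and $(F_2, K_2; \chi_2)$ be two models of $\TNp$. Choose any cardinal $\kappa \geq \max\bigl(|F_1|,|K_1|,|F_2|,|K_2|,\aleph_0\bigr)$. By Lemma~\ref{KappaExt}, each $(F_i, K_i; \chi_i)$ has a $(\kappa,\kappa)$-transcendental elementary extension $(F_i', K_i'; \chi_i')$. By Theorem~\ref{ThmIso}, any two $(\kappa,\kappa)$-transcendental models of $\TNp$ are isomorphic, so
\[
(F_1', K_1'; \chi_1')\ \cong\ (F_2', K_2'; \chi_2'),
\]
and in particular these two structures are elementarily equivalent. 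Since each $(F_i, K_i; \chi_i) \prec (F_i', K_i'; \chi_i')$, we conclude $(F_1, K_1; \chi_1) \equiv (F_2, K_2; \chi_2)$. Thus $\TNp$ is complete.

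Next, decidability follows by the standard observation that a recursively axiomatized complete theory is decidable: we exhibited $\TNp$ as a recursive set of axioms in Section~2 (in particular the universal axioms for genericity from Proposition~\ref{GenericAxiom} together with the recursive $\forall\exists$-axioms for algebraically closed fields of the appropriate characteristics), so given any $L$-sentence $\varphi$ we can enumerate consequences of $\TNp$ until either $\varphi$ or $\neg\varphi$ appears; completeness guarantees one of them does.

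Finally, for the last assertion, when $p > 0$ we have already observed (in the theorem following Proposition~\ref{GenericAxiom}) that $(\ff, \cc; \chi) \models \TNp$ whenever $\chara(\ff) = p$ and $\chi$ is an injective character, because genericity is automatic on $\chi(F^\times)$ in this setting. Since $\TNp$ is complete, any model of $\TNp$ has the same theory as $(\ff, \cc; \chi)$, hence $\TNp$ axiomatizes $\mathrm{Th}(\ff, \cc; \chi)$.

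The only substantive step is the first one, and the key point — really the heart of the section — is already packaged in Theorem~\ref{ThmIso} and Lemma~\ref{KappaExt}; no further obstacle arises here, since the rest is formal.
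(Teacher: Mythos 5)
Your proof is correct and follows essentially the same route as the paper's: pass to $(\kappa,\kappa)$-transcendental elementary extensions via Lemma~\ref{KappaExt}, apply the uniqueness of Theorem~\ref{ThmIso} to get isomorphism and hence elementary equivalence, then obtain decidability and the axiomatization of $\mathrm{Th}(\ff,\cc;\chi)$ as formal consequences of completeness plus the recursive axiomatization and the earlier observation that $(\ff,\cc;\chi)\models\TNp$.
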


\begin{proof}
We first show that any two arbitrary models \( (F_1, K_1; \chi_1) \) and \( (F_2, K_2; \chi_2) \) of $\TNp$ are elementarily equivalent. By the preceding lemma, we can arrange that \( (F_1, K_1; \chi_1) \) and \( (F_2, K_2; \chi_2)\)  are both $(\kappa, \kappa)$-transcendental. It follows from  Theorem ~\ref{ThmIso} that for all $p$, $\TNp$ is complete. The remaining conclusions are immediate.
\end{proof}

\begin{cor}
Let $\tau$ be an $L$-statement. The following are equivalent:
\begin{enumerate}
\item $\tau$ is true in some model of $\TN_0$;
\item there are arbitrarily large primes $p$ such that $\tau$ is true in some model of $\TNp$;
\item there is a number $m$ such that for all primes $p>m$, $\tau$ is true in all models of $\TNp$.
\end{enumerate}
\end{cor}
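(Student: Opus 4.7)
The plan is to exploit the completeness of each $\TNp$ from Theorem~\ref{CompAxiom} together with a standard compactness argument. Since $\TNp$ is complete for every $p$ (prime or zero), the assertion ``$\tau$ is true in some model of $\TNp$'' is equivalent to ``$\TNp \vdash \tau$'', which in turn is equivalent to ``$\tau$ is true in all models of $\TNp$''. Rewriting the three conditions in terms of provability thus collapses the question to showing that $\TN_0 \vdash \tau$ iff $\TNp \vdash \tau$ for arbitrarily large primes $p$ iff $\TNp \vdash \tau$ for all sufficiently large primes $p$.

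First I would prove (1) $\Rightarrow$ (3). Observe that $\TN_0$ is axiomatized by $\TN$ together with the infinite scheme $\{\chara(F) \neq q : q \text{ prime}\}$ (each such statement being expressible in $L$ as the assertion that $1 + 1 + \cdots + 1 \neq 0$ in the $F$-sort). If $\TN_0 \vdash \tau$, then by compactness there is a finite $N$ such that
\[
\TN \cup \{\chara(F) \neq q : q \text{ prime}, q \le N\} \ \vdash\ \tau.
\]
For every prime $p > N$, the axioms of $\TNp$ imply each of the finitely many sentences on the left, and therefore $\TNp \vdash \tau$.

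Next, (3) $\Rightarrow$ (2) is immediate (there are infinitely many primes above any $m$, and $\TNp$ is consistent so ``true in all models'' implies ``true in some model''). Finally, for (2) $\Rightarrow$ (1) I would argue by contraposition. Suppose $\tau$ is not true in any model of $\TN_0$. Since $\TN_0$ is complete, $\TN_0 \vdash \neg\tau$, and by the compactness argument already given there exists $N$ such that $\TNp \vdash \neg\tau$ for every prime $p > N$. By completeness of $\TNp$, this means $\tau$ is false in every model of $\TNp$ for all such $p$, contradicting (2).

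There is no real obstacle here; the work is entirely packaged inside Theorem~\ref{CompAxiom}, and the only subtlety worth stating explicitly is that each of the conditions (1) and (2) can be upgraded from ``in some model'' to ``in all models'' using completeness, which is what allows the compactness-style transfer argument to go through in both directions.
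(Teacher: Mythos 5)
Your proof is correct and takes exactly the approach the paper leaves implicit: since the paper states this corollary immediately after Theorem~\ref{CompAxiom} with no separate argument, the intended route is precisely completeness of each $\TNp$ (so ``true in some model,'' ``true in all models,'' and ``provable'' coincide) combined with the compactness argument over the axiom scheme $\{\chara(F)\neq q : q \text{ prime}\}$. Your write-up supplies that reasoning cleanly, including the contrapositive for $(2)\Rightarrow(1)$ and the small but necessary observation that $\TNp$ is consistent.
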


\section{Substructures and elementary substructures}
\noindent
From now on, let $k, l$ range over the set of natural numbers, $s=(s_1, \ldots, s_k)$, $v =(v_1, \ldots, v_l)$ be tuples of variables of the first sort and $w= (w_1, \ldots, w_k)$, $z = (z_1, \ldots, z_l)$ be tuples of variables of the second sort. We also implicitly assume similar conventions for these letters with additional decorations.

\noindent In addition to the notation conventions in the first paragraph of section 2, we assume in this section that $(F, K; \chi)$ has $\chara(K)=0$. We use $\subseteq$ and $\preccurlyeq$ to denote the $L$-substructure and elementary $L$-substructure relations respectively. 
We will characterize the substructures and elementary substructures of a model of $\TN_p$.

\begin{prop} \label{Substructures}
We have $(F, K; \chi)$ is an $L$-substructure of an $\TN_p$-model if and only if $\chi$ is generic and $\chara(F)=p$.
\end{prop}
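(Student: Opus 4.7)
The forward direction $(1) \Rightarrow (2)$ is immediate. If $(F, K; \chi) \subseteq (F', K'; \chi') \models \TN_p$, then $\chara(F) = \chara(F') = p$, and any multiplicatively independent tuple in $\la \chi(F^\times) \ra$ is also a multiplicatively independent tuple in $\la \chi'(F'^\times) \ra$, hence algebraically independent over $\qq$ by the genericity of $\chi'$; so $\chi$ is generic.

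For $(2) \Rightarrow (1)$, I first pass to the fraction fields of $F$ and $K$ and extend $\chi$ in the canonical way. This preserves genericity by Lemma~\ref{GenericRobust}, so I may assume $F$ and $K$ are fields. By Theorem~\ref{ThmIso}, I then select a $(\kappa, \lambda)$-transcendental model $(F', K'; \chi') \models \TN_p$ with $\kappa \geq \trdeg(F \mid \ff_p)$ and $\lambda \geq \trdeg(K \mid \qq)$; since $F'$ is algebraically closed of characteristic $p$ of sufficient transcendence degree, some field embedding $\iota\colon F \hookrightarrow F'$ exists.

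The heart of the proof is to extend $\iota$ to a field embedding $\sigma\colon K \hookrightarrow K'$ with $\sigma \circ \chi = \chi' \circ \iota$. Because $\chi$, $\chi'$, and $\iota$ are all multiplicative and $\iota$ is injective, the tuples $\bigl(\chi(a)\bigr)_{a \in F^\times}$ in $K$ and $\bigl(\chi'(\iota(a))\bigr)_{a \in F^\times}$ in $K'$ have the same multiplicative type. Applying Proposition~\ref{GenericEquivalence}(3) separately inside the generic groups $\chi(F^\times) \subseteq K^\times$ and $\chi'(\iota(F^\times)) \subseteq K'^\times$ (legitimate as $\chara(K) = \chara(K') = 0$), the set of polynomials in $\qq[x]$ vanishing on either tuple is the radical of the ideal generated by the special polynomials vanishing on it, and this ideal depends only on the common multiplicative type. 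Hence the two tuples share the same algebraic type over $\qq$, so the rule $\chi(a) \mapsto \chi'(\iota(a))$ extends to a field isomorphism $\sigma_0\colon \qq(\chi(F^\times)) \to \qq(\chi'(\iota(F^\times)))$.

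Finally, $\sigma_0$ extends to the desired $\sigma\colon K \hookrightarrow K'$ by standard field theory: first extend over the algebraic closures of the base subfields inside $K, K'$, using that $K'$ is algebraically closed, then send a transcendence basis of $K$ over that algebraic closure to an algebraically independent family in $K'$ of the same cardinality, available because $\lambda$ was chosen large. The pair $(\iota, \sigma)$ is the desired $L$-embedding of $(F, K; \chi)$ into $(F', K'; \chi')$. The main obstacle throughout is the consistent transfer of $\chi$ to $\chi' \circ \iota$: a multiplicative-type-preserving bijection between subgroups of two fields does not in general extend to an isomorphism of the fields they generate, and it is exactly genericity, via Proposition~\ref{GenericEquivalence}, that resolves this.
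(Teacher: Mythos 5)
Your proof is correct but takes a genuinely different route from the paper. The paper argues ``bottom-up'': starting from $(F,K;\chi)$ with $\chi$ generic, it directly \emph{constructs} a $\TNp$-model above it, taking $F'$ to be the algebraic closure of $F$ and $K'\supseteq K$ a sufficiently large algebraically closed field, then extending $\chi$ to $F'^\times$ by sending a multiplicative basis of $F'^\times$ over $F^\times$ to elements algebraically independent over $K$; genericity of the extension is then checked via Corollary~\ref{Equiv1} and Corollary~\ref{GenTrans}. You instead argue ``top-down'': using Theorem~\ref{ThmIso} to produce a $(\kappa,\lambda)$-transcendental $\TNp$-model with $\kappa,\lambda$ large enough, you embed $F$ into $F'$ and transfer the multiplicative type of $\chi(F^\times)$ to $\chi'(\iota(F^\times))$, invoking the equivalence $(1)\Leftrightarrow(3)$ of Proposition~\ref{GenericEquivalence} in both ambient fields (this is exactly the cross-field argument that underlies Proposition~\ref{CharacterIsomorphism}, and it is legitimate because the relevant ideals $J_g$ depend only on $\mtp(g)$); then you extend the resulting isomorphism $\sigma_0$ of $\qq(\chi(F^\times))$ onto $\qq(\chi'(\iota(F^\times)))$ to an embedding $K\hookrightarrow K'$ by ordinary field theory. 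Both routes are sound. The paper's is more self-contained and, with a small amount of extra care (as noted right after the proof), yields the stronger Corollary~\ref{Substructures2} that the extension can be taken to be \emph{regular}, which your proof as written does not immediately give since the transcendence basis of $K$ is only required to land algebraically independent over $\qq\big(\chi'(\iota(F^\times))\big)$ rather than over $\qq\big(\chi'(F'^\times)\big)$. Your route is perhaps conceptually cleaner, at the cost of depending on the existence result Theorem~\ref{ThmIso} and the type-transfer ideas of Section~3. One small bookkeeping remark: the citation for preserving genericity when passing to fraction fields is more accurately Corollary~\ref{Equiv1} (as the paper uses in Proposition~\ref{GenericAxiom}) than Lemma~\ref{GenericRobust}, though the underlying point is the same.
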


\begin{proof}
The forward implication is clear. For the other direction, suppose $\chi$ is generic and $\chara(F)=p$. We can embed $(F, K; \chi)$ into an $L$-structure $(F'', K'';\chi'')$ where $F'', K'' $ are respectively the fraction fields of $F, K$ and $\chi''$ is the natural extension of $\chi$ to $F''$. We note that $\chi''$ is still generic. Therefore, we can arrange that $F$ and $K$ are already fields.

Let $G$ be $ \chi(F^\times)$, $F'$ be the algebraic closure of $F$, and  $K'$ be an algebraically closed field containing $K$  such that $\text{trdeg}(K' \mid K)>|F'|$. Let $\{\alpha_i \}_{ i < \kappa}$ be a multiplicative basis of ${F'}^\times$ over $F^\times$. As $\text{trdeg}(K'\mid K)>|F'|$, we can define a map $$\chi': \{\alpha_i \}_{ i < \kappa} \to K'$$ such that the image is algebraically independent over $K$.
Since $\chara(K)=0$, we have $\mcl_G\big(\{ \chi'(\alpha_i) \}_{ i < \kappa}\big)$ in ${K'}^\times$ is divisible. Hence we can extend $\chi'$ to an injective map $\chi': {F'}^\times \to {K'}^\times$ extending $\chi$. Let $G' = \chi({F'}^\times)$.
Then $G'$ is $K$-generic over $G$ by Corollary~\ref{Equiv1}.
Since $G $ is generic, $G'$ is also generic by Corollary~\ref{GenTrans}. Thus the structure $(F', K'; \chi')$ is the desired model of $\TNp$.
\end{proof}

\noindent
Let $(F', K';\chi')$ be an $L$-structure. We say that $(F, K; \chi)$ is a {\bf regular substructure} of $(F', K';\chi')$, denoted as  $(F, K; \chi) \sqsubseteq (F', K';\chi')$,  if  $(F, K; \chi) \subseteq (F', K';\chi')$ and $\chi'( {F'}^\times)$ is $K$-generic over $\chi( F^\times)$. With the use of Proposition~\ref{Equiv2}, it can be seen that the above proof also gives us the following stronger statement:

\begin{cor}\label{Substructures2}
If $\chi$  is generic then there is a model $(F', K'; \chi')$ of $\TNp$ such that 
$(F, K; \chi) \sqsubseteq (F', K';\chi')$.
\end{cor}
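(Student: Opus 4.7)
The plan is to revisit the proof of Proposition~\ref{Substructures} and extract the stronger conclusion. The construction there already produces a $\TNp$-model $(F', K'; \chi')$ containing $(F, K; \chi)$; what remains is to notice that the way $\chi'$ was extended automatically makes $\chi'(F'^\times)$ be $K$-generic over $\chi(F^\times)$, which is exactly the definition of $\sqsubseteq$.

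First I would reduce to the case where $F$ and $K$ are fields by passing to their fraction fields, noting that this preserves genericity (as used in Proposition~\ref{GenericAxiom}). Then, setting $G = \chi(F^\times)$, I would let $F'$ be the algebraic closure of $F$, pick an algebraically closed field $K'\supseteq K$ with $\trdeg(K' \mid K) > |F'|$, and choose a multiplicative basis $\{\alpha_i\}_{i<\kappa}$ of ${F'}^\times$ over $F^\times$. Using the transcendence degree bound, define $\chi'$ on this basis so that $\{\chi'(\alpha_i)\}_{i<\kappa}$ is algebraically independent over $K$. Because $\chara(K)=0$, the multiplicative closure $\mcl_G\bigl(\{\chi'(\alpha_i)\}\bigr)$ is divisible, allowing us to extend $\chi'$ to an injective character $\chi'\colon {F'}^\times \to {K'}^\times$ prolonging $\chi$.

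Let $G' = \chi'({F'}^\times)$. By construction we have $G' \subseteq \mcl_G\bigl(\{\chi'(\alpha_i)\}\bigr)$ with $\{\chi'(\alpha_i)\}$ algebraically independent over $K \supseteq G$, so applying Corollary~\ref{Equiv1} with $A = G'$, $B = G$, $C = K$ shows that $G'$ is $K$-generic over $G$. This is precisely the condition that $(F, K;\chi) \sqsubseteq (F', K';\chi')$. In particular $G'$ is $\emptyset$-generic over $G$, and combined with the hypothesis that $G$ is generic (i.e.\ $\emptyset$-generic over $\emptyset$), Corollary~\ref{GenTrans}(1) yields that $G'$ itself is generic. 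Hence $(F', K';\chi')\models \TNp$, giving the required regular extension.

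There is really no serious obstacle: the proof of Proposition~\ref{Substructures} already executes the required construction, and the only thing to verify is the additional $K$-genericity, which falls out of Corollary~\ref{Equiv1} once one takes $C = K$ instead of $C = \emptyset$ in the bookkeeping. The only subtle point worth double-checking is that $G'$ really lies in the $G$-multiplicative closure of the chosen basis images, which follows from the fact that $\{\alpha_i\}$ is a multiplicative basis of ${F'}^\times$ over $F^\times$ and $\chi'$ is a multiplicative extension of $\chi$.
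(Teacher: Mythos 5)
Your proposal is correct and follows the paper's own approach essentially verbatim: the paper introduces this corollary by remarking that the construction in the proof of Proposition~\ref{Substructures} already yields the stronger conclusion, and that proof explicitly establishes ``$G'$ is $K$-generic over $G$ by Corollary~\ref{Equiv1}'' and then invokes Corollary~\ref{GenTrans} to get genericity of $G'$, exactly as you do. The only cosmetic difference is that the paper's one-line remark cites Proposition~\ref{Equiv2} (which underlies Corollary~\ref{GenTrans}), whereas you cite the downstream corollaries directly.
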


\noindent
We now characterize the regular substructure relation for models of $\TN_p$.

\begin{prop}\label{GenericSubstructure}
Suppose $(F,K;\chi) \subseteq (F',K';\chi')$ are models of $\TN_p$. Let $G= \chi(F^\times)$ and $G' = \chi'({F'}^\times)$. Then the following are equivalent:

\begin{enumerate}
\item $(F, K; \chi) \sqsubseteq (F', K';\chi')$;

\item for all $n$, all $P_1, \ldots, P_n \in \qq[w]$ and all $a_1, \ldots, a_n \in K$, if there is a tuple $g' \in {G'}^{k}$ with $P_1(g'), \ldots, P_n(g')$ not all $0$ and  $a_1P_1(g') + \cdots+ a_nP_n(g') =0$, then we can find such a tuple in $G^{k}$;

\item $\qq(G')$ and $K$ are linearly disjoint over $\qq(G)$ in $K'$.
\end{enumerate}

\end{prop}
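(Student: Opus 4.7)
The plan is to reduce all three conditions to applications of Proposition~\ref{Equiv2}, applied within the ambient field $K'$ with $H := G$, the ``$G$'' of that proposition $:= G'$, and $C := K$. First I would verify the hypothesis $\mcl(G)\cap G' = G$: if $\beta \in G'$ satisfies $\beta^n \in G$, then writing $\beta = \chi'(b)$ and $\beta^n = \chi(a) = \chi'(a)$, injectivity of $\chi'$ gives $b^n = a \in F$, whence $b \in F$ by algebraic closedness of $F$, so $\beta \in G$. A second preparatory fact which will serve as the bridge between $(3)$ and Proposition~\ref{Equiv2}$(4)$ is that elements of $G'$ lying in pairwise distinct cosets of $G$ are automatically $\qq(G)$-linearly independent in $K'$, because $\qq[G']$ is a free $\qq[G]$-module on any set of coset representatives for $G'/G$. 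Consequently $(3)$ is equivalent to: any finitely many elements of $G'$ in pairwise distinct cosets of $G$ remain $K$-linearly independent in $K'$.

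For $(1) \Leftrightarrow (3)$ I would argue $(1) \Rightarrow (3)$ by contraposition: a shortest nontrivial $K$-linear relation $\sum a_i \alpha_i = 0$ among $\alpha_i \in G'$ lying in pairwise distinct cosets of $G$ has all $a_i$ nonzero, and dividing by $-a_n \alpha_n$ converts it into a non-degenerate solution of $c \cdot w = 1$ in $(G')^{n-1}$ (non-degeneracy being forced by minimality) whose components all lie outside $G$, contradicting Proposition~\ref{Equiv2}$(4)$. Conversely, $(3) \Rightarrow (1)$ establishes Proposition~\ref{Equiv2}$(4)$: given a non-degenerate solution $g' \in (G')^n$ of $c \cdot w = 1$, partition the indices by the $G$-coset of $g'_i$ and use $K$-linear independence across cosets (from $(3)$) together with non-degeneracy to force all $g'_i$ into the trivial coset $G$.

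For $(1) \Leftrightarrow (2)$ I would handle $(2) \Rightarrow (3)$ by contraposition, realizing a witnessed failure of $(3)$ as a specific instance $\sum a_i P_i(g') = 0$ by writing the involved elements of $G'$ as monomial evaluations $(g')^{e_i}$ (picking a finite multiplicative basis of the relevant subgroup and clearing denominators by a common monomial shift); the coset analysis above then precludes any $g \in G^k$ witnessing the same relation. For $(1) \Rightarrow (2)$ I would take $g' \in (G')^k$ with $\sum a_i P_i(g') = 0$ and $P_i(g')$ not all zero, note that $P := \sum a_i P_i$ lies in $K[w] = K(G)[w]$ and vanishes on $g'$, and apply Proposition~\ref{Equiv2}$(3)$ to decompose $P = \sum_j Q_j B_j$ with $Q_j \in K[w]$ and each $B_j$ a $G$-binomial vanishing on $g'$. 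Each $B_j$ encodes a multiplicative equation $(g')^{u_j - v_j} = \xi_j/\eta_j \in G$, and divisibility of $G$ (inherited from $F^\times$ since $F$ is algebraically closed), combined with the compatibility witnessed by $g'$, produces some $g_0 \in G^k$ solving the same binomial system, so $P(g_0) = 0$.

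The hard part will be arranging that this $g_0$ additionally has $P_i(g_0)$ not all zero. The $G^k$-solutions of the binomial system form a coset $g_0 H$ of the algebraic subgroup $H \le G^k$ defined by $h^{u_j - v_j} = 1$, and the plan is to show $g_0 H$ is Zariski dense in the irreducible component of the binomial variety $V := \{w : B_j(w) = 0 \text{ for all } j\}$ containing a suitable translate of $g_0$, using that $G$ is an infinite divisible subgroup of $K^\times$ containing enough roots of unity to meet each component (here $\chara(K) = 0$ and the algebraic closedness of $F$ are essential). Once density is established, the hypothesis that $P_i(g')$ are not all zero forces that component of $V$ not to be contained in $\{w : P_i(w) = 0 \text{ for all } i\}$, so the dense set $g_0 H$ must contain a point in the nonempty Zariski-open complement, supplying the desired $g \in G^k$. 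This density-and-components step is the technical heart of the argument; the rest is reduction to Proposition~\ref{Equiv2}.
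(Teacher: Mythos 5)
Your proof takes a genuinely different route from the paper's. The paper proves $(1)\Rightarrow(2)$ by passing to an $\aleph_0$-saturated elementary extension, invoking Proposition~\ref{GenericEquivalence}, and transferring the search for a witness to the $F$-sort via $\chi^{-1}$; it proves $(3)\Rightarrow(1)$ by expanding a polynomial $P\in K[x]$ vanishing on $g'$ along a $\qq(G)$-basis of $K$ and then invoking genericity through Corollary~\ref{GenTrans}. You instead reduce all three conditions to Proposition~\ref{Equiv2} inside $K'$, work with cosets of $G$ in $G'$, and for $(1)\Rightarrow(2)$ argue directly with the algebraic subgroup cut out by the binomials and Zariski density of its $G$-points. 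That is more geometric and avoids saturation, which is appealing. However, two steps need repair.

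First, the ``preparatory fact'' that $\qq[G']$ is a free $\qq[G]$-module on coset representatives is not a general fact about subgroups of $K'^\times$: with $G=\{1\}$ and $G'=\langle \pi, 1-\pi\rangle$ one has $\mcl(G)\cap G'=G$ yet $\pi+(1-\pi)-1=0$, a $\qq$-linear relation among three distinct cosets. What makes it true here is genericity of $G'$ together with $\mcl(G)\cap G'=G$: any rational linear relation among elements of $G'$ decomposes into non-degenerate sub-relations, and by Proposition~\ref{GenericEquivalence}(4) each such sub-relation has all of its entries in a single $G$-coset (since the relevant roots of unity of $G'$ already lie in $G$). You should supply this justification rather than assert freeness as if it were formal.

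Second, and more seriously, in $(1)\Rightarrow(2)$ the claim that $G$ ``contains enough roots of unity to meet each component'' of $V$ is false when $\chara(F)=p>0$: the component group of $H^{\mathrm{alg}}=\{w: w^{u_j-v_j}=1\}$ is $\mathrm{Hom}(A, K'^\times)$ where $A$ is the torsion of $\zz^k/\langle u_j-v_j\rangle$, and $G=\chi(F^\times)$ lacks $p$-power roots of unity, so $G^k\cap H^{\mathrm{alg}}$ need not surject onto the component group. As written, nothing guarantees that the component in which $g_0 H$ is dense is the one on which some $P_i$ is nonzero. Two fixes are available, both using ideas you already have: (i)~observe that $g'/g_0\in (G')^k\cap H^{\mathrm{alg}}$, and that $G'$ and $G$ have the same torsion (both are $\chi$-images of multiplicative groups of algebraically closed fields of the same characteristic $p$), so the component class of $g'/g_0$ is in the image of $G^k\cap H^{\mathrm{alg}}$ and $g_0$ can be translated into the component of $g'$; or, cleaner, (ii)~enlarge the generating set $\{B_j\}$ of $I_{g'}$ so that the exponent vectors $u_j-v_j$ generate the full relation lattice $\Lambda_0=\{\lambda:(g')^\lambda\in G\}$, which is saturated precisely because $\mcl(G)\cap G'=G$, whence $H^{\mathrm{alg}}$ is connected and the components issue disappears. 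With these two repairs, your argument goes through and gives a saturation-free proof.
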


\begin{proof}

Towards showing that $(1)$ implies $(2)$, suppose (1). Fix $n$, polynomials $P_1, \ldots, P_n \in \qq[w]$, $K$-elements $a_1, \ldots, a_n$ and $g'$ as in $(2)$. 
We want to find $g \in G^{k}$ with $P_1(g), \ldots, P_n(g)$ not all $0$ and  
$$a_1P_1(g) + \cdots+ a_nP_n(g)\ =\ 0.$$
Replacing $(F,K;\chi)$ and $(F',K';\chi')$ concurrently with elementary extensions and noting that $G'$ remains $K$-generic over $G$ by the equivalence between $(1)$ and $(4)$ of Proposition~\ref{Equiv2}, we can arrange that $(F,K;\chi)$ is $\aleph_0$-saturated. By the equivalence between $(1)$ and $(2)$ of Proposition~\ref{GenericEquivalence}, 
$$ \text { if }\ \text{mtp}(g)\ =\ \text{mtp}(g')\ \text{ then }\ P_1(g), \ldots, P_n(g)\ \text{ are not all }0.$$ 
By the equivalence between (1) and (3) of Proposition~\ref{Equiv2}, there are $G$-binomials $M_1-N_1, \ldots, M_l - N_l$ vanishing on $g'$ such that $$M_1(g)-N_1(g)\ =\  \cdots \ =\  M_l(g) - N_l(g) \ =\ 0\ \text{ implies }\ a_1P_1(g) + \cdots+ a_nP_n(g) \ =\ 0.$$
Let $\alpha=\chi^{-1}(g)$, $\alpha'=\chi^{-1}(g')$ and $\chi^{-1}M_i, \chi^{-1}N_i$ be the pullbacks of $M_i$ and $N_i$ under $\chi$ for $i \in  \{1, \ldots,l\}$. 
It suffices to find $\alpha \in (F^\times)^{k}$ with $\mtp(\alpha)=\mtp(\alpha')$ and $$\chi^{-1}M_1(\alpha)-\chi^{-1}N_1(\alpha)\ =\  \cdots\ =\ \chi^{-1}M_l(\alpha)-\chi^{-1}N_l(\alpha)\ =\ 0.$$ Such $\alpha$ can be found as $F$ is an elementary substructure of $F'$ in the language of field and $F$ is $\aleph_0$-saturated. Thus we have (2).

It is immediate that  (2) implies (3). Towards showing that (3) implies (1), suppose (3) and $g' \in (G')^n$ is algebraically dependent over $K(G) =K$. We need to show that $g'$ is multiplicatively dependent over $G$. Pick a non-trivial $P \in K[x]$ with $P(g')=0$. Choose a linear basis $(b_i)_{i\in I}$ of $K$ over $\qq(G)$. Then 
$$P\ =\ \sum_{i \in I} P_ib_i\  \text{ with }\ P_i \in \qq(G)[x]\ \text{ for } i \in I $$ 
and $P_i = 0$ for all but finitely many $i \in I$. 
Hence $\sum_{i \in I} P_i(g')b_i =0 $. By (3), $(b_i)_{i\in I}$ remains linearly independent over $\qq(G')$. Therefore, $\sum_{i \in I} P_i(g')b_i =0 $ implies that $P_i(g')=0$ for all $i \in I$. 
Since $P$ is non-trivial, at least one $P_i$ is non-trivial, and hence $g'$ is algebraically dependent over $\qq(G)$. Now, $G'$ is generic so $G'$ is generic over $G$ by (1) of Corollary~\ref{GenTrans}. By the definition of genericity, $G'$ is $\qq(G)$-generic over $G$.  Hence, $g'$ is multiplicatively dependent over $G$ which is the desired conclusion.
\end{proof}

\begin{cor} \label{ElSub1}
For $(F, K; \chi), (F', K';\chi') \models \TN_p$, if  $(F,K;\chi) \preccurlyeq (F',K';\chi')$, then $(F,K;\chi)\sqsubseteq (F',K';\chi')$.
\end{cor}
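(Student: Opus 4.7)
The plan is to reduce the claim, via Proposition~\ref{GenericSubstructure}, to verifying condition~(2) of that proposition, and then to observe that this condition is uniformly first-order expressible in the language $L$ with parameters from $K$, so that it transfers from $(F',K';\chi')$ down to $(F,K;\chi)$ by elementarity.

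Concretely, I would fix $n$, polynomials $P_1,\dots,P_n \in \qq[w]$, and parameters $a_1,\dots,a_n \in K$, and suppose that some tuple $g' \in (G')^k$ witnesses the existential appearing in (2) inside $(F',K';\chi')$. Since $G' = \chi'((F')^\times)$, I would rewrite $g' = \chi'(\alpha')$ for some $\alpha' \in ((F')^\times)^k$, which converts the hypothesis into the assertion that $(F',K';\chi')$ satisfies $\exists s\, \varphi(s;a)$, where $a = (a_1,\dots,a_n) \in K^n$ and $\varphi(s;y)$ is the $L$-formula
\[
\varphi(s;y)\ :=\ \bigwedge_{i=1}^{k} s_i \neq 0\ \wedge\ \Big(\sum_{j=1}^{n} y_j P_j\big(\chi(s)\big) = 0\Big)\ \wedge\ \bigvee_{j=1}^{n} P_j\big(\chi(s)\big) \neq 0.
\]
Each $P_j(\chi(s))$ is a legitimate $L$-term on the second sort, since the coefficients of $P_j$ lie in the prime field and can be expressed using $0$, $1$ and the ring operations. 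By elementarity, $\varphi(s;a)$ is realized by some $\alpha \in (F^\times)^k$ inside $(F,K;\chi)$, and setting $g := \chi(\alpha) \in G^k$ produces a tuple of the form required by condition~(2) of Proposition~\ref{GenericSubstructure}.

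I do not expect any serious obstacle here: once Proposition~\ref{GenericSubstructure} is in hand, the entire argument is a direct appeal to the definition of elementary substructure, and the only mild subtlety is the bookkeeping translation between tuples in $G'$ and tuples in $F'$ via $\chi'$, which is harmless since $G'$ is exactly the image $\chi'((F')^\times)$. The point of the earlier work in this section is precisely to have packaged the relevant genericity condition into a form whose first-order expressibility in $L$ is transparent; the present corollary is then the immediate payoff.
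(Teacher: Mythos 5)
Your argument is correct and coincides with the paper's proof, which cites the equivalence of (1) and (2) of Proposition~\ref{GenericSubstructure} precisely because condition (2), once $n$, $P_1,\dots,P_n$, and the parameters $a_1,\dots,a_n$ are fixed, is a first-order $L$-statement over $K$ that transfers from $(F',K';\chi')$ to $(F,K;\chi)$ by elementarity. One cosmetic correction: rational coefficients of the $P_j$ are not literally $L$-terms built from $0$, $1$, and the ring operations, but clearing denominators (absorbing the nonzero rational scalars into the $a_j$) reduces to the integer-coefficient case without affecting the argument.
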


\begin{proof}
This follows from the equivalence between (1) and (2) of Proposition~\ref{GenericSubstructure}.
\end{proof}

\begin{cor}
For all $p$, $\TN_p$ is not model complete, and has no model companion in $L$. The same conclusion applies to $\TN$.
\end{cor}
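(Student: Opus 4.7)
The plan is to leverage Corollary~\ref{ElSub1}, which tells us elementary substructure implies regular substructure, together with the completeness of $\TN_p$ from Theorem~\ref{CompAxiom}. For non-model-completeness, it suffices to exhibit an inclusion of $\TN_p$-models that is not a regular substructure; for the non-existence of a model companion, the plan is to show that any model companion $T^{*}$ of $\TN_p$ would have to be logically equivalent to $\TN_p$ itself, and hence $\TN_p$ would be model complete after all.

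The main technical work, and the real obstacle, is the construction of the non-regular inclusion. Start with a $\TN_p$-model $(F_0, K_0; \chi_0)$ satisfying $\trdeg\big(K_0 \mid \qq(G_0)\big) \geq 1$ (which exists by Theorem~\ref{ThmIso}), write $G_0 := \chi_0(F_0^\times)$, and fix $a \in K_0$ transcendental over $\qq(G_0)$. I aim to embed $(F_0, K_0; \chi_0)$ into a $\TN_p$-model $(F_1, K_1; \chi_1)$ in which $a$ already lies in the image of $\chi_1$. Work inside the Laurent polynomial ring $R := F_0[\alpha, \alpha^{-1}]$ and an algebraically closed extension $K' \supseteq K_0$ of sufficiently large transcendence degree; extend $\chi_0$ multiplicatively to a map $\chi' : R \to K'$ by setting $\chi'(\alpha) := a$ and choosing, for each non-unit irreducible $\alpha - r$ with $r \in F_0^\times$, a value $c_r \in K'$ so that the family $\{c_r : r \in F_0^\times\}$ is algebraically independent over $K_0$. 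Genericity of $\chi'$ reduces via Corollary~\ref{Equiv1} to the algebraic independence of $B_0 \cup \{a\} \cup \{c_r\}$ for some multiplicative basis $B_0$ of $G_0$, which is immediate by construction. Corollary~\ref{Substructures2} now supplies $(F_1, K_1; \chi_1) \models \TN_p$ with $(R, K'; \chi') \sqsubseteq (F_1, K_1; \chi_1)$, whence $(F_0, K_0; \chi_0) \subseteq (F_1, K_1; \chi_1)$. Since $a = \chi_1(\alpha) \in G_1 := \chi_1(F_1^\times)$, $a \in K_0$, and $a \notin \mcl(G_0)$ by transcendence, $G_1$ is not $K_0$-generic over $G_0$. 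Thus the inclusion is not regular, and by Corollary~\ref{ElSub1} not elementary, proving $\TN_p$ is not model complete.

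Now suppose for contradiction that $T^{*}$ is a model companion of $\TN_p$. Since $\TN_p$ is $\forall\exists$-axiomatizable, the standard three-model sandwich argument ($M \models T^{*} \hookrightarrow N \models \TN_p \hookrightarrow N' \models T^{*}$, with model completeness of $T^{*}$ yielding $M \preccurlyeq N'$ and existentials lifting from $N$ to $N'$) gives $T^{*} \vdash \TN_p$. Completeness of $\TN_p$ then forces $T^{*} \equiv \TN_p$: any two $\TN_p$-models are elementarily equivalent, so every $\TN_p$-model satisfies the same first-order theory as any $T^{*}$-model, and hence satisfies $T^{*}$. But then $\TN_p$ inherits the model completeness of $T^{*}$, contradicting the previous paragraph. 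The same arguments apply to $\TN$: the counterexample above lives in any fixed characteristic, and if $S^{*}$ were a model companion of $\TN$, then $S^{*} \cup \{\chara(F) = p\}$ would be a model companion of $\TN_p = \TN \cup \{\chara(F) = p\}$ (model completeness is preserved under adjoining the universal sentence $\chara(F) = p$, and the universal consequences and mutual embeddability transfer cleanly), contradicting what has just been shown.
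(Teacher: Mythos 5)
Your proof is correct, and the non-model-completeness half takes a genuinely different route from the paper. The paper works top-down: it fixes a $\kappa$-saturated extension $(F',K';\chi')$ of a model $(F,K;\chi)$, extracts $a,b\in\chi'({F'}^\times)$ algebraically independent over $K$, and shows the intermediate model whose $K$-sort is $K(a+b)^{\text{ac}}$ is not a regular substructure of $(F',K';\chi')$, the point being that $K(a+b)^{\text{ac}}$ absorbs the trace $a+b$ of two $\chi$-image elements without absorbing $a$ or $b$. You work bottom-up: you enlarge the $F$-sort by a fresh transcendental $\alpha$, set $\chi_1(\alpha)$ equal to an element $a$ already present in $K_0$ and transcendental over $\qq(G_0)$, verify genericity of the extended character on $F_0[\alpha,\alpha^{-1}]$ via Corollary~\ref{Equiv1}, and use Corollary~\ref{Substructures2} to land inside a $\TN_p$-model. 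Regularity then fails on the nose because $a\in\chi_1(F_1^\times)\cap K_0$ is multiplicatively independent over $G_0$ yet algebraic over $K_0$, so Corollary~\ref{ElSub1} defeats model completeness. Your version avoids saturation entirely, which is a modest gain in explicitness. The model-companion halves of the two arguments coincide (the standard $\forall\exists$-plus-completeness sandwich). For $\TN$, your reduction --- adjoining the quantifier-free characteristic axioms (preserved both up and down along embeddings) to a hypothetical model companion of $\TN$ to produce one for $\TN_p$ --- is tighter than the paper's blanket remark that a model companion passes to arbitrary extensions, which as stated needs qualification.
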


\begin{proof}
We show that $\TNp$ is not model complete. Let $(F, K; \chi)$ and $(F', K'; \chi)$  be models of $\TNp$ such that the former is an $L$-substructure of the latter and the latter is $\kappa$-saturated with $\kappa > |F| + |K|$. Set $G = \chi(F^\times)$ and $G'= \chi({F'}^\times) $. We get by saturation $a, b \in G'$ algebraically independent over $K$.
We will show that 
$$(F, K''; \chi)\ \not \sqsubseteq\ (F', K'; \chi)\ \text{ where }\ K''\ =\ K (a + b) ^{ac},$$ which yields the desired conclusion by Corollary~\ref{ElSub1}.
Fix $g'\in G' \cap K''$. Then $g'$ and $a + b$ are algebraically dependent over $K$ and therefore so are $g'$, $a$ and $b$. 
Suppose $G'$ is $K''$-generic over $G $. As a consequence, $g'$, $a$ and $b$ are also multiplicatively dependent over $G$.  By replacing $g'$ with some power of it if needed, we arrange $g' = M(a, b)$ for some $G$-monomial $M$. Then $M(a, b)$ is in $K'' = K(a, b)^{ac}$ and so $a$ and $b$  are algebraically dependent over $K$, a contradiction. As a consequence, $(F, K''; \chi) \not \sqsubseteq (F', K'; \chi)$.

\noindent Suppose $\TNp$ has a model companion $T$ in $L$.
Take any model $\mathscr{M}$ of $T$. 
Then $\mathscr{M}$ is an $L$-substructure of $(F,K;\chi) \models \TN_p$ which itself is an $L$-substructure of $\mathscr{N} \models T$.
Let $\varphi(x)$ be an existential formula in $L$ such that $\TN_p \models \forall x \varphi(x)$. Hence, for all $a$ with components in $\mathscr{M}$ of suitable sorts, $(F,K;\chi) \models \varphi(a)$ and so $\mathscr{N} \models \varphi(a)$. As $T$ is model complete, we also have  $\mathscr{M} \models \varphi(a)$. 
Since $\TN_p$ is  a set of $\forall \exists$-formulas, $\mathscr{M} \models \TN_p$.
On the other hand, $\TNp$ is complete. Hence, $T = \text{Th}(  \mathscr{M}) = \TNp$, a contradiction as $\TN_p$ is not model complete.

It is easy to see that if a theory $T$ has a model companion, then any of its extension also has a model  companion. The final conclusion thus follows.
\end{proof}

\noindent
There are clearly some obstructions for one model of $\TN_p$  to be an elementary submodel of another model of $\TN_p$ that contains it. We will show that these are the only obstructions. For the main theorem of this section we need the following two technical lemmas:

\begin{lem} The following statements hold:
\begin{enumerate}
\item If $(F, K;\chi) \sqsubseteq (F', K';\chi')$ and $ (F', K';\chi') \sqsubseteq (F'', K'';\chi'')$, then we have $(F, K;\chi) \sqsubseteq (F'', K'';\chi'')$.
\item Suppose $ (F_0, K_0;\chi_0) = (F, K; \chi)$ and $ (F_m, K_m;\chi_m) \sqsubseteq (F_{m+1}, K_{m+1};\chi_{m+1})$ for every $m$. If \(F' = \bigcup_m F_m, K' = \bigcup_m K_m \) and $\chi' = \bigcup_m \chi_m $, then we have $(F, K;\chi) \sqsubseteq (F', K';\chi')$.
\end{enumerate}
\end{lem}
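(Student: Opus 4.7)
The plan is to deduce both statements directly from the abstract transitivity properties of the relative notion of genericity established in Corollary~\ref{GenTrans}, combined with the elementary monotonicity in the parameter set $C$.

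For part (1), set $G = \chi(F^\times)$, $G' = \chi'({F'}^\times)$, $G'' = \chi''({F''}^\times)$. By hypothesis $G'$ is $K$-generic over $G$, and $G''$ is $K'$-generic over $G'$. Since $K \subseteq K'$, the latter immediately implies that $G''$ is $K$-generic over $G'$: indeed, algebraic independence of a tuple over $G' \cup K'$ is a stronger property than algebraic independence over $G' \cup K$. I would then invoke the transitivity clause (1) of Corollary~\ref{GenTrans} with $C = K$, $A = G \subseteq A' = G' \subseteq A'' = G''$ to conclude that $G''$ is $K$-generic over $G$. Since the underlying inclusions of fields and the compatibility of $\chi''$ with $\chi$ are inherited from the two given substructure relations, this yields $(F, K; \chi) \sqsubseteq (F'', K''; \chi'')$.

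For part (2), let $G_m = \chi_m(F_m^\times)$ and $G' = \chi'({F'}^\times) = \bigcup_m G_m$. From $(F_m, K_m; \chi_m) \sqsubseteq (F_{m+1}, K_{m+1}; \chi_{m+1})$ we know $G_{m+1}$ is $K_m$-generic over $G_m$; in particular, since $K = K_0 \subseteq K_m$, the set $G_{m+1}$ is $K$-generic over $G_m$ for every $m$. The sequence $\{G_m\}_{m < \omega}$ is therefore exactly of the form required by clause (2) of Corollary~\ref{GenTrans} with parameter set $C = K$ (no limit stages arise below $\omega$, so the coherence hypothesis at limits is vacuous). That corollary then gives that $G' = \bigcup_m G_m$ is $K$-generic over $G_m$ for every $m$; specialising to $m = 0$ yields that $G' = \chi'({F'}^\times)$ is $K$-generic over $G = \chi(F^\times)$. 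Combined with the evident substructure inclusion $(F, K; \chi) \subseteq (F', K'; \chi')$, this gives the desired regular substructure relation.

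There is essentially no hard step: the content of the lemma is precisely that the inductive data packaged into $\sqsubseteq$ reduce, after dropping the dependence of the ``over'' parameter from $K_m$ down to the fixed base $K$, to the abstract transitivity and continuity already established for $C$-genericity. The only point that needs to be explicitly flagged in the write-up is the monotonicity observation that $K$-genericity is implied by $K_m$-genericity whenever $K \subseteq K_m$; once this is noted, both parts are immediate applications of Corollary~\ref{GenTrans}.
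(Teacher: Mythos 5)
Your proof is correct and follows exactly the route the paper intends: the paper's one-line proof (``This follows from Corollary~\ref{GenTrans} and the definition of genericity'') is precisely your combination of the monotonicity of $C$-genericity in the parameter set $C$, which is immediate from the definition, with parts (1) and (2) of Corollary~\ref{GenTrans} applied with the fixed base $C = K$.
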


\begin{proof}
This follows from Corollary~\ref{GenTrans} and the definition of genericity.
\end{proof}

\begin{lem}
Let $ (F_0, K_0;\chi_0) = (F, K; \chi)$, $ (F_m, K_m;\chi_m) \sqsubseteq (F_{m+1}, K_{m+1};\chi_{m+1})$ for each $m$, and \(F' = \bigcup_m F_m, K' = \bigcup_m K_m \), $\chi' = \bigcup_m \chi_m $. If $(F_m, K_m;\chi_m)$ is a model of $\TN_p$ with $|K_m| =\kappa$ for each $m$ and $(F, K; \chi)$
 is $(\kappa, \kappa)$-transcendental, then $ ( F', K'; \chi')$ is a $(\kappa, \kappa)$-transcendental model of $\TN_p$. 
\end{lem}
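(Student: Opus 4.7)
The plan is to check the two conclusions separately: membership in $\TNp$ and the correct transcendence degrees. The key structural input is that the regular substructure relation survives transitively through the chain. First I would apply the preceding lemma: part $(1)$ (transitivity of $\sqsubseteq$) gives $(F_0,K_0;\chi_0)\sqsubseteq (F_m,K_m;\chi_m)$ for every $m$, and part $(2)$ (preservation under countable $\sqsubseteq$-unions) yields
\[
(F,K;\chi)\ =\ (F_0,K_0;\chi_0)\ \sqsubseteq\ (F',K';\chi'),
\]
and similarly $(F_m,K_m;\chi_m)\sqsubseteq (F',K';\chi')$ for every $m$. This relation will be the lever I use later when bounding transcendence degrees.

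Next I would verify $(F',K';\chi')\models\TNp$. A directed union of algebraically closed fields is algebraically closed, so $F'$ and $K'$ are algebraically closed; $\chara(F')=p$ and $\chara(K')=0$ are inherited from the chain. The map $\chi'$ is a well-defined character because the $\chi_m$ are compatible and each is injective and multiplication-preserving. For genericity, any multiplicatively independent tuple $\alpha\in({F'}^\times)^n$ already lies in some ${F_m}^\times$ (it has only finitely many components and the witnessing relations are the same in $F'$ as in $F_m$), and genericity of $\chi_m$ then makes $\chi'(\alpha)=\chi_m(\alpha)$ algebraically independent in the fraction field of $K'$.

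For $(\kappa,\kappa)$-transcendentality, note first that $\kappa\geq\aleph_0$ because each $K_m$ has characteristic $0$ and hence contains $\qq$; so $|F'|,|K'|\leq \aleph_0\cdot\kappa=\kappa$. Combined with $F\subseteq F'$ this already gives $\trdeg(F'\mid\ff_p)=\kappa$ on the nose. For the more delicate degree, I would invoke Proposition~\ref{GenericSubstructure} on the relation $(F,K;\chi)\sqsubseteq (F',K';\chi')$ from the first step: its clause~(3) says $K$ and $\qq\big(\chi'({F'}^\times)\big)$ are linearly disjoint over $\qq\big(\chi(F^\times)\big)$ inside $K'$. Linear disjointness of fields transports any transcendence basis of $K$ over $\qq\big(\chi(F^\times)\big)$ to a set algebraically independent over $\qq\big(\chi'({F'}^\times)\big)$ in $K'$; since that basis has size $\kappa$ by hypothesis, one gets $\trdeg\big(K'\mid \qq(\chi'({F'}^\times))\big)\geq\kappa$, while the reverse inequality is just $|K'|\leq\kappa$.

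The main obstacle I expect is the linear-disjointness-to-transcendence-degree implication: it rests on the standard but nontrivial fact that if $L_1,L_2$ are linearly disjoint over a common subfield $k$ inside an ambient field, then the natural map $L_1\otimes_k L_2\hookrightarrow L_1L_2$ is injective, so a transcendence basis of $L_1$ over $k$ remains algebraically independent over $L_2$. Everything else is formal: closure under directed unions, quotation of the preceding lemma, and a direct application of Proposition~\ref{GenericSubstructure}.
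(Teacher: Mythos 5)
Your proof is correct and follows essentially the same route as the paper: both pass through the preceding lemma to establish $(F,K;\chi)\sqsubseteq (F',K';\chi')$, invoke clause (3) of Proposition~\ref{GenericSubstructure} to get linear disjointness of $K$ and $\qq\big(\chi'({F'}^\times)\big)$ over $\qq\big(\chi(F^\times)\big)$, and use that to carry a $\kappa$-sized algebraically independent set from $K$ over $\qq\big(\chi(F^\times)\big)$ up to $K'$ over $\qq\big(\chi'({F'}^\times)\big)$. The only cosmetic difference is that you verify $(F',K';\chi')\models\TNp$ by direct inspection (algebraic closedness passes to directed unions, genericity is a finitary condition), while the paper dispatches this in one line by citing that $\TNp$ is a set of $\forall\exists$-sentences, which are preserved under unions of chains.
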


\begin{proof}
In addition to the above notations, let $G = \chi( F^\times)$ and $G' = \chi( {F'}^\times)$. As $\TNp$  is a set of $\forall\exists$-formulas,  $(F', K'; \chi')$ is a model of $\TNp$.  
Since $ (F, K; \chi)$ is $(\kappa,\kappa)$-transcendental, there is $a \in K^\kappa$ with all components algebraically independent over $\qq(G)$. 
By the preceding lemma, $$ (F, K; \chi)\ \sqsubseteq\ (F', K'; \chi').$$
By Proposition~\ref{GenericSubstructure}, $K $ and $ \qq(G')$ are linearly disjoint over $ \qq(G)$ in $K'$.
Hence the components of $a$ remain algebraically independent over $\qq(G')$. Therefore, $\trdeg\big(K' \mid  \qq(G')\big) \geq \kappa$.
Also, $\trdeg(F' \mid \ff_p) \geq  \kappa$.
Hence $|F'|=|K'|= \kappa$ by a cardinality argument. 
Thus, $ ( F', K'; \chi')$  must be $(\kappa, \kappa)$-transcendental.
\end{proof}

\begin{thm} \label{ElSub2}
\(\TNp \) is the regular model companion of \( \TNp(\forall) \). That is:
\begin{enumerate}
\item every model of \( \TNp(\forall) \) is a regular substructure of a model of \( \TNp \);
\item when $(F, K; \chi) \sqsubseteq (F',K';\chi')$ are models of $\TN_p$, then we have $(F, K; \chi) \preccurlyeq (F',K';\chi')$.
\end{enumerate}
\end{thm}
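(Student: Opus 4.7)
Part~(1) is immediate from Corollary~\ref{Substructures2} in combination with Proposition~\ref{Substructures}. For part~(2), my plan is to reduce elementary equivalence to a \emph{relative} uniqueness statement for $(\kappa,\kappa)$-transcendental models. Fix $(F,K;\chi)\sqsubseteq(F',K';\chi')$, both models of $\TNp$, and a cardinal $\kappa>|F'|+|K'|+\aleph_0$. Using Lemma~\ref{KappaExt} followed by the chain construction given by the two lemmas immediately preceding the theorem, I produce a $(\kappa,\kappa)$-transcendental elementary extension $(F^*,K^*;\chi^*)\succeq(F,K;\chi)$; repeating the construction starting from $(F',K';\chi')$ produces a $(\kappa,\kappa)$-transcendental elementary extension $(F^{**},K^{**};\chi^{**})\succeq(F',K';\chi')$. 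Corollary~\ref{ElSub1} gives $(F,K;\chi)\sqsubseteq(F^*,K^*;\chi^*)$ and $(F',K';\chi')\sqsubseteq(F^{**},K^{**};\chi^{**})$, and transitivity of $\sqsubseteq$ then also gives $(F,K;\chi)\sqsubseteq(F^{**},K^{**};\chi^{**})$.

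Granted the \emph{relative uniqueness claim} that any two $(\kappa,\kappa)$-transcendental regular extensions of $(F,K;\chi)$ are isomorphic over $(F,K;\chi)$, part~(2) follows by a four-term chase: for every $L$-formula $\varphi$ and tuple $\bar{a}$ from $(F,K;\chi)$,
\begin{multline*}
(F,K;\chi)\models\varphi(\bar{a})\;\Leftrightarrow\;(F^*,K^*;\chi^*)\models\varphi(\bar{a})\\
\;\Leftrightarrow\;(F^{**},K^{**};\chi^{**})\models\varphi(\bar{a})\;\Leftrightarrow\;(F',K';\chi')\models\varphi(\bar{a}),
\end{multline*}
the outer equivalences by elementary extension, the middle one via the isomorphism fixing $\bar{a}$. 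The claim itself I plan to prove by imitating Theorem~\ref{ThmIso} relative to $(F,K;\chi)$: the identity on $F$ extends to a field iso $\sigma\colon F^*\to F^{**}$ by equality of transcendence degrees over $F$, and composing $\sigma$ with $\chi^*$ and $\chi^{**}$ gives a multiplicative bijection $\tau\colon G^*\to G^{**}$ fixing $G=\chi(F^\times)$ pointwise, where $G^*=\chi^*((F^*)^\times)$ and $G^{**}=\chi^{**}((F^{**})^\times)$.

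Since $F$ is algebraically closed and the characters are injective, one checks that $\mcl(G)\cap G^*=G$, so Proposition~\ref{Equiv2} applies with $H=G$; the regularity of $G^*$ over $G$ then yields that $\tau$ preserves algebraic types over $\qq(G)$, giving a field iso $\qq(G^*)\to\qq(G^{**})$ fixing $\qq(G)$. The step I expect to be the technical heart of the argument is the following: by Proposition~\ref{GenericSubstructure}(3), regularity gives linear disjointness of $K$ with $\qq(G^*)$ and with $\qq(G^{**})$ over $\qq(G)$, so $K\qq(G^*)\cong K\otimes_{\qq(G)}\qq(G^*)$ as $K$-algebras and similarly for $K\qq(G^{**})$; tensoring with $\operatorname{id}_K$ then extends the iso $\qq(G^*)\to\qq(G^{**})$ to a field iso $K\qq(G^*)\to K\qq(G^{**})$ fixing $K$. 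Passing to algebraic closures and then using that $K^*$ and $K^{**}$ are algebraically closed of transcendence degree $\kappa$ over $(K\qq(G^*))^{\text{ac}}$ and $(K\qq(G^{**}))^{\text{ac}}$ respectively (a computation that uses $|K|<\kappa$) extends the iso all the way to $K^*\to K^{**}$. Linear disjointness, and hence the regularity hypothesis, is what makes the tensor-product step available; without it the middle field iso need not extend compatibly over $K$, which is precisely why $\TNp$ fails ordinary model completeness as noted in the preceding corollary.
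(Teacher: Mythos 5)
Your proposal is correct but takes a genuinely different route from the paper. The paper's proof proceeds by Morleyization: it passes to the expanded language $L^+$ with new relation symbols $R_{P_1,\ldots,P_n}$ naming the existential formulas that appear in condition (2) of Proposition~\ref{GenericSubstructure}, observes (via the equivalence $(1)\Leftrightarrow(2)$ there) that part~(2) of the theorem becomes ordinary model completeness of the resulting $\forall\exists$-theory $\TNp^+$, and then proves that every model of $\TNp^+$ is existentially closed: given a putative counterexample, one reduces to a $(\kappa,\kappa)$-transcendental base, builds a chain of $L^+$-substructures whose union is existentially closed and still $(\kappa,\kappa)$-transcendental, and invokes the \emph{abstract} (not over-the-base) isomorphism from Theorem~\ref{ThmIso}; since being existentially closed is isomorphism-invariant, this contradicts the choice of counterexample. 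The key trick is that one never needs an isomorphism fixing the base. By contrast, you go for a direct relative categoricity statement---any two $(\kappa,\kappa)$-transcendental regular extensions of $(F,K;\chi)$ are isomorphic \emph{over} $(F,K;\chi)$---and feed it into a four-model chase to obtain elementarity. Your route is more explicit and shows concretely where the regularity hypothesis enters: linear disjointness (Proposition~\ref{GenericSubstructure}(3)) is exactly what lets you extend the field isomorphism $\qq(G^*)\to\qq(G^{**})$ over $K$ via the tensor product, and the observation that this is the step that breaks without regularity nicely explains the failure of ordinary model completeness. The paper's route is slicker because it outsources all the isomorphism-building to the already-proved Theorem~\ref{ThmIso} and never has to carry the base along. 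Your verification of $\mcl(G)\cap G^*=G$ (needed for Proposition~\ref{Equiv2} with $H=G$) and the transcendence-degree bookkeeping in the final step are both sound; the only soft spot is that Lemma~\ref{KappaExt} already produces the $(\kappa,\kappa)$-transcendental elementary extension outright, so the appeal to the two chain lemmas preceding the theorem is unnecessary (harmless, but worth trimming), and you should note that $K\otimes_{\qq(G)}\qq(G^*)$ is an integral domain whose fraction field is $K\qq(G^*)$ before invoking the tensor identification.
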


\begin{proof}

We have (1) follows from Corollary~\ref{Substructures2}. The proof of (2) requires some preparation. We let $L^{+}$ be the language obtained by adding to $L$ an $n$-ary relation $R_{P_1,\ldots, P_n}$ for each $n$, and each choice of polynomials $P_1, \ldots, P_n \in \qq[w]$. The theory $\TNp^+$ is obtained by adding to $\TN_p$ the following axioms for each choice of $n,P_1, \ldots, P_n$:
$$R_{P_1,\ldots, P_n}(x)\leftrightarrow \exists s \left( \Big( \bigvee_{i=1}^n P_i \big(\chi(s)\big) \neq 0\Big) \wedge \Big( x_1 P_1 \big(\chi(s) \big) + \cdots+ x_n P_n \big(\chi(s) \big) =0 \Big)  \right). $$ 
We note that $\TNp^+$ is still a complete $\forall\exists$-theory. If $(F, K ; \chi)$ is a model of $\TN_p$, we will let $(F, K ; \chi, R)$ be its natural expansion to a model of $\TNp^+$; here, $R$ represents all the possible $R_{P_1,\ldots, P_n}$ for simplicity of notation. Then, by equivalence of $(1)$ and $(2)$ of Proposition~\ref{GenericSubstructure}, (2) of this theorem is equivalent to saying the theory $\TNp^+$ is model complete in $L^+$. 

It suffices to show that all models of $\TNp^+$ are existentially closed. Suppose we have a counterexample $ (F, K; \chi, R)$. We first reduce to the case where $ (F, K; \chi)$ is moreover $(\kappa, \kappa)$-transcendental for some infinite $\kappa$. By assumption, there is an $\TN_p^+$-model $(F'', K''; \chi'', R'')$ extending $ (F, K; \chi, R)$ as a $L^+$-substructure such that the latter is not existentially closed in the former. Consider the structure $(F'', K''; \chi'', R'', F, K, \chi, R)$ in the language where $F, K, R, \chi$ are regarded as relations on $(F'', K''; \chi'', R'')$.
Note that if we replace this structure with an elementary extension we will still have $ (F, K; \chi, R)$ a non-existentially closed $\TN_p^+$-submodel of $ (F'', K''; \chi'', R'')$. Using a similar trick as in Lemma~\ref{KappaExt}, we can add the condition that $ (F, K; \chi)$ is $(\kappa, \kappa)$-transcendental.

Next, we will construct $ (F', K'; \chi', R')$ existentially closed such that $(F',K'; \chi')$ is $(\kappa,\kappa)$-transcendental. We start with $ (F_0, K_0; \chi_0, R_0) = (F, K; \chi, R) $, the structure obtained at the end of the previous paragraph, and for each $m>0$ construct the $\TNp^+$-model $ (F_m, K_m; \chi_m, R_m)$ as follows.
Choose $(F_{m+1}, K_{m+1}; \chi_{m+1}, R_{m+1})$ to be an $\TNp^+$-model extending $(F_m, K_m; \chi_m, R_m)$ realizing a maximal consistent set of existential formulas with parameters from $(F_m, K_m; \chi_m, R_m$); concurrently, we use downward L\"owenheim-Skolem theorem to arrange $|K_m| = \kappa$.  Let $$F'\ =\ \bigcup_m F_m,\ K'\ =\ \bigcup_m K_m,\ \chi'\ =\ \bigcup_m \chi_m,\ R'\ =\ \bigcup_m R_m.$$ By construction, $(F',K'; \chi')$ is an existentially closed model of $\TN_p^+$. By the equivalence between (1) and (2) of Proposition~\ref{GenericSubstructure},  $( F_m, K_m; \chi_m)$ is a regular substructure of $( F_{m+1}, K_{m+1}; \chi_{m+1})$.
It follows from the preceding lemma that $(F', K'; \chi')$ is $(\kappa, \kappa)$-transcendental.

Finally, by Theorem~\ref{ThmIso}, $ (F, K; \chi)$  and  $ (F', K'; \chi')$ are isomorphic. Hence, $(F, K; \chi, R)$ is also isomorphic to $ (F', K'; \chi', R')$, a contradiction to the fact that the former is not existentially closed but the latter is.
\end{proof}

\begin{cor} \label{ElementarilyEmbeddable}
Suppose $(F, K; \chi) \models \TN_p$ is $(\kappa, \lambda)$-transcendental and $(F', K'; \chi') \models \TN_p$ is $(\kappa', \lambda')$-transcendental. Then $(F, K; \chi) $ can be elementarily embedded into $(F', K'; \chi')$ if and only if $\kappa \leq \kappa'$ and    $\lambda \leq \lambda'$.
\end{cor}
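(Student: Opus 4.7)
The proof splits along the two implications, both handled via the regular-substructure machinery of this section.

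For the forward direction, suppose $(F,K;\chi) \preccurlyeq (F',K';\chi')$. Corollary~\ref{ElSub1} upgrades this to $(F,K;\chi) \sqsubseteq (F',K';\chi')$, so in particular $F \subseteq F'$ is a subfield inclusion and $\kappa = \trdeg(F \mid \ff_p) \leq \trdeg(F' \mid \ff_p) = \kappa'$. Setting $G = \chi(F^\times)$ and $G' = \chi'({F'}^\times)$, Proposition~\ref{GenericSubstructure}(3) gives that $\qq(G')$ and $K$ are linearly disjoint over $\qq(G)$ in $K'$; a transcendence basis of $K$ over $\qq(G)$, of cardinality $\lambda$, therefore remains algebraically independent over $\qq(G')$ inside $K'$, yielding $\lambda \leq \lambda'$.

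For the backward direction, assume $\kappa \leq \kappa'$ and $\lambda \leq \lambda'$. The plan is to build a $(\kappa',\lambda')$-transcendental elementary extension $(F_1,K_1;\chi_1)$ of $(F,K;\chi)$; by Theorem~\ref{ThmIso} it must then be isomorphic to $(F',K';\chi')$, and composing the elementary inclusion with this isomorphism yields the desired elementary embedding of $(F,K;\chi)$ into $(F',K';\chi')$.

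To construct $(F_1,K_1;\chi_1)$ I would adapt the compactness argument of Lemma~\ref{KappaExt} to prescribe the two transcendence degrees separately: one adjoins to the elementary diagram of $(F,K;\chi)$ a family of $\kappa'$ fresh first-sort constants that are pairwise transcendental over $F$ together with a family of $\lambda'$ fresh second-sort constants that are algebraically independent over $K$ and over $\qq(\chi_1(F_1^\times))$. Finite satisfiability for the first-sort part is immediate from $\trdeg(F \mid \ff_p) \leq \kappa'$, and for the second-sort part one invokes the lemma preceding Lemma~\ref{KappaExt}, which guarantees that $K$ has infinite transcendence degree over $\qq(\chi(F))$, so any finitely many new algebraically-independent elements can be produced. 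A downward L\"owenheim--Skolem step, followed by passing to the Skolem hull of $(F,K;\chi)$ together with exactly the prescribed witnesses, yields an elementary extension of cardinality $\max(\kappa',\lambda',\aleph_0)$ whose transcendence degrees are $\kappa'$ and $\lambda'$ on the nose. The main delicate point is the cardinality bookkeeping needed to hit these degrees exactly (in particular in the finite case) rather than merely bound them below, but this is the same style of accounting carried out in Lemma~\ref{KappaExt}.
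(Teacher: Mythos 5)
Your forward direction is exactly the paper's argument: pass to $\preccurlyeq$, invoke Corollary~\ref{ElSub1} to get $\sqsubseteq$, read off $\kappa\le\kappa'$ from the field inclusion, and use the linear-disjointness clause of Proposition~\ref{GenericSubstructure} to transfer a transcendence basis of $K$ over $\qq(G)$ to one over $\qq(G')$, giving $\lambda\le\lambda'$.

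The backward direction has a genuine gap, and it is precisely the ``delicate point'' you flag and then wave past. The compactness step is fine (it is the argument of Lemma~\ref{KappaExt} run sort-by-sort). The problem is the final step: ``passing to the Skolem hull of $(F,K;\chi)$ together with exactly the prescribed witnesses \ldots yields an elementary extension \ldots whose transcendence degrees are $\kappa'$ and $\lambda'$ on the nose.'' That is not justified, and I do not see how to justify it. The Skolem hull of $(F,K;\chi)\cup\{\alpha_i\}_{i<\kappa'}\cup\{a_j\}_{j<\lambda'}$ inside the big elementary extension may well contain first-sort elements beyond those algebraic over $F\cup\{\alpha_i\}$: nothing stops a Skolem function from producing a new transcendental in $F_1$ from second-sort parameters. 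Cardinality bookkeeping only bounds $\trdeg(F_1\mid\ff_p)$ by $\max(\kappa',\lambda',\aleph_0)$, which is useless when $\kappa'<\lambda'$, and when $\kappa'$ is finite it gives nothing at all. The reason the same move is harmless in Lemma~\ref{KappaExt} is that there $\kappa=\lambda$ and $\kappa\ge\max(|F|,|K|)\ge\aleph_0$, so both sorts are pinned to exactly $\kappa$ by cardinality alone; that symmetry is exactly what is missing in the asymmetric $(\kappa',\lambda')$ case.

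The paper avoids the Skolem hull entirely and instead builds the extension by hand, rerunning the algebraic construction of Proposition~\ref{Substructures}: first extend $F$ to an algebraically closed $F'$ of transcendence degree $\kappa'$, then extend $\chi$ to $\chi'\colon F'^\times\to {K''}^\times$ into a large field $K''$ by sending a multiplicative basis of $F'^\times$ over $F^\times$ to elements transcendental over $K$ (so that $G'=\chi'(F'^\times)$ is $K$-generic over $G$), and finally carve out $K'\subseteq K''$ algebraically closed, containing $K(G')$, with $\trdeg(K'\mid\qq(G'))=\lambda'$; the linear disjointness coming from $K$-genericity guarantees $\trdeg(K(G')\mid\qq(G'))=\lambda\le\lambda'$, so such a $K'$ exists. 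By construction $(F,K;\chi)\sqsubseteq(F',K';\chi')$, and Theorem~\ref{ElSub2}(2) upgrades this to $\preccurlyeq$; Theorem~\ref{ThmIso} then identifies $(F',K';\chi')$ with the given $(\kappa',\lambda')$-transcendental model. If you want to salvage your own route, replace ``Skolem hull'' by an explicitly chosen algebraically closed pair $(F_1,K_1)$ inside $(F'',K'';\chi'')$ — take $F_1$ the algebraic closure of $F\cup\{\alpha_i\}$ in $F''$ and $K_1$ the algebraic closure of $K\cup\chi''(F_1)\cup\{a_j\}$ in $K''$ — and then observe that $(F,K;\chi)\sqsubseteq(F_1,K_1;\chi_1)$ (genericity over $K$ passes to the smaller group) so that Theorem~\ref{ElSub2} applies; but at that point you are doing the paper's construction with different packaging.
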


\begin{proof}
We prove the forward direction. Suppose $(F, K; \chi)$ and $(F', K'; \chi')$ are as stated and $(F, K; \chi)$ is elementarily embeddable into $(F', K'; \chi')$. We can arrange that $(F, K; \chi) \preccurlyeq (F', K'; \chi') $. Clearly, $\kappa' \geq \kappa$.  Furthermore, by Corollary~\ref{ElSub1} and $1 \Leftrightarrow(3)$ of Proposition~\ref{GenericSubstructure}, $\qq\big(\chi({F'}^\times)\big)$ and $K$ are linearly disjoint over $\qq\big(\chi(F^\times)\big)$ in $K'$, and so  $\lambda' \geq \lambda$.

For the backward direction, using Theorem~\ref{ThmIso} it suffices to show that a fixed $(\kappa, \lambda)$-transcendental model $(F, K; \chi)$ of $ \TN_p$ has a $(\kappa', \lambda')$-transcendental elementary extension. Find $F'$ extending $F$ with $|F'| =\kappa'$, take $K''$ a sufficiently large algebraically closed field containing $K$ and construct $K' \subseteq K''$ in the same fashion as in the proof of Proposition~\ref{Substructures} to obtain $(F', K'; \chi')$ such that $(F,K;\chi) \sqsubseteq (F',K';\chi')$. This is the desired model by the preceding theorem.
\end{proof}

\section{Definable sets I}
\noindent
We keep the notation conventions in the first paragraphs of sections 2 and 4. Moreover, we assume in this section that $ (F, K; \chi) \models \TN_p$. A set $X\subseteq K^n$ is {\it definable in  the field $K$} if it is definable in the underlying field $K$. 
In this case, we use $\mr_K(X), \md_K(X)$ to denote the corresponding Morley rank and degree. %
We equip $K^n$ with the Zariski topology on $K$, also referred to as the  {\it $K$-topology}. A  {\it $K$-algebraic set} is a closed set in this topology.
We define the corresponding notions for $F$ in a similar fashion. In this section, we show that definable sets in a model of $\TN_p$ has a geometrically and syntactically simple description. The following observation is immediate:
%

\begin{prop} \label{transferRemark}
Let $\chi: F^k \times K^n \to K^{k+n}: ( \alpha, a) \mapsto \big( \chi(\alpha), a\big) $.
If $X \subseteq F^k \times K^n$  is definable, then $\chi \res_X: X \to \chi(X)$ is a definable bijection. 
Moreover, $X \subseteq K^n$ is definable over $( \gamma, c) \in F^l \times K^m$ if and only if $X$ is definable over  $\big( \chi(\gamma), c\big) \in K^{l+m}$. 
\end{prop}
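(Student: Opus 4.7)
The plan is to treat both parts as essentially formal consequences of two facts already at our disposal: $\chi\colon F\to K$ is injective (a defining feature of genericity), and $\chi$ is a function symbol of $L$, so applying $\chi$ or substituting inside $L$-formulas stays inside $L$.

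For the first assertion, I would begin by noting that componentwise application of $\chi$ yields an injection $F^k\to K^k$; since the $K^n$-coordinates are fixed, the full map $\chi\colon F^k\times K^n\to K^{k+n}$ is injective, and hence $\chi\res_X$ is automatically a bijection onto its image. To see it is a \emph{definable} bijection I would exhibit its graph in $(F^k\times K^n)\times K^{k+n}$ as the set cut out by the $L$-formula
\[ X(u,x)\,\wedge\,\bigwedge_{i=1}^{k}\bigl(y_i=\chi(u_i)\bigr)\,\wedge\,\bigwedge_{j=1}^{n}\bigl(z_j=x_j\bigr),\]
and then obtain $\chi(X)$ as the projection of this graph onto the $(y,z)$-coordinates, hence definable.

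For the second assertion, both directions reduce to formula rewriting. For the forward direction, suppose $X\subseteq K^n$ is defined by $\varphi(x;v,w)$ at parameters $(\gamma,c)$, with $v$ of the first sort and $w$ of the second. I would rewrite $X$ as the set defined by
\[ \psi(x;z,w)\ :=\ \exists v\,\Bigl(\bigwedge_{i=1}^{l}\bigl(\chi(v_i)=z_i\bigr)\,\wedge\,\varphi(x;v,w)\Bigr)\]
at the second-sort parameters $(\chi(\gamma),c)$; injectivity of $\chi$ forces the existentially quantified $v$ to equal $\gamma$, so $\psi(x;\chi(\gamma),c)$ is equivalent to $\varphi(x;\gamma,c)$. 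For the backward direction, if $\psi(x;z,w)$ defines $X$ at $(\chi(\gamma),c)$, then the $L$-formula obtained by substituting $\chi(v_i)$ for each $z_i$ in $\psi$ is a formula with first-sort free parameters $v$ and second-sort free parameters $w$; at $(v,w)=(\gamma,c)$ it defines $X$.

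I do not expect any substantive obstacle: the statement is a bookkeeping observation separating the two sorts, and the only place where anything nontrivial enters is the use of injectivity of $\chi$ to invert the character on parameters in the forward direction of (2). The main point in writing this up carefully will simply be to keep the sorts of all variables and parameters straight.
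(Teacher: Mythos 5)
Your argument is correct, and the paper offers no written proof of this proposition---it is introduced by ``The following observation is immediate''---so your write-up is simply making explicit the bookkeeping the authors left to the reader. Your two uses of injectivity of $\chi$ (to see that $\chi\res_X$ is a bijection, and to invert $\chi$ on parameters in the forward direction of the second assertion) are both justified because the section hypotheses put us in a model of $\TNp$, where $\chi$ is generic and in particular injective. The formula manipulations (graph formula, existential rewriting, substitution of $\chi(v_i)$ for $z_i$) are all sound and are exactly what one would expect the authors to have in mind.
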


\noindent
Hence, we restrict our attention to definable subsets of $K^n$. For a similar reason, we only need to consider sets definable over $c \in K^m$.

Suppose $(H_b)_{ b \in Y}$ and $(H'_{b'})_{ b' \in Y'}$ are families of subsets of $K^{n}$. We say $(H_b)_{ b \in Y}$ {\bf contains}  $(H'_{b'})_{ b' \in Y'}$ if for each $b' \in Y'$, there is $b\in Y$ such that $H_b = H'_{b'}$; $(H_b)_{ b \in Y}$ and $(H'_{b'})_{ b' \in Y'}$ are {\bf equivalent} if each contains the other.
A {\bf combination} of $(H_b)_{ b \in Y}$ and $(H'_{b'})_{ b' \in Y'}$ is any family of subsets of $K^n$ containing both $(H_b)_{ b \in Y}$ and $(H'_{b'})_{ b' \in Y'}$, which is minimal with these properties in the obvious sense.
A {\bf fiberwise intersection} of $(H_b)_{ b \in Y}$  and $(H'_{b'})_{ b' \in Y'}$ is any family of subsets of $K^n$ equivalent to $( H_b \cap H'_{b'})_{(b,b') \in Y \times Y'}$. 
A {\bf fiberwise union} of $(H_b)_{ b \in Y}$ and $(H'_{b'})_{ b' \in Y'}$ is any family of subsets of $K^n$ equivalent to $( H_b \cup H'_{b'})_{(b,b') \in Y \times Y'}$. 
A {\bf fiberwise product} of $(H_b)_{ b \in Y}$ and $(H'_{b'})_{ b' \in Y'}$ is any family of subsets of $K^{2n}$ equivalent to $( H_b \times H'_{b'})_{(b,b') \in Y \times Y'}$; this definition can be generalized in an obvious way for two families of subsets of different ambient spaces. The following is immediate from the above definitions:

\begin{lem} \label{fiber}
Suppose $(H_b)_{ b \in Y}$  and $(H'_{b'})_{ b' \in Y'}$ are families of subsets of $K^n$. Let $X= \bigcup_{b \in Y} H_b$ and $X'= \bigcup_{b' \in Y'} H'_{b'}$. Then we have the following:
\begin{enumerate}
\item $X \cup X'$ is the union of any combination of $(H_b)_{ b \in Y}$ and $(H'_{b'})_{ b' \in Y'}$;
\item $X \cap X'$ is the union of any fiberwise intersection of  $(H_b)_{ b \in Y}$ and $(H'_{b'})_{ b' \in Y'}$;
\item $X \cup X'$ is the union of any fiberwise union of $(H_b)_{ b \in Y}$ and $(H'_{b'})_{ b' \in Y'}$;
\item $X \times X'$ is the union of any fiberwise product of $(H_b)_{ b \in Y}$ and $(H'_{b'})_{ b' \in Y'}$. This part of the lemma can be generalized in an obvious way for two families of subsets of different ambient spaces.
\end{enumerate}
\end{lem}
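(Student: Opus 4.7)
The plan is to unpack each definition and verify the set-theoretic identity directly; all four parts are essentially diagram-chases once the vocabulary of combinations, fiberwise intersections/unions/products is translated back into honest subsets of $K^n$.

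For part (1), I would fix a combination $(G_c)_{c\in Z}$ of $(H_b)_{b\in Y}$ and $(H'_{b'})_{b'\in Y'}$. By the definition of combination, every $H_b$ and every $H'_{b'}$ appears as some $G_c$, so $X\cup X'\subseteq \bigcup_{c\in Z}G_c$. For the reverse inclusion, the minimality clause in the definition of combination forces every $G_c$ to be either some $H_b$ or some $H'_{b'}$ (otherwise one could delete $c$ from $Z$ and still contain both families), whence $\bigcup_c G_c\subseteq X\cup X'$.

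For parts (2), (3), (4), I would reduce to the explicit representative family on the index set $Y\times Y'$, since a family equivalent to another has the same union. Part (2) then follows from the standard distributivity
\[
\bigcup_{(b,b')\in Y\times Y'}(H_b\cap H'_{b'})\;=\;\Bigl(\bigcup_{b\in Y}H_b\Bigr)\cap\Bigl(\bigcup_{b'\in Y'}H'_{b'}\Bigr)\;=\;X\cap X',
\]
and part (4) from the corresponding identity
\[
\bigcup_{(b,b')\in Y\times Y'}(H_b\times H'_{b'})\;=\;X\times X'.
\]
Part (3) is immediate from $(H_b\cup H'_{b'})=H_b\cup H'_{b'}$ together with the observation that as $(b,b')$ ranges over $Y\times Y'$, each $H_b$ and each $H'_{b'}$ is swept out (provided both $Y$ and $Y'$ are nonempty).

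The only delicate point, and essentially the only place where anything non-formal happens, is the degenerate case where one of $Y,Y'$ is empty; in that case $Y\times Y'=\emptyset$, so one must verify the identities reduce correctly (for (2) and (4) both sides are empty, and (3) needs the mild observation that the statement is vacuously true since then $X$ or $X'$ is empty and the minimality clause in the definitions collapses accordingly). I would handle this in a single remark rather than separate cases. I expect no real obstacle; the lemma is bookkeeping designed to be invoked repeatedly in the sequel.
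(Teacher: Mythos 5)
The paper offers no proof: Lemma~\ref{fiber} is introduced with the sentence ``The following is immediate from the above definitions,'' so your line-by-line unpacking is exactly the verification the authors are leaving to the reader, and parts (1), (2), (4) are handled correctly.

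One small slip in your treatment of the degenerate case for part (3): you assert the statement is ``vacuously true'' when one of $Y,Y'$ is empty, invoking a ``minimality clause.'' Neither is right. First, there is no minimality clause in the definition of fiberwise union (that appears only in the definition of combination). Second, if $Y=\emptyset$ but $Y'\neq\emptyset$, then $Y\times Y'=\emptyset$, so any fiberwise union is (equivalent to) the empty family and its union is $\emptyset$, whereas $X\cup X'=X'$ may well be nonempty; the identity in (3) genuinely fails there rather than holding vacuously. This is really a flaw in the stated lemma (which the paper waves away), not in your approach, but the safe repair is to note that part (3) requires both index sets to be nonempty, or to build the convention that the fiberwise union is taken over $(Y\sqcup\{*\})\times(Y'\sqcup\{*\})$ with $H_*=H'_*=\emptyset$, so that both families are swept out even when one index set is empty. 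Everything else is fine.
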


\noindent
A family $(X_b)_{ b \in Y}$ of subsets of $K^n$ is { \bf definable (over $c$)} if both $Y$ and the set  $$\big\{(a,b) \in K^n \times Y : (a,b) \in X_b  \big\} $$ are definable (over $c$).
We note that if $(X_b)_{ b \in Y}$ is definable over $c$, then for each $b \in Y$, $X_b$ is definable over $(b,c)$ but not necessarily over $c$. 

For two families of subsets of $K^n$ which are definable (over $c$), we can choose  a combination, a fiberwise intersection, a fiberwise union and a fiberwise product of these two families to be definable (over $c$); the statement about fiberwise product can be generalized in an obvious way for two families of subsets of different ambient spaces.
A {\bf presentation} of  $X \subseteq K^n$ is a definable family $( H_\alpha)_{\alpha \in D}$ such that $$X\ =\ \bigcup_{\alpha \in D} H_\alpha\ \text{ and }\ D\subseteq F^k\ \text{ for some } k.$$ 
An {\bf algebraic presentation} $( V_\alpha)_{\alpha \in D}$ of $S\subseteq K^n$ is a presentation of $S$ such that for each $\alpha \in D$, $V_\alpha$ is $K$-algebraic.
If $ S \subseteq K^n$ has an algebraic presentation (which is definable over $c$), we say $S$ is { \bf algebraically presentable (over $c$)}; if $S\subseteq K^n$ has an algebraic presentation which is $0$-definable, we say $S$ is {\bf $0$-algebraically presentable}. If $S$ plays no important role, we sometimes use the term {\it algebraic presentation} without mentioning $S$. For the rest of this section, $S$ is an algebraically presentable subset of its ambient space. It is easy to observe that:

\begin{lem}
Suppose  $(V_\alpha)_{ \alpha \in D}$  and  $(V'_{\alpha'})_{ \alpha' \in D'}$ are algebraic presentations definable over $c \in K^m$.
We can choose a combination (fiberwise intersection, fiberwise union, fiberwise product) of $(V_\alpha)_{ \alpha \in D}$ and $(V'_{\alpha'})_{ \alpha' \in D'}$  to also be an algebraic presentation definable over $c$. 
\end{lem}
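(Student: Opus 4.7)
The plan is to reduce each of the four operations to standard closure properties of the class of $K$-algebraic sets, together with a careful choice of index set living in some $F^{l}$ and definable over $c$. The key algebraic-geometry input is that finite intersections, finite unions, and Cartesian products of $K$-algebraic sets are again $K$-algebraic (corresponding, respectively, to sums of defining ideals, products of defining ideals, and the natural defining ideal in the doubled variable set).

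For the fiberwise intersection, fiberwise union, and fiberwise product, I would take the new index set to be $D \times D' \subseteq F^{k+k'}$, which is definable over $c$ by hypothesis. The families $(V_\alpha \cap V'_{\alpha'})_{(\alpha,\alpha') \in D \times D'}$, $(V_\alpha \cup V'_{\alpha'})_{(\alpha,\alpha') \in D \times D'}$, and $(V_\alpha \times V'_{\alpha'})_{(\alpha,\alpha') \in D \times D'}$ are then equivalent to the families named in the definitions. By the algebraic-geometry input, each fiber is $K$-algebraic, and the respective total sets such as $\{(a, \alpha, \alpha') : a \in V_\alpha \cap V'_{\alpha'}\}$ are definable over $c$ because the two original total sets are.

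For the combination, the only substantive step is to produce a single ambient $F^{l}$ carrying disjoint copies of $D$ and $D'$. Since the $F$-sort is an algebraically closed field and in particular contains the distinct elements $0$ and $1$, I would set $l := k + k' + 1$ and take
\[
D''\ :=\ \bigl(D \times \{0\}^{k'} \times \{0\}\bigr)\ \cup\ \bigl(\{0\}^{k} \times D' \times \{1\}\bigr)\ \subseteq\ F^{l},
\]
together with the family $(W_\beta)_{\beta \in D''}$ defined by case analysis on the last coordinate of $\beta$: if it is $0$, declare $W_\beta := V_\alpha$ where $\alpha$ is read off the first $k$ coordinates, and otherwise $W_\beta := V'_{\alpha'}$ where $\alpha'$ is read off the middle $k'$ coordinates. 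Then $D''$ is definable over $c$, each fiber of $(W_\beta)$ equals some $V_\alpha$ or some $V'_{\alpha'}$ (so $(W_\beta)_{\beta \in D''}$ is manifestly minimal and therefore a combination), and the total set $\{(a,\beta) : a \in W_\beta\}$ is definable over $c$ as a disjoint union of two $c$-definable pieces.

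No real obstacle arises here: the algebraic-geometry closure properties are classical, the $c$-definability of the new total sets is immediate from that of the old ones, and the minimality in the combination case is automatic because every fiber of $(W_\beta)$ is \emph{exactly} one of the original $V_\alpha$ or $V'_{\alpha'}$, with no redundant entries introduced.
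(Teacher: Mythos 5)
Your proof is correct, and it fills in the details the paper leaves implicit (the paper states the lemma without proof as an easy observation following the preceding remark about definable families). The three fiberwise operations are handled exactly as one would expect: index over $D \times D' \subseteq F^{k+k'}$, invoke the closure of $K$-algebraic sets under finite intersection, finite union, and product, and note that the graph of the new family is obtained from the graphs of the old ones by a $c$-definable Boolean construction. Your handling of the combination is the one place where any thought is required, since the definition demands a single index set inside some $F^l$, and your flag coordinate (using $0 \neq 1$ in the field $F$) cleanly produces a disjoint copy of $D$ and $D'$ in $F^{k+k'+1}$ with the total set still $c$-definable; minimality holds because the collection of fibers of $(W_\beta)_{\beta \in D''}$ is exactly $\{V_\alpha : \alpha \in D\} \cup \{V'_{\alpha'} : \alpha' \in D'\}$. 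The only cosmetic suggestion is to make explicit that each resulting family is genuinely an algebraic \emph{presentation}, i.e., by Lemma \ref{fiber} its union is $S \cap S'$, $S \cup S'$, $S \times S'$, or $S \cup S'$ respectively for the underlying sets $S, S'$, but this is immediate and your argument is complete as written.
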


\noindent
An algebraically presentable set can be considered geometrically simple, and next we show that $0$-algebraically presentable sets are also syntactically simple.

\begin{lem} \label{APsimple1}
Suppose $ S \subseteq K^n$ is algebraically presentable over $c$. Then we can find an algebraic presentation $(V_\alpha)_{ \alpha \in D}$ and a system of polynomials $P$ in $\qq(c)[w,x]$ such that $V_\alpha = Z\Big( P\big(\chi(\alpha),x\big)\Big)$ for all $\alpha \in D$.
\end{lem}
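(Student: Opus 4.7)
The plan is to take an arbitrary algebraic presentation $(W_\beta)_{\beta\in D'}$ of $S$ over $c$ defined by an $L$-formula $\phi(x,u,c)$, rewrite it so that each fiber is cut by a single polynomial system with coefficients in $\qq(c)$ evaluated at $\chi$-images of an enlarged $F$-valued index, and then relabel. First I would invoke uniform Noetherianity in the ACF $K$ applied to the definable family $(W_\beta)$: since each fiber is $K$-algebraic and the family is definable, there exist a degree bound $d$ and a number $N$ such that every $W_\beta$ is the common zero set of at most $N$ polynomials in $K[x]$ of degree at most $d$.

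Next I would analyze $\phi(x,u,c)$. In the two-sorted language $L$, every $K$-term appearing in $\phi$ is a $\zz$-polynomial in $K$-variables, the $K$-parameters $c$, and $\chi(t)$ for a finite list of $F$-terms $t_1(u),\ldots,t_r(u)$. Running ACF quantifier elimination in the $K$-sort, with $K$-parameters taken to be $c$ together with $\chi(t_j(u))$, reduces $\phi(x,\beta,c)$ for each fixed $\beta\in D'$ to a boolean combination of polynomial equations in $x$ with coefficients in $\qq\bigl(c,\chi(t_1(\beta)),\ldots,\chi(t_r(\beta))\bigr)$. Since $W_\beta$ is $K$-algebraic, hence closed, replacing unions by products of polynomials (and using the standard Rabinowitsch/prime-decomposition argument to kill any remaining inequations from a closed constructible set) yields a system of polynomials whose common zero set equals $W_\beta$ and whose coefficients lie in the same field. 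Clearing $\qq(c)$-denominators then packages these coefficients as $\qq(c)$-polynomials in the tuple $\chi(t(\beta))$, giving a fixed system $P\in\qq(c)[w,x]$ with $W_\beta=Z\bigl(P(\chi(t(\beta)),x)\bigr)$ for every $\beta\in D'$.

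Finally, I would set $\alpha:=t(u)$, define $D:=\{t(\beta):\beta\in D'\}\subseteq F^r$ (which is definable over $c$), and put $V_\alpha:=Z\bigl(P(\chi(\alpha),x)\bigr)$ for $\alpha\in D$. By construction $V_{t(\beta)}=W_\beta$ for every $\beta\in D'$, so $\bigcup_{\alpha\in D}V_\alpha=\bigcup_{\beta\in D'}W_\beta=S$ and $(V_\alpha)_{\alpha\in D}$ is a definable algebraic presentation of $S$ of the desired form.

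The main obstacle is the quantifier-elimination step when $\phi$ contains $F$-quantifiers interleaved with $K$-quantifiers, in which case the $\chi(t_j(u))$ cannot be frozen as $K$-parameters before running ACF QE. I would handle this by eliminating $K$-quantifiers iteratively inside each $F$-quantifier, at every stage absorbing newly witnessed $F$-terms into the parameter list, and finally absorbing any remaining existentially quantified $F$-variables as additional coordinates of $\alpha$. The bookkeeping needed to ensure that the enlarged $D$ remains definable over $c$, and that the coefficients of the resulting polynomials genuinely lie in $\qq(c,\chi(\alpha))$ rather than a proper algebraic extension (invoking that a Zariski-closed set defined over a field is cut out by polynomials over that same field in characteristic zero), is the delicate point.
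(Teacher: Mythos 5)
Your proposal has a genuine gap at its central step. The family $(W_\beta)_{\beta\in E}$ is definable in the two-sorted structure $(F,K;\chi)$, so the formula $\phi(x,u,c)$ is an arbitrary $L$-formula whose quantifiers can range over \emph{both} sorts and whose $K$-quantified variables can interact with $F$-quantified ones through $\chi$. You cannot "run ACF quantifier elimination in the $K$-sort" on such a formula, because it is not an ACF formula and the $\chi(t(u))$ cannot be frozen as parameters while $u$ is still under a quantifier. You flag this yourself as the delicate point, but the iterative elimination you sketch is not an argument: it is unclear that it terminates or that it can ever reduce $\phi$ to the required form, precisely because a $K$-quantifier inside an $F$-quantifier can bind a variable whose relationship to the $F$-sort passes through $\chi$. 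Worse, any genuine quantifier-reduction argument here is circular: this lemma feeds into the proof of Theorem~\ref{QuanRed}, which is the theory's quantifier-reduction result, so you cannot assume a syntactic normal form for $L$-formulas at this stage. A related issue undercuts your opening step as well: "uniform Noetherianity" for $(W_\beta)$ would follow from ACF-definability of the family, but here the family is $L$-definable, so degree bounds are exactly the sort of thing that needs to be \emph{proved}, not assumed.

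The paper avoids all of this with a semantic argument. For each fixed $\beta\in E$, every automorphism $\sigma$ of the field $K$ fixing $\chi(F)$ and $c$ pointwise extends (by pairing with $\mathrm{id}_F$) to an automorphism of $(F,K;\chi)$, which therefore fixes $W_\beta$ setwise; since $W_\beta$ is $K$-algebraic, Galois descent in ACF (characteristic $0$) gives that $W_\beta = Z\bigl(P(\chi(\alpha),x)\bigr)$ for some finite tuple $\alpha\in F^k$ and some system $P$ over $\qq(c)$. The pair $(k,P)$ ranges over a countable set because $\qq(c)$ is countable, so after replacing the model by an $\aleph_0$-saturated elementary extension, compactness yields finitely many shapes $\mathscr{C}_1,\ldots,\mathscr{C}_l$ that cover $E$; these are then merged by taking $D=D_1\times\cdots\times D_l$ and $P=P_1\cdots P_l$. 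This is the move your proof needs and does not have: replace the attempted syntactic reduction of $\phi$ by the observation that definable $K$-algebraic sets are automatically $K$-defined over $\qq(c,\chi(F))$, and let compactness do the bookkeeping you were worried about.
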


\begin{proof}
Suppose $S$ has an  algebraic presentation $( W_\beta)_{ \beta \in E}$ definable over $c$. For each choice $\mathscr{C}$ of $k$ and a system $P$ of polynomials in $\qq(c)[w,x]$, define $R_{\mathscr{C}} \subseteq F^k \times E$ by  
$$(\alpha, \beta) \in R_{\mathscr{C}}\ \text{ if and only if }\ W_\beta \ =\  Z\Big( P\big(\chi(\alpha),x\big)\Big).$$
Then the relation $R_{\mathscr{C}}$ is definable and so are its projections $R^1_{\mathscr{C}}$ on $F^k$  and $R^2_{\mathscr{C}}$ on $E$. For each $\beta \in E $, any automorphism of $K$ fixing $\chi(F)$ and $c$ will also fix $W_\beta$. 
Therefore, for each $\beta \in E$, $W_\beta$ is definable  in the field sense over $\qq\big(c, \chi( \alpha)\big)$ for some $\alpha \in F$. Hence, there is a choice $\mathscr{C}$ as above such that $\beta \in R^2_{\mathscr{C}}$. There are countably many such choices $\mathscr{C}$.
By replacing $(F,K;\chi)$ by an elementary extension, if necessary, we can without loss of generality assume that the structure $(F,K;\chi)$ is $\aleph_0$-saturated.
Hence, there are choices $\mathscr{C}_1, \ldots, \mathscr{C}_l$ such that  $E$ is covered by $R^2_{\mathscr{C}_i}$ as $i$ ranges over $\{1, \ldots, l\}$. 

For $i \in \{1, \ldots, l\}$, obtain $k_i$ and $P_i$ from  the choice $\mathscr{C}_i$ and let $D_i= R^1_{\mathscr{C}_i} \subseteq F^{k_i}$. 
Set $D = D_1 \times \cdots \times D_l$, $P= P_1\cdots P_l$ and $V_\alpha = Z\Big( P\big(\chi(\alpha),x\big)\Big)$. It is easy to check that the family $(V_\alpha)_{ \alpha \in D}$ satisfies the desired requirements.
\end{proof}

\noindent
We have a slightly different version of the above lemma which will be used later.

\begin{lem}  \label{APsimple2}
Suppose $ S \subseteq K^n$ has an algebraic presentation $( W_\beta)_{ \beta \in E}$ definable over $c$. We can find an algebraic presentation $(V_\alpha)_{ \alpha \in D}$ and systems $P_1, \ldots, P_l$ of polynomials in $\qq(c)[w,x]$, such that  $(V_\alpha)_{ \alpha \in D}$ is equivalent to $( W_\beta)_{ \beta \in E}$, $D\subseteq F^k$ is the disjoint union of $D_1, \ldots, D_l$, each definable over $c$, and $ V_\alpha = Z\big( P_i(\chi(\alpha),x)\big)$ for $i\in \{ 1,\ldots, l\}$ and $\alpha \in D_i$. 
\end{lem}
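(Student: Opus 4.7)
The plan is to rerun the construction from the proof of Lemma~\ref{APsimple1} almost verbatim, but modify the final aggregation step so that the pieces are kept separate (with their own polynomial systems) rather than amalgamated via a product polynomial. Concretely, I would form the same collection of ``choices'' $\mathscr{C}$ consisting of a tuple length $k$ together with a system $P$ of polynomials in $\qq(c)[w,x]$, and the associated definable relations $R_\mathscr{C}\subseteq F^k\times E$ and projections $R^1_\mathscr{C}, R^2_\mathscr{C}$. The same $\aleph_0$-saturation argument (after passing to a sufficiently saturated elementary extension if needed) yields finitely many choices $\mathscr{C}_1,\dots,\mathscr{C}_l$ such that $E=\bigcup_i R^2_{\mathscr{C}_i}$. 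Write $k_i$, $P_i^\circ$, and $D_i^\circ=R^1_{\mathscr{C}_i}\subseteq F^{k_i}$ for the data extracted from $\mathscr{C}_i$.

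The new ingredient is to embed the $D_i^\circ$ into a common $F^k$ as pairwise disjoint $c$-definable sets, each carrying its own polynomial system. To that end, pick $l$ pairwise distinct $0$-definable tuples $\tau_1,\dots,\tau_l\in F^r$ (for example, sufficiently long $\{0,1\}$-tuples, which exist in any characteristic), set $k=r+\max_i k_i$, and embed
$$D_i^\circ\hookrightarrow F^k,\qquad (\alpha_1,\dots,\alpha_{k_i})\longmapsto (\tau_i,\alpha_1,\dots,\alpha_{k_i},0,\dots,0).$$
Let $D_i$ be the image. By the distinctness of the tags $\tau_i$, the $D_i$ are pairwise disjoint and each is definable over $c$; set $D=D_1\sqcup\dots\sqcup D_l$. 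For each $i$, rewrite $P_i^\circ$ as a system $P_i\in\qq(c)[w,x]$ with $w$ of length $k$ by letting $P_i$ depend only on the $k_i$ coordinates of $w$ corresponding to the $\alpha$-block of the $i$-th embedding (and trivially on the remaining $w$-variables).

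It then remains to verify that $(V_\alpha)_{\alpha\in D}$ with $V_\alpha=Z\bigl(P_i(\chi(\alpha),x)\bigr)$ for $\alpha\in D_i$ is equivalent to $(W_\beta)_{\beta\in E}$. For $\alpha\in D_i$ with preimage $\alpha^\circ\in D_i^\circ$, we have $P_i(\chi(\alpha),x)=P_i^\circ(\chi(\alpha^\circ),x)$ by construction, and since $\alpha^\circ\in R^1_{\mathscr{C}_i}$ there is $\beta\in E$ with $(\alpha^\circ,\beta)\in R_{\mathscr{C}_i}$, so $V_\alpha=W_\beta$. Conversely, given $\beta\in E$, choose the least $i$ with $\beta\in R^2_{\mathscr{C}_i}$, pick a witness $\alpha^\circ\in D_i^\circ$ of $W_\beta=Z(P_i^\circ(\chi(\alpha^\circ),x))$, and observe that the image $\alpha\in D_i\subseteq D$ satisfies $V_\alpha=W_\beta$.

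The genuinely substantive step, and the only point that is not a direct reuse of Lemma~\ref{APsimple1}, is the tagged embedding together with the reindexing of the $P_i$: one has to check that the disjointness of the $D_i$ in $F^k$ is compatible with the correct assignment $\alpha\mapsto V_\alpha$. I expect this to be the main (essentially bookkeeping) obstacle; everything preceding it is a straightforward rerun of the APsimple1 argument, and the equivalence with $(W_\beta)_{\beta\in E}$ follows immediately from the defining properties of the $R_{\mathscr{C}_i}$.
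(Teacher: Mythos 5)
Your proof is correct but takes a genuinely different route from the paper's. Both arguments begin identically: rerun the construction of Lemma~\ref{APsimple1} to obtain choices $\mathscr{C}_1,\dots,\mathscr{C}_l$ with $E=\bigcup_i R^2_{\mathscr{C}_i}$. The paper then pads the tuple lengths so that $k_1=\cdots=k_l=k$ and forces disjointness by inductive subtraction, setting $D_i=\{\alpha\in F^k\setminus\bigcup_{j<i}D_j: \exists\beta\in E\ (W_\beta=Z(P_i(\chi(\alpha),x)))\}$. You instead attach a distinct $0$-definable tag $\tau_i$ to each $R^1_{\mathscr{C}_i}$, embed the tagged copies disjointly into a common $F^k$, and rewrite each $P_i$ to ignore the tag and padding coordinates (which is sound since $\chi(0)=0$ and the rewritten $P_i$ never sees those slots). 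Your tagging device makes the final equivalence check with $(W_\beta)_{\beta\in E}$ genuinely transparent: each $\alpha^\circ\in R^1_{\mathscr{C}_i}$ has its own tagged image in $D_i$, so every $W_\beta$ is hit via the witnessing $\alpha^\circ$, with no interference between the pieces. In the paper's subtraction scheme, by contrast, an $\alpha\in R^1_{\mathscr{C}_i}$ that also lies in an earlier $D_j$ gets assigned $V_\alpha=Z(P_j(\chi(\alpha),x))$ rather than $Z(P_i(\chi(\alpha),x))$, so one still has to argue that no $W_\beta$ gets dropped as a result --- a point the paper waves off with ``it is easy to check.'' Both routes yield the lemma with comparable effort, but your version removes that bookkeeping subtlety at the cost of a slightly larger ambient $F^k$.
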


\begin{proof}
We get the choices $\mathscr{C}_1, \ldots, \mathscr{C}_l$ in exactly the same way as in the first paragraph of the proof of the preceding lemma.
By adding extra variables, if needed, we can arrange that $k_1 = \cdots = k_l = k$ where $k_i$ is taken from the choice $\mathscr{C}_i$. We define $D_i$ inductively. For each $i \in { 1,\ldots, l} $, set
$$D_i \ =\  \bigg\{ \alpha \in F^k \backslash ( \bigcup_{j<i} D_j): \text{ there is } \beta \in E \text{ with } W_\beta = Z\Big( P_i\big(\chi(\alpha),x\big)\Big) \bigg \}.$$
Let $D = \bigcup_{i=1}^l D_i$, and $(V_\alpha)_{ \alpha \in D} $ be given by $V_\alpha = Z \big( P_i(\chi(\alpha),x)\big)$. It is easy to check that $(V_\alpha)_{ \alpha \in D}$ is the desired algebraic presentation.
\end{proof}

\noindent
Next, we prove that $F$ is $0$-stably embedded into $(F, K; \chi)$.
\begin{lem}\label{StablyEmbedednessForParameterFreeSet}
If $ D \subseteq F^k$ is $0$-definable, then it is $0$-definable in the field $F$.
\end{lem}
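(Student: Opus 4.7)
The plan is to combine Corollary~\ref{AutExt} with the homogeneity of saturated algebraically closed fields and a compactness argument. The idea is that Corollary~\ref{AutExt} lets every automorphism of $F$ (in a suitable elementary extension) lift to an $L$-automorphism, so any $0$-definable $D \subseteq F^k$ must be $\mathrm{Aut}(F)$-invariant; in $\mathrm{ACF}$ this invariance, combined with compactness and completeness of $\TN_p$, forces $D$ to be cut out by a formula of the ring language alone.

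Fix an $L$-formula $\varphi(x)$ with $k$ first-sort free variables defining $D$. First I would prove: for $\alpha, \beta \in F^k$ sharing the same $0$-type in $F$ as a field, one has $\varphi(\alpha) \iff \varphi(\beta)$. Pass to a sufficiently saturated $L$-elementary extension $(F^\ast, K^\ast; \chi^\ast)$; since every field formula is an $L$-formula, $F \preccurlyeq F^\ast$ as fields, so $\alpha, \beta$ still share their field type in $F^\ast$. By the usual homogeneity of saturated models of $\mathrm{ACF}$, there is a field automorphism $\sigma$ of $F^\ast$ with $\sigma(\alpha) = \beta$. Corollary~\ref{AutExt} lifts $\sigma$ to an $L$-automorphism $\tilde\sigma$ of $(F^\ast, K^\ast; \chi^\ast)$; since $\varphi$ is parameter-free, $\tilde\sigma$ preserves its zero set, giving the desired equivalence.

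For the second step, let $\Sigma$ denote the set of field formulas $\psi(x)$ over $\emptyset$ with $\TN_p \vdash \forall x(\psi(x) \to \varphi(x))$, and let $\Sigma'$ be the analogous set for $\neg\varphi$. By the previous paragraph together with completeness of $\TN_p$, every $\alpha$ in every model of $\TN_p$ satisfies some member of $\Sigma \cup \Sigma'$; otherwise, by compactness one could build a model containing $\alpha$ and another element realizing the same field type but lying on the wrong side of $\varphi$, contradicting step one. Compactness then yields finite subsets $\Sigma_0 \subseteq \Sigma$, $\Sigma'_0 \subseteq \Sigma'$ whose union already exhausts $F^k$ modulo $\TN_p$, and the finite disjunction $\Psi := \bigvee_{\psi \in \Sigma_0} \psi$ is then $\TN_p$-equivalent to $\varphi$ on the first sort. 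Hence $D = \Psi(F^k)$ is $0$-definable in the field $F$.

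The main point to be careful about is in step one: the saturation and homogeneity must be arranged inside $F^\ast$ as a pure field (rather than merely inside the $L$-structure) so that the field automorphism $\sigma$ genuinely exists and can then be fed to Corollary~\ref{AutExt}. This is routine because $L$-elementary extensions restrict to field elementary extensions on the first sort, and $\mathrm{ACF}$ is strongly $\kappa$-homogeneous in any $\kappa$-saturated model; everything else is standard compactness using completeness of $\TN_p$ from Theorem~\ref{CompAxiom}.
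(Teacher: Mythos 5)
Your proof is correct and takes essentially the same approach as the paper: both use Corollary~\ref{AutExt} to lift a field automorphism matching two elements with the same field $0$-type to an $L$-automorphism, and then deduce definability in the ring language. The paper compresses your second step (the compactness argument producing the finite disjunction $\Psi$) into a one-line appeal to Stone's representation theorem, after first arranging that the model realizes all $0$-types; this is the same argument unpacked differently.
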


\begin{proof}
By changing the model if needed, we can arrange that $ (F, K; \chi) $ realizes all the $0$-types. By Stone's representation theorem, it suffices to show that if $ \alpha $ and $\alpha'$ are arbitrary elements in $ F^k$ with the same $0$-type in the field $F$, then they have the same $0$-type. Fix such $\alpha$ and $\alpha'$. As  $F$ is a model of $ \text{ACF} $, there is an automorphism of $F$ sending $ \alpha $ to $ \alpha'$. This automorphism can be extended to an automorphism of $(F, K;\chi) $ by Corollary~\ref{AutExt}, so $ \alpha $ and $ \alpha'$ have the same  $0$-type. 
\end{proof}

\begin{prop}
If $S\subseteq K^n$ is $0$-algebraically presentable, then we can find a formula $\varphi(s)$ in the language of rings and a system of polynomials $P \in \qq[w,x]$ such that $S$ is defined by 
$$ \exists s \Big(\varphi(s)\wedge P\big(\chi(s),x\big)=0 \Big).$$
\end{prop}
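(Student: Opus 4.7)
The plan is to chain Lemma~\ref{APsimple1} and Lemma~\ref{StablyEmbedednessForParameterFreeSet} directly, with essentially no extra work. Suppose $S \subseteq K^n$ is $0$-algebraically presentable, and apply Lemma~\ref{APsimple1} with $c = \emptyset$. Inspecting its proof, the set $D$ produced is a finite product of $F$-projections of relations $R_{\mathscr{C}_i}$ each definable over $c$; when $c = \emptyset$ these are all $0$-definable, so $D \subseteq F^k$ is $0$-definable in $(F, K; \chi)$, and the accompanying system of polynomials lies in $\qq[w, x]$. This yields $P \in \qq[w, x]$ together with a $0$-definable $D \subseteq F^k$ such that the family $V_\alpha = Z\big(P(\chi(\alpha), x)\big)$, $\alpha \in D$, is an algebraic presentation of $S$.

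Next, by Lemma~\ref{StablyEmbedednessForParameterFreeSet} applied to the $0$-definable set $D$, there is a formula $\varphi(s)$ in the language of rings such that $\varphi(F^k) = D$. Unwinding the presentation,
\[
a \in S \iff \exists \alpha \in D\ \big(P(\chi(\alpha), a) = 0\big) \iff (F, K; \chi) \models \exists s\, \big(\varphi(s) \wedge P(\chi(s), a) = 0\big),
\]
which is exactly the asserted syntactic form.

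There is no genuine obstacle here: the work is already contained in Lemmas~\ref{APsimple1} and~\ref{StablyEmbedednessForParameterFreeSet}, and the only point that warrants checking is the parameter-free version of the former, which follows by tracking through its construction (the definable relations $R_{\mathscr{C}_i}$ and their projections $D_i$ inherit the parameter set of the ambient presentation, so the case $c = \emptyset$ yields $0$-definability of $D$).
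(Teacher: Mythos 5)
Your proof is correct and follows the same route as the paper: the paper's own argument is a one-line appeal to Lemma~\ref{APsimple1} and Lemma~\ref{StablyEmbedednessForParameterFreeSet}, and you simply unwind that appeal explicitly, including the (routine but worth noting) verification that the $c=\emptyset$ case of Lemma~\ref{APsimple1} yields a $0$-definable index set $D$.
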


\begin{proof}
This follows from Lemma~\ref{APsimple1} and Lemma~\ref{StablyEmbedednessForParameterFreeSet}.
\end{proof}

\noindent
We next show that $0$-definable sets are just boolean combinations of $0$-algebraically presentable sets. Towards this, we need a number of lemmas.

\begin{lem} \label{SaturatedExtension}
The model $(F, K; \chi)$ has an elementary extension $(F', K'; \chi')$ such that $F'= F$ and $K'$ is $|F'|^+$-saturated as a model of $\ACF$.
\end{lem}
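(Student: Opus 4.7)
The plan is to exploit the fact that we are allowed to leave $F$ alone: since $F' = F$ is required, we may take $\chi' = \chi$ (reinterpreted into a larger second-sort field), and the only real content is constructing a sufficiently transcendental algebraically closed field $K' \supseteq K$ and checking that we are still in the regular-substructure setup.

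Concretely, I would first pick an algebraically closed field $K' \supseteq K$ with $\trdeg(K' \mid K) \geq |F|^+$; for instance, adjoin $|F|^+$ algebraically independent transcendentals to $K$ and close algebraically. Viewing $\chi$ now as a map $F \to K'$, it is still injective, multiplication-preserving, and still generic: genericity is a statement about multiplicatively independent tuples from $\chi(F^\times)$ being algebraically independent, and Lemma~\ref{GenericRobust}(1) explicitly guarantees that this property is preserved when the ambient integral domain is enlarged. Hence $(F, K'; \chi) \models \TN_p$.

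Next I would verify $(F, K; \chi) \sqsubseteq (F, K'; \chi)$. Since $F' = F$, the group $\chi'({F'}^\times) = \chi(F^\times)$ coincides with itself, so the $K$-genericity of $\chi'({F'}^\times)$ over $\chi(F^\times)$ is automatic (every $\chi(F^\times)$-multiplicatively independent tuple drawn from $\chi(F^\times)$ itself is the empty tuple). The regular model companion result, Theorem~\ref{ElSub2}(2), then promotes this to an elementary inclusion $(F, K; \chi) \preccurlyeq (F, K'; \chi)$.

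Finally, I would check the saturation clause using the standard fact that an algebraically closed field is $\kappa$-saturated as a model of $\mathrm{ACF}$ as soon as its transcendence degree over the prime field is at least $\kappa$: every $1$-type over a set $A$ with $|A| < \kappa$ is either algebraic (and hence realized in any algebraically closed field) or the unique transcendental type over $A$, which is realized whenever $\trdeg(K' \mid A) \geq 1$; and this holds because $\trdeg(K' \mid \text{prime}) \geq |F|^+$ while $|A| \leq |F|$. No step looks hard; the only thing to be careful about is making sure that enlarging $K$ does not secretly force one to enlarge $F$ — but this is precisely what the vacuity of the regular-substructure condition rules out, so the argument goes through.
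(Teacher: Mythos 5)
Your argument is correct, and it takes a more direct route than the paper. The paper disposes of this lemma with the single line ``This follows from Corollary~\ref{ElementarilyEmbeddable},'' which in turn passes through the classification theorem (Theorem~\ref{ThmIso}) and the construction in the proof of Corollary~\ref{ElementarilyEmbeddable} (which itself re-does Proposition~\ref{Substructures} and then invokes Theorem~\ref{ElSub2}). You instead go straight to the point: enlarge $K$ transcendentally, observe that genericity of $\chi(F^\times)$ is preserved under enlarging the ambient field (Lemma~\ref{GenericRobust}(1), or simply the fact that algebraic independence over the prime field is absolute), note that the regular-substructure condition $(F,K;\chi)\sqsubseteq(F,K';\chi)$ is \emph{vacuous} when the first sort does not change (every $\chi(F^\times)$-multiplicatively independent tuple drawn from $\la\chi(F^\times)\ra$ is empty), and then let Theorem~\ref{ElSub2}(2) do the work. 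The vacuity observation is a genuine simplification: it bypasses both the classification theorem and the nontrivial extension-of-$\chi$ construction that Proposition~\ref{Substructures} needs when $F'$ properly extends $F$. Your saturation computation at the end is the standard fact about ACF and is stated correctly. In short, same key lemma (Theorem~\ref{ElSub2}(2)), but a shorter and more self-contained path to it.
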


\begin{proof}
This follows from Corollary~\ref{ElementarilyEmbeddable}.
\end{proof}

\noindent
The following lemma is well known about $\ACF$. The proof is a consequence, for example, of the results in \cite{LouSmith}. 

\begin{lem}\label{DOE1}
Let $( X_b)_{b\in Y}$ be a family of subsets of $K^n$ definable ($0$-definable) in the field $K$. We have the following:

\begin{enumerate}
\item (Definability of dimension in families)

\noindent
the set $Y_k= \big\{b \in Y : \mr_K(X_b) =k\big\}$ is definable ($0$-definable) in the field $K$;
\item (Definability of multiplicity in families)

\noindent
the set $Y_{k, l}= \big\{b \in Y : \mr_K(X_b) =k,\  \md_K(X_b) = l \big\}$ is definable ($0$-definable) in the field $K$;
\item (Definability of irreducibility algebraic families)

\noindent
if $X_b$ is $K$-algebraic for all $b \in Y$, then $Y_{\mathrm{ired}}= \{b \in Y : X_b \text{ is irreducible} \}$ is definable ($0$-definable) in the field $K$.
\end{enumerate}
\end{lem}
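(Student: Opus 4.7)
The plan is to invoke quantifier elimination for $\ACF$ to reduce the family $(X_b)_{b \in Y}$ to one of uniformly bounded algebraic complexity, then derive all three parts from classical uniform constructibility. I will use throughout that Morley rank of a definable set in $K^n$ coincides with the Krull dimension of its Zariski closure, and Morley degree with the number of top-dimensional irreducible components.

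First I would apply quantifier elimination to the defining formula of $(X_b)_{b \in Y}$ to write each $X_b$ as a Boolean combination of zero and nonzero sets $\{P_{ij}(b, x) = 0\}$ for fixed polynomials $P_{ij} \in \zz[y, x]$, hence of $x$-degree bounded by some $N$ independent of $b$. For part (1), with this uniformity in hand, Chevalley's theorem applied to coordinate projections implies that the function $b \mapsto \mr_K(X_b)$ has constructible fibers; more explicitly, I would induct on $n$ by noting that $\mr_K(X_b) \geq k$ is equivalent to the existence of a projection onto $k$ coordinates whose image has dimension $k$, a condition testable by resultant computations of degree bounded in terms of $N$. The $0$-definable refinement is automatic, since the $P_{ij}$ come from the defining formula and carry no extra parameters. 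For part (3), once (1) is available, an algebraic set $V_b$ of dimension $d$ is irreducible iff there do not exist polynomials $Q_1, Q_2$ of degree at most some $N'$ with $V_b = (V_b \cap Z(Q_1)) \cup (V_b \cap Z(Q_2))$ and each $V_b \cap Z(Q_i)$ of strictly smaller dimension; the needed bound $N'$ comes from effective Nullstellensatz-style bounds on the degrees of polynomials cutting out irreducible components, and the resulting condition is definable in $b$. Part (2) then follows by decomposing the Zariski closure of $X_b$ into its top-dimensional irreducible components and counting them: (3) makes ``being an irreducible component'' a definable condition, and the uniform complexity bound keeps the count definable.

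The main obstacle is the uniform complexity bound needed for the irreducible decompositions in (2) and (3); this relies on effective versions of Hilbert's basis theorem for polynomials of bounded degree, which is less immediate than the degree bound $N$ coming from quantifier elimination, but is precisely the content of the uniform bounds in \cite{LouSmith}. Once these are granted, all three parts reduce to routine Chevalley-type constructibility combined with bookkeeping to preserve definability in $b$.
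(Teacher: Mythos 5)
The paper does not actually prove Lemma~\ref{DOE1}: it records the statement as a well-known fact about $\ACF$ whose proof ``is a consequence, for example, of the results in \cite{LouSmith}.''  Your sketch rests on the same source for the uniform degree bounds, and the outline for part~(1) --- quantifier elimination to bound the complexity of $X_b$ uniformly in $b$, then Chevalley-type constructibility for the dimension via coordinate projections --- is a reasonable unfolding of the standard argument, so in that respect the spirit matches.

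However, the irreducibility criterion you propose for part~(3) is wrong as stated, and it would fail.  You claim that an algebraic set $V_b$ of dimension $d$ is irreducible iff there do not exist bounded-degree $Q_1,Q_2$ with $V_b=(V_b\cap Z(Q_1))\cup(V_b\cap Z(Q_2))$ and each $V_b\cap Z(Q_i)$ of strictly smaller dimension.  But for any decomposition $V_b=W_1\cup W_2$ into closed subsets one has $\dim V_b=\max(\dim W_1,\dim W_2)$, so at least one $W_i$ necessarily has dimension $d$; a cover by two pieces both of strictly smaller dimension never exists.  Your test is therefore vacuous and declares every algebraic set irreducible (e.g.\ $Z(x_1x_2)\subseteq K^2$ passes it).  The correct definable condition replaces ``strictly smaller dimension'' with ``proper subset'': $V_b$ is irreducible iff for all $Q_1,Q_2$ of degree at most $N'$, whenever $V_b=(V_b\cap Z(Q_1))\cup(V_b\cap Z(Q_2))$, some $V_b\cap Z(Q_i)$ equals $V_b$.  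Equality of two bounded-degree algebraic sets is first-order in the parameters, and the bound $N'$ on the degree of defining equations of a union of irreducible components is exactly what \cite{LouSmith} supplies.  Since your part~(2) is derived by counting top-dimensional irreducible components using~(3), the error propagates there too, though the same repair fixes it.
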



\begin{cor}\label{DOE2}
Let $( X_b)_{b\in Y}$ be a definable (0-definable) family of subsets of $K^n$ with $X_b$ definable in the field $K$ for all $b \in Y$. Then we have the following:
\begin{enumerate}
\item (Definability of dimension in families)

\noindent
the set $Y_k= \big\{b \in Y : \mr_K(X_b) =k \big\}$ is definable ($0$-definable);
\item (Definability of multiplicity in families)

\noindent
the set $Y_{k, l}= \big\{b \in Y : \mr_K(X_b) =k,\  \md_K(X_b) = l \big\}$ is definable ($0$-definable);
\item (Definability of irreducibility in algebraic families)

\noindent
if $X_b$ is $K$-algebraic for all $b \in Y$, then $Y_{\mathrm{ired}}= \{b \in Y : X_b \text{ is irreducible} \}$ is definable ($0$-definable).
\end{enumerate}
\end{cor}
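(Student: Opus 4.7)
The plan is to reduce to Lemma~\ref{DOE1} via a uniform ring-formula parameterization of the fibers. First, I would apply Proposition~\ref{transferRemark} to transfer any $F$-sort components of the parameter $b$ over to the $K$-sort through $\chi$, which preserves each fiber verbatim and all ranks and irreducibility properties; hence without loss of generality $Y \subseteq K^m$. Let $\varphi(x, y)$ be an $L$-formula defining the family, and for each ring formula $\psi(x, z)$ set
\[
Y_\psi \;=\; \{\, b \in Y : \exists\, c \in K^{|z|}\ \forall\, x \in K^n\ \varphi(x,b) \leftrightarrow \psi(x,c)\,\}.
\]
Each $Y_\psi$ is $L$-definable, and by hypothesis $Y = \bigcup_\psi Y_\psi$.

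The central step, and also the main obstacle, is to show that finitely many ring formulas $\psi_1, \ldots, \psi_r$ already cover $Y$. My plan for this is a compactness argument inside a sufficiently saturated and strongly $L$-homogeneous elementary extension $(F', K'; \chi')$: if no finite subcover existed, compactness would produce a parameter $b^* \in Y'$ lying outside every $Y'_\psi$, so that $X'_{b^*}$ is not field-$K'$-definable. To derive a contradiction from the hypothesis, one arranges that the $L$-type of $b^*$ over the parameters defining the family is already realized in $Y$ by some $b$—a delicate point, since field-definability of a fiber is not a first-order property in isolation; it can be handled by first replacing $(F, K; \chi)$ with an elementarily equivalent sufficiently saturated copy (any defining $L$-formula eventually obtained for $Y_k$ in the saturated copy transfers back by elementary equivalence). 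Corollary~\ref{AutExt} together with standard $L$-homogeneity then produces an $L$-automorphism $\sigma$ of $(F', K'; \chi')$ with $\sigma(b) = b^*$. Since by hypothesis $X_b = \psi(K^n, c)$ for some ring formula $\psi$ and $c \in K^{|z|}$, applying $\sigma$ gives $X'_{b^*} = \psi((K')^n, \sigma(c))$, contradicting the choice of $b^*$.

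With the finite cover $Y = Y_{\psi_1} \cup \cdots \cup Y_{\psi_r}$ in hand, the rest is routine. For each $i$ the set
\[
Z_i \;=\; \{\, (b, c) \in Y_{\psi_i} \times K^{|z_i|} : X_b = \psi_i(K^n, c)\,\}
\]
is $L$-definable, and by Lemma~\ref{DOE1} the condition $\mr_K(\psi_i(K^n, c)) = k$ (respectively $\mr_K = k$ together with $\md_K = l$, or irreducibility of $\psi_i(K^n, c)$ in the algebraic case) cuts out a field-$K$-definable—hence $L$-definable—subset of $K^{|z_i|}$. Intersecting with $Z_i$ and projecting to the $b$-coordinate shows that $Y_{\psi_i} \cap Y_k$ is $L$-definable, and therefore $Y_k = \bigcup_{i=1}^r (Y_{\psi_i} \cap Y_k)$ is $L$-definable. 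Exactly the same pattern, using parts (2) and (3) of Lemma~\ref{DOE1}, yields $L$-definability of $Y_{k,l}$ and $Y_{\mathrm{ired}}$. The $0$-definable version follows by taking all parameters to be $0$ throughout and invoking the $0$-definable parts of Lemma~\ref{DOE1}.
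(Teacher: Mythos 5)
Your approach is essentially the same as the paper's: uniformize the fibers by finitely many parameter-free ring formulas via a compactness argument, then apply Lemma~\ref{DOE1} fiberwise. You spell out the compactness step in more detail than the paper, which only says ``by a standard compactness argument and a simple reduction,'' and you correctly flag the delicate point. However, your proposed resolution --- replacing $(F, K; \chi)$ by an elementarily equivalent saturated copy --- still has a gap: the hypothesis that each $X_b$ with $b \in Y$ is field-$K$-definable is not an elementary property of the structure (it is an infinite disjunction over ring formulas), so there is no reason it should persist when you pass to the saturated copy, where $Y$ acquires new elements. In fact, knowing that the hypothesis persists to elementary extensions is essentially equivalent to the finite-cover conclusion you are trying to prove, so the argument is circular as stated. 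This is exactly the spot the paper's own proof glosses over; in practice the paper only ever invokes this corollary for families of the form $V_\beta = Z\big(P(\chi(\beta),x)\big)$ with $P$ a fixed system of polynomials, for which the hypothesis is trivially uniform and the compactness step is moot, so the corollary should be read with that implicit uniformity in mind. Two smaller points: Corollary~\ref{AutExt} concerns extending automorphisms of $F$ to the two-sorted structure, and is not quite the right tool for moving a $K$-sort parameter $b$ to $b^*$; what you actually need is strong homogeneity of the saturated model. Your handling of the $0$-definable case, tracking parameter-freeness through the construction, is more direct than the paper's automorphism argument and is fine once the finite cover is in hand.
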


\begin{proof}
We first prove (1) for the definable case. Let  $( X_b)_{b\in Y}$ be a definable family as stated. For each $b \in Y$, there is a parameter free formula $\varphi(w, x)$ in the language of rings such that there is $c \in K^{k}$ with $X_b$ defined by $\varphi(c, x)$.
We note that there are only countably many parameter free formulas $\varphi(w,x)$ in the language of rings. 
By a standard compactness argument and a simple reduction we arrange that there is  a formula $\varphi(w,x)$ such that for any $b$ in $Y$, there is $c \in K^{k}$ such that $X_b$ coincides with $X'_c$  where $X'_d \subseteq K^n$ is defined by $\varphi(d, x)$ for $d \in K^k$.  With $Y_k$ as in the statement of the lemma, we have
$$Y_k\ =\ \big\{ b \in Y : \text{ there is } c\in K^{k} \text{ such that } X_b = X'_c \text{ and } \mr_K(X'_c) = k  \big\}. $$
The definability of $Y_k$ then follows from (1) of the preceding lemma.

For the 0-definable case, we can arrange that $(F, K; \chi)$ is $\aleph_0$-saturated and check that any automorphism of the structure fixing $( X_b)_{b\in Y}$ also fixes $Y_{k}$ for all $k$.
The statements (2) and  (3) can be proven similarly.
\end{proof}

\noindent
Towards obtaining the main theorem, we need the following auxiliary lemma.

\begin{lem}\label{QuanRedLem}
For $a \in K^n$, choose $V \subseteq K^n$ containing $a$ and definable in the field $K$ over $ \qq\big( \chi(F^\times)\big) $  such that $\big(\mr_K(V), \md_K(V)\big)$ is lexicographically minimized with respect to these conditions. Likewise, choose $V'\subseteq K^n$ for $a' \in K^n$.
If there are $ \alpha, \alpha' \in F^k$ of the same $0$-type in $F$ and a system $P$ of polynomials in $\qq[w,x]$  with $ V = Z\big(P( \chi( \alpha), x) \big)  $ and $V' =Z\big(P( \chi( \alpha'), x) \big)$, then  $ a$ and $a'$ have the same $0$-type.
\end{lem}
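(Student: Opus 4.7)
The plan is to construct, in a sufficiently saturated elementary extension of $(F, K; \chi)$ (which does not change $0$-types), an $L$-automorphism sending $a$ to $a'$. First, since $\alpha$ and $\alpha'$ have the same $0$-type in $F$, saturation gives a field automorphism $\sigma: F \to F$ with $\sigma(\alpha) = \alpha'$; by Corollary~\ref{AutExt}, $\sigma$ extends to an $L$-automorphism $\tilde\sigma$ of $(F, K; \chi)$. Since $P$ has coefficients in $\qq$ and hence is fixed by $\tilde\sigma$, we compute $\tilde\sigma(V) = Z\big(P(\chi(\alpha'), x)\big) = V'$, so $\tilde\sigma(a) \in V'$.

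The key step is understanding the lex-minimized $V$. Set $k := \qq\big(\chi(F^\times)\big)$ and let $V_0$ be the $k$-Zariski closure of $a$, a $k$-irreducible $k$-algebraic set of dimension $\trdeg\big(k(a)\mid k\big)$ with $a$ as a $k$-generic point. Any $k$-definable set containing $a$ has Morley rank at least $\dim V_0$, so $\mr_K(V) = \dim V_0$ and $V \supseteq V_0$. Decompose the top-dimensional $k$-irreducible part of $V$ as $V_0 \cup W_1 \cup \cdots \cup W_r$. No $W_i \ne V_0$ can contain $a$ (else $V_0 \subseteq W_i$, forcing equality by dimensions), so dropping the $W_i$'s yields a $k$-algebraic subset of $V$ still containing $a$ but of strictly smaller Morley degree, contradicting lex-minimality unless $r = 0$. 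Thus $V_0$ is the unique top-dimensional $k$-irreducible component of $V$; symmetrically, the $k$-Zariski closure $V_0'$ of $a'$ is the unique top-dimensional $k$-irreducible component of $V'$.

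Since $\tilde\sigma$ permutes $\chi(F^\times)$, it fixes $k$ setwise while preserving dimension and $k$-irreducibility, so applying $\tilde\sigma$ to the unique top-dimensional $k$-irreducible component of $V$ gives that of $V'$, i.e., $\tilde\sigma(V_0) = V_0'$. Hence $\tilde\sigma(a)$ is a $k$-generic of $V_0'$, as is $a'$, so $\tilde\sigma(a)$ and $a'$ share the same $\ACF$-type over $k$ in $K$. By saturation, pick a field automorphism $\tau: K \to K$ with $\tau \res k = \mathrm{id}_k$ and $\tau\big(\tilde\sigma(a)\big) = a'$; since $\tau$ fixes $\chi(F^\times) \subseteq k$ pointwise, extending by $\tau \res F = \mathrm{id}_F$ yields an $L$-automorphism of $(F, K; \chi)$. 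The composite $\tau \circ \tilde\sigma$ then sends $a$ to $a'$, proving that $a$ and $a'$ have the same $0$-type. The main technical step is the geometric analysis forcing the lex-minimized $V$ to have $V_0$ as its unique top-dimensional $k$-irreducible component, since this is exactly what allows $\tilde\sigma$ to transport the locus of $a$ onto the locus of $a'$ and reduces the remaining work to ordinary $\ACF$-conjugation over $k$.
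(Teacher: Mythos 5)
Your argument is correct and follows the same overall strategy as the paper: extend the automorphism of $F$ induced by $\alpha \mapsto \alpha'$ to an $L$-automorphism $\tilde\sigma$ of $(F,K;\chi)$ sending $V$ to $V'$; argue that $\tilde\sigma(a)$ and $a'$ have the same $\ACF$-type over $k := \qq\big(\chi(F^\times)\big)$; use $|F|^+$-saturation of $K$ as a field to produce a $\tau$ fixing $k$ pointwise with $\tau\big(\tilde\sigma(a)\big)=a'$; and compose. Where you depart from the paper is in the middle step. The paper never singles out the $k$-Zariski closure $V_0$ of $a$; instead, it observes that the lex-minimality of $V'$ for both $\tilde\sigma(a)$ and $a'$ yields, for every $k$-algebraic $W$, the characterization $\tilde\sigma(a)\in W \Leftrightarrow \big(\mr_K(V'\cap W), \md_K(V'\cap W)\big) = \big(\mr_K(V'),\md_K(V')\big)$, and likewise for $a'$, after which $\ACF$ quantifier elimination finishes. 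You instead prove that lex-minimality forces $V_0$ to be the unique top-dimensional $k$-irreducible component of $V$ (and $V_0'$ of $V'$), deduce $\tilde\sigma(V_0)=V_0'$, and conclude that $\tilde\sigma(a)$ and $a'$ are both $k$-generic points of $V_0'$. Your route is a bit longer (it uses the fact that distinct $k$-irreducible components of the same dimension cannot share an absolutely irreducible component, so that removing one genuinely drops $\md_K$), but it makes the geometry explicit: it identifies the $\ACF$-type of $a$ over $k$ with the generic type of the $k$-locus $V_0$, whereas the paper's intersection criterion is slicker but less geometrically transparent. Both are valid, and there is no gap.
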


\begin{proof}
Using Lemma~\ref{SaturatedExtension}, we can arrange that $ K$ is $ |F|^{+}$-saturated as a model of $\ACF$. Suppose $a,a', V, V', \alpha, \alpha'$ and $P$ are as stated. Then we get an automorphism $\sigma_F$ of $F$ mapping $\alpha$ to $\alpha'$.
By Corollary~\ref{AutExt}, this can be extended to an automorphism  $ (\sigma_F, \sigma_K)$ of $ (F, K; \chi)$. In particular, 
$$\sigma_K: \chi(\alpha) \mapsto \chi(\alpha')\ \text{ and }\ \sigma_K(V) = V'.$$
Then $V'$ contains $\sigma(a)$, is defined over $ \qq\big( \chi(F^\times)\big) $ and $\big(\mr_K(V'), \md_K(V')\big)$ achieves the minimum value under these conditions.
Hence, for an algebraic  set $W \subseteq K^n$ definable in the field $K$ over $ \qq\big( \chi(F^\times)\big) $, 
$$\sigma(a) \in W\ \text{ if and only if }\ \big(\mr_K(V'\cap W), \md_K(V' \cap W)\big)\ =\ \big(\mr_K(V'), \md_K(V')\big). $$
By the choice of $V'$, exactly the same statement holds when $\sigma(a)$ replaced with $a'$. 
By the quantifier elimination of $\ACF$, $ \sigma(a) $ and $ a'$ have the same type over $ \qq\big( \chi(F^\times)\big) $ in the field $K$.
As $ K$ is $ |F|^{+}$-saturated, there is an automorphism $\tau_K$ of $K$ fixing $\qq\big(\chi(F^\times)\big)$ pointwise and mapping $\sigma(a)$ to $a'$.
It is easy to check that $(\sigma_F, \tau_K \circ \sigma_K)$ is an automorphism of $(F, K; \chi)$ mapping $a$ to $a'$. Therefore, $a$ and $a'$ have the same $0$-type.
\end{proof}

\begin{thm} \label{QuanRed}
If $X \subseteq K^n$ is $0$-definable, then $X$ is a boolean combination of $0$-algebraically presentable subsets of $K^n$.  
\end{thm}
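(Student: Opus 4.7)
The plan is to reduce the theorem by Stone duality to a single type-separation claim, and then to establish that claim by a saturation argument anchored in Lemma~\ref{QuanRedLem}. Because any particular boolean-combination identity among $0$-definable sets is first-order and descends to the original structure, I may work in a sufficiently saturated elementary extension of $(F, K; \chi)$. Once there, it suffices to prove the following \emph{Key Claim}: if $a, a' \in K^n$ lie in exactly the same $0$-algebraically presentable subsets of $K^n$, then they realise the same $0$-type. Indeed, given the Key Claim, the Boolean subalgebra of the clopen algebra of the type space $S_n(\emptyset)$ generated by (the traces of) $0$-algebraically presentable sets separates points; since a point-separating Boolean subalgebra of the clopen algebra of a Stone space must equal the whole algebra, every $0$-definable subset of $K^n$ is a Boolean combination of $0$-algebraically presentable sets.

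To prove the Key Claim, attach to each $a \in K^n$ its locus $V(a) := Z(\mathfrak{p}_a) \subseteq K^n$ over the subfield $L := \qq(\chi(F^\times))$ of $K$, where $\mathfrak{p}_a := \{P \in L[x] : P(a) = 0\}$ is prime; so $V(a)$ is $L$-irreducible of Morley rank $\rho_a := \trdeg(L(a) \mid L)$ and achieves lexicographically minimal $(\mr_K, \md_K)$ among $K$-definable supersets of $a$ over $L$, making $V(a)$ a legitimate choice of $V$ in the hypothesis of Lemma~\ref{QuanRedLem}. Clearing denominators, write $V(a) = Z\bigl(P(\chi(\alpha), x)\bigr)$ for some $P \in \qq[w, x]$ and $\alpha \in F^k$, and analogously fix $V(a') = Z\bigl(P'(\chi(\alpha'_0), x)\bigr)$. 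The first step is to show $\rho_a = \rho_{a'}$: by Corollary~\ref{DOE2} together with Lemma~\ref{StablyEmbedednessForParameterFreeSet}, the set
\[
D' \;:=\; \bigl\{\beta \in F^{k'} : Z(P'(\chi(\beta), x)) \text{ is irreducible of Morley rank } \rho_{a'}\bigr\}
\]
is $0$-definable, so $\bigcup_{\beta \in D'} Z(P'(\chi(\beta), x))$ is $0$-algebraically presentable, contains $a'$, and hence contains $a$. But this places $a$ in an $L$-algebraic set of Morley rank $\rho_{a'}$, forcing $\rho_{a'} \geq \rho_a$; symmetry yields equality. A parallel argument with Morley degree adjoined to the defining formula of $D'$ gives $\md_K(V(a)) = \md_K(V(a'))$.

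The second step is to realise the hypothesis of Lemma~\ref{QuanRedLem}: produce $\alpha' \in F^k$ with $\atp(\alpha') = \atp(\alpha)$ (same $0$-type in the field $F$) and $V(a') = Z\bigl(P(\chi(\alpha'), x)\bigr)$ for the very same $P$ and $\alpha$ already chosen. Let
\[
D \;:=\; \bigl\{\beta \in F^k : Z(P(\chi(\beta), x)) \text{ is equidimensional of Morley rank } \rho_a \text{ with Morley degree } \md_K(V(a))\bigr\},
\]
$0$-definable by Corollary~\ref{DOE2} and Lemma~\ref{StablyEmbedednessForParameterFreeSet} (equidimensionality in $\ACF$-families is first-order and expressible via rank and degree); note $\alpha \in D$, since $V(a)$ is $L$-irreducible hence equidimensional. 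Consider the partial type (over parameter $a'$)
\[
\Sigma(s) \;:=\; \{s \in D\} \;\cup\; \{a' \in Z(P(\chi(s), x))\} \;\cup\; \atp(\alpha).
\]
For any finite conjunction $\varphi(s)$ from $\atp(\alpha)$, the set $\bigcup_{\beta \in D,\, \varphi(\beta)} Z(P(\chi(\beta), x))$ is $0$-algebraically presentable and contains $a$ (via $\beta = \alpha$), hence contains $a'$; so $\Sigma$ is finitely satisfiable, and saturation furnishes a realisation $\alpha' \in F^k$. The set $Z\bigl(P(\chi(\alpha'), x)\bigr)$ is then equidimensional of Morley rank $\rho_{a'}$ with Morley degree $\md_K(V(a'))$ containing $a'$, and $V(a')$ — itself equidimensional of the same rank and degree — is contained in it, so the two coincide. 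Lemma~\ref{QuanRedLem} applied to $(a, a', \alpha, \alpha', P)$ now yields the Key Claim, and the theorem follows.

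The main obstacle is ensuring the equality $V(a') = Z\bigl(P(\chi(\alpha'), x)\bigr)$ rather than merely containment in the second step: matching only Morley rank and Morley degree would leave room for $Z(P(\chi(\alpha'), x))$ to carry extraneous lower-dimensional components absent from $V(a')$, so equidimensionality must be built into $D$ from the outset. Verifying that all the invariants employed — Morley rank, Morley degree, irreducibility, equidimensionality — are in fact $0$-definable in $K$-families whose parameters live in the $F$-sort is where Corollary~\ref{DOE2} and Lemma~\ref{StablyEmbedednessForParameterFreeSet} are used essentially, ensuring that every separating set produced along the way is honestly $0$-algebraically presentable rather than merely $0$-definable.
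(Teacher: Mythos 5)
Your proof is correct in essence and follows a genuinely different route through the same key reduction. Like the paper, you reduce via Stone duality to showing that equal $0$-ap-types imply equal $0$-types, and both arguments funnel through Lemma~\ref{QuanRedLem}. Where you differ is in manufacturing the pair $(\alpha', V')$: the paper first picks $V$ with lexicographically minimal $(\mr_K,\md_K)$, then \emph{also} minimizes $(\mr_F(D),\md_F(D))$ over presentations $V=Z(P(\chi(\alpha),x))$, extracts $\alpha'$ from the resulting $0$-ap separator $S=\bigcup_{\beta\in E}Z(P(\chi(\beta),x))$, and then verifies both hypotheses of Lemma~\ref{QuanRedLem} at $a'$ by reverse-construction contradiction arguments; you instead anchor $V$ at the locus $V(a)$, match $(\mr_K,\md_K)$ against $V(a')$ using two $0$-ap separators, and realize the partial type $\Sigma(s)$ by compactness, so that $\atp(\alpha)=\atp(\alpha')$ is built directly into $\Sigma$ rather than deduced from minimality of $D$. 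Your route trades the paper's second minimization and two contradiction arguments for a saturation argument over $a'$, a harmless cost since Lemma~\ref{QuanRedLem}'s own proof already passes to a saturated extension.

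Two small corrections. In the rank-matching step, the set $D'$ should require $Z(P'(\chi(\beta),x))$ to be \emph{equidimensional} of Morley rank $\rho_{a'}$ (as you correctly do in the second step), or simply of Morley rank $\rho_{a'}$, rather than \emph{irreducible}: $V(a')$ is only $L$-irreducible and will in general split into several $K$-conjugate components, so with the irreducibility clause as written $\alpha'_0\notin D'$ and $a'$ need not lie in your separator. Second, the equidimensionality device you flag as the main obstacle is in fact dispensable. Lemma~\ref{QuanRedLem} only asks that $V'$ achieve the lexicographic $(\mr_K,\md_K)$-minimum for $a'$; once step~1 establishes $\bigl(\rho_a,\md_K(V(a))\bigr)=\bigl(\rho_{a'},\md_K(V(a'))\bigr)$, the set $Z(P(\chi(\alpha'),x))$ attains that minimum by its rank and degree alone, so it may serve as $V'$ directly, with no need to prove that it coincides with $V(a')$ or that it has no extraneous lower-dimensional components.
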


\begin{proof}
We say $a, a' \in K^n$ have the same 0-ap-type if they belong to the same $0$-algebraically presentable sets. 
By changing the model, if needed, we can arrange that $ (F, K; \chi) $ realizes all the $0$-types. 
By Stone's representation theorem, it suffices to show that if $ a$ and $ a' $ are arbitrary elements in $  K^n $ with the same $0$-ap-type then they have the same $0$-type. 

\noindent
Fix $a$ and $a'$ in $K^n$ with the same  $0$-ap-type. 
Choose $V \subseteq K^n$ containing $a$ and definable in the field $K$ over $ \qq\big( \chi(F^\times)\big) $  such that $$\big(\mr_K(V), \md_K(V)\big) \text{ is lexicographically minimized with respect to these conditions}. $$ 
Moreover, pick $k$, $D \subseteq F^k$ $0$-definable in the field $F$, $\alpha \in D$ and a system $P$ of polynomials in $\qq[w,x]$ such that $V = Z\big( P( \chi( \alpha), x)\big)$ and $$\big(\mr_F(D), \md_F(D) \big) \text{ is minimized under these conditions}.$$
We will find $\alpha'$ and $ V'$ in order to use Lemma~\ref{QuanRedLem}. 
Set $$E\  =\ \bigg\{ \beta \in D : \text{ if } V_\beta = Z\Big( P\big( \chi( \beta), x\big)\Big) \text{, then } \big(\mr_K(V_\beta), \md_K(V_\beta)\big)=\big(\mr_K(V), \md_K(V)\big)\bigg\} .$$ 
We note that $E $ is $0$-definable by Corollary~\ref{DOE2}, and so by Lemma~\ref{StablyEmbedednessForParameterFreeSet}, $ E$ is also $0$-definable in the field $F$. 
As $\alpha$ is in $E$, 
$$ \big(\mr_F(E), \md_F(E) \big) \ = \ \big(\mr_F(D), \md_F(D) \big) $$ by the choice of $D$. 
With $S = \big( Z\big(P(\chi(\beta),x)\big)\big)_{\beta \in E}$, we have $a\in S$, and so we also have $a' \in S$ since $a$ and $ a'$ have the same 0-ap-type. Hence, there is $\alpha'\in E$ such that $a'$ is an element of $V' = Z\big(P(\chi(\alpha'),x)\big)$.

We next verify that $ \alpha'$ and $ V'$ satisfy the conditions of Lemma~\ref{QuanRedLem}. It will then follow that $a$ and $a'$ have the same $0$-type. We first check that
$$\big(\mr_K(V'), \md_K(V')\big)\ =\  \min\Big\{ \big(\mr_K(W'), \md_K(W')\big) : W' \subseteq K^n \text{ is } K\text{-algebraic, } a' \in W' \Big\}.$$
As $ \alpha'$ is in $E$, $$\big(\mr_K(V'), \md_K(V')\big)\ =\ \big(\mr_K(V), \md_K(V)\big).$$ 
Suppose towards a contradiction that there is an irreducible algebraic set $ W' \subseteq K^n$ containing $a'$ with $$ \big(\mr_K(W'), \md_K(W')\big) \ <_{\text{lex}}\ \big(\mr_K(V'), \md_K(V')\big).$$ 
We can do the same construction as above in the reverse direction to get $ W''$ with 
$$ \big(\mr_K(W''), \md_K(W'')\big)\ <_{\text{lex}} \  \big(\mr_K(V), \md_K(V)\big)$$ containing $a$, a contradiction to the choice of $V$. 
We next check that $\alpha$ and $\alpha'$ have the same $0$-type in the field $F$. Suppose otherwise. Let $ D'$ be the  smallest 0-definable $F$-algebraic set containing $\alpha'$.
Then $$ \big(\mr_F(D'), \md_F(D') \big) \ <_{\text{lex}}\ \big(\mr_F(D), \md_F(D) \big).$$
Do the same construction in the reverse direction again to get $ \alpha'' \in D'$ such that $a $ satisfies $ P\big( \chi( \alpha''), x\big) =0$.
If $ D''$ is the smallest 0-definable $F$-algebraic set containing $\alpha''$, then $$ \big(\mr_F(D''), \md_F(D'') \big)\ \leq_{\text{lex}}\  \big(\mr_F(D'), \md_F(D') \big)\ <_{\text{lex}} \ \big(\mr_F(D), \md_F(D) \big), $$ a contradiction to our choice of $D, \alpha$ and $P$.
\end{proof}

\noindent
Suppose $D \subseteq F^k$ is definable. By Proposition~\ref{transferRemark}, $D$ can be identified with $\chi(D) $, which has a simple description by the preceding theorem. In the rest of the section, we give an improvement of the above result for this special case. 
For a system  $P$ in $K[w]$, we abuse the notation and let $Z\big(P(\chi(s))\big) \subseteq F^k$ be the set defined by  $P\big(\chi(s)\big)=0$.

\begin{lem}
For each $k$ there is a system $Q$ of polynomials in $ F[s] $ such that the set defined by $ \chi(s_1) + \cdots+ \chi(s_k) =0 $ is $Z(Q)$.
\end{lem}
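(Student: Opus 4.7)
The plan is to induct on $k$, writing $Z_k\subseteq F^k$ for the set defined by $\chi(s_1)+\cdots+\chi(s_k)=0$. The base case $k=1$ is immediate since injectivity of $\chi$ together with $\chi(0)=0$ gives $Z_1=\{0\}=Z(s_1)$.

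For the inductive step I would split $Z_k$ into its \emph{degenerate} part, consisting of $\alpha$ for which some nonempty proper subset $S\subsetneq\{1,\dots,k\}$ satisfies $\sum_{i\in S}\chi(\alpha_i)=0$ (note this captures every tuple with some $\alpha_i=0$), and its complementary \emph{non-degenerate} part. The degenerate part equals
\[
\bigcup_{\emptyset\ne S\subsetneq\{1,\dots,k\}}\Bigl\{\alpha\in F^k:\sum_{i\in S}\chi(\alpha_i)=0\text{ and }\sum_{i\notin S}\chi(\alpha_i)=0\Bigr\},
\]
a finite union of intersections of two shorter equations of the same form, hence $F$-algebraic by the inductive hypothesis.

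The substantive step is the non-degenerate case. For any non-degenerate $\alpha\in Z_k$ one has $\alpha_k\ne 0$, and dividing through by $-\chi(\alpha_k)$ turns the equation into a non-degenerate solution of $-y_1-\cdots-y_{k-1}=1$ in $G^{k-1}$, where $G=\chi(F^\times)$ is generic. The implication $(1)\Rightarrow(4)$ of Proposition~\ref{GenericEquivalence} forces each $y_i=\chi(\alpha_i/\alpha_k)$ to be a root of unity in $K$, and Mann's theorem bounds the order by the recursive constant $d=d(k-1)$. Injectivity of $\chi$ then upgrades $\chi\bigl((\alpha_i/\alpha_k)^d\bigr)=1=\chi(1)$ to $(\alpha_i/\alpha_k)^d=1$ in $F$, so each $\alpha_i/\alpha_k$ is one of the finitely many $d$-th roots of unity in $F$.

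Consequently the non-degenerate part of $Z_k$ is contained in a finite union of ``lines''
\[
L_\beta\ =\ \bigl\{(\beta_1 t,\dots,\beta_{k-1}t,t):t\in F\bigr\}
\]
indexed by tuples $\beta=(\beta_1,\dots,\beta_{k-1},1)$ of $d$-th roots of unity in $F$ with $\chi(\beta_1)+\cdots+\chi(\beta_{k-1})+1=0$; conversely each such $L_\beta$ sits inside $Z_k$, since for $t\in F$ one computes $\chi(\beta_1 t)+\cdots+\chi(t)=\chi(t)\bigl(\chi(\beta_1)+\cdots+1\bigr)=0$. Each $L_\beta$ is cut out in $F^k$ by the linear polynomials $s_i-\beta_i s_k\in F[s]$ for $i<k$. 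Together with the degenerate part this writes $Z_k$ as a finite union of $F$-algebraic sets, hence of the form $Z(Q)$ for a single system $Q$ of polynomials in $F[s]$. The main obstacle is exactly the non-degenerate step, where the combination of Mann's theorem, genericity, and injectivity of $\chi$ is what replaces an infinite additive condition over $K$ by a finite list of $F$-algebraic constraints.
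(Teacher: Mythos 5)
Your argument is correct, and it reaches the same key fact the paper relies on — that after scaling a non-degenerate solution of $\chi(s_1)+\cdots+\chi(s_k)=0$ by one coordinate, Mann's theorem (via genericity) forces the ratios $\alpha_i/\alpha_k$ to lie among finitely many roots of unity of $F$ of bounded order — but it organizes the combinatorics differently. The paper does not induct: it stratifies $Z_k$ directly by the partition of $\{1,\dots,k\}$ into minimal vanishing sub-sums, applies Mann separately to each block $I$ to obtain a system $Q_I$ over $F$, and writes $Z_k = \bigcup_{\mathscr{P}}\bigcap_{I\in\mathscr{P}}Z(Q_I)$. Your version instead splits $Z_k$ into a degenerate part (some proper sub-sum vanishes), which the inductive hypothesis handles as a finite union of products of lower-dimensional zero sets, and a non-degenerate core, which Mann collapses to finitely many genuine lines $L_\beta$ through the origin. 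The two proofs buy roughly the same thing, but your presentation is arguably tidier on one point the paper glosses over: the set defined by the strictly non-degenerate condition $\ndeq 0$ is only a union of \emph{punctured} lines, so it is not itself a zero set, whereas you explicitly replace the non-degenerate part by the finite union of full lines $L_\beta$ and verify $L_\beta\subseteq Z_k$, sidestepping the closure issue entirely. One small remark worth keeping in mind: in characteristic $p>0$ the $d$-th roots of unity in $F$ form a cyclic group of order $d/p^{v_p(d)}$ rather than $d$, but that is still finite, so the argument is unaffected.
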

\begin{proof}

For $ I \subseteq \{1, \ldots, k\} $, let $ \sum_{i\in I} \chi(s_i) \ndeq 0 $ denote the system which consists of $\sum_{i\in I} \chi(s_i) = 0 \text{ and }\sum_{i\in I'} \chi(s_i) \neq 0$ for each non-empty proper subset $I'$ of $I$. By Mann's theorem, there are $\alpha^{(1)}, \ldots, \alpha^{(l)}$ in $F^I$, such that the set defined by $ \sum_{i\in I} \chi(s_i) \ndeq 0  $ precisely  consists of  $ \beta \alpha^{(j)}$  with $\beta \in F^\times$ and $j \in \{1, \ldots, l \}$. Hence, if $ I \subseteq \{1, \ldots, k\} $, then there is a system $ Q_{I} $ of polynomials in $F[s]$  such that the set defined by $ \sum_{i\in I} \chi(s_i) \ndeq 0 $ is $ Z(Q_I)$.

Consider all the partitions $ \mathscr{P} $ of the set $\{1,\ldots, k\}$ into non-empty subsets. Then we have the set defined by $\chi(s_1) + \cdots + \chi(s_k) = 0 $ is $ \bigcup_{\mathscr{P}} \bigcap_{I \in \mathscr{P}} Z(Q_I)$. Note that finite unions and finite intersections of $F$-algebraic sets are again $F$-algebraic. Thus, we can find a system $Q$ of polynomials in $ F[s] $ as desired.
\end{proof}

\begin{lem} \label{Continuity}
The map $ \chi: F^k \to K^k $ is continuous. 
\end{lem}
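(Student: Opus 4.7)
The plan is to show: for every $P \in K[x_1,\ldots,x_k]$, the set $W_P := \{s \in F^k : P(\chi(s))=0\}$ is $F$-algebraic. Since any $K$-algebraic set is cut out by a system of such polynomials, this yields continuity of $\chi$. Set $G := \chi(F^\times)$. The first move is to pare the coefficients of $P$ from $K$ down to $\qq$: fix a $\qq(G)$-linear basis $\{b_\lambda\}$ of $K$ and expand each coefficient of $P$ with respect to it, so that the equation $P(\chi(s))=0$ splits into finitely many equations $P_\lambda(\chi(s))=0$ with $P_\lambda \in \qq(G)[x]$. (This works because each value $P_\lambda(\chi(s))$ lies in $\qq(G)$ and the $b_\lambda$'s are $\qq(G)$-linearly independent.) After clearing denominators one may assume $P \in \qq[G][x]$; expanding each coefficient as a $\qq$-linear combination of elements of $G$ and using $\chi(\alpha)\chi(s)^\kappa = \chi(\alpha s^\kappa)$, the equation $P(\chi(s))=0$ takes the form
$$ \sum_{l=1}^{L} n_l\,\chi\!\big(t_l(s)\big) \ =\ 0, $$
with $n_l \in \zz$ and $t_l(s) \in F[s]$ monomials.

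The second move is to run the decomposition from the preceding lemma on this reshaped equation. Partitioning by which subsums cancel, $W_P$ is a finite union, indexed by partitions $\mathscr{P}$ of $\{1,\ldots,L\}$, of sets where, on each block $I \in \mathscr{P}$, $\sum_{l\in I} n_l\,\chi(t_l(s))=0$ holds while no proper sub-sum vanishes. For a block $I$ with $|I|\geq 2$, dividing by $n_{l_0}\chi(t_{l_0}(s))$ produces an equation
$$ \sum_{l\in I\setminus\{l_0\}} \frac{n_l}{n_{l_0}}\,\chi\!\left(\frac{t_l(s)}{t_{l_0}(s)}\right)\ =\ -1 $$
with $\qq$-coefficients and unknowns in $G$, non-degenerate in the sense of Section~2. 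Setting $H := G \cap U_K = \chi(U_F)$, Corollary~\ref{GenTrans} applied to $\{1\}\subseteq H\subseteq G$ yields that $G$ is $\qq$-generic over $H$; since $\mcl(H)\cap G = H$, Proposition~\ref{Equiv2}(4) forces every such non-degenerate tuple to lie in $H^{|I|-1}$. By injectivity of $\chi$, each ratio $t_l(s)/t_{l_0}(s)$ is then a root of unity in $F$, and Mann's theorem bounds its order by some uniform $d=d(|I|)$, which translates into the $F$-polynomial identity $t_l(s)^d = t_{l_0}(s)^d$.

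Finally, for each $\mathscr{P}$ and each block $I$, the non-degenerate root-of-unity tuples $(\zeta_l)_{l\in I}$ satisfying $\sum_{l\in I} n_l\zeta_l = 0$ form a finite set; for each such choice the locus $\{s : t_l(s)=\zeta_l\,t_{l_0}(s)\ \text{for every block and every}\ l\in I\}$ is $F$-algebraic and is visibly contained in $W_P$. Conversely, any $s\in W_P$ either falls in one such locus (once each $l_0$ and the $\zeta_l$'s are read off from $s$) or lies in the $F$-algebraic boundary $\{t_l(s)=0 \text{ for some } l\}$, which is handled separately by a simpler sub-case analysis. Hence $W_P$ is a finite union of $F$-algebraic sets and is therefore itself $F$-algebraic.

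The main obstacle throughout is the passage from arbitrary $K$-coefficients to $\qq$-coefficients, since both Mann's theorem and Proposition~\ref{Equiv2}(4) demand $\qq$-coefficients. The $\qq(G)$-basis step, together with the observation that $\chi(s)^\kappa$ always lies in $G\subseteq \qq(G)$, is precisely what makes this reduction possible; once it is done, the remainder of the argument is essentially a parameter-dependent replay of the preceding lemma.
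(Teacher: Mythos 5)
Your proof is correct and follows essentially the same route as the paper: reduce from $K$-coefficients to $\qq(G)$-coefficients by expanding in a linear basis over $\qq\big(\chi(F^\times)\big)$, push down to $\zz$-coefficients and $\chi$-values of $F$-monomials, and then run the partition/non-degenerate-subsum analysis with Mann's theorem to produce an $F$-algebraic description. The only organizational difference is that the paper isolates the Mann-theorem step as a preceding lemma stated for unit coefficients, $\chi(s_1)+\cdots+\chi(s_k)=0$, and invokes it by substituting monomials for the $s_i$, whereas you re-run that argument inline with general $\zz$-coefficients via Proposition~\ref{Equiv2}(4) and Corollary~\ref{GenTrans}; your version is a bit more self-contained and sidesteps the small coefficient-normalization step the paper uses when passing from $P_i(\chi(s))=0$ to an equation with all coefficients equal to~$1$.
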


\begin{proof}
For the statement of the lemma, we need to show that if $ V\subseteq K^k$ is $K$-closed then $ \chi^{-1}(V) $ is $F$-closed. It suffices to show that if $P$ is in $ \qq[w, x] $ and $ a$ is a tuple of elements in $K$, then  $ Z\big(P(\chi(s), a)\big) $ is $F$-algebraic. 
Choose a linear basis $ B$ of $\qq\big(\chi(F^\times), a\big)$ over $\qq\big(\chi(F^\times)\big)$. 
Then 
$$P\big(\chi(s), a\big) \ = \ P_1\big(\chi(s)\big)b_1+\cdots+ P_m\big(\chi(s)\big)b_m $$ where $P_i $ has coefficients in $ \qq(\chi(F^\times))$, $b_i \in B$ for $i \in \{1, \ldots, m\}$, and $b_i \neq b_j$ for distinct $i,j \in \{ 1 , \ldots, m\}$. 
Therefore,  $P\big(\chi(s), a\big) = 0$ is equivalent to $ P_i\big(\chi(s)\big)=0$ for all $i \in \{1, \ldots, m \}$. 
Furthermore, for each $i \in \{ 1, \ldots,m \} $, $ P_i\big(\chi(s)\big) =0 $ is equivalent to an equation of the form 
$$ \chi\big(M_1(s, \alpha)\big)+\cdots+\chi\big(M_{l_i}(s, \alpha)\big)\ =\ 0 $$ 
where $ \alpha$ is a tuple of elements in $F$, and $ M_j$ is a monomial for $j \in \{1, \ldots, l_i\}$. By the result of the preceding lemma, for each $i \in \{1, \ldots, m\}$, the polynomial equation $P_i\big(\chi(s)\big)=0$ is equivalent to a system $ Q_i\big( M_1(s, \alpha), \ldots, M_{l_i}(s, \alpha)\big) =0 $. Thus, $Z\big(P(\chi(s), a)\big) = \bigcap_{i=1}^k Z\big(P_i(\chi(s))\big)$ is $F$-algebraic.
\end{proof}

\begin{thm} \label{StablyEmbbed}
Let $D$ be a subset of $F^k$. If $D$ is definable, then $D$ is definable in the field $F$. Moreover, when $D$ is $0$-definable, $D$ is $0$-definable in the field $F$. If $D = \chi^{-1}(V)$ with $K$-algebraic $V \subseteq K^n$, then $D$ is an $F$-algebraic set. Moreover, when $V = Z(P)$ with $P$ a system in $\zz[w]$, $D = Z(Q)$ with $Q$ a system in $\zz[s]$. 
\end{thm}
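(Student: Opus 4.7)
The four assertions assemble from the machinery developed earlier in this section and in Sections~2 and~3. The $0$-definable case is Lemma~\ref{StablyEmbedednessForParameterFreeSet}, which combines Corollary~\ref{AutExt} (to extend field-$F$ automorphisms to two-sorted ones) with quantifier elimination in $\ACF$. For the parameterized definable case my plan is to mimic that proof while handling the parameters carefully. Suppose $D$ is definable over $e=(\gamma,c)\in F^l\times K^m$. I would pick a finite $\beta\subseteq F$ extending $\gamma$ such that $\qq\big(\chi(\beta)\big)$ contains the algebraic part of $c$ over $\qq\big(\chi(F^\times)\big)$; such a $\beta$ exists because that algebraic part lies in a finitely generated subextension. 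Passing to a sufficiently saturated elementary extension (which exists by Theorem~\ref{ElementarilyEmbeddable}), every field-$F$ automorphism $\sigma$ fixing $\beta$ lifts via Corollary~\ref{AutExt} to an automorphism $(\sigma,\tau)$ of $(F,K;\chi)$; the choice of $\beta$ forces $\tau$ to fix the algebraic part of $c$, and saturation gives enough room to further arrange $\tau$ to fix the transcendental remainder of $c$ over $\qq\big(\chi(F^\times)\big)$. Hence $D$ is invariant under $\mathrm{Aut}(F/\beta)$, and a standard Stone-space compactness argument---this invariance together with $(F,K;\chi)$-definability of $D$ forces the corresponding subset of the field-$F$ Stone space $S_k(\beta)$ to be clopen---yields that $D$ is definable in the pure field $F$ over a finite subset of $\acl(\beta)\subseteq F$.

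For the third assertion, taking $n=k$ so that $D=\chi^{-1}(V)$ sits in $F^k$, Lemma~\ref{Continuity} applies directly: $\chi\colon F^k\to K^k$ is Zariski-continuous, so $D$ is $F$-closed, hence $F$-algebraic.

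For the fourth assertion, my plan is to upgrade $F$-algebraicity to definability over the prime field via a Galois-invariance argument. Given any $\sigma\in\mathrm{Aut}(F)$, extend via Corollary~\ref{AutExt} to $(\sigma,\tau)\in\mathrm{Aut}(F,K;\chi)$; because $P\in\zz[w]$ and $\tau$ fixes $\zz$, one computes $P\big(\chi(\sigma(s))\big)=\tau\big(P(\chi(s))\big)$, which vanishes whenever $s\in D$, giving $\sigma(D)=D$. Since a Galois-invariant algebraic set in an algebraically closed field is defined over the prime field, $D=Z(Q)$ for some $Q$ with prime-field coefficients, hence in $\zz[s]$ after clearing denominators if $p=0$.

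The principal obstacle lies in the parameterized first assertion: one must carefully select $\beta$ to absorb the algebraic part of $c$ and justify the freedom to preserve the transcendental remainder when extending field-$F$ automorphisms. Once the invariance is in hand, the Stone-space step is routine stable-theory manipulation, and the remaining three assertions are direct consequences of Lemma~\ref{Continuity}, Corollary~\ref{AutExt}, Lemma~\ref{StablyEmbedednessForParameterFreeSet}, and classical facts about Galois-invariant algebraic sets.
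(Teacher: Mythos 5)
Your handling of the second and third assertions coincides with the paper's (Lemma~\ref{StablyEmbedednessForParameterFreeSet} and Lemma~\ref{Continuity}, respectively). For the first and fourth assertions you take a genuinely different route: the paper reduces the first assertion to Theorem~\ref{QuanRed} together with Lemma~\ref{Continuity} (a syntactic reduction), and obtains the fourth from the second and third; you instead run a direct automorphism-and-Stone-space argument for the first and a Galois-descent argument for the fourth. The Galois argument for the fourth assertion is correct, and the semantic approach to the first is in spirit a legitimate alternative that avoids routing through the heavy Theorem~\ref{QuanRed}.

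However, the crucial step in your first-assertion argument has a gap. You ask for a finite $\beta\subseteq F$ with $\qq\big(\chi(\beta)\big)$ containing the algebraic part of $c$ over $\qq\big(\chi(F^\times)\big)$, justified ``because that algebraic part lies in a finitely generated subextension''; but $\qq\big(\chi(F^\times)\big)$ is \emph{not} algebraically closed in $K$ (in $(\fpa,\qa;\chi)$ it is a subfield of $\qq(\uu)$, which does not contain $2^{1/3}$), so a component of $c$ that is algebraic over $\qq\big(\chi(F^\times)\big)$ need not lie in $\qq\big(\chi(F^\times)\big)$, let alone in any $\qq\big(\chi(\beta)\big)$. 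Moreover Corollary~\ref{AutExt} only supplies \emph{some} $(\sigma,\tau)$, with no control over $\tau(c)$: nothing ``forces'' $\tau$ to fix anything outside $\chi(F)$. The correct move is to enlarge $\beta\supseteq\gamma$ so that the vanishing ideal $I$ of $c$ over $\qq\big(\chi(F^\times)\big)$ in $K$ is generated by polynomials with coefficients in $\qq\big(\chi(\beta)\big)$; then $\tau_0 := \tau|_{\qq(\chi(F^\times))}$, which is determined by $\sigma$, fixes $I$ setwise, so $P(c)\mapsto \tau_0(P)(c)$ is a well-defined automorphism of $\qq\big(\chi(F^\times)\big)(c)$ over $\tau_0$ fixing $c$, and it extends to an automorphism $\tau$ of $K$. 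That is, you must rebuild the extension of Corollary~\ref{AutExt} with the added constraint $\tau(c)=c$ rather than cite it as a black box. Once that is repaired, your Stone-space conclusion---the restriction $S_k^{(F,K;\chi)}(e)\to S_k^F(\beta)$ is a closed continuous surjection, the images of $[D]$ and of its complement partition the base and are hence both clopen---correctly yields $F$-definability of $D$ over $\beta$.
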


\begin{proof}

We prove the first assertion. It suffices to show that if $ X \subseteq K^k$ is definable, then $ \chi^{-1}(X) $ is definable in the field $F$.
By Theorem~\ref{QuanRed}, we only need to show that if $S \subseteq K^{k+m}$ is $0$-algebraically presentable and $X = \big\{a : (a,b) \in S \big\}$ with $b \in K^m$ then  $ \chi^{-1}(X) $ is definable in the field $F$. 
It is easy to see that $ X $ is defined by a formula of the form 
$$ \exists t \Big( \varphi(t) \wedge P\big(w, \chi(t)\big) = 0 \Big)\ \text{ where } P \text{ is a system of polynomials in } K[w, x].$$
Let $V$ be  $ Z(P)$. Then by the preceding lemma, $ \chi^{-1}(V) $ is $Z(Q)$ for some system $Q$ in $F[s, t]$. Hence, $ \chi^{-1}(X)$, which is defined by  $ \exists t \big( \varphi(t) \wedge P(\chi(s),\chi(t)) =0 \big) $, is also defined by $ \exists t \big( \varphi(t) \wedge Q(s,t) =0 \big)  $. Thus, $ \chi^{-1}(X)$ is definable in the field $F$ as desired.  The second assertion is just Lemma~\ref{StablyEmbedednessForParameterFreeSet}. The third assertion is Lemma~\ref{Continuity}. The forth assertion follows from the second and third assertions. 
\end{proof}
\section{Definable sets II}


\noindent
We keep the notation conventions of the preceding section. Furthermore, We assume that, with possible decorations, $V, W,  C$ are $K$-algebraic subsets of their ambient spaces and $C$ is $K$-irreducible; also with possible decorations, $S$ is an algebraically presentable subset of its ambient space.

The goal of this section is to obtain an ultimate description of definable sets in $(F, K; \chi)$ which allows us to define good notions of dimension and multiplicity. 
This is done in several steps by introducing intermediate descriptions which gradually increase our geometric understanding of definable sets.
This analysis is complicated by the fact that not all definable sets are algebraically presentable; indeed, algebraically presentable sets are definable with only existential formulas while $\TN_p$ is not model complete. If we try to replace ``algebraic'' with ``constructible'', we will still run into the same problem. Therefore, we will need to take one step further.

We call $T \subseteq K^n$ a {\bf pseudo-constructible set} (or {\bf pc-set}) if there are $ V, S \subseteq K^n$   such that $T= V \backslash S$. A pc-set is clearly definable. If $V'$ is the closure of $T$ in the $K$-topology then $T= V' \backslash (S \cap V')$, and $S \cap V'$ is also algebraically presentable. Hence, if $T$ is a pc-set, there is a choice of $V, S$ such that $V$ is the closure of $T$ in the $K$-topology. Throughout the rest of this section, $T$ with possible decorations is a pc-subset of its ambient space. If $T= V \backslash S$, and $S$ has an algebraic presentation with only finitely many elements, then $T$ is a constructible set in the $K$-topology. This section is based on the observation that we can almost pretend pc-sets are constructible sets in the $K$-topology. The underlying reason is the following:

\begin{lem}\label{Smallness1}
Suppose $S \subseteq K^n$ has algebraic presentation $\{W_\beta\}_{\beta \in E}$ and $C$  is a subset of $S$. Then $C$ is a subset of $W_\beta$ for some $\beta \in E$.
\end{lem}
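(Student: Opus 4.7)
I first reduce, via Lemma~\ref{APsimple2} and a finite partition of $E$ (after which irreducibility of $C$ localizes the problem to one piece), to the case where there is a single polynomial system $P \in K[y, x]$ and an $F$-definable $E \subseteq F^k$ such that $W_\beta = Z(P(\chi(\beta), x))$ for all $\beta \in E$. Writing $I_C \subseteq K[x]$ for the prime ideal defining the $K$-irreducible variety $C$, I set
$$ Y_C := \{y \in K^k : P(y, \cdot) \in I_C \cdot K[x]\}, $$
which is straightforwardly $K$-algebraic in $K^k$ and satisfies $C \subseteq W_\beta$ iff $\chi(\beta) \in Y_C$. Hence the target set $A := \{\beta \in E : C \subseteq W_\beta\}$ equals $E \cap \chi^{-1}(Y_C)$, which by Theorem~\ref{StablyEmbbed} is $F$-constructible; the goal is $A \neq \emptyset$. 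The case $\dim_K C = 0$ is immediate from $C \subseteq S$, so I focus on $d := \dim_K C \geq 1$.

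Suppose for contradiction $A = \emptyset$, so each $C \cap W_\beta$ has $K$-dimension strictly less than $d$. Using Corollary~\ref{ElementarilyEmbeddable} and Lemma~\ref{SaturatedExtension}, pass to an elementary extension $(F^*, K^*; \chi^*) \succeq (F, K; \chi)$ with $K^*$ sufficiently saturated as a model of $\ACF$ to realize a generic point $\eta \in C(K^*)$ of $C$ over $K$, so that $\trdeg(K(\eta)/K) = d$ and the $K$-polynomials vanishing at $\eta$ are exactly $I_C$. By elementarity $\eta \in S(K^*) = \bigcup_{\beta \in E^*} W^*_\beta$, giving some $\beta_0 \in E^*$ with $P(\chi^*(\beta_0), \eta) = 0$. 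Crucially, this $\beta_0$ cannot lie in the original $E$: for $\beta \in E$ the polynomial system $P(\chi(\beta), \cdot) \in K[x]$ vanishes at the generic $\eta$ and so lies in $I_C$, giving $C \subseteq W_\beta$ and contradicting $A = \emptyset$.

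The plan is to show that nonetheless $\chi^*(\beta_0) \in K^k$; then the elementarity identity $\chi^*(F^*) \cap K = \chi(F)$ (a direct consequence of elementarity together with the injectivity of $\chi^*$) forces $\beta_0 \in F \cap E^* = E$, the desired contradiction. Expanding $P(y, x) = \sum_I P_I(x) y^I$ with $P_I \in K[x]$, the relation $P(\chi^*(\beta_0), \eta) = 0$ becomes
$$ \sum_{I \in J} P_I(\eta) \, \chi^*(\beta_0)^I \ =\ 0, $$
where $J := \{I : P_I \notin I_C\}$ and each $P_I(\eta) \in K(\eta)^\times$ by genericity of $\eta$ over $K$. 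I plan to exploit the genericity of $\chi^*$ over $\chi(F^\times)$ -- which holds because every elementary extension in our theory is regular, by Corollary~\ref{ElSub1} together with Proposition~\ref{GenericSubstructure} -- to analyze this $K(\eta)$-linear relation in terms of the multiplicative structure of $\beta_0$ over $F$ and to force $\chi^*(\beta_0) \in K^k$.

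The main obstacle is this last extraction step, because the displayed relation itself witnesses an algebraic dependence between $\chi^*(\beta_0)$ and $\eta$ over $K$, so a naive linear-disjointness argument does not close the gap. I expect the resolution to decompose $\beta_0$ into its components that are multiplicatively dependent on $F^\times$ (whose $\chi^*$-images automatically lie in $K$ since $K$ is algebraically closed) and its multiplicatively $F$-independent components (whose $\chi^*$-images are algebraically independent over $K$ by Proposition~\ref{GenericSubstructure}), and then to argue, by comparing the $K(\eta)$-span and the $K$-span of the monomials $\chi^*(\beta_0)^I$ for $I \in J$ against the constraint that each $P_I(\eta)$ is nonzero, that the independent components cannot appear nontrivially in the relation -- forcing $\chi^*(\beta_0)$ to lie entirely in $K^k$.
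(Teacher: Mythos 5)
Your proposal stalls at a genuine gap, which you yourself flag; moreover the gap is entirely an artifact of the set-up you chose, and the paper's argument is a one-liner by comparison.

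First, a circularity in your preprocessing: you invoke Lemma~\ref{APsimple2} to split $E$ into finitely many pieces $E_1,\dots,E_l$ (one polynomial system $P_i$ per piece) and then claim that ``irreducibility of $C$ localizes the problem to one piece.'' But each piece $S_i = \bigcup_{\beta \in E_i} W_\beta$ is an infinite union of closed sets and need not itself be closed, so irreducibility of $C$ alone does not let you conclude $C \subseteq S_i$ for some $i$. The statement that irreducibility \emph{does} localize across such unions is precisely Corollary~\ref{Smallness2}, which is deduced from the very lemma you are proving. This is repairable --- you could postpone the choice of $P$ until after you have a generic point $\eta$ landing in some particular $W^*_{\beta_0}$ --- but as written the reduction begs the question.

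Second, and more seriously, the ``extraction step'' you propose for showing $\chi^*(\beta_0) \in K^k$ is not a small technical lemma you can fill in; the relation $\sum_{I\in J} P_I(\eta)\,\chi^*(\beta_0)^I = 0$ genuinely ties the new transcendentals $\eta$ (from passing to a big $K^*$) to the new multiplicative elements (from passing to a big $F^*$), and there is no reason in general for the latter to collapse into $K$. You are trying to untangle a nonlinear algebraic dependence over $K(\eta)$ using only the linear-disjointness content of Proposition~\ref{GenericSubstructure}, which, as you correctly worry, does not close the gap. The decisive observation you are missing is that Lemma~\ref{SaturatedExtension} already provides an elementary extension $(F',K';\chi')$ with $F' = F$ \emph{unchanged} and $K'$ that is $|F|^+$-saturated as a field. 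In that extension $E' = E$ verbatim, so the $\beta_0 \in E'$ you extract is automatically in $E$, and your own argument that $\beta_0 \notin E$ then yields an immediate contradiction --- no multiplicative genericity, no $Y_C$, no analysis of $\chi^*(\beta_0)$.

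For comparison, the paper's proof is shorter still and avoids generic points altogether: pass via Lemma~\ref{SaturatedExtension} to a model where $K$ is $|F|^+$-saturated as a field; the partial type $\{x \in C\}\cup\{x \notin W_\beta : \beta \in E\}$ involves at most $|E| \le |F|$ formulas over $\le |F|$ parameters of $K$, so by saturation its unrealizability (which is the hypothesis $C \subseteq \bigcup_\beta W_\beta$) is already witnessed by a finite subfamily, $C \subseteq W_{\beta_1}\cup\dots\cup W_{\beta_m}$; irreducibility of $C$ then gives $C \subseteq W_{\beta_i}$ for some $i$. Your generic-point manoeuvre is essentially this compactness argument in disguise, but routed through an elementary extension that enlarges $F$ and thereby manufactures the very problem you then cannot solve.
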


\begin{proof}
Suppose $S, C$ and $\{W_\beta\}_{ \beta \in E}$ are as stated. By Corollary~\ref{SaturatedExtension}, we can arrange that $K$ is $|F|^+$-saturated as a field. As $|E| \leq |F|$, $C \subseteq \bigcup_{\beta \in E} W_\beta $ implies $C$ is a subset of a union of finitely many elements in $\{W_\beta\}_{ \beta \in E}$. Since $C$ is irreducible, $C \subseteq W_\beta$ for some $\beta \in E$.
\end{proof}

\begin{cor} \label{Smallness2}
If $C, S, S' \subseteq K^n$ are such that $C \subseteq S\cup S'$, then either $C \subseteq S$ or $C \subseteq S'$.
\end{cor}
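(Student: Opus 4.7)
The plan is to reduce this immediately to Lemma~\ref{Smallness1} applied to the union $S \cup S'$. The key observation is that the class of algebraically presentable subsets of $K^n$ is closed under finite unions, so $S \cup S'$ is itself algebraically presentable, and then irreducibility of $C$ forces it into a single member of an algebraic presentation.

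In more detail, I would first fix algebraic presentations $(W_\beta)_{\beta \in E}$ of $S$ and $(W'_{\beta'})_{\beta' \in E'}$ of $S'$. By the definition of a combination of two definable families, together with Lemma~\ref{fiber}(1), the family indexed by the disjoint union $E \sqcup E'$ whose members are the $W_\beta$'s and $W'_{\beta'}$'s is a presentation of $S \cup S'$; since each of its members is $K$-algebraic, it is in fact an algebraic presentation. Hence $S \cup S'$ is algebraically presentable.

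Now apply Lemma~\ref{Smallness1} to the irreducible $K$-algebraic set $C$ inside $S \cup S'$ with this combined algebraic presentation. The conclusion gives an index in $E \sqcup E'$ whose associated algebraic set contains $C$. If this index lies in $E$, then $C \subseteq W_\beta \subseteq S$ for the corresponding $\beta$; otherwise $C \subseteq W'_{\beta'} \subseteq S'$.

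There is no real obstacle here: the only thing to be careful about is confirming that combinations of algebraic presentations remain algebraic presentations, but this follows directly from the definitions given just before Lemma~\ref{fiber} together with part~(1) of that lemma. The rest is immediate from Lemma~\ref{Smallness1}.
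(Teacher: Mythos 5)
Your proof is correct and matches the argument the paper intends: the corollary is stated without proof because it is immediate from Lemma~\ref{Smallness1} once one observes, via the unnumbered lemma on combinations of algebraic presentations together with Lemma~\ref{fiber}(1), that $S\cup S'$ is again algebraically presentable by a family whose members come from the two given presentations. Tracking which of $E$ or $E'$ the index returned by Lemma~\ref{Smallness1} lies in then yields $C\subseteq S$ or $C\subseteq S'$, exactly as you wrote.
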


\noindent
The above lemma allows us to analyze pc-sets through their closures.

\begin{cor} \label{Smallness3}
Suppose $C, S$ are subsets of $K^n$. Then  $C \backslash S$ has closure $C$ if and only if $C \nsubseteq S$.
\end{cor}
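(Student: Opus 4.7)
The plan is to handle the two directions separately, using the fact that $K$-irreducibility combined with Corollary~\ref{Smallness2} makes algebraically presentable sets behave like single algebraic sets from the point of view of an irreducible~$C$.

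For the forward direction, I would argue the contrapositive. If $C \subseteq S$, then $C \setminus S = \emptyset$, so its closure in the $K$-topology is $\emptyset \neq C$ (since $C$, being $K$-irreducible, is nonempty). Hence $C \setminus S$ does not have closure $C$.

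For the backward direction, assume $C \nsubseteq S$ and let $V'$ denote the closure of $C \setminus S$ in the $K$-topology, so $V'$ is $K$-algebraic and $V' \subseteq C$. Writing $C = (C \setminus S) \cup (C \cap S)$ and taking closures gives $C \subseteq V' \cup \overline{C \cap S}$. The set $C \cap S$ is algebraically presentable (if $S = \bigcup_{\beta \in E} W_\beta$ is an algebraic presentation, then $(C \cap W_\beta)_{\beta \in E}$ presents $C \cap S$), and $V'$ is $K$-algebraic, hence itself trivially algebraically presentable. Therefore $C$ is contained in the union of two algebraically presentable sets. By Corollary~\ref{Smallness2}, the $K$-irreducibility of $C$ forces either $C \subseteq V'$ or $C \subseteq C \cap S$, i.e.\ $C \subseteq S$. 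The latter is excluded by hypothesis, so $C \subseteq V'$, and combined with $V' \subseteq C$ this yields $V' = C$, as required.

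No serious obstacle is expected here: the only subtlety is recognizing that $V'$ can be treated as an algebraically presentable set so that Corollary~\ref{Smallness2} applies directly to the decomposition $C \subseteq V' \cup (C \cap S)$, at which point $K$-irreducibility of $C$ closes the argument immediately.
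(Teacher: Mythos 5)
Your proof is correct and follows essentially the same route as the paper's: form the inclusion $C \subseteq V' \cup (\text{algebraically presentable piece of }S)$ and apply Corollary~\ref{Smallness2} together with $K$-irreducibility of $C$. One small slip in the write-up: you write ``taking closures gives $C \subseteq V' \cup \overline{C\cap S}$'' but then apply Corollary~\ref{Smallness2} as though the second set were $C\cap S$ itself; in fact no closure is needed on that piece --- from $C\setminus S \subseteq V'$ one gets $C \subseteq V' \cup (C\cap S)$ directly, which is both what you use and what makes the conclusion $C\subseteq S$ immediate (whereas $C\subseteq \overline{C\cap S}$ would not give $C\subseteq S$ without further work).
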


\begin{proof}
 Suppose $C, S \in K^n$ are as stated. The forward direction is clear. Suppose $ \{W_\beta \}_{\beta \in E}$ is an algebraic presentation of $S$ and $C \nsubseteq S$. Let $V$ be the closure of $C \backslash S$. Then $C \backslash S \subseteq V \subseteq C \subseteq V \cup S $. By the preceding corollary, the last inclusion implies either $C \subseteq V$ or $C \subseteq S$. By assumption $C \nsubseteq S$, so $C \subseteq V$. Thus, $C = V$ as desired.
\end{proof}

\begin{cor} \label{Smallness4}
Suppose $V=C_1\cup\ldots\cup C_k, X = V_1 \backslash S_1 \cup \ldots V_l \backslash S_l$ are subsets of $K^n$, and $X$ is a subset of $ V$. Then $V$ is the  $K$-closure of $X$ if and only if for each $i \in \{1, \ldots,k\}$, there is $j \in  \{1, \ldots, l\} $ such that $C_i \subseteq V_j$ and $C_i \nsubseteq S_j$.
\end{cor}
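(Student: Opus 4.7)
The plan is to analyze both directions by first reducing the closure of $X$ to a union of $K$-algebraic pieces, and then matching those pieces against the decomposition $V = C_1\cup\cdots\cup C_k$, which I take to be into the (irredundant) $K$-irreducible components of $V$, the natural interpretation given the conventions of this section. Since $K$-closure distributes over finite unions, $\overline{X} = \bigcup_{j=1}^l \overline{V_j\setminus S_j}$, so the main technical point is to compute $\overline{V_j\setminus S_j}$ in terms of the irreducible components of $V_j$.

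For the backward direction, fix $i$ and let $j$ be an index supplied with $C_i\subseteq V_j$ and $C_i\nsubseteq S_j$. Applying Corollary~\ref{Smallness3} to $C=C_i$ and $S=S_j$ gives $\overline{C_i\setminus S_j}=C_i$; since $C_i\setminus S_j\subseteq V_j\setminus S_j\subseteq X$, this yields $C_i\subseteq\overline{X}$. Taking the union over $i$ gives $V\subseteq\overline{X}$, and the reverse inclusion is immediate from $X\subseteq V$ and $V$ being $K$-closed.

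For the forward direction, decompose each $V_j=D_{j,1}\cup\cdots\cup D_{j,k_j}$ into $K$-irreducible components. Then $V_j\setminus S_j=\bigcup_m (D_{j,m}\setminus S_j)$, and Corollary~\ref{Smallness3} gives $\overline{D_{j,m}\setminus S_j}=D_{j,m}$ if $D_{j,m}\nsubseteq S_j$ and $\overline{D_{j,m}\setminus S_j}=\emptyset$ otherwise. Taking the union over $j$ and $m$, $\overline{X}=\bigcup_{(j,m):\,D_{j,m}\nsubseteq S_j} D_{j,m}$, a finite union of $K$-irreducible closed sets. Now fix $i$: $C_i$ is $K$-irreducible and $C_i\subseteq V=\overline{X}$, so $C_i\subseteq D_{j,m}$ for some pair $(j,m)$ with $D_{j,m}\nsubseteq S_j$. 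Moreover $D_{j,m}\subseteq \overline{X}=V=\bigcup_{i'} C_{i'}$ together with $K$-irreducibility of $D_{j,m}$ forces $D_{j,m}\subseteq C_{i_0}$ for some $i_0$. The chain $C_i\subseteq D_{j,m}\subseteq C_{i_0}$ combined with irredundancy of the decomposition $V=\bigcup_{i'} C_{i'}$ gives $C_i=C_{i_0}=D_{j,m}$, from which $C_i\subseteq V_j$ and $C_i\nsubseteq S_j$ are immediate.

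The only genuinely non-formal step is the computation of $\overline{V_j\setminus S_j}$. The obstacle is that $S_j$ is in general an infinite union of $K$-algebraic sets, so $V_j\setminus S_j$ is not constructible in the usual algebro-geometric sense and naive constructible-set reasoning is unavailable. What rescues the argument is Corollary~\ref{Smallness3}, and behind it the smallness Lemma~\ref{Smallness1}, ensuring that any $K$-irreducible algebraic set is either entirely swallowed by the algebraically presentable $S_j$ or has its complement dense in it; this is exactly the sense in which pc-sets behave like constructible sets in the present context.
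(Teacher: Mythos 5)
Your proof is correct, and it takes a genuinely different route in the forward direction. The paper's proof starts from the equivalence ``$V=\overline{X}$ iff each $C_i$ is the $K$-closure of $C_i\cap X$'' (which requires irredundancy for the forward implication), then intersects each $C_i$ with the pieces $V_j\setminus S_j$, writes $C_i\cap(V_j\setminus S_j)=(C_i\cap V_j)\setminus S_j$, and applies Corollary~\ref{Smallness3} to $(C_i\cap V_j)\setminus S_j$ after observing that $C_i\cap V_j$ must equal $C_i$ for the closure to be all of $C_i$. You instead leave the $C_i$'s alone at first, decompose each $V_j$ into irreducible components $D_{j,m}$, compute $\overline{X}=\bigcup_{D_{j,m}\nsubseteq S_j}D_{j,m}$ via Corollary~\ref{Smallness3}, and then match components: $C_i\subseteq D_{j,m}\subseteq C_{i_0}$ and irredundancy force $C_i=D_{j,m}$. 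Your route is a bit longer but yields the extra information that, under irredundancy, each $C_i$ is literally an irreducible component of some $V_j$ with $D_{j,m}\nsubseteq S_j$, which is sharper than the inclusion $C_i\subseteq V_j$ in the statement. The paper's argument is more compact and avoids decomposing the $V_j$'s. Both proofs use Corollary~\ref{Smallness3} as the essential input, and both silently (in the paper) or explicitly (in yours) require the decomposition $V=C_1\cup\cdots\cup C_k$ to be irredundant; you are right to flag this, since the statement as literally written fails for redundant decompositions (e.g.\ $C_1\subsetneq C_2$, $V=C_2$, $S_1\supseteq C_1$ but $C_2\nsubseteq S_1$).
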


\begin{proof}
We have $V$ is the $K$-closure of $X$ if and only if for each $i \in {1, \ldots, k}$, $C_i$ is the closure of $C_i \cap X$. For each $i$, as $C_i$ is $K$-irreducible, $C_i$ is the closure of $C_i \cap X$ if and only if there is  $j \in \{1, \ldots, l\} $ such that $C_i$ is the closure of $ C_i \cap ( V_j \backslash S_j)= (C_i \cap V_j) \backslash S_j$. By the preceding corollary, this happens if and only if $C_i \cap V_j =C_i$ and $C_i \nsubseteq S_j$.
\end{proof}

\begin{cor} \label{ClosurePreserve}
Suppose $T, V$ are subsets of $K^n$. If $V$ is the closure of $T$ in the $K$-topology, then this also holds in any elementary extension of the model.
\end{cor}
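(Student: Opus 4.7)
The plan is to reduce the statement ``$V$ is the $K$-closure of $T$'' to a finite conjunction of first-order sentences that transfer automatically to the elementary extension. Write $T = V \setminus S$ with $V$ a $K$-algebraic set and $S$ algebraically presentable with presentation $\{W_\beta\}_{\beta \in E}$. Let $(F', K'; \chi') \succcurlyeq (F, K; \chi)$, and let $V', T', S'$ denote the interpretations in the extension, so that $V'$ is cut out in $(K')^n$ by the same polynomials defining $V$, the set $S'$ has presentation $\{W'_\beta\}_{\beta \in E'}$ where $E'$ extends $E$, and the quantifier-free identity $T = V \setminus S$ transfers to $T' = V' \setminus S'$.

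First I would decompose $V = C_1 \cup \ldots \cup C_k$ into its $K$-irreducible components and apply Corollary~\ref{Smallness4} with $l = 1$, $V_1 = V$, $S_1 = S$: this yields the equivalence between the hypothesis ``$V$ is the closure of $T$'' and the finite conjunction ``$C_i \nsubseteq S$ for each $i \in \{1, \ldots, k\}$''. Each clause $C_i \nsubseteq S$ is the existential $L$-sentence $\exists x (C_i(x) \wedge \neg S(x))$ with parameters in the base model (note that $\neg S(x)$ is a universal formula since $S$ is existentially presented, but the whole statement remains first-order), and therefore transfers to $C'_i \nsubseteq S'$ in $(F', K'; \chi')$.

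Next I would verify that $V' = C'_1 \cup \ldots \cup C'_k$ is still the $K'$-irreducible decomposition of $V'$. The set-theoretic equality $V' = \bigcup_i C'_i$ and the pairwise non-containments $C'_i \nsubseteq C'_j$ transfer directly from the sentences $\forall x (V(x) \leftrightarrow \bigvee_i C_i(x))$ and $\exists x (C_i(x) \wedge \neg C_j(x))$. For the irreducibility of each $C'_i$, I would invoke Lemma~\ref{DOE1}(3): viewing $C_i$ as a single member of a definable family of $K$-algebraic sets, irreducibility is a first-order condition on the parameters defining $C_i$, hence preserved under elementary equivalence. With the decomposition in hand and $C'_i \nsubseteq S'$ for every $i$, applying Corollary~\ref{Smallness4} now in the extension to $T' = V' \setminus S'$ delivers the conclusion that $V'$ is the $K'$-closure of $T'$.

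The only potentially delicate step is the absoluteness of irreducibility, namely that $C_i$ being $K$-irreducible implies $C'_i$ is $K'$-irreducible; Lemma~\ref{DOE1}(3) handles this cleanly via definability of irreducibility in algebraic families, so no separate algebraic-geometric input is required, and the entire argument amounts to a transfer of finitely many first-order sentences between the two models.
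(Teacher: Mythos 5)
Your proof is correct and takes essentially the same route as the paper, which simply cites Corollary~\ref{Smallness4} and says the result is immediate; you have spelled out why it is immediate by making explicit the reduction to finitely many first-order conditions with parameters (the non-containments $C_i \nsubseteq S$ plus preservation of the irreducible decomposition of $V$), all of which transfer to the elementary extension.
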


\begin{proof}
This follows immediately from the preceding corollary.
\end{proof}

\noindent
The collection of pc-sets is not closed under complement. The following definitions allow us to overcome this limitation.
Suppose $T,T'$ are subset of $K^n$ and $V'$ is the closure of $T'$ in the $K$-topology. We define $T \capdot T'$, to be $T \cap V'$. Note that this definition is not symmetric. Set $T \dotminus T'$ to be $ T \backslash V'$. Clearly, $T = (T \capdot T') \cup (T \dotminus T')$.
By a routine manipulation of formulas we get:
 
\begin{lem} 
If $T, T' \subseteq K^n$ then $T \cap T'$, $T \capdot T'$, $T \dotminus T'$ are pc-sets. Likewise, for $T \subseteq K^n$, $T'\subseteq K^{n'}$, we have that $ T \times T' \subseteq K^{n+n'}$ is also a pc-set.
\end{lem}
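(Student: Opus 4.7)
The plan is to prove each claim by a direct algebraic calculation, reducing every operation to standard closure properties of the classes of $K$-algebraic sets and algebraically presentable sets. Write $T = V \setminus S$ and $T' = V' \setminus S'$, where $V, V'$ are $K$-algebraic and $S, S'$ are algebraically presentable; by the remark preceding Lemma~\ref{Smallness1} we may also take $V''$ to be the closure of $T'$ in the $K$-topology. I will use throughout that the union of two algebraically presentable sets is algebraically presentable (via any combination of their algebraic presentations, as in Lemma~\ref{fiber}), that a $K$-algebraic set is trivially algebraically presentable by the family consisting of itself, and that finite intersections of $K$-algebraic sets are $K$-algebraic.

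For $T \cap T'$, I would write
\[
T \cap T' \ =\ (V \setminus S) \cap (V' \setminus S')\ =\ (V \cap V') \setminus (S \cup S'),
\]
and observe that $V \cap V'$ is $K$-algebraic while $S \cup S'$ is algebraically presentable, so the right side is a pc-set. For $T \capdot T' = T \cap V''$, the same manipulation yields $(V \cap V'') \setminus S$, which is a pc-set. For $T \dotminus T' = T \setminus V''$, I would write
\[
T \dotminus T'\ =\ (V \setminus S) \setminus V''\ =\ V \setminus (S \cup V''),
\]
and use that $S \cup V''$ is algebraically presentable (since $V''$ alone is).

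For the product, I would write
\[
T \times T' \ =\ (V \times V') \setminus \bigl((S \times V') \cup (V \times S')\bigr).
\]
Then $V \times V'$ is a $K$-algebraic subset of $K^{n+n'}$. If $\{W_\alpha\}_{\alpha \in D}$ is an algebraic presentation of $S$ (with $D \subseteq F^k$ definable), the family $\{W_\alpha \times V'\}_{\alpha \in D}$ is an algebraic presentation of $S \times V'$, and symmetrically for $V \times S'$; their union then gives an algebraic presentation of $(S \times V') \cup (V \times S')$, showing that this set is algebraically presentable and hence that $T \times T'$ is a pc-set.

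None of these steps is a real obstacle: each is just an application of the distributive law of intersection over set difference combined with the closure properties of the two classes involved. The only thing worth highlighting is that we do not need $V^\ast$ in an expression $T^\ast = V^\ast \setminus S^\ast$ to be the closure of $T^\ast$; the definition of a pc-set only requires the existence of some such $V^\ast, S^\ast$, which is exactly what the above identities provide.
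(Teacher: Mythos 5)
Your proof is correct and supplies exactly the ``routine manipulation of formulas'' that the paper asserts without spelling out. Each identity (distributing $\cap$ and $\setminus$, and the product decomposition $(V \times V') \setminus \bigl((S \times V') \cup (V \times S')\bigr)$) is right, and the closure facts you invoke --- that finite unions of algebraically presentable sets are algebraically presentable via combinations, that a $K$-algebraic set is trivially algebraically presentable (index set $F^0$), and that fiberwise products give algebraic presentations in $K^{n+n'}$ --- are precisely the tools the paper sets up in the surrounding lemmas.
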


\noindent
Suppose $T$, $T'$, $V$, $V'$ are subsets of $K^n$, and $V$ is the $K$-closure of $T'$, $V'$ is the $K$-closure of $T'$. The {\bf$K$-Morley rank} of $T$ is defined by $\mr_K(T)= \mr_K(V)$; the {\bf$K$-Morley degree}  of $T$ is defined by $\md_K(T)=\md_K(V)$.

We say $T$ is {\bf almost a subset} of $T'$, if  $\mr_K(T) = \mr_K(T') =\mr_K(T \cap T')$ and $\md_K(T\cap T') =\md_K(T)$, and denote it by $T  \subsim T'$. We note that in our definition $T\subseteq T'$ does not imply $T \subsim T'$ as we might have $\mr_K(T) < \mr_K(T')$. The definition is given in this way to simplify the notation in the case of $\mr_K(T) =\mr_K(T')$, which is our focus.
We say $T$ and $T'$ are {\bf almost equal}, denoted by $T\sim T'$, if $T \subsim T'$ and $T' \subsim T$. We say $T$ and $T'$ are { \bf almost disjoint}, denoted by $T \simperp T'$ if $\mr_K(T)=\mr_K(T')$ and $\mr_K(T\cap T')< \mr_K(T)$.

The following facts are very natural analogues of what we expect to be true about constructible sets in $K$-topology. All are either straightforward from the definitions or easy consequences of Corollaries~\ref{Smallness2} and~\ref{Smallness4}.
\begin{prop} \label{lies}
Suppose $T$, $T_1$, $T_2$, $V_1$, $V_2$ are subsets of $ K^n$ and $T'$, $T_1'$, $T_2'$ are subsets of $K^{n'}$, and $V_1$,$V_2$ are respectively the closure of $T_1$,$T_2$ in the $K$-Zariski topology. Then we have the following:
\begin{enumerate}
\item if $T_1$ is a subset of $T_2$, then either $\mr_K(T_1) \leq \mr_K(T_2)$ or $\mr_K(T_1) = \mr_K(T_2)$ and $\md_K(T_1) \leq \md_K(T_2)$;
\item if $T = T_1 \cup T_2$, then $\mr_K(T) = \max\{ \mr_K(T_1), \mr_K(T_2)\} $;
\item if $T = T_1 \cup T_2$, $T_1 \simperp T_2$, then $\md_K(T) = \md_K(T_1)+\md_K(T_2)$;
\item the relation $\subsim$ is transitive; the relation $\sim$ is an equivalent relation;
\item if $\mr_K(T_1)=\mr_K(T_2)= \mr_K(T _1 \cap T_2)$, then $T_1 \cap T_2 \sim T_1 \capdot T_2 \sim V_1 \cap V_2$;
\item if $\mr_K(T_1) = \mr_K(T_2)$, then exactly one of the following can happen: $T_1 \simperp T_2$, $T_1 \subsim T_2$ or $\mr_K(T_1)=\mr_K(T_2)= \mr_K(T _1 \capdot T_2) = \mr_K( T_1 \dotminus T_2)$;
\item if $T_1 \simperp T_2$, then $\mr_K(T_1 \capdot T_2) < \mr(T_1) $ and $T_1 \dotminus T_2 \sim T_1$;
\item if $T_1 \subsim T_2$, then $ T_1 \cap T_2 \sim T_1 \capdot T_2 \sim T_1$;
\item if $\mr_K(T_1)=\mr_K(T_2)= \mr_K(T _1 \capdot T_2) = \mr_K( T_1 \dotminus T_2)$, then 
$$\md_K(T _1 \capdot T_2)+\md_K(T_1 \dotminus T_2) \ =\ \md_K(T_1);$$
\item if $\mr_K(T_1)= \mr_K(T_2) = \mr_K(T_1 \cap T_2)$ and $T'_1 \sim T_1, T'_2 \sim T_2$, then $T'_1 \cap T'_2 \sim T_1 \cap T_2$. The same conclusion holds if as we replace all appearances of $\cap$ in the previous statement with one of $\cup, \capdot, \dotminus$;
\item $\mr_K( T \times T') = \mr_K(T) + \mr_K(T')$, $ \md_K( T \times T') = \md_K(T)\md_K(T')$;
\item if $T_1  \sim T_2$ and $T_1' \sim T'_2$, then $T_1 \times T'_1 \sim T_2 \times T'_2$; if $T_1 \sim T_2$ and $T'_1 \simperp T'_2$, then $T_1 \times T'_1 \simperp T_2 \times T'_2$; if   $T_1 \simperp T_2$ and $T'_1 \simperp T'_2$, then $T_1 \times T'_1 \simperp T_2 \times T'_2$.
\end{enumerate}
\end{prop}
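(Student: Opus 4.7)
The plan is to push every assertion down to the underlying $K$-closures, where the analogous facts are classical for algebraic sets in $\mathrm{ACF}$. The structural input is Corollary~\ref{Smallness4}: if $V$ is the $K$-closure of a pc-set $T = V \setminus S$, then every $K$-irreducible component of $V$ fails to be contained in $S$. Combined with Corollaries~\ref{Smallness2} and~\ref{Smallness3}, this identifies the closure of any reasonable boolean combination of pc-sets as the union of those $K$-irreducible components (of the ambient algebraic set) that are not swallowed by the relevant algebraically presentable set. Since $\mr_K$ and $\md_K$ are computed on closures, this reduction will suffice.

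Items (1) and (2) are immediate from $\overline{T_1 \cup T_2} = V_1 \cup V_2$ together with $\overline{T_1} \subseteq \overline{T_2}$ when $T_1 \subseteq T_2$, applied to the classical rank/degree calculus for algebraic sets. For (3) with $T_1 \simperp T_2$, write $T_i = V_i \setminus S_i$ with $V_i = \overline{T_i}$: a hypothetical top-rank $K$-irreducible component $C$ shared by $V_1$ and $V_2$ would, by Corollary~\ref{Smallness4} applied to each $T_i$, satisfy $C \nsubseteq S_i$, hence $C \nsubseteq S_1 \cup S_2$ by Corollary~\ref{Smallness2}, and then Corollary~\ref{Smallness3} would force $\overline{C \cap T_1 \cap T_2} = C$, contradicting $T_1 \simperp T_2$. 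Thus the top-rank components of $V_1$ and $V_2$ are disjoint, yielding $\md_K(V_1 \cup V_2) = \md_K(V_1) + \md_K(V_2)$.

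The crucial step is (5); once it is in hand, items (4) and (6)--(10) reduce to routine combinatorics of the $\capdot/\dotminus$ decomposition together with classical rank/degree facts for algebraic sets. For (5), expand $T_1 \cap T_2 = (V_1 \cap V_2) \setminus (S_1 \cup S_2)$ and decompose $V_1 \cap V_2$ into its $K$-irreducible components; by Corollary~\ref{Smallness3} applied componentwise, $\overline{T_1 \cap T_2}$ is precisely the union of those components not contained in $S_1 \cup S_2$. The hypothesis $\mr_K(T_1 \cap T_2) = \mr_K(T_1) = \mr_K(T_2)$ forces every top-rank irreducible component $C$ of $V_1 \cap V_2$ to appear also as a top-rank component of both $V_1$ and $V_2$; Corollary~\ref{Smallness4} applied separately to $T_1$ and $T_2$ then gives $C \nsubseteq S_i$, so $C \subseteq \overline{T_1 \cap T_2}$. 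Thus $V_1 \cap V_2$ and $\overline{T_1 \cap T_2}$ share their rank and their top-rank components, i.e.\ $T_1 \cap T_2 \sim V_1 \cap V_2$; the case $T_1 \capdot T_2 = (V_1 \cap V_2) \setminus S_1$ follows from the same argument with $S_2$ dropped.

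For (11)--(12), the closure of $T \times T'$ is $V \times V'$: every $K$-irreducible component of $V \times V'$ has the form $C \times C'$ with $C, C'$ components of $V, V'$, and Corollary~\ref{Smallness4} gives $C \nsubseteq S$ and $C' \nsubseteq S'$; Corollary~\ref{Smallness2} then yields $C \times C' \nsubseteq (S \times K^{n'}) \cup (K^n \times S')$, so Corollary~\ref{Smallness4} in the reverse direction gives $\overline{T \times T'} = V \times V'$. The classical $\mathrm{ACF}$ formulas $\mr_K(V \times V') = \mr_K(V) + \mr_K(V')$ and $\md_K(V \times V') = \md_K(V)\cdot \md_K(V')$ then deliver (11), and (12) follows by noting (via (5)) that $T_1 \sim T_2$ forces $V_1$ and $V_2$ to share their top-rank components, whence so do the product closures. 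The main obstacle will be the component-tracking in (5) and the $\simperp$ case of (3): ensuring that a top-rank component of an intersection really is a top-rank component of each factor, so that Corollary~\ref{Smallness4} applies; once that geometric accounting is secure, the rest is routine $\mathrm{ACF}$ rank/degree calculus.
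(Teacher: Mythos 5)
Your proposal is correct and follows exactly the route the paper indicates: the paper's entire ``proof'' is the remark that all twelve items are either straightforward from the definitions or easy consequences of Corollaries~\ref{Smallness2} and~\ref{Smallness4}, and your argument fleshes out precisely that reduction, tracking $K$-irreducible components of closures and using those two corollaries as the only nontrivial input.
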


\noindent
We now introduce our first intermediate description of definable sets in a model of $\TN_p$.
A {\bf pseudo constructible presentation}  (or {\bf pc-presentation}) of a set $X \subseteq K^n$ is a definable family $ \{ T_\alpha\}_{ \alpha \in D} $ where $D \subseteq F^k$ for some $k$, such that $X = \bigcup_{ \alpha \in D} T_\alpha $.  We will also talk of a pc-presentation without mentioning $X$; by that we mean a pc-presentation for some $X$, but $X$ plays no important role. If $X$ has a pc-presentation then $X$ is definable. We will show that the converse is also true. The following is immediate:

\begin{lem}
Suppose  $\{T_\alpha\}_{ \alpha \in D}$  and  $\{T'_{\alpha'}\}_{ \alpha' \in D'}$ are pc-presentations definable over $c \in K^m$. A
fiberwise intersection (respectively fiberwise union, disjoint combination or fiberwise product) of $\{T_\alpha\}_{ \alpha \in D}$ and $\{T'_{\alpha'}\}_{ \alpha' \in D'}$ can be chosen to be also a pc-presentation definable over $c$. 
\end{lem}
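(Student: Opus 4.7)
My plan is to handle the four operations---disjoint combination, fiberwise intersection, fiberwise product, and fiberwise union---one at a time. In each case I write down the natural parametrization of the new family and verify that every member set is a pc-set and that the family is definable over $c$. The necessary set-theoretic identities come from the preceding lemma on closure of pc-sets under intersection and product, together with the closure properties of algebraically presentable sets under finite unions, intersections with algebraic sets, and products, which were observed in Section~5.

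For the disjoint combination, I reindex by the disjoint union $D \sqcup D'$: the member at $\alpha \in D$ is $T_\alpha$ and the member at $\alpha' \in D'$ is $T'_{\alpha'}$. Each is a pc-set by hypothesis, and the resulting family is definable over $c$ directly from the two given definable families.

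For the fiberwise intersection and fiberwise product, I use the index set $D \times D'$. Writing $T_\alpha = V_\alpha \setminus S_\alpha$ and $T'_{\alpha'} = V'_{\alpha'} \setminus S'_{\alpha'}$ with $V_\alpha, V'_{\alpha'}$ $K$-algebraic and $S_\alpha, S'_{\alpha'}$ algebraically presentable, the identities
\[
T_\alpha \cap T'_{\alpha'} \;=\; (V_\alpha \cap V'_{\alpha'}) \setminus (S_\alpha \cup S'_{\alpha'}),
\]
\[
T_\alpha \times T'_{\alpha'} \;=\; (V_\alpha \times V'_{\alpha'}) \setminus \bigl((S_\alpha \times V'_{\alpha'}) \cup (V_\alpha \times S'_{\alpha'})\bigr)
\]
exhibit each member as a pc-set, with uniformity in $(\alpha,\alpha')$ clear from the displayed formulas.

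For the fiberwise union, again indexed by $D \times D'$, the goal is to express $T_\alpha \cup T'_{\alpha'}$ as an explicit pc-set. The natural enveloping algebraic set is $V_\alpha \cup V'_{\alpha'}$, and a case analysis according to whether a point lies in $V_\alpha \setminus V'_{\alpha'}$, $V'_{\alpha'} \setminus V_\alpha$, or $V_\alpha \cap V'_{\alpha'}$ identifies $(V_\alpha \cup V'_{\alpha'}) \setminus (T_\alpha \cup T'_{\alpha'})$ as a union of three pieces built uniformly from $V_\alpha, V'_{\alpha'}, S_\alpha, S'_{\alpha'}$. The hard part here---and the main potential obstacle in the whole lemma---is verifying that this complement really is algebraically presentable uniformly in $(\alpha, \alpha')$, since unlike finite intersections, finite unions of pc-sets are not obviously pc-sets; if needed, one can sidestep the fiberwise union in applications by replacing it with a disjoint combination of suitable fiberwise intersections realizing the same total union.
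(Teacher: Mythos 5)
Your handling of the disjoint combination, fiberwise intersection, and fiberwise product is correct and is all the paper has in mind (the paper offers no proof, prefacing the lemma only with ``The following is immediate''). For the disjoint combination, the two given families are simply merged; for the fiberwise intersection and product, the identities you display on the $\alpha$-level and $(\alpha,\alpha')$-level are exactly what is needed, and uniformity over $c$ is clear.

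Your instinct about the fiberwise union, however, is more than a worry: it uncovers a genuine error in the clause as stated. The union of two pc-sets need not be a pc-set. Take $T_1 = K^2 \setminus (\{0\}\times K)$ and $T_2 = K\times\{0\}$, both pc-subsets of $K^2$. Their union is $K^2 \setminus \bigl(\{0\}\times(K\setminus\{0\})\bigr)$, whose $K$-closure is $K^2$, so for $T_1 \cup T_2$ to be a pc-set the complement $\{0\}\times(K\setminus\{0\})$ would have to be algebraically presentable. But any $K$-algebraic subset of the line $\{0\}\times K$ that misses the origin is finite, so an algebraic presentation $\{W_\beta\}_{\beta\in E}$ of $\{0\}\times(K\setminus\{0\})$ would consist entirely of finite sets indexed by a definable $E\subseteq F^k$. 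In a $(0,\aleph_1)$-transcendental model (where $|F|=\aleph_0 < |K|$), the union of such a family has cardinality at most $|F|<|K|$ and cannot equal $\{0\}\times(K\setminus\{0\})$. Since any fiberwise union of $\{T_1\}$ and $\{T_2\}$ must have $T_1\cup T_2$ among its members, none of them is a pc-presentation, and the ``fiberwise union'' clause of the lemma fails. Fortunately the clause is never invoked in the sequel: wherever a union of definable sets is formed (for instance in the proof of Proposition~\ref{pc-pres}), the paper uses a disjoint combination, which realizes the same total union while keeping every member a pc-set. Your suggested sidestep is in exactly that spirit, but note that it is not a proof of the stated lemma, since a disjoint combination has different member sets than a fiberwise union. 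The correct fix is simply to delete ``fiberwise union'' from this lemma; nothing downstream is affected.
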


\begin{prop} \label{pc-pres}
Every definable subset of $K^n$ has a pc-presentation.
\end{prop}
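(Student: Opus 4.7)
The plan is to leverage the boolean description of definable sets coming from Theorem~\ref{QuanRed} together with the closure properties of (algebraic) presentations collected earlier in this section. First I would extend Theorem~\ref{QuanRed} to the parameter case: if $X\subseteq K^n$ is definable over some tuple $c$, then the $0$-definable set
$$\tilde X\ =\ \bigl\{(a,b) : (a,b) \text{ satisfies the defining formula of } X\bigr\}$$
in the appropriate product sort is, by Theorem~\ref{QuanRed}, a boolean combination of $0$-algebraically presentable sets. Taking the fiber at $b=c$ realises $X$ as a boolean combination of algebraically presentable subsets of $K^n$, because the fiber of a $0$-algebraic presentation at a fixed parameter is again an algebraic presentation (now definable over that parameter), and boolean operations commute with taking fibers.

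Next I would put this boolean combination in disjunctive normal form,
$$X \ =\ \bigcup_{i=1}^{l} (P_i \backslash Q_i),$$
where each $P_i$ is a finite intersection and each $Q_i$ is a finite union of algebraically presentable subsets of $K^n$. By the closure lemma just after Lemma~\ref{fiber} (fiberwise intersections and disjoint combinations of algebraic presentations remain algebraic presentations), both $P_i$ and $Q_i$ are themselves algebraically presentable.

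Now fix an algebraic presentation $\{V^{(i)}_\alpha\}_{\alpha \in D_i}$ of $P_i$, with $D_i \subseteq F^{k_i}$. Then
$$P_i \backslash Q_i \ =\ \bigcup_{\alpha \in D_i} \bigl(V^{(i)}_\alpha \backslash Q_i\bigr),$$
and by definition each $V^{(i)}_\alpha \backslash Q_i$ is a pc-set ($K$-algebraic minus algebraically presentable). Since $\{V^{(i)}_\alpha\}_{\alpha \in D_i}$ and $Q_i$ are both definable, $\{V^{(i)}_\alpha \backslash Q_i\}_{\alpha \in D_i}$ is a pc-presentation of $P_i \backslash Q_i$. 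Finally, taking a disjoint combination of these $l$ pc-presentations (using the closure lemma for pc-presentations stated just before the proposition) yields a pc-presentation of $X$.

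The only point requiring any real care is the parametrised version of Theorem~\ref{QuanRed}; once that is in hand, the proof is essentially bookkeeping with the closure properties of algebraic and pc-presentations already established in this section. The key conceptual observation is that even though the collection of pc-sets is not closed under complement, a complemented algebraically presentable set can still be absorbed into the \emph{right-hand factor} of a pc-set, provided the \emph{left-hand factor} is an algebraic set — which is exactly what the algebraic presentation of $P_i$ supplies.
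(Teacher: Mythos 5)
Your proof is correct and follows essentially the same route as the paper's: use Theorem~\ref{QuanRed} to get a boolean combination of algebraically presentable sets, pass to disjunctive normal form (closing intersections and unions of algebraic presentations via the fiberwise‐operation lemma after Lemma~\ref{fiber}), reduce each disjunct $P_i\setminus Q_i$ to a union $\bigcup_{\alpha}(V^{(i)}_\alpha\setminus Q_i)$ by choosing an algebraic presentation of $P_i$, and finally take a disjoint combination. The paper states this more tersely, writing any definable set as a finite union of sets of the form $S\setminus S'$ with $S,S'$ algebraically presentable and then using closure of pc-presentations under finite union; you spell out more carefully both the DNF step and the fact that the parametrized version of Theorem~\ref{QuanRed} (which the paper's proof uses implicitly) reduces to the $0$-definable case by regarding parameters as extra coordinates of the ambient space and taking fibers — a point worth making explicit.
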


\begin{proof}
We will first show the statement for $X \subseteq K^n$ of the form $ S \backslash S'$. By definition $ S $ has an algebraic presentation $ \{ V_\alpha\}_{ \alpha \in D} $. For $ \alpha \in D$, let $ T_\alpha = V_\alpha \backslash S' $. It can be easily checked that $ \{ T_\alpha\}_{ \alpha \in D} $ is a pc-presentation of $ S \backslash S'$. By Theorem~\ref{QuanRed}, every definable subsets of $K^n$ can be written as a finite union of sets of the form $ S \backslash S'$ where $S, S' \subseteq K^n$. By the preceding lemma and Lemma~\ref{fiber}, the collection of sets having a pc-presentation is closed under finite union. 
The conclusion follows.
\end{proof}

\noindent
The next proposition allows us to define a geometrical invariant of a definable set based on a choice of its pc-presentation and yet independent of such choice.

\begin{prop} \label{Kgeorank}
If $X \subseteq X' \subseteq K^n$ are definable, $X$ has a pc-presentation $\{ T_\alpha\}_{ \alpha \in D}$ and $X'$ has a pc-presentation $\{ T'_{\alpha'}\}_{ \alpha' \in D'}$, then $\max_{\alpha \in D} \mr_K(T_\alpha) \leq \max_{\alpha' \in D'} \mr_K(T'_{\alpha'}) $. 
\end{prop}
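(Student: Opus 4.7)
The plan is to pick an element of $D$ realizing the maximum $K$-Morley rank on the left-hand side, then to use irreducibility to locate a single $\alpha' \in D'$ whose $T'_{\alpha'}$ is at least as large. Let $\rho = \max_{\alpha \in D} \mr_K(T_\alpha)$ and fix $\alpha_0 \in D$ with $\mr_K(T_{\alpha_0}) = \rho$. By the observation at the start of the section, I may write $T_{\alpha_0} = V \setminus S_0$ with $V$ the $K$-closure of $T_{\alpha_0}$ and $S_0 \subseteq V$ algebraically presentable; then $\mr_K(V) = \rho$, so I can pick an irreducible component $C$ of $V$ with $\mr_K(C) = \rho$. Corollary~\ref{Smallness4}, applied to the one-element pc-presentation $V \setminus S_0$ of $T_{\alpha_0}$, yields $C \not\subseteq S_0$. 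For each $\alpha' \in D'$, write $T'_{\alpha'} = V'_{\alpha'} \setminus S'_{\alpha'}$ with $V'_{\alpha'}$ the $K$-closure of $T'_{\alpha'}$.

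The key inclusion to establish is
\[
C \subseteq \Bigl(\,\bigcup_{\alpha' \in D'} V'_{\alpha'}\Bigr) \cup S_0.
\]
Indeed $C \setminus T_{\alpha_0} = C \cap S_0 \subseteq S_0$ (since $C \subseteq V$), while $C \cap T_{\alpha_0} \subseteq T_{\alpha_0} \subseteq X \subseteq X' = \bigcup_{\alpha' \in D'} T'_{\alpha'} \subseteq \bigcup_{\alpha' \in D'} V'_{\alpha'}$. Now $\bigcup_{\alpha' \in D'} V'_{\alpha'}$ is algebraically presentable (via $\{V'_{\alpha'}\}_{\alpha' \in D'}$, which is definable because the original pc-presentation is) and $S_0$ is algebraically presentable. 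Since $C$ is irreducible, Corollary~\ref{Smallness2} forces $C \subseteq \bigcup_{\alpha' \in D'} V'_{\alpha'}$ or $C \subseteq S_0$; the latter was excluded, so the former holds. Then Lemma~\ref{Smallness1} produces some $\alpha' \in D'$ with $C \subseteq V'_{\alpha'}$, whence $\mr_K(T'_{\alpha'}) = \mr_K(V'_{\alpha'}) \geq \mr_K(C) = \rho$.

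The main obstacle is the step that forces $C$ into a single member of the presentation of $X'$ rather than letting it be covered piecemeal by many $V'_{\alpha'}$; this is exactly where the smallness lemmas (which rely on $\aleph_1$-saturation in the field $K$) and the irreducibility of $C$ are indispensable. Once this is in place, everything else is bookkeeping about closures and the definition $\mr_K(T) = \mr_K(\overline{T})$.
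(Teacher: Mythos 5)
Your direct route --- locating a single $\alpha'$ with $C \subseteq V'_{\alpha'}$ rather than arguing by contradiction as the paper does --- is in the right spirit, and the reduction to $C \subseteq \bigl(\bigcup_{\alpha'} V'_{\alpha'}\bigr) \cup S_0$ and the use of Corollary~\ref{Smallness4} to rule out $C \subseteq S_0$ are fine. But there is a real gap in the step where you assert that $\{V'_{\alpha'}\}_{\alpha' \in D'}$ ``is definable because the original pc-presentation is,'' and hence that $\bigcup_{\alpha'} V'_{\alpha'}$ is algebraically presentable. Taking Zariski closures fiberwise over a definable family of pc-sets is not an obviously uniformly definable operation: this is precisely the content of Lemma~\ref{DefinabilityOfClosure}, which the paper proves only \emph{after} Proposition~\ref{Kgeorank} and which requires its own non-trivial compactness argument over choices of defining data. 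Without it, Lemma~\ref{Smallness1} and Corollary~\ref{Smallness2} cannot be applied as stated, since their hypotheses demand an algebraic presentation, which is by definition a \emph{definable} family indexed by a subset of $F^k$.

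The paper's own proof avoids this by unwinding the smallness lemmas directly: after passing (via Corollary~\ref{SaturatedExtension}) to an elementary extension where $K$ is $|F|^+$-saturated (not $\aleph_1$-saturated, as you wrote), it uses only that $|D'| \le |F|$ and that each individual $V'_{\alpha'}$ is definable in the field $K$ over $\qq\bigl(c, \chi(F)\bigr)$, a set of cardinality at most $|F|$; the latter holds since any field automorphism of $K$ fixing $c$ and $\chi(F)$ pointwise extends to an $L$-automorphism fixing $T'_{\alpha'}$, hence its closure. Then the partial type asserting $x \in C$, $x \notin V'_{\alpha'}$ for all $\alpha'$, and $x \notin W_\beta$ for all $\beta$ in an algebraic presentation of $S_0$ lies over a small parameter set, so if it were finitely satisfiable it would be realized, contradicting your key inclusion; irreducibility of $C$ and $C \not\subseteq W_\beta$ (since $W_\beta \subseteq S_0$) then give $C \subseteq V'_{\alpha'}$ for some $\alpha'$. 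Replacing your appeals to Lemma~\ref{Smallness1} and Corollary~\ref{Smallness2} with this direct saturation argument repairs the proof without any forward reference.
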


\begin{proof}
Suppose $X$, $X'$, $\{ T_\alpha\}_{ \alpha \in D}$, $\{ T'_{\alpha'}\}_{ \alpha' \in D'}$ are as given. Let $\alpha$ be such that $\mr_K(T_{\alpha}) =\max_{\beta \in D}\mr_K(T_\beta) $. We can arrange that $T_\alpha = V_\alpha \backslash S_\alpha$ with $V_\alpha$ the $K$-closure of $T_\alpha$; let $C_\alpha$ be one of the the components of $V_\alpha$ with $K$-dimension $\mr_K(T_\alpha)$. Then by Lemma~\ref{Smallness4}, if $\{W_\beta\}_{\beta \in E}$ is an algebraic presentation of $S_\alpha$, 
$$\mr_K(C_\alpha \cap W_\beta)\ <\ \mr_K(T_\alpha)\ \text{ for each }\ \beta \in E.$$
Now suppose for all $\alpha' \in D'$, $\mr_K( T'_{\alpha'})< \mr_K(T_\alpha)$. For each $\alpha' \in D'$, let $V'_{\alpha'}$ be the $K$-closure of $T'_{\alpha'}$. We note that the family $\{ V'_{\alpha'}\}_{ \alpha' \in D'}$ has cardinality at most $|F|$; also, by Corollary~\ref{SaturatedExtension}, we can arrange that $K$ is $|F|^+$-saturated as a field. By dimension comparison, $C_\alpha$ is not a subset of a union of finitely many elements of $\{ V'_{\alpha'}\}_{ \alpha' \in D'}$ and finitely many elements of $\{W_\beta \cap C_\alpha\}_{\beta \in E}$. Thus $C_\alpha$ is not a subset of the union of $\{ V'_{\alpha'}\}_{ \alpha' \in D'}$ and $\{C_\alpha \cap W_\beta\}_{\beta \in E}$. This implies $T_\alpha$ is not a subset of the union $\{ V'_{\alpha'}\}_{ \alpha' \in D'}$, a contradiction; the conclusion follows.
\end{proof}

\begin{cor}
If $X \subseteq K^n$ has pc-presentations $\{ T_\alpha\}_{ \alpha \in D}$ and $\{ T'_{\alpha'}\}_{ \alpha' \in D'}$, then $\max_{\alpha \in D} \mr_K(T_\alpha) = \max_{\alpha' \in D'} \mr_K(T'_{\alpha'}) $.
\end{cor}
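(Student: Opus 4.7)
The plan is to deduce this corollary directly from Proposition~\ref{Kgeorank} by a symmetry argument. Given that $X$ admits the two pc-presentations $\{T_\alpha\}_{\alpha \in D}$ and $\{T'_{\alpha'}\}_{\alpha' \in D'}$, I would apply Proposition~\ref{Kgeorank} first with $X \subseteq X$ regarded as $X \subseteq X'$ where $X'=X$ is covered by $\{T'_{\alpha'}\}_{\alpha' \in D'}$, obtaining
\[
\max_{\alpha \in D} \mr_K(T_\alpha) \ \leq\ \max_{\alpha' \in D'} \mr_K(T'_{\alpha'}).
\]
Then I would swap the roles of the two presentations and apply Proposition~\ref{Kgeorank} again to get the reverse inequality.

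Since both inequalities hold, the maxima coincide. There is no real obstacle here; the work was done in Proposition~\ref{Kgeorank}, and this corollary merely observes that its conclusion depends only on $X$, not on the chosen pc-presentation. This justifies defining an invariant of $X$ by taking this common value, which is the geometric rank promised at the start of the section.
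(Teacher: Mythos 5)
Your proof is correct and is exactly the argument intended: the corollary is the special case $X'=X$ of Proposition~\ref{Kgeorank}, applied twice with the roles of the two presentations interchanged to get both inequalities. The paper gives no separate proof, as this is the immediate symmetry consequence you describe.
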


\noindent
Suppose $X\subseteq K^n$ has  a pc-presentation $\{ T_\alpha\}_{ \alpha \in D}$. Then the {\bf $K$-geometric rank} of $X$, denoted by $ \gr_K(X)$, is defined to be  $\max_{\alpha \in D} \mr_K(T_\alpha)$. The following is also immediate from the previous proposition:

\begin{cor}
Suppose $X, X' \subseteq K^n$ are definable. Then we have the following:
\begin{enumerate}
\item  if $X \subseteq X'$, then $\gr_K(X) \leq \gr_K(X')$;
\item $\gr_K(X \cup X') = \max\{ \gr_K(X), \gr_K(X') \}$.
\end{enumerate}
\end{cor}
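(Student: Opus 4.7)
The plan is to derive both parts directly from Proposition~\ref{Kgeorank} together with the existence of pc-presentations established in Proposition~\ref{pc-pres}, so the argument is essentially bookkeeping around the definition of $\gr_K$.

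For part (1), I would invoke Proposition~\ref{pc-pres} to pick pc-presentations $\{T_\alpha\}_{\alpha \in D}$ of $X$ and $\{T'_{\alpha'}\}_{\alpha' \in D'}$ of $X'$. Since $X \subseteq X'$, Proposition~\ref{Kgeorank} gives
\[
\max_{\alpha \in D} \mr_K(T_\alpha)\ \leq\ \max_{\alpha' \in D'} \mr_K(T'_{\alpha'}),
\]
which, unwinding the definition of $K$-geometric rank, is exactly $\gr_K(X) \leq \gr_K(X')$.

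For part (2), the inequality $\max\{\gr_K(X), \gr_K(X')\} \leq \gr_K(X \cup X')$ is immediate from part (1) applied to the two inclusions $X \subseteq X \cup X'$ and $X' \subseteq X \cup X'$. For the reverse inequality, I would again pick pc-presentations $\{T_\alpha\}_{\alpha \in D}$ of $X$ and $\{T'_{\alpha'}\}_{\alpha' \in D'}$ of $X'$ via Proposition~\ref{pc-pres}, and form a combination of the two families. By the lemma immediately preceding Proposition~\ref{pc-pres}, such a combination can be chosen to again be a pc-presentation, and by Lemma~\ref{fiber}(1) its union is $X \cup X'$, so it is a pc-presentation of $X \cup X'$. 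Its $K$-geometric rank, computed as the maximum of the $K$-Morley ranks of its members, is by construction
\[
\max\Big\{\max_{\alpha \in D} \mr_K(T_\alpha),\ \max_{\alpha' \in D'} \mr_K(T'_{\alpha'})\Big\}\ =\ \max\{\gr_K(X), \gr_K(X')\}.
\]
Combining the two inequalities gives the equality in (2).

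There is no real obstacle; the corollary is a direct application of Proposition~\ref{Kgeorank} once pc-presentations are chosen. The only minor bookkeeping concern is that a combination should have its index set sitting inside some $F^k$, which can be arranged by padding with dummy coordinates (so that $D \subseteq F^k$ and $D' \subseteq F^k$ for a common $k$) and then using an extra coordinate of the first sort to distinguish the two families, keeping the resulting index set definable.
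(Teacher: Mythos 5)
Your proposal is correct and is exactly the bookkeeping the paper elides when it says the corollary is ``immediate from the previous proposition'': part~(1) by directly unwinding the definition of $\gr_K$ through Proposition~\ref{Kgeorank}, and part~(2) by combining~(1) with the (disjoint) combination of pc-presentations furnished by the lemma preceding Proposition~\ref{pc-pres}. The only cosmetic point is that the lemma you cite speaks of a \emph{disjoint} combination, and your padding-plus-extra-coordinate remark at the end is precisely the construction of that disjoint combination, so the two are in fact the same.
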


\medskip \noindent
Next, we introduce the second intermediate description of definable sets in a model of $\TN_p$.
A family $\{T_b\}_{b \in Y}$ of pc-sets is {\bf essentially disjoint} if for any $b, b' \in Y$ such that $ \mr_K(T_b) =\mr_K(T_{b'}) = \max_{b \in Y} \mr_K(T_b)$, we have  either $T_b \sim T_{b'}$ or  $T_b \simperp T_{b'}$. An essentially disjoint pc-presentation is a pc-presentation which is essentially disjoint as a family of pc-sets.

\begin{lem}
Suppose  $\{T_b\}_{ b \in Y}$  and  $\{T'_{b'}\}_{ b' \in Y'}$ are families of pc-sets. Then an arbitrary fiberwise intersection (respectively fiberwise product) of $\{T_b\}_{ b \in Y}$  and  $\{T'_{b'}\}_{ b' \in Y'}$ is also essentially disjoint.
\end{lem}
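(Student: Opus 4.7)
The plan is to verify the essentially-disjoint property for both constructions by identifying the maximum-rank members of the new family and appealing to Proposition~\ref{lies}; the product case is straightforward, while the intersection case requires extra work with closures of pc-sets.

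For the product case, I will begin by using Proposition~\ref{lies}(11) to note that $\mr_K(T_b \times T'_{b'}) = \mr_K(T_b) + \mr_K(T'_{b'})$. Hence, if $\rho = \max_{b \in Y} \mr_K(T_b)$ and $\rho' = \max_{b' \in Y'} \mr_K(T'_{b'})$, the product family attains its maximum rank $\rho + \rho'$ exactly at pairs $(b, b')$ with $\mr_K(T_b) = \rho$ and $\mr_K(T'_{b'}) = \rho'$. Given two such pairs $(b_1, b'_1)$ and $(b_2, b'_2)$, the essentially-disjoint hypothesis on $\{T_b\}_{b \in Y}$ will yield $T_{b_1} \sim T_{b_2}$ or $T_{b_1} \simperp T_{b_2}$, and similarly for the primed family; Proposition~\ref{lies}(12) will then handle each of the resulting four sub-cases to produce the required dichotomy on $T_{b_1} \times T'_{b'_1}$ and $T_{b_2} \times T'_{b'_2}$.

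For the fiberwise intersection, I will set $\rho^* = \max_{(b,b')} \mr_K(T_b \cap T'_{b'})$ and take two pairs $(b_1, b'_1), (b_2, b'_2)$ both attaining this maximum. Writing $T_b = V_b \setminus S_b$ and $T'_{b'} = V'_{b'} \setminus S'_{b'}$ with $V_b$ and $V'_{b'}$ the respective $K$-closures, the intersection equals $(V_{b_i} \cap V'_{b'_i}) \setminus (S_{b_i} \cup S'_{b'_i})$, and Corollaries~\ref{Smallness3} and \ref{Smallness4} identify its $K$-closure with the union of those irreducible components of $V_{b_i} \cap V'_{b'_i}$ not contained in $S_{b_i} \cup S'_{b'_i}$; Corollary~\ref{Smallness2} lets such containment be checked against $S_{b_i}$ and $S'_{b'_i}$ separately. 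I will then reduce the comparison of the two intersections to a comparison of their $\rho^*$-dimensional irreducible components, and combine the essentially-disjoint property of both original families with Proposition~\ref{lies}(6), (10) to conclude either $T_{b_1} \cap T'_{b'_1} \sim T_{b_2} \cap T'_{b'_2}$ or $T_{b_1} \cap T'_{b'_1} \simperp T_{b_2} \cap T'_{b'_2}$.

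The main obstacle will arise in the intersection case when some $T_{b_i}$ or $T'_{b'_i}$ fails to sit at the maximum rank of its family, for then the essentially-disjoint hypothesis is not directly available to compare these elements with others. Here I will rely on the fact that every top-dimensional component of $V_{b_i}$ is automatically active in $T_{b_i}$, since $V_{b_i}$ is the $K$-closure of $T_{b_i}$, and carry out a dimension-counting argument on the irreducible components of $V_{b_i} \cap V'_{b'_i}$ to transfer the dichotomy from the ambient families to the intersection family.
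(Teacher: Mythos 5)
Your product argument is correct and is what the paper's one-line proof intends: Proposition~\ref{lies}(11) locates the top-rank members of $\{T_b\times T'_{b'}\}$ at pairs whose factors are each top-rank, and Proposition~\ref{lies}(12) converts the $\sim$/$\simperp$ dichotomy on the factors into the dichotomy on the products.

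The intersection half cannot be completed as you outline, because the obstacle you flag at the end is fatal rather than technical: the statement is in fact false for fiberwise intersections in the generality stated. In $K^3$ take $T_{b_0}=\{x_2=0\}$, $T_{b_1}=\{x_1=x_2=0\}\cup\{x_1=x_3=0\}$, and let $\{T'_{b'}\}$ be the singleton family with $T'_{b'_0}=\{x_1=0\}$. Both families are essentially disjoint (the first trivially, since $T_{b_0}$ is the unique member of maximal rank $2$), yet the fiberwise intersection is $\bigl\{\{x_1=x_2=0\},\ \{x_1=x_2=0\}\cup\{x_1=x_3=0\}\bigr\}$, whose two members both attain the maximal rank $1$ while the first is $\subsim$ the second without being $\sim$ or $\simperp$ to it. The source of the failure is precisely the phenomenon you identify: $T_{b_1}$ sits below the maximal rank of $\{T_b\}$, so essential disjointness gives no constraint on it, yet intersecting with $T'_{b'_0}$ raises it to the maximum of the new family. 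No dimension count on the closures $V_{b_i}$ can repair this, and the paper's own proof --- a bare citation to Proposition~\ref{lies}(10)--(12) --- passes over the same gap.

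What saves the paper is that both of its uses of the intersection case, in the proof of Proposition~\ref{geoired}, occur under extra hypotheses: all $T_\alpha$ have rank equal to $\gr_K(X)$, the family $\{T'_{\alpha'}\}$ has maximal rank $\gr_K(X')=\gr_K(X)$, and the fiberwise intersection presents a set of the same $K$-geometric rank, so the maximum intersection rank coincides with the common maximum $\rho$ of both families. Under this hypothesis every pair attaining the maximum intersection rank has both coordinates at rank $\rho$, so the two essential-disjointness hypotheses apply directly; the $\simperp$ cases propagate because the inner fourfold intersection drops below rank $\rho$, and the $\sim$-$\sim$ case is exactly Proposition~\ref{lies}(10) since $\mr_K(T_{\alpha_i})=\mr_K(T'_{\alpha'_i})=\mr_K(T_{\alpha_i}\cap T'_{\alpha'_i})=\rho$. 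A correct version of the lemma for intersections needs this additional hypothesis; under it your Smallness-based reasoning about closures becomes unnecessary.
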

\begin{proof}
This follows from (10), (11) and (12) of Lemma~\ref{lies}.
\end{proof}

\noindent
Towards replacing pc-presentation with essentially disjoint pc-presentation in Proposition~\ref{pc-pres} we need the following auxiliary result:

\begin{lem} \label{DefinabilityOfClosure}
Suppose $\{T_b\}_{b \in Y}$ is a definable family of pc-subsets of $K^n$ and for each $b \in Y$, $V_b$ is the closure of $T_b$ in the $K$-topology. Then the family  $\{V_b\}_{b \in Y}$ is definable.

\end{lem}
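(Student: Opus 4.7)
First I would reduce to the case where $T_b = S_b \setminus S'_b$ uniformly in $b$: applying Theorem~\ref{QuanRed} to the formula defining $\{T_b\}_{b \in Y}$ with $b$ treated as an additional parameter, putting the resulting boolean combination of algebraically presentable sets into disjunctive normal form, and using closure of algebraically presentable sets under finite intersection, one obtains $T_b = \bigcup_{j=1}^r (S_{j, b} \setminus S'_{j, b})$ with $\{S_{j, b}\}_{b \in Y}$, $\{S'_{j, b}\}_{b \in Y}$ definable families of algebraically presentable sets. Since the $K$-closure commutes with finite unions, it is enough to treat the single-term case $T_b = S_b \setminus S'_b$. The plan from here is to give a uniform polynomial description of the algebraic sets covering $S_b$, parameterize their irreducible components via classical $\mathrm{ACF}$ facts, and take the union of exactly those components that survive the removal of $S'_b$.

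Next I would apply a uniform version of Lemma~\ref{APsimple2} to obtain polynomial systems $P_1, \ldots, P_l \in \qq[w, x, y]$ and definable families $\{D_{i, b}\}_{b \in Y}$ with $D_{i, b} \subseteq F^{k_i}$, such that
\begin{align*}
S_b\ =\ \bigcup_{i=1}^l \bigcup_{\alpha \in D_{i, b}} V_{i, \alpha, b},\qquad V_{i, \alpha, b}\ :=\ Z\!\left(P_i\big(\chi(\alpha), x, b\big)\right).
\end{align*}
Since the total $x$-degree of each $P_i$ is fixed, every $V_{i, \alpha, b}$ is $K$-algebraic of degree bounded by some constant $d_i$, hence has only finitely many irreducible components, each of degree at most $d_i$. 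Let $\{W_\lambda\}_{\lambda \in \Lambda_i}$ be the field-definable parameterization of all $K$-algebraic subsets of $K^n$ of degree $\leq d_i$. By Lemma~\ref{DOE2}, the set $\Lambda_i^{\mathrm{irr}}$ of $\lambda$ with $W_\lambda$ irreducible is field-definable, and so is the definable set $\Gamma_i := \{(\lambda, \alpha, b) : \lambda \in \Lambda_i^{\mathrm{irr}},\ \alpha \in D_{i, b},\ W_\lambda \text{ is an irreducible component of } V_{i, \alpha, b}\}$.

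I would then define
\begin{align*}
V_b\ :=\ \bigcup\big\{W_\lambda :\ i \leq l,\ (\lambda, \alpha, b) \in \Gamma_i \text{ for some } \alpha,\ W_\lambda \not\subseteq S'_b\big\},
\end{align*}
noting that ``$W_\lambda \not\subseteq S'_b$'' is the first-order statement $\exists x (x \in W_\lambda \wedge x \notin S'_b)$, so $\{V_b\}_{b \in Y}$ is a definable family. To verify $V_b = \overline{T_b}^K$, I combine Lemma~\ref{Smallness1} (each irreducible component $C$ of $\overline{T_b}^K$ is contained in some $V_{i, \alpha, b}$, hence in one of its irreducible components $W_\lambda$, with $W_\lambda \not\subseteq S'_b$ since otherwise $C \cap T_b = \emptyset$ would contradict density of $C \cap T_b$ in $C$) with Corollary~\ref{Smallness3} (if $W_\lambda$ is irreducible and $W_\lambda \not\subseteq S'_b$ then $W_\lambda = \overline{W_\lambda \setminus S'_b}^K \subseteq \overline{T_b}^K$). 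The main obstacle will be justifying the uniform version of Lemma~\ref{APsimple2}: the existing proof of Lemma~\ref{APsimple1} extracts finitely many formula templates via $\aleph_0$-saturation for a fixed parameter $c$, and one needs to check that allowing $c$ to vary through a definable family of parameters still yields finitely many templates covering all $b \in Y$ simultaneously, which should follow from saturation of the whole structure together with countability of the set of templates.
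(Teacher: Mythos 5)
Your overall strategy---extract finitely many polynomial templates by compactness, parameterize irreducible components using definability of irreducibility in $\mathrm{ACF}$, and keep exactly the components that survive subtraction of the excluded alg.\ presentable set via the test ``$W_\lambda \not\subseteq S'_b$''---is the right one, and it is in the same spirit as the paper's proof. But there is a concrete gap in the forward inclusion $\overline{T_b} \subseteq V_b$. You cite Lemma~\ref{Smallness1} to conclude that each irreducible component $C$ of $\overline{T_b}$ is contained in some $V_{i,\alpha,b}$ (hence in one of its components $W_\lambda$). Lemma~\ref{Smallness1} requires $C \subseteq S_b$, but all you have is that $C \cap T_b$ is dense in $C$ and $C \cap T_b \subseteq S_b$; since $S_b$ is in general a strictly algebraically presentable set---an infinite union of algebraic sets that need not be closed---the closure $C = \overline{C \cap T_b}$ may contain points outside $S_b$. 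So $C \subseteq S_b$ is not established, and without it your $V_b$ could be a proper (indeed non-algebraic) subset of $\overline{T_b}$.

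The source of the gap is the reduction via Theorem~\ref{QuanRed}: this decomposes $T_b$ as a finite union of differences of \emph{algebraically presentable} sets, which forgets that $T_b$ is a pc-set, i.e.\ a \emph{single $K$-algebraic set} minus an algebraically presentable one. The paper's proof of this lemma keeps that structure: its ``choice'' $\mathscr{C}$ includes a system $P'$ with $T_b = Z\big(P'(c',x)\big) \setminus S''_{\gamma'',c''}$, so $\overline{T_b} \subseteq Z\big(P'(c',x)\big)$ is automatic and each component $C$ of the closure sits inside a component of $Z\big(P'(c',x)\big)$ for free; the closure condition is then checked directly by Corollary~\ref{Smallness4}. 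The fix for your argument is to drop the Theorem~\ref{QuanRed} detour and instead exploit the pc-hypothesis directly: use the countability of fpc-templates (as in Lemma~\ref{APsimple3}, or a compactness argument analogous to the one you sketch for the uniform version of Lemma~\ref{APsimple2}) to write each $T_b$ as $Z\big(P_i(x,c)\big) \setminus S''_b$ for finitely many templates $P_i$. Then take $W_\lambda$ to range over the irreducible components of $Z\big(P_i(x,c)\big)$, and the rest of your argument---including the use of Corollary~\ref{Smallness3} for the reverse inclusion---goes through. The uniformity issue you flag as the ``main obstacle'' is real but secondary; the missing pc-structure is the more serious problem.
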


\begin{proof}

Suppose $\{T_b\}_{b \in Y}$ and $\{V_b\}_{b \in Y}$ are as in the assumption of the lemma. Let $\mathscr{C}$ be a choice of systems $Q^{(1)}, \ldots, Q^{(l)}$  in $\qq[w,x]$, a system $P'$ in $\qq[w',x]$, a system $P''$ in $\qq[w'',x, z'']$, a parameter free $L_r$-formula $\phi''(u'',v'')$ with  variables in the first sort  and $|v''|=|z''|$. We note that there are countably many such choices $\mathscr{C}$. We define a relation $R_\mathscr{C} \subseteq K^n \times Y \times F^{|v''|} \times K^{|w|}\times K^{|w'|} \times K^{|w''|}$ as follows. For $a \in K^n$, $b \in Y$, $\gamma'' \in F^{|v''|}$, $c \in K^{|w|}$, $c' \in K^{|w'|}$, and $c'' \in K^{|w''|}$, $R_\mathscr{C}(a, b, \gamma'', c, c', c'')$ holds if and only if the following conditions hold:
\begin{enumerate}[(a)]
\item $T_b = V'_{c'} \backslash S''_{ \gamma'', c''}$ where $V'_{c'}$ is the zero set of $P'(c',x)$ and $S''_{\gamma'', c''}$ is the set defined by $\exists v''\big( \phi''(\gamma'', v'') \wedge P''(c'', x, \chi(v'')) =0\big)$;
\item $Z\big( Q^{(1)}(c, x)\big), \ldots, Z\big( Q^{(l)}(c, x)\big) $ are irreducible;
\item $\bigcup_{i=1}^l Z\big( Q^{(i)}(c, x)\big) $ is the closure of $V'_{c'} \backslash S''_{\gamma'', c''}$ in $K$-topology, where $V'_{c'}, S''_{\gamma'', c''}$ are the same as in (a);
\item $a$ is in  $ \bigcup_{i=1}^l Z\Big( Q^{(i)}(c, x)\Big) $.
\end{enumerate}
We note that (a), (d) are clearly definable, (b) is definable as irreducibility is definable in families in models of $\text{ACF}$, and under the condition that (b) holds, (c) is definable by Corollary~\ref{Smallness4}. Hence, $R_\mathscr{C}$ is a definable relation. Let $R^2_\mathscr{C} \subseteq Y$ be the projection of $R_\mathscr{C}$ on $Y$, $R^{1,2}_\mathscr{C}$ be the projection of $R_\mathscr{C}$ on $K^n \times Y$. Then $R^2_\mathscr{C}, R^{1,2}_\mathscr{C} $ are also definable. We also note that  if  $b \in R^2_\mathscr{C}$ then $V_b$ is precisely the set $\big\{ a \in K^n : (a, b) \in R^{1,2}_\mathscr{C} \big\}$.

For each $b \in Y$, there is a choice $\mathscr{C}$ as above such that $b \in R^2_{\mathscr{C}}$. By a standard compactness argument, there are finitely many choices $\mathscr{C}_1, \ldots, \mathscr{C}_k$ as above such that for any $b \in Y$, there is $i \in \{ 1, \ldots, k\} $ such that $b \in R^2_{\mathscr{C}_i}$. Then the family $\{V_b\}_{b \in Y}$ as a subset of $K^n \times Y$ consists of $(a, b)$ such that for some $i \in \{1, \ldots k\}$, $(a, b) \in R^{1,2}_{\mathscr{C}_i}$.  Thus $\{V_b\}_{b \in Y}$ is definable.
\end{proof}

\begin{cor} \label{Definability}
Suppose $\{T_b\}_{b \in Y}, \{T'_{b'}\}_{b' \in Y'}$ are definable families of pc-subsets of $K^n$. The following sets are definable:
\begin{enumerate}
\item for $k \in \nn$, the set $\big\{ b \in Y : \mr_K(T_b) = k \big\}$;
\item for $k,l \in \nn$, the set $\big\{ b \in Y : \mr_K(T_b) = k,  \md_K(T_b) = l \big\}$;
\item the set $ \big\{ (b,b') \in Y \times Y' : \mr_K(T_b) \leq \mr_K( T'_{b'}) \big\}$;
\item the sets obtained by replacing $\leq$ in (3) with $<, =$ or replacing $\mr_K$ with $\md_K$;
\item the set $ \big\{ (b,b') \in Y \times Y' : T_b \subsim T'_{b'} \big\}$;
\item the sets obtained by replacing $ \subsim$ with $\sim$ and $\simperp$ in (5). 
\end{enumerate}
\end{cor}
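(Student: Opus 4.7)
The plan is to reduce every clause to Corollary~\ref{DOE2} on definability of Morley rank and degree in algebraic families in $\ACF$. The bridge to pc-sets is Lemma~\ref{DefinabilityOfClosure}: if $\{T_b\}_{b\in Y}$ is a definable family of pc-sets, then the family $\{V_b\}_{b\in Y}$ of their $K$-closures is also definable, and each $V_b$ is definable in the field $K$. Since $\mr_K(T_b)=\mr_K(V_b)$ and $\md_K(T_b)=\md_K(V_b)$ by definition, clauses (1) and (2) follow immediately by applying Corollary~\ref{DOE2}(1)(2) to $\{V_b\}_{b\in Y}$.

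For (3) and (4) I would observe first that for any definable family of $K$-algebraic subsets of $K^n$, the $K$-Morley rank is bounded by $n$, and a uniform bound on $K$-Morley degree follows by compactness once (2) is known. Hence only finitely many pairs $(k,l)$ of values of $(\mr_K,\md_K)$ occur across $\{T_b\}_{b\in Y}$, and similarly across $\{T'_{b'}\}_{b'\in Y'}$. Setting $Y_k=\{b\in Y:\mr_K(T_b)=k\}$ and $Y'_l=\{b'\in Y':\mr_K(T'_{b'})=l\}$, the set in (3) is the finite union $\bigcup_{k\le l}Y_k\times Y'_l$, which is definable by (1). The strict-inequality, equality and $\md_K$-variants in (4) are handled identically, using (2) where $\md_K$ is involved.

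For (5) and (6) I would invoke the lemma just preceding Proposition~\ref{pc-pres}, which says that a fiberwise intersection of two definable families of pc-sets can be chosen to again be a definable family of pc-sets; so $\{T_b\cap T'_{b'}\}_{(b,b')\in Y\times Y'}$ is a definable family of pc-sets. Unfolding the definition, $T_b\subsim T'_{b'}$ is the conjunction of $\mr_K(T_b)=\mr_K(T'_{b'})=\mr_K(T_b\cap T'_{b'})$ and $\md_K(T_b\cap T'_{b'})=\md_K(T_b)$, and each conjunct is definable by (1)--(4) applied to the three families $\{T_b\}$, $\{T'_{b'}\}$, $\{T_b\cap T'_{b'}\}$. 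For (6), the relation $\sim$ is the conjunction of $\subsim$ in both directions, while $T_b\simperp T'_{b'}$ is just $\mr_K(T_b)=\mr_K(T'_{b'})>\mr_K(T_b\cap T'_{b'})$, again definable.

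The only real work is in Lemma~\ref{DefinabilityOfClosure}, which has already been proved; once the closures are known to form a definable family, the present corollary is essentially routine bookkeeping on top of the classical $\ACF$ results in Corollary~\ref{DOE2}. The one point that deserves a careful sentence in the write-up is the uniform finiteness of the range of $(\mr_K,\md_K)$ on a definable family, which is needed to turn the \emph{a priori} infinite unions in (3)--(4) into finite ones.
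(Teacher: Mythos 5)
Your proposal is correct and follows essentially the same route as the paper: pass from each $T_b$ to its $K$-closure $V_b$ via Lemma~\ref{DefinabilityOfClosure}, so that $\mr_K$ and $\md_K$ of $T_b$ equal those of the algebraic set $V_b$, and then invoke definability of Morley rank and degree in algebraic families (Corollary~\ref{DOE2}). The paper's proof is quite terse, writing out only clause (1) and declaring the rest similar; you have spelled out (3)--(6) in full, which is useful. One small imprecision worth fixing: for (5)--(6) you invoke the fiberwise-intersection lemma preceding Proposition~\ref{pc-pres}, but that lemma is stated for \emph{pc-presentations} (where the index set is required to lie in some $F^k$), whereas here $Y$ and $Y'$ are arbitrary definable index sets. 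What you actually need is the earlier and more elementary observation that $T \cap T'$ is a pc-set whenever $T, T'$ are, together with the fact that $\{T_b \cap T'_{b'}\}_{(b,b')\in Y\times Y'}$ is visibly a definable family; citing that instead makes the step airtight. Your observation about the uniform finiteness of the range of $(\mr_K,\md_K)$, needed to reduce (3)--(4) to finite unions, is a detail the paper elides but correctly identified.
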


\begin{proof}
All the above statements have similar proof ideas, so we will only provide the proof of the first statement as an example. Suppose $\{T_b\}_{b \in Y}$ is as in the statement and let $\{V_b\}_{b \in Y}$ be as in the preceding lemma. Then $\big\{ b \in Y \mid \mr_K(T_b) = k \big\}$ is by definition the same as $\big\{ b \in Y \mid \mr_K(V_b) = k \big\}$. The desired conclusion follows from the preceding lemma and the fact that the Morley rank is definable in family in a model of ACF (see Lemma~\ref{DOE1}).
\end{proof}

\noindent
Suppose $\{T_\alpha\}_{\alpha \in D}$ is a pc-presentation. The {\bf primary index set} $\hat{D}$ of $\{T_\alpha\}_{\alpha \in D}$ consists of   $\alpha \in D$ such that $\mr_K(T_\alpha) = \max_{ \alpha \in D} \mr_K(T_\alpha)$.

\begin{prop} \label{essdispre}
Every definable set has an essentially disjoint pc-presentation.
\end{prop}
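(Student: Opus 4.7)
The plan is to start from a pc-presentation guaranteed by Proposition~\ref{pc-pres} and then refine it so that every top-rank piece has an \emph{irreducible} $K$-closure of the maximal dimension. The slogan is: once two top-rank pc-sets have irreducible closures, they are automatically either $\sim$ or $\simperp$, depending on whether their closures coincide or not.

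First I would fix a pc-presentation $\{T_\alpha\}_{\alpha\in D}$ of $X$ with $D\subseteq F^k$, set $\rho=\max_{\alpha\in D}\mr_K(T_\alpha)$, and let $\hat D\subseteq D$ be the primary index set; by Corollary~\ref{Definability} (1) the set $\hat D$ is definable. By Lemma~\ref{DefinabilityOfClosure} the family of $K$-closures $\{V_\alpha\}_{\alpha\in D}$ is definable, and for $\alpha\in\hat D$ one has $\mr_K(V_\alpha)=\rho$. The next step is to apply the standard ACF fact on definability of irreducible components in families (a uniform version of Corollary~\ref{DOE2}(3)): there is a finite definable partition $\hat D=\hat D_1\sqcup\cdots\sqcup \hat D_r$ and for each $j$ a definable family $\{C_{\alpha,i}\}_{\alpha\in\hat D_j,\,i\in I_j}$, with $I_j$ finite, whose fibers at $\alpha$ enumerate the $\rho$-dimensional irreducible components of $V_\alpha$.

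Now I would define the refined presentation as follows: keep $T_\alpha$ for $\alpha\in D\setminus\hat D$ (these have rank $<\rho$, so impose no essential-disjointness constraint), and for $\alpha\in\hat D_j$ replace $T_\alpha$ by the pc-sets $T_\alpha\cap C_{\alpha,i}$ for $i\in I_j$ together with the residual $T_\alpha\dotminus\bigcup_i C_{\alpha,i}$. The residual is contained in the union of the $(<\rho)$-dimensional components of $V_\alpha$, hence has rank $<\rho$. For the main pieces, write $T_\alpha=V_\alpha\setminus S_\alpha$ where $V_\alpha$ is the closure of $T_\alpha$; since $V_\alpha$ is the closure of $T_\alpha$, no irreducible component of $V_\alpha$ can lie in $S_\alpha$ (by Corollary~\ref{Smallness3} applied componentwise), so $C_{\alpha,i}\nsubseteq S_\alpha$ and Corollary~\ref{Smallness3} gives that $T_\alpha\cap C_{\alpha,i}=C_{\alpha,i}\setminus S_\alpha$ has $K$-closure exactly $C_{\alpha,i}$, which is irreducible of dimension $\rho$.

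With this refinement in hand, essential disjointness is automatic. If $T',T''$ are two top-rank members of the new family with irreducible closures $C',C''$ of dimension $\rho$, then either $C'=C''$, in which case $\md_K(C')=1$ forces $T'\sim C'=C''\sim T''$, or $C'\neq C''$, in which case $C'\cap C''$ has dimension $<\rho$ and hence $T'\cap T''$ too, so $T'\simperp T''$ (using parts (6) and (7) of Proposition~\ref{lies}). The main obstacle I expect is a clean appeal to the ``definability of top-dimensional irreducible components in a definable family over ACF'' in step two: this is standard but not stated in the excerpt, and the cleanest way to invoke it is via an $\aleph_0$-saturation / type-definable stratification argument (decomposing by the locus where $\mr_K$ and $\md_K$ of the local component are constant) together with Corollary~\ref{DOE2}.
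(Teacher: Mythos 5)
Your observation that two top-rank pc-sets with \emph{irreducible} closures of dimension $\rho$ are automatically $\sim$ (if the closures coincide) or $\simperp$ (if they differ) is correct and elegant, and your verification that each $T_\alpha\cap C_{\alpha,i}=C_{\alpha,i}\setminus S_\alpha$ has $K$-closure $C_{\alpha,i}$ via Corollary~\ref{Smallness3} and Lemma~\ref{Smallness1} is sound. However, there is a genuine gap, and it is not in the spot you flagged. The problem is not merely that "definability of irreducible components in families" is unstated; it is that the refined family $\{T_\alpha\cap C_{\alpha,i}\}$ cannot be realized as a pc-presentation at all, because a pc-presentation requires an index set $D\subseteq F^{k'}$. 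The top-dimensional irreducible components $C_{\alpha,i}$ of $V_\alpha$ are in general defined only over a proper finite Galois extension of $\qq\big(\chi(F),c\big)$: for instance, with $\chi(\alpha)=g$ transcendental and $V_\alpha = Z\big(y^2-(g+1)x^2\big)$, the two lines $y=\pm\sqrt{g+1}\,x$ are conjugate under a $K$-automorphism fixing $\qq\big(\chi(F),c\big)$ pointwise, and $\sqrt{g+1}\notin\qq\big(\chi(F)\big)$ (one checks this using multiplicative independence and genericity). Because ACF has no definable choice function on unordered sets of conjugate components, there is no definable map $(\alpha,i)\mapsto C_{\alpha,i}$; the natural codes live in $K^m$, not in $F^{k'}$. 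Thus the proposed reindexed family either is not uniformly definable or has its index set escaping the first sort, and in either case fails to be a pc-presentation. (The residual $T_\alpha\dotminus\bigcup_i C_{\alpha,i}$ is harmless, since the union of all top-dimensional components \emph{is} $\qq\big(\chi(F),c\big)$-definable; the difficulty is only in separating the components.)

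The paper sidesteps this entirely by never trying to reach irreducible closures. It instead compares top-rank members \emph{pairwise}: it sets $\hat D_1$ to be those $\alpha\in\hat D$ already $\sim$- or $\simperp$-related to every $\beta\in\hat D$, forms the relation $R$ of "entangled" pairs $(\alpha,\beta)$ in the complement, and splits $T_\alpha$ into $T_\alpha\capdot T_\beta$ and $T_\alpha\dotminus T_\beta$. Both pieces remain indexed by $D\times D\subseteq F^{2k}$, and parts (6)--(9) of Proposition~\ref{lies} show each piece drops either in $K$-rank or in $K$-degree, yielding a double induction on $\big(\max_\alpha \mr_K(T_\alpha),\ \max_{\alpha\in\hat D_2}\md_K(T_\alpha)\big)$. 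This buys exactly what your approach cannot: the "test" varieties used to cut $T_\alpha$ are drawn from the $F$-indexed family itself, so no choice of Galois-conjugate components is ever needed. If you want to rescue your argument, you would need to replace "cut by the irreducible components of $V_\alpha$" with "cut by the closures $V_\beta$ of other members of the family," which is exactly the paper's $\capdot$/$\dotminus$ trick.
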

 
\begin{proof}
Let $ \{ T_\alpha\}_{ \alpha \in D}$ be a pc-presentation of $X$, $r= \max_{ \alpha \in D} \mr_K(T_\alpha)$ and  $\hat{D}$ be the primary index set of $ \{ T_\alpha\}_{ \alpha \in D}$. Let  $ \hat{D}_1 $ be the set $$\{ \alpha \in \hat{D}: \text{ for all } \beta \in \hat{D}, \text{either } T_\alpha \sim T_\beta \text{ or } T_\alpha \simperp T_\beta\} $$ and let $ \hat{D}_2 = D\backslash \hat{D}_1$. Set $l=0$ if $\hat{D}_2$ is empty and $l = \max_{ \alpha \in \hat{D}_2} \md_K(T_\alpha)$ otherwise. We note that $\big\{ \md_K(T_\alpha):  \alpha \in \hat{D}_2 \big\}$ has an upper bound by the preceding lemma and a standard compactness argument.

We now make a number of reductions. If $r = 0$, the statement of the proposition is immediate. Towards using induction,  assume $r>0$ and we have proven the statement for all $X'$ with a pc-presentation $ \{ {T'}_{\alpha'}\}_{ \alpha' \in D'}$ such that for similarly defined $r'$, we have $r'<r$. Note that $\hat{D}, \hat{D}_1$ and $\hat{D}_2$ are definable by the preceding lemma. Let $X^-$ and $\hat{X}$ be the definable subsets of $K^n$ given by:
$$X^-\ =\ \bigcup_{ \alpha \in D\backslash \hat{D}}T_\alpha\ \text{ and }\ \hat{X}\ =\ \bigcup_{ \alpha \in \hat{D}}T_\alpha. $$
By the induction assumption, $X^-$ has an essentially disjoint pc-presentation. If $l=0$, then $\{ T_\alpha\}_{ \alpha \in \hat{D}}$ is an essentially disjoint pc-presentation of $\hat{X}$; we can take the disjoint combination of the former and a pc-presentation of $X^-$ to get an essentially disjoint pc-presentation of $X$. Towards using induction, assume that $l>0$ and we have proven the statement for all $X'$ with a pc-presentation $ \{ {T'}_{\alpha'}\}_{ \alpha' \in D'}$ such that for similarly defined $l'$, we have $l'<l$.

Let $ R $ be the set of $ (\alpha, \beta) \in \hat{D}_2 \times \hat{D}_2$ such that neither $  T_\alpha\sim T_\beta $ nor $T_\alpha\simperp T_\beta$. By preceding corollary $R$ is definable.
Let $ \{ T^{(1)}_{\alpha, \beta}\}_{ (\alpha, \beta) \in R}$ and  $ \{ T^{(2)}_{\alpha, \beta}\}_{ (\alpha, \beta) \in R}$ be  definable families given by 
$$T^{(1)}_{\alpha, \beta}\ =\ T_\alpha \capdot T_\beta\ \text{ and }\ T^{(2)}_{\alpha, \beta}\ =\ T_\alpha \dotminus T_\beta.$$
The above two families are definable families of pc-sets.
By the assumption of the preceding paragraph, for all $\alpha \in D_2$, there is $\beta \in D_2$, such that $( \alpha,\beta)$ is in $R$. For such $\beta$, 
$$T_\alpha\ =\ T^{(1)}_{\alpha, \beta} \cup T^{(2)}_{\alpha, \beta}.$$
Hence, taking the disjoint combination of $ \{ T_\alpha\}_{ \alpha \in \hat{D}_1}$, $ \{ T_\alpha\}_{ \alpha \in {D\backslash \hat{D}}}$, $ \{ T^{(1)}_{\alpha, \beta}\}_{ (\alpha, \beta) \in R}$, $ \{ T^{(2)}_{\alpha, \beta}\}_{ (\alpha, \beta) \in R}$, we get a new pc-presentation $ \{ T'_{\alpha'}\}_{ \alpha' \in D'}$ of $X$. We note that by (6), (7), (8), (9) of Lemma~\ref{lies}, for every $(\alpha, \beta) \in R$ and $i \in \{1,2\}$ 
$$\text{either }\ \mr_K(T^{(i)}_{\alpha, \beta})\ <\ \max_{ \beta \in \hat{D}_2} \mr_K(T_\beta) \ \text{ or } \ \md_K(T^{(i)}_{\alpha, \beta})\ <\ \max_{ \beta \in \hat{D}_2} \md_K(T_\beta).$$
Hence, for $l'$ defined for $\{ T'_{\alpha'}\}_{ \alpha' \in D'}$ in the same way as in the preceding paragraph, we have $l'<l$. The conclusion follows by the induction assumption from the preceding paragraph.
\end{proof}

\noindent
Let $\{T_\alpha\}_{\alpha \in D}$ be an essentially disjoint pc-presentation and $\hat{D}$ be its primary index set. A {\bf primary index quotient} of $\{T_\alpha\}_{\alpha \in D}$ consists of a definable subset $\widetilde{D}$ of $F^l$ for some $l$ and a map $\pi: \hat{D} \to \widetilde{D}, \alpha \mapsto \tilde{\alpha}$ such that $\tilde{\alpha}= \tilde{\beta}$ if and only if $T_\alpha \sim T_\beta$. However, we will systematically abuse notation calling $\widetilde{D}$ a primary index quotient of $T_\alpha$ and regarding the map $\pi$ as implicitly given. Moreover, we will write $\tilde{\alpha} \in \widetilde{D}$ as an abbreviation for $\alpha$ is an element of $\hat{D}$ and $\tilde{\alpha}$ is the image of $\alpha$ under $\pi$.
We note that we can always find a primary index quotient of an essentially disjoint pc-presentation by Lemma~\ref{Definability}, Lemma~\ref{QuanRed} and the fact that \text{ACF} has elimination of imaginaries. 
For a property (P) of pc-sets which is preserved under the equivalent relation $\sim$,
let $\widetilde{D}_{\text{(P)}}$ 
consist of $\tilde{\alpha} \in \widetilde{D}$ such that $T_\alpha$ satisfies (P).
With (P) as above, we say (P) holds {\bf for most} $\tilde{\alpha} \in \widetilde{D}$ if $\mr_F(\widetilde{D} \backslash \widetilde{D}_{\text{(P)}}) < \mr_F( \widetilde{D})$.

The next proposition allows us to define another geometrical invariant of a definable set based on a choice of its essentially disjoint pc-presentation and yet independent of such choice.

\begin{prop} \label{Fgeorank}
Suppose $X \subseteq X' \subseteq K^n$, $X$ has an essentially disjoint pc-presentation $\{ T_\alpha\}_{ \alpha \in D}$ with  a primary index quotient $\widetilde{D}$ and $X'$ has an essentially disjoint pc-presentation $\{ T'_{\alpha'}\}_{ \alpha' \in D'}$ with a primary index quotient $\widetilde{D}'$. Then either $\gr_K(X)< \gr_K(X')$ or $\gr_K(X)= \gr_K(X')$ and $\mr_F(\widetilde{D}) \leq \mr_F(\widetilde{D}')$.
\end{prop}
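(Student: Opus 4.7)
If $\gr_K(X)<\gr_K(X')$ there is nothing to prove, and $\gr_K(X)>\gr_K(X')$ is impossible by the corollary to Proposition~\ref{Kgeorank} since $X\subseteq X'$, so I assume $\gr_K(X)=\gr_K(X')=:r$ and plan to produce a definable relation $R\subseteq\widetilde{D}\times\widetilde{D}'$ whose projection onto $\widetilde{D}$ is surjective and whose projection onto $\widetilde{D}'$ has finite fibers. Both $\widetilde{D}$ and $\widetilde{D}'$ sit in Cartesian powers of $F$, so by Theorem~\ref{StablyEmbbed} they and $R$ are all definable in the field $F$; the additivity of Morley rank in $\mathrm{ACF}$ will then yield $\mr_F(\widetilde{D})\leq \mr_F(R)\leq \mr_F(\widetilde{D}')$, which is exactly what is wanted.

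Set $R(\tilde{\alpha},\tilde{\alpha}')$ to hold iff $\mr_K(T_\alpha\cap T'_{\alpha'})=r$; the choice of representatives is immaterial by Proposition~\ref{lies}(10) (applied in whichever direction an intersection attains $K$-rank $r$), and definability follows from Corollary~\ref{Definability}. If $R(\tilde{\alpha},\tilde{\alpha}')$ holds, then $\mr_K(T'_{\alpha'})\geq r$ while $T'_{\alpha'}\subseteq X'$ forces $\mr_K(T'_{\alpha'})\leq r$, so $\alpha'\in\hat{D}'$, and symmetrically $\alpha\in\hat{D}$. Surjectivity of the first projection $R\to\widetilde{D}$ is then easy: for $\tilde{\alpha}\in\widetilde{D}$, the family $\{T_\alpha\cap T'_{\alpha'}\}_{\alpha'\in D'}$ is a pc-presentation of $T_\alpha$ since $T_\alpha\subseteq X'=\bigcup_{\alpha'}T'_{\alpha'}$, and the corollary to Proposition~\ref{Kgeorank} gives $r=\mr_K(T_\alpha)=\max_{\alpha'\in D'}\mr_K(T_\alpha\cap T'_{\alpha'})$, the maximum being attained because $K$-Morley ranks are bounded natural numbers.

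Finally, for the fiber $R_{\tilde{\alpha}'}=\{\tilde{\alpha}:R(\tilde{\alpha},\tilde{\alpha}')\}$, suppose $R(\tilde{\alpha},\tilde{\alpha}')$; then all three of $\mr_K(T_\alpha),\mr_K(T'_{\alpha'}),\mr_K(T_\alpha\cap T'_{\alpha'})$ equal $r$, so by Proposition~\ref{lies}(5), $\mr_K(V_\alpha\cap V'_{\alpha'})=r$, forcing $V_\alpha$ and $V'_{\alpha'}$ to share a top-dimensional $K$-irreducible component. Since $V'_{\alpha'}$ is $K$-algebraic it has only finitely many such components $C_1,\ldots,C_j$; I claim the map sending $\tilde{\alpha}\in R_{\tilde{\alpha}'}$ to one of the $C_i$ is injective, which gives $|R_{\tilde{\alpha}'}|\leq j$. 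Indeed, if $\tilde{\alpha}_1\neq\tilde{\alpha}_2$ both landed on the same $C$, writing $T_{\alpha_i}=V_{\alpha_i}\setminus S_{\alpha_i}$ and applying Corollary~\ref{Smallness3} to each $T_{\alpha_i}$ gives $C\nsubseteq S_{\alpha_i}$; Corollary~\ref{Smallness2} then gives $C\nsubseteq S_{\alpha_1}\cup S_{\alpha_2}$, and Corollary~\ref{Smallness3} once more implies that $C\setminus(S_{\alpha_1}\cup S_{\alpha_2})\subseteq T_{\alpha_1}\cap T_{\alpha_2}$ has $K$-closure $C$, so $\mr_K(T_{\alpha_1}\cap T_{\alpha_2})=r$. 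Essential disjointness of $\{T_\alpha\}_{\alpha\in D}$ rules out $T_{\alpha_1}\simperp T_{\alpha_2}$ and forces $T_{\alpha_1}\sim T_{\alpha_2}$, contradicting $\tilde{\alpha}_1\neq\tilde{\alpha}_2$. The step I expect to need the most care is this last geometric one, namely pulling the shared top-dimensional component of $V_\alpha$ and $V'_{\alpha'}$ back into the closure of the actual pc-intersection $T_\alpha\cap T'_{\alpha'}$; this is precisely what the small-remainder machinery of Corollaries~\ref{Smallness2} and~\ref{Smallness3} is designed to deliver, so once that is in place the rest is bookkeeping.
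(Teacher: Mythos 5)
Your proposal is correct and follows essentially the same route as the paper: both define $R\subseteq\widetilde{D}\times\widetilde{D}'$ by $\mr_K(T_\alpha\cap T'_{\alpha'})=\gr_K(X)$, observe (via Proposition~\ref{lies}(10), Corollary~\ref{Definability}, Theorem~\ref{StablyEmbbed}) that $R$ is well-defined and definable in $F$, note $X\subseteq X'$ gives a surjective first projection, use essential disjointness of $\{T_\alpha\}$ for finite fibers over $\widetilde{D}'$, and conclude by Morley-rank comparison in $\mathrm{ACF}$. The only difference is that you spell out the finite-fibers step concretely by pinning each $\tilde{\alpha}$ to a top-dimensional irreducible component of $V'_{\alpha'}$ and invoking Corollaries~\ref{Smallness2} and~\ref{Smallness3}, where the paper leaves this as a one-line appeal to essential disjointness (equivalently boundable by $\md_K(T'_{\alpha'})$).
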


\begin{proof}
Suppose $X, X'$, $\{ T_\alpha\}_{ \alpha \in D}$, $\{ T'_{\alpha'}\}_{ \alpha' \in D'}$, $\widetilde{D}$ and $\widetilde{D}'$ are as given.
By Proposition~\ref{Kgeorank}, we have $\gr_K(X) \leq \gr_K(X')$, so suppose $\gr_K(X) = \gr_K(X')$.
Let  $\displaystyle R \subseteq \widetilde{D} \times \widetilde{D}'$ be the relation consisting of $(\tilde{\alpha}, \tilde{\alpha}') \in R$  such that
$$  \mr_K( T_{\alpha} \cap T'_{\alpha'})\ =\ \gr_K(X)\ =\ \gr_K(X')\ \text{ for } \alpha \in \pi^{-1}(\tilde{\alpha}) \text{ and } \alpha' \in \pi'{}^{-1}(\tilde{\alpha}'). $$ By (10) of Proposition~\ref{lies}, $R$ is well defined. By (1) of Corollary~\ref{Definability}  and Theorem~\ref{StablyEmbbed}, $R$ is definable in $F$.
Since $ X\subseteq X'$,  for every $ \tilde{\alpha}$ there is at least one $ \tilde{\alpha}' $ such that $  \tilde{\alpha} R \tilde{\alpha}' $.
Since the pc-presentation of $X$ is essentially disjoint for each $  \tilde{\alpha}'$, there is at most finitely many $ \tilde{\alpha}$ such that $ \tilde{\alpha} R \tilde{\alpha}'$.
Hence, $ \mr_F (\widetilde{D}) \leq  \mr_F (\widetilde{D}') $, and so the conclusion follows.
\end{proof}

\begin{cor}
If $X \subseteq K^n$ has essentially disjoint pc-presentations $\{ T_\alpha\}_{ \alpha \in D}$ and $\{ T'_{\alpha'}\}_{ \alpha' \in D'}$ with respective primary index quotients $(\widetilde{D}, \pi)$ and  $(\widetilde{D}', \pi')$, then $\mr_F(\widetilde{D})= \mr_F(\widetilde{D}')$.
\end{cor}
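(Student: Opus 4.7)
The plan is to deduce this corollary immediately from Proposition~\ref{Fgeorank} by a symmetric application. Taking $X' = X$ in Proposition~\ref{Fgeorank}, the inclusion $X \subseteq X'$ holds trivially, so the proposition asserts that either $\gr_K(X) < \gr_K(X)$ (impossible) or $\gr_K(X) = \gr_K(X)$ together with $\mr_F(\widetilde{D}) \leq \mr_F(\widetilde{D}')$. Only the second alternative can occur, which gives one direction of the claimed equality.

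Next I would swap the roles of the two pc-presentations: apply Proposition~\ref{Fgeorank} again with $X \subseteq X$ but now regarding $\{T'_{\alpha'}\}_{\alpha' \in D'}$ as the pc-presentation of the smaller set and $\{T_\alpha\}_{\alpha \in D}$ as that of the larger set. By exactly the same reasoning, this yields $\mr_F(\widetilde{D}') \leq \mr_F(\widetilde{D})$. Combining the two inequalities gives $\mr_F(\widetilde{D}) = \mr_F(\widetilde{D}')$, as desired.

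There is essentially no obstacle here since the corollary is a direct symmetrization of the preceding proposition, in complete parallel to how the well-definedness of $\gr_K(X)$ was deduced from Proposition~\ref{Kgeorank}. The only thing to note is that the definition of ``primary index quotient'' and of $\mr_F(\widetilde{D})$ makes sense by virtue of $\widetilde{D}$ being definable in $F$ (via Theorem~\ref{StablyEmbbed} applied to the definable set $\widetilde{D} \subseteq F^l$), so that $\mr_F$ is unambiguous on both sides of the equation.
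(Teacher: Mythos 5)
Your proof is correct and is exactly the intended argument: the corollary is the symmetrization of Proposition~\ref{Fgeorank} with $X' = X$, directly parallel to how the corollary following Proposition~\ref{Kgeorank} is obtained.
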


\noindent
Suppose $X \subseteq K^n$ has an essentially disjoint pc-presentation $\{ T_\alpha\}_{ \alpha \in D}$ with a primary index quotient $(\widetilde{D}, \pi)$. The { \bf $F$-geometric rank} of $X$, denoted by $\gr_F(X)$, is defined by $\mr_F(\widetilde{D})$. The {\bf geometric rank} of $X$, denoted by $\gr(X)$ is defined by $\omega\cdot \gr_K(X)+ \gr_F(X)$.

\begin{cor} \label{grbehavior2}
Suppose $X, X' \subseteq K^n $ are definable. Then we have the following:
\begin{enumerate}
\item if $X \subseteq X'$, then $\gr(X) \leq \gr(X')$;
\item $\gr(X \cup X') = \max\{\gr(X), \gr(X') \}$.
\end{enumerate}

\end{cor}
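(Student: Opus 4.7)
The plan is to reduce everything to Propositions~\ref{Kgeorank}, \ref{Fgeorank}, and \ref{essdispre}. For (1), I would first invoke Proposition~\ref{essdispre} to obtain essentially disjoint pc-presentations of both $X$ and $X'$. Proposition~\ref{Kgeorank} immediately yields $\gr_K(X) \leq \gr_K(X')$; if this inequality is strict, then by the definition $\gr(\cdot) = \omega \cdot \gr_K(\cdot) + \gr_F(\cdot)$ we have $\gr(X) < \omega \cdot \gr_K(X') \leq \gr(X')$ and we are done. If $\gr_K(X) = \gr_K(X')$, then Proposition~\ref{Fgeorank} applied to the chosen presentations gives $\gr_F(X) \leq \gr_F(X')$, and again $\gr(X) \leq \gr(X')$.

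For (2), the inequality $\max\{\gr(X), \gr(X')\} \leq \gr(X \cup X')$ is immediate from (1) applied to the inclusions $X \subseteq X \cup X'$ and $X' \subseteq X \cup X'$. For the reverse direction, I would start with essentially disjoint pc-presentations $\{T_\alpha\}_{\alpha \in D}$ of $X$ and $\{T'_{\alpha'}\}_{\alpha' \in D'}$ of $X'$, with primary index quotients $\widetilde{D}$ and $\widetilde{D}'$, and form their disjoint combination, which is a pc-presentation of $X \cup X'$. The corollary following Proposition~\ref{Kgeorank} gives $\gr_K(X \cup X') = \max\{\gr_K(X), \gr_K(X')\}$. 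When $\gr_K(X) \neq \gr_K(X')$, say $\gr_K(X) < \gr_K(X')$, the top-rank pieces in the combination come only from $\{T'_{\alpha'}\}$, so the combination is already essentially disjoint at the top rank, its primary index quotient equals $\widetilde{D}'$, and $\gr(X \cup X') = \gr(X')$ follows directly.

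The main obstacle is the case $\gr_K(X) = \gr_K(X')$, where the disjoint combination may fail to be essentially disjoint because a $T_\alpha$ with $\alpha \in \hat{D}$ and a $T'_{\alpha'}$ with $\alpha' \in \hat{D}'$ need not satisfy either $T_\alpha \sim T'_{\alpha'}$ or $T_\alpha \simperp T'_{\alpha'}$. I would then apply the refinement procedure used in the proof of Proposition~\ref{essdispre}, splitting such offending $T_\alpha$ via $\capdot$ and $\dotminus$. The delicate point to check is that each newly produced top-rank piece is $\sim$-equivalent to one of the original top-rank $T_\alpha$ or $T'_{\alpha'}$, so that the primary index quotient of the resulting essentially disjoint presentation admits a definable finite-to-one map onto a definable quotient of $\widetilde{D} \sqcup \widetilde{D}'$. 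Since Morley rank in the $\ACF$-model $F$ is preserved under such maps and $\mr_F(\widetilde{D} \sqcup \widetilde{D}') = \max\{\mr_F(\widetilde{D}), \mr_F(\widetilde{D}')\}$, this gives $\gr_F(X \cup X') \leq \max\{\gr_F(X), \gr_F(X')\}$, completing the equality $\gr(X \cup X') = \max\{\gr(X), \gr(X')\}$.
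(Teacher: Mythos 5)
Part (1) of your proposal matches the paper's proof exactly, and your treatment of the case $\gr_K(X)\neq\gr_K(X')$ in part (2) is fine.

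For the case $\gr_K(X)=\gr_K(X')$ there is a genuine gap. Your ``delicate point to check'' is in fact false: if $\alpha\in\hat{D}$, $\alpha'\in\hat{D}'$ and neither $T_\alpha\sim T'_{\alpha'}$ nor $T_\alpha\simperp T'_{\alpha'}$, then by item (6) of Proposition~\ref{lies} both $T_\alpha\capdot T'_{\alpha'}$ and $T_\alpha\dotminus T'_{\alpha'}$ remain at top $K$-rank, and by item (9) we have $\md_K(T_\alpha\capdot T'_{\alpha'})+\md_K(T_\alpha\dotminus T'_{\alpha'})=\md_K(T_\alpha)$, so the new piece $T_\alpha\capdot T'_{\alpha'}$ has strictly smaller $K$-Morley degree than $T_\alpha$. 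Consequently $T_\alpha\not\subsim T_\alpha\capdot T'_{\alpha'}$, so $T_\alpha\capdot T'_{\alpha'}\not\sim T_\alpha$, and by the same reasoning it is not $\sim$ to $T'_{\alpha'}$ either; it also cannot be $\sim$ to any other $T_\beta$ or $T'_{\beta'}$ at top rank, since the two original presentations were essentially disjoint. So the newly produced top-rank pieces are \emph{not} $\sim$-equivalent to any of the originals, and the definable map onto a quotient of $\widetilde{D}\sqcup\widetilde{D}'$ that you invoke does not exist in the form you describe; one would have to replace $\sim$ by $\subsim$, worry about well-definedness and uniform finiteness of the fibers, and track how the index set changes across iterations of the refinement.

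The paper avoids all of this with a one-line reduction: replace $X'$ by $X'\setminus X$, noting that $\gr(X'\setminus X)\le\gr(X')$ by (1) and that the reverse inequality $\max\{\gr(X),\gr(X')\}\le\gr(X\cup X')$ also follows from (1). Once $X$ and $X'$ are disjoint, every $T_\alpha$ is disjoint from every $T'_{\alpha'}$, so any pair of top-rank pieces coming one from each family is automatically $\simperp$; the disjoint combination of the two essentially disjoint presentations is therefore already essentially disjoint, and the primary index quotient can be read off directly without any refinement. You should incorporate this reduction; it eliminates the entire difficulty.
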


\begin{proof}
(1) follows from Propositions~\ref{Kgeorank} and~\ref{Fgeorank}. For (2), suppose $X, X'$ are as given. We can reduce to the case where $X, X'$ are disjoint, and $\gr(X) \geq \gr(X')$. Choose essentially disjoint pc-presentations of $X,X'$ and take the disjoint combination to get a pc-presentation of $X \cup X'$. As $X$ and $X'$ are disjoint, the obtained pc-presentation remains essentially disjoint. Calculating geometric rank yields the desired result.
\end{proof}

\noindent
We finally introduce the ultimate description of definable set in a model of $\TN_p$. Let $P$ be a system of polynomials in $ K[w, x]$ and $T$ is a subset of $K^n$. We say $P$ {\bf divides $T$} if there is some $\beta \in F^{|w|}$  such that for $W = Z\big(P( \chi(\beta),x)\big)$, we have $\mr_K(T \cap W) = \mr_K(T \backslash W)$. We note that if $T' \sim T$, and $P$ as above divides $T$ then $P$ also divides $T'$.
Let $ \{ T_\alpha\}_{ \alpha \in D}$ be an essentially disjoint pc-presentation. 
We say $P$ {\bf essentially divides} $ \{ T_\alpha\}_{ \alpha \in D}$ if  
$$\mr_F( \widetilde{D}_P)\ =\ \mr_F( \widetilde{D}) \ \text{ where }\ \widetilde{D}_P\ =\ \{ \tilde{\alpha} \in  \widetilde{D} : P \text{ divides } T_\alpha  \}  .$$ 
We note that if a system $P$ essentially divides $ \{ T_\alpha\}_{ \alpha \in D}$ then some polynomial in the system already essentially divides $ \{ T_\alpha\}_{ \alpha \in D}$.

We say $ \{ T_\alpha\}_{ \alpha \in D}$ is  a {\bf geometric pc-presentation} if  $P$ does not essentially divide $ \{ T_\alpha\}_{ \alpha \in D}$ for any $P \in K[w, x]$.

\begin{prop} \label{geoired}
Suppose $X \subseteq K^n$ is definable. The following are equivalent:
\begin{enumerate}
\item if $X' \subseteq X$ with $\gr(X')= \gr(X)$, then $\gr(X \backslash X') < \gr(X)$;
\item $X$ has a geometric pc-presentation  $ \{ T_\alpha\}_{ \alpha \in D}$ with a primary index quotient $\widetilde{D}$ satisfying $\md_F(\widetilde{D}) =1$.
\end{enumerate}
\end{prop}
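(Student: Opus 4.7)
The plan is to prove both directions directly, using the pc-presentation machinery and definability of rank in families.

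For $(1) \Rightarrow (2)$, I will start with any essentially disjoint pc-presentation $\{T_\alpha\}_{\alpha \in D}$ of $X$ with primary index quotient $(\widetilde{D}, \pi)$, whose existence is guaranteed by Proposition~\ref{essdispre}. By Proposition~\ref{Fgeorank}, $\mr_F(\widetilde{D}) = d := \gr_F(X)$ and $\mr_K(T_\alpha) = r := \gr_K(X)$ for each $\tilde\alpha \in \widetilde{D}$. I claim that under $(1)$ this presentation already witnesses $(2)$. If $\md_F(\widetilde{D}) \geq 2$, then since $F \models \ACF$ I can decompose $\widetilde{D}$ (up to a set of smaller $F$-Morley rank) into disjoint definable pieces $\widetilde{D}_1, \widetilde{D}_2$ each of $F$-Morley rank $d$, and setting $X' := \bigcup\{T_\alpha : \pi(\alpha) \in \widetilde{D}_1\}$ gives $\gr(X') = \gr(X) = \gr(X \setminus X')$, contradicting $(1)$. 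If some $P \in K[w,x]$ essentially divides $\{T_\alpha\}$, then $\widetilde{D}_P$ is definable of $F$-Morley rank $d$ by Corollary~\ref{Definability}, and using elimination of imaginaries for $\ACF$ in $F$ I can select a definable function $\alpha \mapsto \beta_\alpha \in F^{|w|}$ on $\pi^{-1}(\widetilde{D}_P)$ realizing the division; then $X' := \bigcup_\alpha (T_\alpha \cap Z(P(\chi(\beta_\alpha),x)))$ together with $X \setminus X'$ again violates $(1)$.

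For $(2) \Rightarrow (1)$, let $\{T_\alpha\}_{\alpha \in D}$ with quotient $(\widetilde{D}, \pi)$ witness $(2)$, take definable $X' \subseteq X$ with $\gr(X') = \gr(X)$, set $Y := X \setminus X'$, and assume for contradiction that $\gr(Y) = \gr(X)$. Fixing essentially disjoint pc-presentations $\{U_\beta\}_{\beta \in E}$ of $X'$ and $\{U'_{\beta'}\}_{\beta' \in E'}$ of $Y$ with primary index quotients of $F$-Morley rank $d$, define the definable sets $\widetilde{D}_{X'} := \{\tilde\alpha \in \widetilde{D} : \exists \beta \in E,\ \mr_K(T_\alpha \cap U_\beta) = r\}$ and $\widetilde{D}_Y$ analogously (both definable via Corollary~\ref{Definability}). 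Passing to a sufficiently saturated elementary extension via Lemma~\ref{SaturatedExtension} and using Lemma~\ref{Smallness1} to map the top-dimensional irreducible components of the primary closures of $U_\beta$'s and $U'_{\beta'}$'s into appropriate $V_\alpha := \overline{T_\alpha}$'s, I deduce that $\mr_F(\widetilde{D}_{X'}) = \mr_F(\widetilde{D}_Y) = d$. Since $\md_F(\widetilde{D}) = 1$, a standard $\ACF$-fact applied to $F$ forces $\mr_F(\widetilde{D}_{X'} \cap \widetilde{D}_Y) = d$.

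The main obstacle is extracting from this a polynomial $P \in K[w,x]$ essentially dividing $\{T_\alpha\}$. For each $\tilde\alpha$ in the intersection above, $T_\alpha$ is split by $X'$ into two definable pieces of full $K$-Morley rank $r$. By Theorem~\ref{QuanRed} together with Lemma~\ref{APsimple2}, $X'$ is a Boolean combination of algebraically presentable sets of the form $\{a : \exists s \in D_j,\ P_j(\chi(s), a) = 0\}$, involving only finitely many polynomial systems $P_1, \ldots, P_N \in K[w,x]$. Analyzing the Boolean structure generically on $T_\alpha$, the proper closed splitting of the top-dimensional part of $V_\alpha$ induced by $\overline{X' \cap T_\alpha}$ must be contained in some $Z(P_j(\chi(\beta), x))$ for some $j$ and some $\beta \in F^{|w|}$. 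Since ``$P_j$ divides $T_\alpha$'' is definable in $\alpha$ by Corollary~\ref{Definability}, pigeonholing over $j \in \{1, \ldots, N\}$ produces a single $P_j$ dividing $T_\alpha$ on a definable subset of $\widetilde{D}$ of $F$-Morley rank $d$, so that $P_j$ essentially divides $\{T_\alpha\}$, contradicting geometricity.
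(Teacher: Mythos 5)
Your proof of $(2) \Rightarrow (1)$ roughly tracks the paper's strategy: extract $\widetilde{D}_{X'}$, $\widetilde{D}_{Y}$ of full $F$-rank, use $\md_F(\widetilde{D})=1$ to force their intersection to have full $F$-rank, and then conclude by exhibiting a polynomial that essentially divides $\{T_\alpha\}$. However, your crucial final step (``analyzing the Boolean structure generically on $T_\alpha$'') is where all the work lies, and it is only gestured at. The paper handles it precisely: it replaces the presentations of $X'$ and $X\setminus X'$ by fiberwise intersections with $\{T_\alpha\}$ so that every piece sits inside some $T_\alpha$, applies Lemma~\ref{DefinabilityOfClosure} and Lemma~\ref{APsimple2} to describe the closures $V'_{\alpha'}$ uniformly by a single system $P'$, and then shows for $\tilde\alpha\in\widetilde{D}_1\cap\widetilde{D}_2$ that $P'$ divides $T_\alpha$ using items $(7)$ and $(10)$ of Proposition~\ref{lies} applied to $T'_{\alpha'}\subsim T_\alpha$ and $T''_{\alpha''}\subsim T_\alpha$. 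Those details are the content of the proof, not a routine verification.

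Your $(1)\Rightarrow(2)$ argument takes a genuinely different route, and it has a real gap in the essential-division case. You assert that ``using elimination of imaginaries for $\ACF$ in $F$ I can select a definable function $\alpha\mapsto\beta_\alpha$ realizing the division.'' Elimination of imaginaries gives interpretable quotients, not definable sections: $\ACF$ has no definable Skolem functions, and for a given $\tilde\alpha$ the set of $\beta$ realizing the division is a (possibly infinite) definable subset of $F^{|w|}$ from which no element can in general be chosen definably. Moreover, even if a definable selector existed, you would still need to check that $X\setminus X'$ retains full rank: for $\alpha'\ne\alpha$ with $\pi(\alpha')=\pi(\alpha)$, the set $T_{\alpha'}\cap W_{\beta_{\alpha'}}$ may eat into $T_\alpha\setminus W_{\beta_\alpha}$ in full $K$-rank, and you have not controlled this. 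The paper's proof avoids both problems by a different induction: it inducts on $l=\md_K(T_\alpha)$ (for primary $\tilde\alpha$), sets $D'=\{(\alpha,\beta): W_\beta\ \text{divides}\ T_\alpha\}$, and defines $X'=\bigcup_{(\alpha,\beta)\in D'} T_\alpha\cap W_\beta$ over \emph{all} pairs, then observes $\md_K$ of each piece drops strictly so the induction hypothesis applies to $X'$, and finally uses $(1)$ together with a lifting observation (if $\gr(X\setminus X')<\gr(X)$ and $X'$ satisfies (2), so does $X$). No selection is required, and the union over all pairs is permitted because the argument does not need $X'$ and $X\setminus X'$ to be disjoint witnesses against $(1)$. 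If you want to repair your approach you would have to replace the definable selector by a union over all dividing pairs and restructure the argument along the paper's inductive lines.
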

\begin{proof}
Towards the proof of (1) implying (2), we make the observation that for $X' \subseteq X$ with $\gr(X \backslash X') < \gr(X)$, if (2) holds for $X'$ then (2) holds for $X$. Suppose $\{ {T'}_{\alpha'}\}_{ \alpha' \in D'}$ is a pc-presentation of $X'$ as described in (2). Choose an arbitrary essentially disjoint pc-presentation of $X \backslash X'$ and take a disjoint combination of it and $\{ {T'}_{\alpha'}\}_{ \alpha' \in D'}$ to get an essentially disjoint pc-presentation $ \{ T_\alpha\}_{ \alpha \in D}$ of $X$. The assumption that $\gr(X \backslash X') < \gr(X)$ implies that $ \{ T_\alpha\}_{ \alpha \in D}$ is still a presentation as described in (2).

We also make a few more observations and reductions. Suppose $X \subseteq K^n$ has the property (1) and $\{ T_\alpha\}_{ \alpha \in D}$ is an essentially disjoint pc-presentation of $X$ with $\hat{D}$ the primary index set and $\widetilde{D}$ a primary index quotient. It follows from (1) that $\md_F(\widetilde{D}) =1$. 
Let $l$ be the natural number such that 
$$\md_K( T_\alpha)\ =\ l\ \text{ for most } \tilde{\alpha} \in \widetilde{D}\textbf{} .$$
Suppose $l=1$, then $\{ T_\alpha\}_{ \alpha \in D}$ is geometric with the desired properties and we are done. Towards using induction, assume we have proven (2) for all $X'$ satisfying (1) with an essentially disjoint presentation  $\{ T'_{\alpha'}\}_{ \alpha' \in D'}$ such that for similarly defined $l'$, we have $l'<l$. Suppose  $\{ T_\alpha\}_{ \alpha \in D}$ is not geometric as otherwise we are done.
Then there is a polynomial $P \in K[w,x]$ such that 
$$\mr_F(\widetilde{D}_P)\ =\ \mr_F(\widetilde{D})\ \text{ where }\ \widetilde{D}_P\ =\ \{ \tilde{\alpha} \in \widetilde{D} \mid P  \text{ divides } T_\alpha \} .$$
By the observation in the preceding paragraph, we can arrange to have 
$$D\ =\ \hat{D},\  \widetilde{D}\ =\ \widetilde{D}_P\ \text{ and  for some } l,\ \md_K( T_\alpha)\ =\ l\ \text{ for all }  \tilde{\alpha} \in \widetilde{D}. $$
We prove that (1) implies (2). For each $\beta \in F^k$, let $W_\beta$ be $Z\big( P(\chi(\beta),x)\big)$. Let $D' $ be the set consisting of $(\alpha, \beta) \in D \times F^{k}$ such that  
$\mr_K( T_\alpha \cap W_\beta) = \mr_K(T_\alpha \backslash W_\beta)$. Let $X'$ be given by the pc-presentation $ \{ T_\alpha \cap W_\beta \}_{ (\alpha, \beta) \in D'}$ and $X''$ be given by the pc-presentation $ \{ T_\alpha \backslash W_\beta \}_{ (\alpha, \beta) \in D'}$. 
Then $$X\ =\ X'\cup X'',\ \text{and so }\ \gr(X)\ =\ \max\big\{\gr(X'), \gr(X'')\big\}.$$ We assume $\gr(X) = \gr(X')$; the other case can be dealt with similarly.  We note that for all $(\alpha, \beta) \in D'$, $\md_K(T_\alpha \cap W_\beta )<l$. The pc-presentation $ \{ T_\alpha \cap W_\beta \}_{ (\alpha, \beta) \in D'}$ might not be essentially disjoint, but by applying the same procedure as in Proposition~\ref{essdispre} we can produce an essentially disjoint $\{ T'_{\alpha'}\}_{ \alpha' \in D'}$. By construction, for similarly defined $l'$, we have $l'<l$. Therefore, by the assumption in the preceding paragraph $X'$ satisfies (2). By $(1)$, we have $\gr(X \backslash X')<\gr$ and so by the observation in the first paragraph $X$ also satisfies (2).

Next, we make some preparation for the proof of $(2)$ implies $(1)$. 
Towards a contradiction, suppose $X \subseteq K^n$ has a pc-presentation $\{ T_\alpha\}_{ \alpha \in D}$ with a primary index quotient $\widetilde{D}$ as in $(2)$ but there is $X'\subseteq X$ such that $\gr(X')= \gr(X \backslash X') = \gr(X)$.
Replacing $X$ by $X_0 \subseteq X$ and $\gr(X \backslash X_0)< \gr(X)$ if needed, we can arrange that  $$\mr_K(T_\alpha)\ =\ \gr_K(X)\ \text{ for all } \alpha \in D.$$ 
Choose an essentially disjoint pc-presentation $\{ {T'}_{\alpha'}\}_{ \alpha' \in D'}$ of $X'$. As usual, let $\widetilde{D}'$ be a primary index quotient of  $\{ {T'}_{\alpha'}\}_{ \alpha' \in D'}$. 
Replacing this pc-presentation by its fiber-wise intersection with $\{ T_\alpha\}_{ \alpha \in D}$ if needed,  we can arrange that for each $\alpha' \in D'$, there is $\alpha \in D$ such that ${T'}_{\alpha'} \subseteq T_\alpha$.
For each $\alpha' \in D'$, let $V'_{\alpha'}$ be the closure of ${T'}_{\alpha'}$ in the $K$-topology.
By Lemma~\ref{DefinabilityOfClosure}, the family $\{ V'_{\alpha'} \}_{\alpha' \in D'}$ is definable. By Lemma~\ref{APsimple2}, there are finitely many systems of polynomials $P'_1, \ldots, P'_k$ in $K[w', x]$ such that for all $\alpha' \in D'$, 
$$ V'_{\alpha'}\ =\ Z\Big(P_i'\big(\chi(\beta'),x\big)\Big)\ \text{ for some } \beta' \in F^{|w'|} \text{ and } i \in \{1, \ldots, k\}.$$ 
By shrinking $X'$ if needed, we can assume that $\md_F(\widetilde{D}') =1$. 
By shrinking $X'$ further, we can assume there is  a system $P'$ in $K[w', x]$ such that for any $\alpha' \in D'$, there is $\beta'$ in $F^{|w'|}$ with $ V'_{\alpha'} =Z\big(P'(\chi(\beta'),x)\big)$. 
Let $\{ {T''}_{\alpha''}\}_{ \alpha'' \in D''}$ be an essentially disjoint pc-presentation of $X \backslash X'$. Replacing this pc-presentation by its fiber-wise intersection with $\{ T_\alpha\}_{ \alpha \in D}$ if needed,  we arrange that  for all $\alpha'' \in D''$, there is $\alpha \in D$ such that ${T''}_{\alpha''} \subseteq T_\alpha$.

We continue the proof of $(2)$ implies $(1)$ with the notations as in the preceding paragraph. 
Set
$$\widetilde{D}_1\  =\ \{ \tilde{\alpha} \in \widetilde{D} : \text{ there is } \tilde{\alpha}' \in \widetilde{D}' \text{ such that } T'_{\alpha'}\subsim T_\alpha \}\ \text{ and }\ X_1\ =\ \bigcup_{\tilde{\alpha} \in \widetilde{D}_1} T_\alpha. $$
$$\widetilde{D}_2\  =\ \{ \tilde{\alpha} \in \widetilde{D} : \text{ there is } \tilde{\alpha}'' \in \widetilde{D}'' \text{ such that } T''_{\alpha''}\subsim T_\alpha \}\ \text{ and }\ X_2\ =\ \bigcup_{\tilde{\alpha} \in \widetilde{D}_2} T_\alpha. $$
By the arrangement in the preceding paragraph, for each $\tilde{\alpha}' \in \widetilde{D}'$, there is $\tilde{\alpha} \in \widetilde{D}$ such that ${T'}_{\alpha'} \subsim T_\alpha$, so $\gr( X' \backslash X_1) < \gr(X)$. 
 As $X_1 \subseteq X $, $\gr(X_1) \leq \gr(X)$. By assumption, $\gr(X) =\gr(X')$. By putting the last three statements together and using Corollary~\ref{grbehavior2}, we have 
$$\gr( X')\ =\ \gr(X_1)\ =\ \gr(X)\ \text{ and so }\ \mr_F(\widetilde{D}_1)\ =\ \mr_F(\widetilde{D}')\ =\ \mr_F(\widetilde{D}).$$
Similarly, we get $\mr_F(\widetilde{D}_2) =\mr_F(\widetilde{D}'')=\mr_F(\widetilde{D}) $.
As  $\md_F(\widetilde{D})=1$, $\mr_F(\widetilde{D}_1 \cap \widetilde{D}_2) = \mr_F(\widetilde{D}) $. 
We will now show that 
$$ P' \text{ divides }T_\alpha \text{ for all } \tilde{\alpha} \in \widetilde{D}_1 \cap \widetilde{D}_2. $$
This is the desired contradiction as $P'$ will then essentially divide $\{ T_\alpha\}_{ \alpha \in D}$. Suppose  $\tilde{\alpha} $ is in $\widetilde{D}_1 \cap \widetilde{D}_2$.  
Then there are $\tilde{\alpha}' \in \tilde{D}'$ and $\tilde{\alpha}'' \in \tilde{D}''$ such that ${T'}_{\alpha'} \subsim T_\alpha$ and ${T''}_{\alpha''} \subsim T_\alpha$.
 By preceding paragraph, there is $\beta' \in F^{|w'|}$, such that the closure $V'_{\alpha'}$  of $T'_{\alpha'}$ in $K$-topology is $Z\big(P'(\chi(\beta'),x)\big)$. 
Then $$\mr_K(T_\alpha \cap V'_{\alpha'})\ \geq\ \mr_K({T'}_{\alpha'})\ =\ \mr_K(T_{\alpha}),\ \text{ and so }\ \mr_K(T_\alpha \cap V'_{\alpha'})\ =\ \mr_K(T_{\alpha}).$$ 
On the other hand, by $(10)$ of Proposition~\ref{lies} and the fact that ${T'}_{\alpha'} \cap {T''}_{\alpha''}$ is empty, $\mr_K({T''}_{\alpha''} \cap V'_{\alpha'})< \mr_K(T_{\alpha})$. By $(7)$ of Proposition~\ref{lies}, $\mr_K({T''}_{\alpha''} \backslash V'_{\alpha'}) = \mr_K(T_{\alpha})$. 
Therefore, $$\mr_K(T_\alpha \backslash V'_{\alpha'})\ \geq\ \mr_K({T''}_{\alpha''} \backslash V'_{\alpha'})\ =\ \mr_K(T_{\alpha}),\ \text{ and so }\ \mr_K({T}_{\alpha} \backslash V'_{\alpha'})\ =\ \mr_K( T_\alpha).$$
Thus,  $\mr_K(T_\alpha \cap V'_{\alpha'}) = \mr_K({T}_{\alpha} \backslash V'_{\alpha'})$ and $P'$ divides $T_\alpha$ as desired.
\end{proof}

\noindent
If $X \subseteq K^n$ is definable and satisfies one of the two statements of Proposition~\ref{geoired}, we say $X$ is {\bf geometrically irreducible}. 

\begin{lem} \label{irredlemma}
Suppose $X, X', X'' \subseteq K^n$ are definable. Then
\begin{enumerate}
\item if $X$ is geometrically irreducible and $\gr(X')< \gr(X)$, then $X \cup X'$ is geometrically irreducible;
\item if $X$ is geometrically irreducible and $\gr(X')< \gr(X)$, then $X \backslash X'$ is geometrically irreducible;
\item if $X, X', X''$ are geometrically irreducible, $\gr(X \cap X')=\gr(X)=\gr(X')$ and $\gr(X'\cap X'') = \gr(X') = \gr(X'')$, then $\gr(X \cap X'') = \gr(X) =\gr(X'')$.
\end{enumerate}
\end{lem}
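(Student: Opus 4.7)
The plan is to use the characterization in Proposition~\ref{geoired}(1) throughout: a definable set $Z$ is geometrically irreducible if and only if, for every definable $Z'\subseteq Z$, either $\gr(Z')<\gr(Z)$ or $\gr(Z\setminus Z')<\gr(Z)$. Combined with the basic behaviour of $\gr$ from Corollary~\ref{grbehavior2} (monotonicity under $\subseteq$ and $\gr(A\cup B)=\max\{\gr(A),\gr(B)\}$), each of (1), (2), (3) reduces to an elementary bookkeeping computation. I expect no serious obstacle; the content is purely formal once Proposition~\ref{geoired} is in hand.

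For (1), set $r=\gr(X)$. Then $\gr(X\cup X')=r$ by Corollary~\ref{grbehavior2}(2). Take an arbitrary definable $Y\subseteq X\cup X'$ with $\gr(Y)=r$. Writing $Y=(Y\cap X)\cup(Y\cap X')$ and using $\gr(Y\cap X')\le\gr(X')<r$ forces $\gr(Y\cap X)=r$. By geometric irreducibility of $X$ applied to the subset $Y\cap X\subseteq X$, we have $\gr(X\setminus(Y\cap X))<r$. Since
\[
(X\cup X')\setminus Y\ \subseteq\ (X\setminus Y)\cup X'\ \subseteq\ (X\setminus(Y\cap X))\cup X',
\]
another application of Corollary~\ref{grbehavior2} gives $\gr((X\cup X')\setminus Y)<r$, as required.

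For (2), first note $X=(X\setminus X')\cup(X\cap X')$ with $\gr(X\cap X')\le\gr(X')<r$, hence $\gr(X\setminus X')=r$. Given any definable $Y\subseteq X\setminus X'$ with $\gr(Y)=r$, we have $(X\setminus X')\setminus Y\subseteq X\setminus Y$, so geometric irreducibility of $X$ yields $\gr((X\setminus X')\setminus Y)\le\gr(X\setminus Y)<r$.

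For (3), write $r=\gr(X)=\gr(X')=\gr(X'')$. Applying geometric irreducibility of $X'$ to its subset $X\cap X'$ (which has $\gr=r$) gives $\gr(X'\setminus(X\cap X'))<r$; similarly applying geometric irreducibility of $X''$ to $X'\cap X''\subseteq X''$ gives $\gr(X''\setminus(X'\cap X''))<r$. I now chase the triple intersection: decomposing
\[
X'\cap X''\ =\ (X\cap X'\cap X'')\ \cup\ \bigl((X'\setminus(X\cap X'))\cap X''\bigr),
\]
the second piece has geometric rank $<r$, so $\gr(X\cap X'\cap X'')=\gr(X'\cap X'')=r$. Since $X\cap X'\cap X''\subseteq X\cap X''\subseteq X$, monotonicity of $\gr$ forces $\gr(X\cap X'')=r=\gr(X)=\gr(X'')$, which is the desired conclusion.
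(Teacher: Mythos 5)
Your proof is correct, and for parts (2) and (3) it takes essentially the paper's approach, since the paper also derives these from condition (1) of Proposition~\ref{geoired} together with the rank calculus of Corollary~\ref{grbehavior2}. For part (1), however, you diverge: the paper proves it via condition (2) of Proposition~\ref{geoired}, by taking a geometric pc-presentation of $X$ with $\md_F(\widetilde{D})=1$, combining it with an arbitrary pc-presentation of $X'$, and observing that since $\gr(X')<\gr(X)$ the added pieces do not disturb the primary index quotient or the divisibility condition, so the combination is again a geometric pc-presentation of $X\cup X'$ with one-element-wide primary quotient. You instead stay entirely within the rank-theoretic characterization (1), decomposing an arbitrary $Y\subseteq X\cup X'$ of full rank and pushing the residual set into $(X\setminus(Y\cap X))\cup X'$ to bound its rank. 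Your route is more elementary in that it never touches pc-presentations, which is a modest gain in self-containedness; the paper's route makes (1) immediate from the structural description and treats all three items uniformly from the two equivalent characterizations. Two small observations: in part (3) the bound $\gr\bigl(X''\setminus(X'\cap X'')\bigr)<r$ that you derive is never used (nor are the geometric irreducibility of $X$ and $X''$), so the argument can be trimmed; this is harmless but worth noting.
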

\begin{proof}
We have that $(1)$ follows from condition $(2)$ of Proposition~\ref{geoired} and $(2)$, $(3)$ follow from condition $(1)$ of Proposition~\ref{geoired}.
\end{proof}

\begin{thm}
Every definable $X\subseteq K^n$ has a geometric pc-presentation. 
\end{thm}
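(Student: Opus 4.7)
The plan is to adapt the double-induction template of Proposition~\ref{essdispre}, adding a further layer that drives an essentially disjoint pc-presentation towards having no essentially dividing polynomial. First invoke Proposition~\ref{essdispre} to fix an essentially disjoint pc-presentation $\{T_\alpha\}_{\alpha\in D}$ of $X$ with primary index quotient $\widetilde{D}$. Decompose $\widetilde{D}$ into its $F$-irreducible components $\widetilde{D}_1,\dots,\widetilde{D}_k$ of top $F$-Morley rank, set $X_i=\bigcup_{\tilde{\alpha}\in\widetilde{D}_i}T_\alpha$ and $X'=X\setminus(X_1\cup\cdots\cup X_k)$; then $\gr(X')<\gr(X)$, and a disjoint combination of geometric pc-presentations for each $X_i$ together with one for $X'$ (which exists by outer induction on $\gr(X)$) yields a geometric pc-presentation for $X$. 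Hence we may assume $\md_F(\widetilde{D})=1$ and focus on producing a geometric pc-presentation under this assumption.

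So suppose the presentation is not geometric and fix $P\in K[w,x]$ with $\mr_F(\widetilde{D}_P)=\mr_F(\widetilde{D})$; because $\md_F(\widetilde{D})=1$, the complement $\widetilde{D}\setminus\widetilde{D}_P$ has strictly smaller $F$-Morley rank. For $\beta\in F^{|w|}$ set $W_\beta=Z\bigl(P(\chi(\beta),x)\bigr)$ and define
$$D^\sharp \ =\ \bigl\{(\alpha,\beta)\in\hat{D}\times F^{|w|} : \tilde{\alpha}\in\widetilde{D}_P,\ \mr_K(T_\alpha\cap W_\beta)=\mr_K(T_\alpha\setminus W_\beta)=\mr_K(T_\alpha)\bigr\},$$
which is definable via Corollary~\ref{Definability}, Lemma~\ref{DefinabilityOfClosure}, and Theorem~\ref{StablyEmbbed}. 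Refine $\{T_\alpha\}_{\alpha\in D}$ by replacing each primary $T_\alpha$ with $\tilde{\alpha}\in\widetilde{D}_P$ by the pair $T_\alpha\capdot W_\beta$, $T_\alpha\dotminus W_\beta$ parametrized by $D^\sharp$, then re-essentially-disjointify via Proposition~\ref{essdispre}. A componentwise analysis of closures gives $\md_K(T_\alpha\capdot W_\beta)+\md_K(T_\alpha\dotminus W_\beta)=\md_K(T_\alpha)$ with both summands strictly smaller than $\md_K(T_\alpha)$.

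The inner induction runs on the lexicographic pair $(M,r)$, where $M$ is the maximum of $\md_K(T_\alpha)$ over primary $\tilde{\alpha}$ and $r=\mr_F\bigl(\{\tilde{\alpha}\in\widetilde{D}:\md_K(T_\alpha)=M\}\bigr)$. The refined primary pieces over $\widetilde{D}_P$ have strictly smaller $\md_K$, while the unchanged primary pieces over $\widetilde{D}\setminus\widetilde{D}_P$ sit in a set of strictly smaller $F$-Morley rank; together with the fact that re-essentially-disjointification uses only $\capdot$ and $\dotminus$, which by (9) of Proposition~\ref{lies} can only further decrease $\md_K$, this forces $(M,r)$ to strictly drop. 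The base case $M=1$ means every primary piece has irreducible closure, under which no polynomial can divide any primary $T_\alpha$, so the presentation is automatically geometric.

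The main obstacle is confirming that the invariant $(M,r)$ genuinely decreases through the essentially-disjointification step and that all sets encountered remain definable. The former hinges on having first reduced to $\md_F(\widetilde{D})=1$, which ensures $\widetilde{D}\setminus\widetilde{D}_P$ has strictly smaller $\mr_F$ and hence any leftover $\md_K=M$ primary pieces cannot inflate $r$; the latter is the standard bookkeeping guaranteed by Corollary~\ref{Definability}, Lemma~\ref{DefinabilityOfClosure} and Theorem~\ref{StablyEmbbed}, together with the observation that $T_\alpha\capdot W_\beta = T_\alpha\cap W_\beta$ and $T_\alpha\dotminus W_\beta = T_\alpha\setminus W_\beta$ since each $W_\beta$ is already $K$-closed.
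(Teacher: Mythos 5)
Your approach genuinely differs from the paper's: the paper first proves an auxiliary counting lemma (a presentation with $\md_F(\widetilde{D})=1$ and all primary pieces of constant $\md_K=l$ can only be partitioned into at most $l$ sets of full geometric rank), uses it to decompose $X$ into geometrically irreducible pieces via Proposition~\ref{geoired}, and then assembles a geometric presentation by quotienting by the equivalence $X_i\approx X_j \iff \gr(X_i\cap X_j)=\gr(X)$. You instead attempt a direct double induction, splitting along the essentially dividing polynomial $P$.

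There is a genuine gap in your inner induction: the measure $(M,r)$ need not decrease. The essentially dividing polynomial $P$ only splits pieces $T_\alpha$ with $\tilde\alpha\in\widetilde{D}_P$, and you have no control relating $\widetilde{D}_P$ to the set $\{\tilde\alpha:\md_K(T_\alpha)=M\}$. Concretely: take $\mr_F(\widetilde{D})=1$, $\md_F(\widetilde{D})=1$, with the maximum $\md_K=M$ attained only at a single $\tilde\alpha_0$ (so $r=0$). Choose $P$ so that $\widetilde{D}_P$ has $\mr_F=1$ but avoids $\tilde\alpha_0$. Then the split leaves $T_{\alpha_0}$ untouched; the new maximum degree is still $M$, achieved on a set still of $\mr_F=0$, so $(M,r)$ is unchanged. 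The paper avoids this precisely with the reduction you omit: after passing to $\md_F(\widetilde{D})=1$, one further removes the pieces not realizing the $F$-generic pair $(\mr_K,\md_K)$, arranging $\md_K(T_\alpha)=l$ constant over all primary $\tilde\alpha$; only then does the dividing argument make uniform progress, because any set $\widetilde{D}_P$ of full $\mr_F$ necessarily meets the (now full-rank) maximum-degree locus. Without that reduction — or some replacement for the paper's auxiliary pigeonhole argument — your induction can stall. A secondary concern, which would also need attention even after this fix, is that the re-essentially-disjointification from Proposition~\ref{essdispre} can change $\md_F(\widetilde{D})$; the paper's route through geometric irreducibility sidesteps the bookkeeping entirely.
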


\begin{proof}
We first prove an auxiliary result. Suppose $X$ has an essentially disjoint pc-presentation $\{ T_\alpha\}_{ \alpha \in D}$  with a primary index  quotient $\widetilde{D}$ satisfying $\md_F( \widetilde{D}) =1$ and there is $l>0$ such that $$\mr_K(T_\alpha)\ =\ \gr_K(X)\ \text{ and }\ \md_K(T_\alpha)\ =\ l\ \text{ for all } \tilde{\alpha} \in \widetilde{D}. $$
We will show that $X$ can be written as a disjoint union of at most $l$  definable sets, all with geometrical rank $\gr(X)$. Suppose towards a contradiction that $X$ can be written as a disjoint union of $ X_1, \ldots, X_{l+1}$, all with of geometrical rank $\gr(X)$. For each $i \in \{1, \ldots, l+1\}$, let $\{ T_{i,\beta}\}_{ \beta \in E_i}$ be an essentially disjoint pc-presentation of $X_i$, set 
$$D_i\ =\ \big\{ \alpha \in D : \text{ there is } \beta \in E_i \text{ with  }\mr_K( T_\alpha \cap T_{i,\beta}) = \gr_K(X) \big\}$$  and let $\widetilde{D}_i$ be a primary index quotient of $\{ T_\alpha\}_{ \alpha \in D_i}$. As $\gr(X_i) = \gr(X)$, we have $\mr_F(\widetilde{D}_i)=\mr_F(\widetilde{D})$ for $i \in \{1, \ldots, l+1\}$. Hence, we can find $\alpha $ in $\bigcap_{i =1}^{l+1} D_i$. Then $\md_K(T_\alpha)> l$, a contradiction.

Suppose $X \subseteq K^n$ is definable. We will next show there is a finite collection of geometrically irreducible definable sets $\{X_i\}_{i \in I}$ each with $\gr(X_i)=\gr(X)$ such that $X = \bigcup_{i \in I} X_i$.
If $X = X' \cup X''$ such that $\gr(X')=\gr(X'') =\gr(X)$, and we have proven the statement for $X', X''$, the statement for $X$ also follows.
Therefore, we can arrange that $X$ has an essentially disjoint pc-presentation $\{ T_\alpha\}_{ \alpha \in D}$ where $\md_F( \widetilde{D}) =1$. 
By possibly removing a set of geometrical rank smaller than $\gr(X)$ and using $(1)$ of the preceding lemma, we can also arrange that there is $l>0$ such that for all $\alpha \in D$,  $\mr_K(T_\alpha) =\gr_K(X)$ and $\md_K(T_\alpha)=l$. By the preceding paragraph, $X$ can be written as a disjoint union of at most $l$  definable sets, all of geometrical rank $\gr(X)$. Let $X$ be written as a disjoint union of the most number of definable sets, all of geometrical rank $\gr(X)$. Then it can be easily seen that each of these must be geometrically irreducible.

We now prove the proposition. Suppose $X = \bigcup_{i\in I} X_i$ is as in the previous paragraph. For $i, j \in I$, we write $X_i \approx X_j$ if $\gr(X_i \cap X_j) =\gr(X)$. By $(2)$ of the preceding lemma, $\approx$ is an equivalence relation on $\{X_i\}_{i \in I}$. We can choose $J \subseteq I$ such that $\{X_j\}_{j \in J}$ are representatives of the equivalent classes. Then using Corollary~\ref{grbehavior2} and characterization $(1)$ of geometric irreducibility, it is easy to see that $X =X' \cup \left( \bigcup_{j \in J} X_j\right)$ such that $\gr(X')<\gr(X)$. Using $(1)$ of the preceding lemma, we can reduce to the case where $X =\bigcup_{j \in J} X_j$. By $(2)$ of the preceding lemma, we can further reduce to the case where $X_i, X_j$ are disjoint for distinct $i, j \in J$. For all $i \in J$, $X_i$ has a geometric pc-presentation.  Then $X$ has a geometric  pc-presentation which is a combination of the geometric pc-presentations of $X_i$.
\end{proof}

\noindent With the next proposition, we get the final geometric invariant of a definable set which is based on a choice of its geometric presentation but independent of such choice.

\begin{prop}
There is a unique $d$ such that $X$ is a disjoint union of $d$ geometrically irreducible sets of geometric rank $\gr(X)$. Moreover, if $\{ T_\alpha\}_{ \alpha \in D}$ is a geometric pc-presentations of $X \subseteq K^n$ then $\md_F(\widetilde{D})=d$.
\end{prop}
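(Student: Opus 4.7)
The plan is to prove uniqueness of $d$ and the identity $\md_F(\widetilde{D}) = d$ together. Existence of at least one decomposition into geometrically irreducible sets of rank $\gr(X)$ is already established by the preceding theorem. I would first settle uniqueness of the number of pieces directly, using the characterization of geometric irreducibility, and then show that any geometric pc-presentation yields a decomposition with $\md_F(\widetilde{D})$ pieces; combining the two forces $\md_F(\widetilde{D}) = d$.

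For uniqueness, suppose $X = Y_1 \sqcup \cdots \sqcup Y_d = Y_1' \sqcup \cdots \sqcup Y_{d'}'$ are two such decompositions. Fix $i$ and write $Y_i = \bigsqcup_j (Y_i \cap Y_j')$. By Corollary~\ref{grbehavior2}(2), at least one index $j$ satisfies $\gr(Y_i \cap Y_j') = \gr(Y_i) = \gr(X)$. If two distinct indices $j_1 \neq j_2$ both satisfied this, then applying characterization (1) of Proposition~\ref{geoired} to the geometrically irreducible $Y_i$ with the subset $Y_i \cap Y_{j_1}'$ would yield $\gr(Y_i \setminus (Y_i \cap Y_{j_1}')) < \gr(Y_i)$; but $Y_i \cap Y_{j_2}' \subseteq Y_i \setminus (Y_i \cap Y_{j_1}')$ since the $Y_j'$ are disjoint, contradicting $\gr(Y_i \cap Y_{j_2}') = \gr(X)$. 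So the assignment $i \mapsto j =: \phi(i)$ is well-defined, and by symmetry so is its inverse, forcing $d = d'$.

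For the moreover direction, fix a geometric pc-presentation $\{T_\alpha\}_{\alpha \in D}$ of $X$ with primary index quotient $\widetilde{D}$, and set $e = \md_F(\widetilde{D})$. Using Corollary~\ref{DOE2} together with elimination of imaginaries in $\ACF$ (possibly after adjoining finitely many algebraic parameters), partition $\widetilde{D} = \widetilde{D}_1 \sqcup \cdots \sqcup \widetilde{D}_e \sqcup R$, where each $\widetilde{D}_i$ is definable with $\mr_F(\widetilde{D}_i) = \mr_F(\widetilde{D})$ and $\md_F(\widetilde{D}_i) = 1$, and $\mr_F(R) < \mr_F(\widetilde{D})$. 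Put $X_i = \bigcup_{\tilde{\alpha} \in \widetilde{D}_i} T_\alpha$. The restricted family $\{T_\alpha\}_{\tilde{\alpha} \in \widetilde{D}_i}$ is essentially disjoint with primary index quotient $\widetilde{D}_i$, and it remains geometric: any polynomial $P$ essentially dividing it would satisfy $\widetilde{D}_{i,P} \subseteq \widetilde{D}_P$ with $\mr_F(\widetilde{D}_{i,P}) = \mr_F(\widetilde{D}_i) = \mr_F(\widetilde{D})$, contradicting that $\{T_\alpha\}_{\alpha \in D}$ is itself geometric. Hence by characterization (2) of Proposition~\ref{geoired}, each $X_i$ is geometrically irreducible of rank $\gr(X)$. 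Essential disjointness of the original pc-presentation gives $T_\alpha \simperp T_\beta$ for $\tilde{\alpha} \neq \tilde{\beta}$ in $\widetilde{D}$, so $\gr(X_i \cap X_j) < \gr(X)$ for $i \neq j$, and the remainder $X \setminus \bigcup_i X_i$ also has geometric rank less than $\gr(X)$ (contributed by $R$ together with the non-primary indices of $D$). By iterated application of Lemma~\ref{irredlemma}(2), I replace each $X_i$ by $X_i \setminus \bigcup_{j \neq i} X_j$, preserving geometric irreducibility of rank $\gr(X)$, and then Lemma~\ref{irredlemma}(1) lets me absorb the final lower-rank residue into $X_1$. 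This produces a disjoint union $X = X_1' \sqcup \cdots \sqcup X_e'$ into $e$ geometrically irreducible pieces of rank $\gr(X)$, so by uniqueness $e = d$.

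The main obstacle is the definable partitioning of $\widetilde{D}$ into degree-one pieces: strictly speaking the irreducible-component pieces picked out by ACF elimination of imaginaries may only become definable after adjoining finitely many algebraic parameters, but this is harmless since the pieces $Y_i$ in the statement are only required to be definable with parameters. A secondary point of care is the bookkeeping in the disjointification step, where one must verify that every modification only touches sets of geometric rank strictly less than $\gr(X)$, so that Lemma~\ref{irredlemma} indeed applies at each stage.
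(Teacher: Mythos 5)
Your proof is correct and follows essentially the same strategy as the paper's: uniqueness of $d$ via the bijection between pieces of two decompositions (each $Y_i$ meets exactly one $Y'_j$ in full geometric rank, using characterization (1) of Proposition~\ref{geoired}), and construction of a decomposition with $\md_F(\widetilde{D})$ pieces from a geometric pc-presentation by splitting $\widetilde{D}$ into degree-one parts and applying characterization (2). You are slightly more careful than the paper in explicitly disjointifying the $X_i$ via Lemma~\ref{irredlemma}, a step the paper glosses over even though the $T_\alpha$ are only \emph{essentially} disjoint, so the $X_i$ as defined there need not be literally disjoint.
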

\begin{proof}
Suppose $\{ T_\alpha\}_{ \alpha \in D}$ is a geometric pc-presentation of $X$ with $\hat{D}$ its primary index set, $\widetilde{D}$ its primary index quotient and $\md_F(\widetilde{D})=d$. We will show that $X$  is a disjoint union of $d$ geometrically irreducible sets of geometric rank $\gr(X)$.
Let $\widetilde{D}_1, \ldots, \widetilde{D}_d$ be the disjoint subsets of $\widetilde{D}$ such that $\widetilde{D} = \bigcup_{i=1}^d \widetilde{D}_i$ and  $\mr_F(\widetilde{D}_i)=1$  for each $i \in \{1, \ldots,d\}$. Let $\hat{D}_1, \ldots, \hat{D}_d \subseteq \hat{D}$ be the corresponding inverse images under the canonical map $\hat{D} \to \widetilde{D}$.
Set $$X_1\ =\ \bigcup_{\alpha \in \hat{D}_1 \cup D\backslash \hat{D}} T_\alpha\ \text{ and }\ X_i\ =\ \bigcup_{\alpha \in D_i} T_\alpha \  \text{ for }\ i \in \{2, \ldots, d \}.$$ Since $\{ T_\alpha\}_{ \alpha \in D}$ is geometric, each $X_i$ is geometrically irreducible of geometrical rank $\gr(X)$.

It remains to show the uniqueness part of the first statement of the proposition. Suppose the above $X$ is the disjoint union of geometrically irreducible sets $X'_1, \ldots, X'_{d'}$, all of geometrical rank $\gr(X)$. Then for every $i \in \{1, \ldots, d\}$, there is a unique $j \in \{1, \ldots, d'\}$ such that $\gr(X_i \cap X'_{j}) = \gr(X_i)$ . Conversely, for each $j \in \{1, \ldots, d'\} $, there is a unique $i \in \{1, \ldots, d\}$ such that $\gr(X_i \cap X'_{j}) = \gr(X_i) $. The conclusion follows.
\end{proof}

\noindent
For each $X \subseteq K^n$, we call the number $d$ as in the above proposition the {\bf geometric degree} of $X$ and denote this by $\gd(X)$. The following properties of this notion are immediate from the preceding proposition.

\begin{cor} \label{gdbehavior}
Suppose $X, X'$ are definable and $\gr(X)=\gr(X')$. We have the following:
\begin{enumerate}
\item if $X \subseteq X'$, then $\gd(X) \leq \gd(X')$;
\item if $\gr(X \cap X')< \gr(X)$, then $\gd(X \cup X') =\gd(X) +\gd(X')$;
\item if $\gr(X \cap X')= \gr(X)$, then $ \gd(X \cup X') =\gd(X) +\gd(X') - \gd(X \cap X')$.
\end{enumerate}
\end{cor}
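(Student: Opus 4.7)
The plan is to exploit the decomposition result from the preceding proposition: every definable $X$ can be written as a disjoint union $X = X_1 \sqcup \cdots \sqcup X_d$ of geometrically irreducible sets of rank $\gr(X)$ with $d = \gd(X)$, and this $d$ is unique. Combined with Lemma~\ref{irredlemma} and Corollary~\ref{grbehavior2}, this reduces each assertion to bookkeeping of such components.

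For (1), fix decompositions $X = X_1 \sqcup \cdots \sqcup X_d \subseteq X' = X'_1 \sqcup \cdots \sqcup X'_{d'}$. Since $X_i = \bigsqcup_j (X_i \cap X'_j)$, additivity of $\gr$ on finite unions gives some $j$ with $\gr(X_i \cap X'_j) = \gr(X_i)$; moreover only \emph{one} such $j$ exists, for otherwise geometric irreducibility of $X_i$ (condition (1) of Proposition~\ref{geoired}) would be violated. This defines a map $j \colon \{1,\dots,d\} \to \{1,\dots,d'\}$. I would then show $j$ is injective: if $j(i_1) = j(i_2) = j_0$, then $X_{i_1} \cap X'_{j_0}$ is a subset of $X'_{j_0}$ of full rank, so geometric irreducibility of $X'_{j_0}$ forces $\gr\!\bigl(X'_{j_0} \setminus (X_{i_1} \cap X'_{j_0})\bigr) < \gr(X'_{j_0})$, contradicting $\gr(X_{i_2} \cap X'_{j_0}) = \gr(X')$ because $X_{i_1}$ and $X_{i_2}$ are disjoint. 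Hence $d \le d'$.

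For (2), use $X \cup X' = X \sqcup (X' \setminus X)$, noting $\gr(X' \setminus X) = \gr(X')$ by Corollary~\ref{grbehavior2} since $\gr(X \cap X') < \gr(X')$. For each component $X'_j$ of $X'$, write $X'_j \setminus X = X'_j \setminus (X'_j \cap X)$; by $\gr(X'_j \cap X) \le \gr(X \cap X') < \gr(X'_j)$ and Lemma~\ref{irredlemma}(2), each $X'_j \setminus X$ is geometrically irreducible of rank $\gr(X)$. Concatenating the decompositions produces $X \cup X'$ as a disjoint union of $d + d'$ geometrically irreducible pieces of rank $\gr(X)$, so uniqueness of the count in the preceding proposition yields $\gd(X \cup X') = d + d'$.

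For (3), declare the pair $(i,j)$ \emph{matched} if $\gr(X_i \cap X'_j) = \gr(X)$. The argument of (1), applied on both sides via Lemma~\ref{irredlemma}(3), shows matching is a partial bijection between the $X_i$'s and the $X'_j$'s; let $m$ be its size. By Lemma~\ref{irredlemma}(2), each matched $X_i \cap X'_j$ is geometrically irreducible of rank $\gr(X)$, and all other intersections $X_i \cap X'_j$ have strictly smaller rank, so $\gd(X \cap X') = m$. For $X \cup X' = X \sqcup (X' \setminus X)$, matched $X'_j$ satisfy $\gr(X'_j \cap X) = \gr(X'_j)$ and thus $\gr(X'_j \setminus X) < \gr(X'_j)$ by geometric irreducibility, while unmatched $X'_j \setminus X$ remain geometrically irreducible of full rank by Lemma~\ref{irredlemma}(2). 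Using Lemma~\ref{irredlemma}(1) to absorb the lower-rank leftovers into any one of the full-rank pieces, $X \cup X'$ decomposes as a disjoint union of $d + (d'-m)$ geometrically irreducible sets of rank $\gr(X)$, giving $\gd(X \cup X') = d + d' - m = \gd(X) + \gd(X') - \gd(X \cap X')$. The main delicate point throughout is verifying that a unique matched partner exists in each direction, which is exactly where geometric irreducibility in both the single-component and ambient senses is used, together with disjointness within a decomposition.
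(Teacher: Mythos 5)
Your proof is correct and fills in exactly the bookkeeping the paper leaves implicit (the paper dismisses all three assertions as ``immediate from the preceding proposition''). One small misattribution: in part (3) the claim that matching defines a partial bijection already follows from the unique-partner argument of part (1) applied in both directions (at most one $j$ for each $i$ and at most one $i$ for each $j$, each by irreducibility and disjointness); Lemma~\ref{irredlemma}(3) is not actually needed there.
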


\section{Rank, degree and their behaviors}

\noindent
We continue working in a fixed model \( (F, K; \chi) \) of \( \TN_p\) and keeping the notations and conventions of the preceding section. We show in this section that the geometric rank and geometric degree defined in the preceding section agrees with Morley rank and Morley degree.

It is also convenient to define the so-called Cantor rank and Cantor  degree for a Boolean algebra. Suppose $B$ is a Boolean algebra and $b \in B$, we set $\crk(b) = -\infty$ for $b = 0$ and $\crk(b) \geq 0$ if $b \neq 0$. For an ordinal $\alpha$, we set $\crk(b) \geq \alpha $ if for all $\beta< \alpha$ there is an infinite disjoint family $\{ b_n \}_{n \in \nn}$ with $\crk(b_i) \geq \beta$ such that $b\wedge b_i = b_i$. Set 
$$\crk(b)\ =\ \max \big\{ \alpha : \crk(b) \geq \alpha \big\}\ \text{ if  this exists and }\ \crk(b)\ =\ \infty\ \text{ otherwise}.$$
If $b = b_1 \vee \cdots \vee b_m$ with $ b_1, \ldots, b_m$ disjoint and $ \crk(b_i) = \crk(b) < \infty $ for $i \in \{ 1, \ldots, m \}$, we say $\cdg(b) \geq m$. Set 
$$\cdg(b)\ =\ \max \big\{ m : \cdg(b) \geq m \big\}.$$
We apply the above definition in the case where $B$ is the Boolean algebra of definable subsets of $F^m$ or of $K^n$. Note that if $(F, K; \chi) $ is $\aleph_0$-saturated, then $\crk(X)= \mmr(X)$ and $\cdg(X)= \mmd(X)$ for all definable $X \subseteq K^n$.

\begin{prop}
If $D \subseteq F^m$ is definable, then $\mmr(D)= \crk(D)=\mr_F(D)$ and $\mmd(D)=\cdg(D)=\md_F(D)$.
\end{prop}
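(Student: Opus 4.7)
The plan is to show that all three ranks (and similarly the three degrees) collapse to the Cantor rank and Cantor degree computed on the Boolean algebra of field-definable subsets of $F^m$, by exploiting stable embeddedness.

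First, I would pass to a sufficiently saturated elementary extension $(F',K';\chi')$ of $(F,K;\chi)$; such an extension exists by Lemma~\ref{KappaExt}. This step changes none of the six quantities in the proposition, since Morley rank and Morley degree (in either the $L$-theory $\TN_p$ or the pure-field theory $\ACF$) are invariants of the defining formula under elementary extension, and the quantities $\mr_F(D), \md_F(D)$ are computed from $D$ viewed as a subset of $(F')^m$. In such a saturated model, a standard stability fact yields $\mmr(D)=\crk(D)$ and $\mmd(D)=\cdg(D)$, where Cantor rank and degree are taken in the Boolean algebra of $L$-definable subsets of $(F')^m$. The same reasoning applied internally to the $\ACF$-model $F'$ gives $\mr_F(D)=\crk_F(D)$ and $\md_F(D)=\cdg_F(D)$, where the subscript indicates that the Cantor rank and degree are computed in the Boolean algebra of $F'$-field-definable subsets of $(F')^m$.

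The key step is to invoke Theorem~\ref{StablyEmbbed}, which tells us that every $L$-definable subset of $(F')^m$ is already definable in the pure field $F'$ (with parameters from $F'$). Consequently the Boolean algebra of $L$-definable subsets of $(F')^m$ coincides, as a collection of sets, with the Boolean algebra of $F'$-field-definable subsets of $(F')^m$. Since Cantor rank and Cantor degree are intrinsic invariants of a Boolean algebra, defined purely in terms of its inclusion and disjointness structure, their values on $D$ are the same whether we compute in the $L$-structure or the pure field $F'$. Hence $\crk(D)=\crk_F(D)$ and $\cdg(D)=\cdg_F(D)$.

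Chaining these identifications gives $\mmr(D)=\crk(D)=\mr_F(D)$ and $\mmd(D)=\cdg(D)=\md_F(D)$, as required. There is no real obstacle: the stable embeddedness result (Theorem~\ref{StablyEmbbed}) does the geometric work, and the rest is packaged in the standard equivalence of Morley and Cantor rank in saturated models. The one point requiring mild care is to pick a single elementary extension that is sufficiently saturated for the $L$-side and the $\ACF$-side identifications of Morley rank with Cantor rank to hold simultaneously; Lemma~\ref{KappaExt} supplies this without effort.
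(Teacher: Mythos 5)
Your proof is correct and follows essentially the same route as the paper: the paper's one-line argument invokes Theorem~\ref{StablyEmbbed} to reduce to the field $F$ and then says ``the conclusion follows,'' which, unpacked, is precisely your chain through the Cantor-rank identification in a saturated extension. Your version simply makes explicit the saturation step and the coincidence of the two Boolean algebras.
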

\begin{proof}
By Theorem~\ref{StablyEmbbed} if $D \subseteq F^m$ is definable, then $D$ is already definable in $F$ as a field. The conclusion follows.
\end{proof}

\noindent Let $P$ be an $m$-ary second-order property about definable sets in models of $\TN_p$ and $X_1, \ldots, X_m$ be definable in $(F, K;\chi)$.  We say $P(X_1, \ldots, X_m)$ is {\bf preserved under elementary extensions} if for every elementary extension $(F', K';\chi')$ of $(F, K;\chi)$, we have that $P(X_1, \ldots, X_m)$ is equivalent to $P(X_1', \ldots, X'_m)$ where $X'_i$ is defined by the $L$-formula with parameters in $K$ defining $X_i$ for each $i \in \{1, \ldots, m\}$. We define being {\bf preserved under elementary extensions} likewise for $f(X_1, \ldots, X_m)$ where $f$ is an $m$-ary function on definable sets in models of $\TN_p$. 

\begin{lem} \label{Preservative}
If $X\subseteq K^n$ is definable, then $ \gr(X)$ and $\gd(X)$ are preserved under elementary extensions.
\end{lem}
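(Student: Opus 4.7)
The plan is to fix a geometric pc-presentation $\{T_\alpha\}_{\alpha \in D}$ of $X$ in $(F,K;\chi)$ with a primary index quotient $\widetilde{D}$, and argue that the \emph{same} $L$-formulas continue to witness a geometric pc-presentation of the correspondingly defined set $X' \subseteq K'{}^n$ in a given elementary extension $(F',K';\chi')$, with all the relevant numerical invariants unchanged. Once this is in place, we read off $\gr_K(X') = \gr_K(X)$, $\gr_F(X') = \gr_F(X)$, and $\gd(X') = \gd(X)$, hence $\gr(X') = \gr(X)$ and $\gd(X') = \gd(X)$.

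First I would check that the defining formulas transfer correctly. Each $T_\alpha$ has the form $V_\alpha \setminus S_\alpha$ with $V_\alpha$ its $K$-closure; by Corollary~\ref{ClosurePreserve} this closure is preserved under elementary extensions, so in $(F',K';\chi')$ the same formulas still define pc-sets whose $K'$-closures are given by the same field formulas. Since $\mr_K$ and $\md_K$ of a pc-set are computed on its closure (definable in the field $K$) and Morley rank and degree in $\mathrm{ACF}$ are preserved under elementary extensions, the values $\mr_K(T'_\alpha)$ and $\md_K(T'_\alpha)$ coincide with $\mr_K(T_\alpha)$ and $\md_K(T_\alpha)$ uniformly in $\alpha$. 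Essential disjointness is a first-order condition on parameters by Corollary~\ref{Definability}, so the extended family $\{T'_\alpha\}_{\alpha \in D'}$ remains essentially disjoint, and a primary index quotient $\widetilde{D}'$ is given by the same defining formulas as $\widetilde{D}$. By Theorem~\ref{StablyEmbbed} the quotient $\widetilde{D}$ is definable in the field $F$, so $\mr_F(\widetilde{D}') = \mr_F(\widetilde{D})$ and $\md_F(\widetilde{D}') = \md_F(\widetilde{D})$ by standard preservation of rank and degree in $\mathrm{ACF}$.

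The main obstacle, and the step I would handle last, is preservation of the \emph{geometric} property. A priori, a polynomial $P' \in K'[w,x]$ with coefficients outside $K$ might essentially divide $\{T'_\alpha\}_{\alpha \in D'}$ even though no $P \in K[w,x]$ essentially divides $\{T_\alpha\}_{\alpha \in D}$. To rule this out, I would use that for each fixed multidegree, a polynomial is encoded by a finite tuple of coefficients, so that the statement
\[
\exists c \in K^{|c|}\ \text{such that } P(c,w,x) \text{ essentially divides } \{T_\alpha\}_{\alpha \in D}
\]
is, by Corollary~\ref{Definability} and Theorem~\ref{StablyEmbbed}, a single $L$-sentence with parameters naming $\{T_\alpha\}_{\alpha \in D}$. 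Since $\{T_\alpha\}_{\alpha \in D}$ is geometric, this sentence fails in $(F,K;\chi)$, and so by elementary equivalence it also fails in $(F',K';\chi')$ for every fixed multidegree. Ranging over all multidegrees covers every $P' \in K'[w,x]$, so the extended presentation is geometric as well.

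Combining these observations: $\gr_K(X')$ equals the maximum value of $\mr_K(T'_\alpha) = \mr_K(T_\alpha)$, which is $\gr_K(X)$; the primary index quotient in the extension has the same $\mr_F$ and $\md_F$ as in the base, so $\gr_F(X') = \gr_F(X)$ and $\gd(X') = \gd(X)$. This gives the desired preservation.
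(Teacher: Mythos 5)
Your proof is correct and follows the same strategy as the paper: transfer a fixed geometric pc-presentation to the elementary extension and observe that all the defining data ($K$-closures, $K$-Morley rank/degree, essential disjointness, the primary index quotient and its $F$-Morley rank/degree) are preserved because they are first-order definable via Corollary~\ref{Definability}, Theorem~\ref{StablyEmbbed}, and Corollary~\ref{ClosurePreserve}. The paper's own proof is quite terse and glosses over the point you correctly flag as the main obstacle, namely that the geometric property quantifies over all polynomials in $K[w,x]$ (a countable conjunction, not a single sentence); your device of fixing a multidegree so that ``some polynomial of this shape essentially divides the family'' becomes a single $L$-sentence with parameters, and then letting the multidegree range, is exactly the right way to make that step rigorous.
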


\begin{proof}
We first note that if $T, V \subseteq K^n$, then the property that $V$ is the closure of $T$ in the $K$-topology is preserved under elementary extensions. Suppose $X \subseteq K^n$ has a geometric pc-presentation $\{T_\alpha \}_{\alpha \in D}$ with primary index quotient $\widetilde{D}$. This fact is preserved under elementary extensions as the notion of geometric pc-presentation is defined using notions of closure in $K$-topology, $K$-Morley rank and degree, $F$-Morley rank and degree. Finally, $\gr(X)$ and $\gd(X)$ are calculated using $\{T_\alpha \}_{\alpha \in D}$, $\widetilde{D}$, $K$-Morley rank, $F$-Morley rank and $F$-Morley degree which are invariant under elementary extensions. Hence, the conclusion follows.
\end{proof}

\begin{lem}
If $X \subseteq K^n$ is definable,
then $\gr(X) = \crk(X)$ and $\gd(X)=\cdg(X)$. 
\end{lem}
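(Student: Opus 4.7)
The plan is to prove both equalities simultaneously via a reduction to a sufficiently saturated model, followed by a transfinite induction on $\gr(X)$. Using Lemma~\ref{Preservative} together with the standard fact that Morley rank and degree are preserved under elementary extensions, I pass to a $\kappa$-saturated elementary extension of $(F,K;\chi)$ for $\kappa$ larger than all parameters involved. In such an extension $\crk$ and $\cdg$ coincide with $\mmr$ and $\mmd$, and by preservation all four quantities $\gr(X), \gd(X), \crk(X), \cdg(X)$ are unchanged; so it suffices to prove the equalities there.

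For the upper bound $\crk(X) \leq \gr(X)$, I would argue by induction on $\gr(X)$. Suppose $\{X_i\}_{i\in\nn}$ is an infinite pairwise disjoint family of definable subsets of $X$ with $\crk(X_i) \geq \beta$. By the uniqueness part of the decomposition into geometrically irreducible sets (Proposition following Corollary~\ref{gdbehavior}), at most $\gd(X)$ of the $X_i$ can have $\gr(X_i) = \gr(X)$, since any two disjoint subsets of a single geometrically irreducible piece cannot both attain its full geometric rank (by characterization (1) of Proposition~\ref{geoired}). Hence for all but finitely many $i$ we have $\gr(X_i) < \gr(X)$, and the inductive hypothesis gives $\beta \leq \crk(X_i) \leq \gr(X_i) < \gr(X)$. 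Taking the supremum over $\beta < \crk(X)$ yields $\crk(X) \leq \gr(X)$.

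For the lower bound $\crk(X) \geq \gr(X)$, write $\gr(X) = \omega\cdot r_K + r_F$ and fix a geometric pc-presentation $\{T_\alpha\}_{\alpha \in D}$ with primary index quotient $\widetilde D$, so $\mr_F(\widetilde D) = r_F$ and $\mr_K(T_\alpha) = r_K$ for all $\tilde\alpha\in\widetilde D$. I proceed by induction, handling the two components in turn. For the $r_F$-component, partition $\widetilde D$ into definable families of lower $F$-Morley rank and pull back through the canonical map to obtain disjoint subfamilies of $\{T_\alpha\}$, whose unions are disjoint subsets of $X$ with geometric rank $\omega\cdot r_K + (r_F-1)$; by induction these have $\crk$ at least $\omega\cdot r_K + (r_F - 1)$, which forces $\crk(X) \geq \omega\cdot r_K + r_F$. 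For the $r_K$-component (the case $r_F = 0$), each $T_\alpha$ has $K$-Morley rank exactly $r_K$ and so inside $K^n$ carries, for every $n < \omega$, infinitely many pairwise disjoint $K$-definable subsets of $K$-Morley rank $r_K - 1$ together with uniformly chosen $F$-indexed families of $F$-Morley rank $n$; combining gives infinite disjoint families inside $X$ with geometric rank $\omega\cdot(r_K-1) + n$, and by induction with Cantor rank at least this value, witnessing $\crk(X) \geq \omega\cdot r_K$.

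Once the rank equality is in place, the degree equality follows by applying both sides to the canonical decomposition of $X$ into $\gd(X)$ geometrically irreducible pieces of full geometric rank (provided by the proposition preceding Corollary~\ref{gdbehavior}): each piece is geometrically irreducible of geometric rank $\gr(X)$, and by the equivalent condition (1) of Proposition~\ref{geoired} together with the first part of the proof, each has Cantor degree one at Cantor rank $\gr(X)$; disjointness of the pieces and additivity of $\cdg$ (from the definition) then yield $\cdg(X) = \gd(X)$. I expect the main obstacle to lie in the lower bound, specifically in producing the transversal families that realize the $\omega\cdot r_K$ contribution uniformly in the $F$-parameter; this is where saturation of the extension together with definability of $K$-Morley rank in families (Corollary~\ref{DOE2}) and the infinite transcendence degree of $K$ over $\qq(\chi(F))$ (Lemma before~\ref{KappaExt}) must be carefully combined.
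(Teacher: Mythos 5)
The overall skeleton of your argument matches the paper's: induction on $\gr(X)$, the inequality $\crk(X)\leq\gr(X)$ via the geometric-degree decomposition, and the degree equality deduced afterwards. Your upper-bound argument (bounding the number of disjoint pieces of full rank by $\gd(X)$ via condition (1) of Proposition~\ref{geoired}) and your treatment of $\cdg(X)=\gd(X)$ are valid variants of what the paper does, and the $r_F$-component of the lower bound (splitting $\widetilde D$ into infinitely many definable subsets of $F$-Morley rank one less) is essentially verbatim the paper's move. Two issues, though.

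First, the opening reduction is unjustified. You claim that by passing to a $\kappa$-saturated elementary extension ``all four quantities $\gr(X),\gd(X),\crk(X),\cdg(X)$ are unchanged.'' Lemma~\ref{Preservative} gives preservation only for $\gr$ and $\gd$; Cantor rank and Cantor degree are computed in the Boolean algebra of sets \emph{definable in the given model}, and in general Cantor rank can only go up when you pass to a richer elementary extension (this is precisely why $\crk$ and $\mmr$ need not coincide off saturated models). The paper does not perform this reduction: it argues directly about $\crk$ in the fixed model, exhibiting actual definable disjoint families to get the lower bound and using the $\gd$-decomposition (also a statement about the given model) for the upper bound. If you pass to a saturated model you prove $\gr=\mmr$ there, which transfers, but you do not thereby recover $\gr=\crk$ in the original model, which is what the lemma asserts.

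Second, the $r_K$-component of the lower bound (the case $\gr_F(X)=0$, so $\gr(X)=\omega\cdot r_K$ is a limit) is the genuine content of the lemma, and it is exactly the piece you leave as a gap. Your sketch says each $T_\alpha$ ``carries, for every $n<\omega$, infinitely many pairwise disjoint $K$-definable subsets of $K$-Morley rank $r_K-1$ together with uniformly chosen $F$-indexed families of $F$-Morley rank $n$,'' but you never say what these $F$-indexed families are, and the obvious candidates ($F$-definable subfamilies of $K$-algebraic sets) do not by themselves yield geometric $F$-rank. The paper's actual construction is concrete and crucial: reduce via condition (1) of Proposition~\ref{geoired} and Noether normalization to $X=K^m$ with $m=\gr_K(X)$; then for each $k$, choose $a_1,\ldots,a_k\in K$ algebraically independent over $\qq\big(\chi(F)\big)$ and set $Z=(a_1\chi(F)+\cdots+a_k\chi(F))\times K^{m-1}$. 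This $Z$ is in definable bijection with $\chi(F)^k\times K^{m-1}$, which has a geometric pc-presentation $\{T'_\beta\}_{\beta\in F^k}$ with $T'_\beta=\{\chi(\beta)\}\times K^{m-1}$, hence $\gr(Z)=\omega\cdot(m-1)+k<\gr(X)$; the induction hypothesis gives $\crk(Z)=\omega\cdot(m-1)+k$, and letting $k$ range over $\nn$ shows $\crk(X)\geq\omega\cdot m$. This $\chi(F)$-linear-combination trick is what cashes out the ``transversal families'' you allude to, and without it the limit-ordinal step is not established.
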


\begin{proof}
Suppose $X \subseteq K^n$ is definable. Let $\{ T_\alpha \}_{\alpha \in D }$ be a geometric pc-presentation of $X$ with primary index quotient $\widetilde{D}$. 

First, consider the case when $\gr(X)=0$. Using Corollary~\ref{grbehavior2}, we can reduce to the case when $\gd(X)=1$.  
Then  $|\widetilde{D}| = 1$ and for each $\tilde{\alpha} \in \widetilde{D}$, $T_\alpha$ is finite. 
It follows from the latter that  if $T_\alpha \sim T_\beta$ then $T_\alpha =T_\beta$ for $\tilde{\alpha}, \tilde{\beta} \in \widetilde{D}$. 
Therefore, for each $\tilde{\alpha} \in \widetilde{D}$, no $P \in K[w, x]$ divides $T_\alpha$. Hence, for each $\tilde{\alpha} \in \widetilde{D}$, $T_\alpha$ has only one element. Thus $|X| = 1$, $\crk(X)= \gr(X) =0$ and $\cdg(X)=\gd(X)=1$.

Towards a proof by induction, suppose we have shown the statement for all $Y$ with $\gr(Y)< \gr(X)$. We will next show that $\crk(X)\geq \gr(X)$.
Again by using Corollary~\ref{grbehavior2}, we can reduce to the case when $\gd(X)=1$.
First, consider the case when $\mr_F(\widetilde{D})=\gr_F(X)>0$.
We can choose disjoint family $\{\widetilde{D}_i \}_{i \in \nn}$ of $F$-definable subsets of $\widetilde{D}$ such that 
$$\mr_F(\widetilde{D}_i)\ =\ \mr_F(\widetilde{D}) - 1\ \text{ for }\ i\in \nn.$$
Let $X_i = \bigcup_{\alpha \in D_i} T_\alpha$ for $i\in \nn$.
Then $\gr(X_i)= \gr(X)-1 \text{ for all } i\in \nn$. Moreover, $$\gr_K(X_i \cap X_j)\ <\ \gr_K(X)\ \text{ and so }\ \gr(X_i \cap X_j)\ <\ \gr(X)-1\ \text{ for distinct }\ i,j \in \nn .$$
By induction hypothesis, $\crk(X_i) = \gr(X)-1 $ for all $i \in \nn$ and $\crk(X_i \cap X_j)<\gr(X)-1$ for distinct $i, j \in \nn$.
Thus, $\crk(X) \geq \gr(X)$.

We continue showing that $\crk(X)\geq \gr(X)$ for the remaining case when $\gd(X)=1$, $\mr_F(\widetilde{D})=\gr_F(X)=0$. 
Then $\widetilde{D} = \{ \tilde{\alpha} \}$ and no $P \in K[w, x]$ divides $T_\alpha$. Using the induction hypothesis, we can reduce to the case when $X = T_\alpha$ is an irreducible variety of dimension $\gr_K(X)$.
By Noether normalization lemma, we can further reduce to the case when $$X\ =\ K^m\ \text{ with }\ m=\gr_K(X).$$
We have previously covered the case when $m = 0$. If $m>0$, let $a_1, \ldots, a_k \in K$ be algebraically independent elements over $\qq\big(\chi(F)\big)$. Set $$Y\ =\ a_{1}\chi(F) + \cdots  + a_{k} \chi(F)\ \text{ and }\ Z\ =\ Y \times K^{m-1}.$$
Then $Z\subseteq X$ is in a one-to-one correspondence with $\chi(F)^k \times K^{m-1}$.
Note that $\chi(F)^k \times K^{m-1}$ has a geometric pc-presentation $\{ T'_{\beta} \}_{ \beta \in F^k}$ where for each $\beta \in F^k$, $T'_\beta = \big\{ \chi(\beta) \big\} \times K^{m-1}$. Hence, by induction hypothesis, $\crk(Z)= \omega \cdot (m-1) +k$. Thus, $\crk(X) \geq \omega\cdot m = \gr(X)$.

Now we will prove that  $\crk(X) \leq \gr(X)$. By Corollary~\ref{grbehavior2}, we can reduce to the case when $\gd(X)=1$. Using the induction hypothesis, it suffices to prove that for any partition of $X$ into a union of two disjoint definable sets, one of them has Cantor rank less than $\gr(X)$. This follows from Corollary~\ref{gdbehavior}. 

Finally, we will verify that $\cdg(X)=\gd(X)$. Using Corollary~\ref{gdbehavior}, we can reduce to the case when $\gd(X)=1$. Suppose $\cdg(X)>1$, then $X$ is the disjoint union of $X_1, X_2$, where $\gr(X_1) =\crk(X_1)=\crk(X_2) =\gr(X_2)$. But by Corollary~\ref{gdbehavior} again, we get $\gd(X)>1$, a contradiction.
\end{proof}

\begin{thm} \label{rankAgreement}
Suppose $X \subseteq K^n$ is definable. Then $\gr(X) = \crk(X) =\mmr(X)$ and $\gd(X)=\cdg(X)=\mmd(X)$. 
\end{thm}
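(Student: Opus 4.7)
The plan is to reduce everything to the preceding lemma by passing to a saturated elementary extension. The previous lemma already establishes $\gr(X) = \crk(X)$ and $\gd(X) = \cdg(X)$ in any model, so the only work left is to identify these with Morley rank and Morley degree. Recall from the discussion after the definition of Cantor rank/degree that in an $\aleph_0$-saturated model one has $\mmr(X) = \crk(X)$ and $\mmd(X) = \cdg(X)$ for every definable $X$; this is the standard argument that Morley rank is computed by the Cantor--Bendixson analysis of the Stone space once enough types are realized.

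Given a definable $X \subseteq K^n$, I would pick an $\aleph_0$-saturated elementary extension $(F', K'; \chi') \succcurlyeq (F, K; \chi)$ (whose existence is guaranteed, e.g., by Corollary~\ref{ElementarilyEmbeddable} or standard saturation arguments), and let $X' \subseteq (K')^n$ be defined by the same formula. Applying the preceding lemma inside $(F', K'; \chi')$ gives $\gr(X') = \crk(X')$ and $\gd(X') = \cdg(X')$, and the saturation of $(F', K'; \chi')$ then yields $\crk(X') = \mmr(X')$ and $\cdg(X') = \mmd(X')$.

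It remains to transfer these equalities back to $(F, K; \chi)$. For Morley rank and Morley degree, this is immediate: $\mmr$ and $\mmd$ are preserved under elementary extensions, so $\mmr(X) = \mmr(X')$ and $\mmd(X) = \mmd(X')$. For the geometric invariants, this is precisely Lemma~\ref{Preservative}: $\gr(X) = \gr(X')$ and $\gd(X) = \gd(X')$. Chaining the equalities gives
\[
\gr(X)\ =\ \gr(X')\ =\ \crk(X')\ =\ \mmr(X')\ =\ \mmr(X),
\]
and analogously $\gd(X) = \gd(X') = \cdg(X') = \mmd(X') = \mmd(X)$, which together with the preceding lemma yields the full chain of equalities.

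The only nontrivial ingredient is Lemma~\ref{Preservative}, which was already handled; everything else here is a bookkeeping argument. There is no real obstacle, since the heavy geometric content (existence of geometric pc-presentations, computation of $\gr$ and $\gd$ via such presentations, and their agreement with Cantor rank and degree) has been done in the previous section and the preceding lemma.
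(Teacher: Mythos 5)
Your proposal is correct and matches the paper's own proof: the paper deduces Theorem~\ref{rankAgreement} from the same three ingredients — the lemma identifying $\gr$, $\gd$ with $\crk$, $\cdg$ in any model, Lemma~\ref{Preservative}, and the fact that Cantor rank/degree agree with Morley rank/degree in an $\aleph_0$-saturated model — via exactly the transfer-to-a-saturated-extension argument you spell out.
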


\begin{proof}
This follows from the preceding two lemmas and the fact that in an $\aleph_0$-saturated model of $\TN_p$, Cantor rank agrees with Morley rank and Cantor degree agrees with Morley degree.
\end{proof}

\begin{cor}
The theory $\TN$ is $\omega$-stable. 
\end{cor}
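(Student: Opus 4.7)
The plan is to derive $\omega$-stability directly from Theorem~\ref{rankAgreement}. Since the language $L$ is countable, it suffices to show that every completion of $\TN$ is totally transcendental, i.e.\ that every definable set in a model has ordinal-valued Morley rank. By the completeness result for $\TNp$ (Theorem~\ref{CompAxiom}), the completions of $\TN$ are exactly the $\TNp$ for $p$ prime or zero, so I fix such a $p$ and work in a model $(F,K;\chi)\models \TNp$.

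First, I handle definable subsets of $K^n$. By Theorem~\ref{rankAgreement}, every definable $X\subseteq K^n$ satisfies
\[ \mmr(X) \ =\ \gr(X) \ =\ \omega\cdot \gr_K(X) + \gr_F(X), \]
where $\gr_K(X),\gr_F(X)\in \nn$. In particular $\mmr(X) < \omega^2 < \infty$, so Morley rank is ordinal-valued on the Boolean algebra of definable subsets of $K^n$. For definable subsets of a mixed product $F^k\times K^n$, I appeal to Proposition~\ref{transferRemark}: the map $\chi\colon F^k\times K^n \to K^{k+n}$ restricts to a definable bijection from any definable $X\subseteq F^k\times K^n$ onto a definable subset of $K^{k+n}$. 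Since Morley rank is preserved under definable bijections, such $X$ also have ordinal Morley rank.

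Therefore every definable set in $(F,K;\chi)$ has ordinal-valued Morley rank, so $\TNp$ is totally transcendental. Since $L$ is countable, this is equivalent to $\omega$-stability. As this holds for every $p$ and the completions of $\TN$ are exactly the $\TNp$, the theory $\TN$ itself is $\omega$-stable.

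There is really no obstacle here: all the work was done in building up the geometric rank in Section~6 and matching it with Morley rank in Theorem~\ref{rankAgreement}. The only thing to be slightly careful about is the reduction from arbitrary definable sets to subsets of $K^n$, which is handled cleanly by Proposition~\ref{transferRemark}, and the observation that $\omega$-stability of $\TN$ follows from $\omega$-stability of each completion $\TNp$.
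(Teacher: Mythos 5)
Your proof is correct and takes essentially the same approach as the paper. The paper's own argument is just the single line ``For any definable $X \subseteq K^n$, $\mmr(X) = \gr(X) < \omega^2$. The conclusion follows.''; you have simply spelled out the implicit steps (reduction of mixed-sort definable sets to subsets of $K^n$ via Proposition~\ref{transferRemark}, passage to the completions $\TNp$, and the equivalence between total transcendence and $\omega$-stability over a countable language), all of which are standard and correctly applied.
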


\begin{proof}
For any definable $X \subseteq K^n$, $\mmr(X) = \gr(X) < \omega^2$. The conclusion follows.
\end{proof}
\newpage
\noindent
Another consequence of Theorem~\ref{rankAgreement} is the following:

\begin{prop}
For any strongly minimal set $X \subseteq K^n$, there is a finite-to-one definable map from $X$ to $F$.
\end{prop}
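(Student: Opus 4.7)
First, I would translate strong minimality into the geometric vocabulary of Sections~6 and~7. Since $X$ is strongly minimal, $\mmr(X) = \mmd(X) = 1$, so Theorem~\ref{rankAgreement} gives $\gr(X) = \gd(X) = 1$. As $\gr(X) = \omega \cdot \gr_K(X) + \gr_F(X)$, this forces $\gr_K(X) = 0$ and $\gr_F(X) = 1$. Pick a geometric pc-presentation $\{T_\alpha\}_{\alpha \in D}$ of $X$ with $D \subseteq F^k$, guaranteed by the main theorem of Section~6. The condition $\gr_K(X) = 0$ means $\mr_K(T_\alpha) = 0$, so every $T_\alpha$ is a finite set; after replacing $D$ by its primary index quotient $\widetilde D$ (using elimination of imaginaries for the field $F$) we may further assume that $\mr_F(D) = \md_F(D) = 1$ and that $\alpha \mapsto T_\alpha$ is injective on $D$.

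Next, form the definable incidence relation $R = \{(a, \alpha) \in X \times D : a \in T_\alpha\}$. The projection $\pi_D : R \to D$ is a surjection with finite fibers of uniformly bounded size, so $\mmr(R) = \mmr(D) = \mr_F(D) = 1$. The projection $\pi_X : R \to X$ is also surjective; were its generic fiber $\{\alpha \in D : a \in T_\alpha\}$ infinite, additivity of Morley rank would force $\mmr(R) \geq \mmr(X) + 1 = 2$, a contradiction. Thus $X_0 = \{a \in X : \{\alpha \in D : a \in T_\alpha\} \text{ is finite}\}$ is a cofinite definable subset of $X$, and on $X_0$ both projections of $R$ have finite fibers.

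Finally, I would partition $X_0$ using Corollary~\ref{DOE2} into finitely many definable pieces $X_{0,m}$ on which the fiber has constant cardinality $m$; boundedness of $m$ follows from $\omega$-stability. On each piece, send $a$ to the tuple of elementary symmetric polynomials in the coordinates of the $m$ points of its fiber in $F^k$, obtaining a definable map $g : X_{0,m} \to F^N$; its fibers $\{a : \pi_X^{-1}(a) = \pi_X^{-1}(a_0)\}$ are contained in any single $T_\alpha$ with $\alpha$ in the fiber over $a_0$, hence finite. By Theorem~\ref{StablyEmbbed} the image $g(X_{0,m})$ is definable in the field $F$ and has $F$-Morley rank at most $1$, so some coordinate projection $\pi : F^N \to F$ has finite generic fibers on this image; composing $\pi \circ g$, gluing over the finitely many pieces, and extending arbitrarily on the finite complement $X \setminus X_0$, yields the required finite-to-one definable map $X \to F$. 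I expect the main obstacle to be the additivity step establishing finiteness of the generic fiber of $\pi_X$; the remainder is routine bookkeeping.
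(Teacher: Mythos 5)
Your argument is correct, and it starts exactly where the paper does---$\gr(X)=\gd(X)=1$, hence $\gr_K(X)=0$ and $\gr_F(X)=1$, with a geometric pc-presentation $\{T_\alpha\}_{\alpha\in D}$ in which every $T_\alpha$ is finite---but you then overlook the decisive observation that makes the remaining machinery unnecessary. At $K$-Morley rank $0$ the relation $\simperp$ is honest set-theoretic disjointness and $\sim$ is equality, so essential disjointness forces the $T_\alpha$, as $\tilde\alpha$ ranges over the primary index quotient $\widetilde D$, to be \emph{pairwise disjoint} nonempty finite sets. Thus $a\mapsto\tilde\alpha$ (the unique $\tilde\alpha\in\widetilde D$ with $a\in T_\alpha$) is already a well-defined finite-to-one definable map $X\to\widetilde D$. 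The paper then composes with a finite-to-one definable map $\widetilde D\to F$, obtained from stable embeddedness (Theorem~\ref{StablyEmbbed}), quantifier elimination in $\ACF$, and Noether normalization; since $\mr_F(\widetilde D)=1$ this is routine. In short, nothing beyond disjointness of the $T_\alpha$'s is needed.

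Your incidence-relation route---building $R\subseteq X\times D$, using the Morley-rank inequality for fibrations to bound the $\pi_X$-fibers, partitioning $X_0$ by fiber size, and coding fibers via symmetric functions---is a sound workaround, and it would be the natural fallback if one only knew that the $T_\alpha$ were finite but not disjoint. But given the essential disjointness built into the presentation it buys nothing. Two smaller points worth tightening: (i) Corollary~\ref{DOE2} is stated for families of subsets of $K^n$, whereas your fibers live in $F^k$; the $F$-version you actually need follows from Theorem~\ref{StablyEmbbed} together with Lemma~\ref{DOE1} (or via the identification in Proposition~\ref{transferRemark}). (ii) Coordinate-by-coordinate elementary symmetric polynomials do not code an unordered $m$-element subset of $F^k$ injectively---they can conflate distinct fibers---so your identification of the fibers of $g$ with $\{a:\pi_X^{-1}(a)=\pi_X^{-1}(a_0)\}$ is not quite right; the $g$-fibers are still finite, but one should either use a genuinely injective coding (e.g.\ coefficients of $\prod_i(Y_0+s_1^{(i)}Y_1+\cdots+s_k^{(i)}Y_k)$) or note that the bound $(m!)^k\cdot\max_\alpha|T_\alpha|$ suffices.
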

\begin{proof}

We first show that if $Y \subseteq F^k$ is a strongly minimal set, then there is a finite-to-one definable map from $Y$ to $F$.
Indeed, by Theorem~\ref{StablyEmbbed}, the algebraically closed field $F$ is stably embedded into $(F, K; \chi)$ and hence $Y$ is strongly minimal in the field $F$.
By quantifier elimination of ACF, $Y$ differs from an irreducible variety $Y'$ over the field $F$ by finitely many elements.
By Noether normalization lemma, there is a finite map from $Y'$ to $F$. This can be easily modified to give a definable finite-to-one map from $Y$ to $F$.

Suppose $X$ is as in the statement of the proposition.
Then $\gr(X)=\gd(X)=1$.
Let $\{ T_\alpha\}_{\alpha \in D}$ be a geometric pc-presentation of $X$ with a primary index quotient $\widetilde{D}$.
Then for each $\tilde{\alpha} \in \widetilde{D}$, $T_\alpha$ has finitely many elements. Moreover, $T_\alpha$ and $T_\beta$  either consist of the same elements or are disjoint for all $\tilde{\alpha},\tilde{\beta} \in \widetilde{D}$.
We have a finite-to-one map from $X$ to $\widetilde{D}$ given by $a \mapsto \tilde{\alpha}$ if $a$ is an element of $T_\alpha$. By the previous paragraph, there is a finite-to-one definable map from $\widetilde{D}$ to $F$. The desired map is the composition of these two maps. 
\end{proof}

\noindent
We will next prove that Morley rank is definable in families in a model of $\TN_p$.
An {\bf fpc-formula} is an $L$-formula of the form 
$$P(x, z) \wedge \neg \exists s \Big( \phi(s, z) \wedge Q\big(\chi(s),x, z\big)\Big)$$
where $P$ is a system of polynomials in $\qq[x, z]$, $\phi(s, z)$ is a parameter free $L$-formula and $Q$ a system of polynomials in $\qq[w,x,z]$. If  $\psi(x,z)$  is an fpc-formula and $T_c \subseteq K^n$ is the set defined by $\psi(x, c)$ for $c \in K^l$, then $\{ T_c\}_{c \in K^l}$ is a definable family of pc-sets. On the other hand, as a consequence of Lemma~\ref{APsimple1}, an arbitrary pc-set is defined by $\psi(x,c)$ for some fpc-formula $\psi(x,z)$ and some $c$ in $K^{l}$.

\begin{lem}  \label{APsimple3}
Suppose \( X \subseteq K^n\) has an algebraic presentation $\{T_\alpha\}_{ \alpha \in D}$. Then, there are fpc-formulas $\psi_1(x,z), \ldots, \psi_k(x,z)$ such that for each $\alpha \in D$, there are $c \in K$ and $i \in \{1, \ldots, k\}$ such that $T_\alpha$ is defined by $\psi_i( x,c)$.
\end{lem}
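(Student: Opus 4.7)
\emph{Plan.} The plan is to reduce directly to Lemma~\ref{APsimple2}, which already produces a finite list of polynomial systems capable of describing the members of the presentation, and then to wrap each polynomial system inside the syntactic shape of an fpc-formula by adjoining a vacuously-true $\neg\exists s$-clause (so that the subtracted part $S$ in a pc-set $V \setminus S$ is the empty set).

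\emph{Step 1 (polynomial templates).} Applying Lemma~\ref{APsimple2} to $X$ with its algebraic presentation $\{T_\alpha\}_{\alpha \in D}$, one obtains an equivalent algebraic presentation $(V_\beta)_{\beta \in D'}$, a definable partition $D' = D'_1 \sqcup \cdots \sqcup D'_k$, and systems $P_1, \ldots, P_k$ of polynomials in $\qq(c)[w,x]$ for some fixed parameter $c \in K^m$, such that $V_\beta = Z\bigl(P_i(\chi(\beta), x)\bigr)$ for every $\beta \in D'_i$. By equivalence of the two presentations, for each $\alpha \in D$ there exist $\beta \in D'$ and $i \in \{1, \ldots, k\}$ with $T_\alpha = V_\beta = Z\bigl(P_i(\chi(\beta), x)\bigr)$. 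Clearing denominators, one may replace each $P_i$ by some $R_i \in \qq[z, w, x]$ evaluated at $z = c$ (the denominator being nonzero at $c$), so that $T_\alpha = Z\bigl(R_i(c, \chi(\beta), x)\bigr)$.

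\emph{Step 2 (fpc-wrapping).} For each $i \in \{1, \ldots, k\}$, regrouping $R_i(z, w, x)$ as a polynomial $\tilde{R}_i(x, z')$ with $z' = (z, w)$ playing the role of the $K$-parameter, define
\[
\psi_i(x, z') \;=\; \bigl(\tilde{R}_i(x, z') = 0\bigr) \;\wedge\; \neg\, \exists s \bigl( s_1 \neq s_1 \,\wedge\, 0 = 0 \bigr).
\]
Here $\phi(s, z') := (s_1 \neq s_1)$ is a parameter free $L$-formula that is never satisfied, so the $\neg\exists s$-clause is vacuously true, and $\psi_i$ is a well-formed fpc-formula defining exactly $\{x \in K^n : \tilde{R}_i(x, z') = 0\}$ at any chosen value of $z'$. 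For each $\alpha \in D$, taking $\beta$ and $i$ as in Step~1 and setting $c_\alpha := (c, \chi(\beta))$ yields $T_\alpha = \{x \in K^n : \psi_i(x, c_\alpha)\}$, as required.

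\emph{Main obstacle.} The genuinely technical work, namely the reduction from a countable collection of candidate polynomial templates to a finite list via saturation, has already been performed inside Lemma~\ref{APsimple2}; the present lemma is essentially a syntactic recasting. The only point to verify is that an algebraic set, viewed as a pc-set $V \setminus \emptyset$, can be cast in the rigid $V \setminus S$ shape demanded by an fpc-formula, which is accomplished by the vacuous $\neg\exists$-clause above. If the lemma is meant for a pc-presentation rather than an algebraic presentation, one would additionally invoke Lemma~\ref{APsimple1}, Theorem~\ref{StablyEmbbed} and Lemma~\ref{DefinabilityOfClosure} to obtain uniform fpc-descriptions of both the closures $V_\alpha$ and the subtracted parts $S_\alpha$ in $T_\alpha = V_\alpha \setminus S_\alpha$, after which the same saturation-plus-repackaging scheme applies.
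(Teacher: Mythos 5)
Your proof is correct for the statement as literally written, but you should be aware that the statement almost certainly contains a typo: given the paper's own convention from Section~6 that the letter $T$ (with decorations) always denotes a pc-set, and given the observation stated immediately before the lemma (``an arbitrary pc-set is defined by $\psi(x,c)$ for some fpc-formula $\psi(x,z)$''), the intended hypothesis is a pc-presentation $\{T_\alpha\}_{\alpha\in D}$, not an algebraic one. The paper's own proof is a bare compactness argument pitched at that intended generality: each pc-set $T_\alpha$ is defined by some fpc-formula with $K$-parameters (a consequence of Lemma~\ref{APsimple1}), there are only countably many fpc-formulas, and a standard saturation/compactness argument extracts a finite sublist. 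Your route is genuinely different in that you delegate the finitization to Lemma~\ref{APsimple2} rather than re-running compactness from scratch, and then package each polynomial template as an fpc-formula with a vacuous $\neg\exists$-clause; this is clean and avoids re-proving anything, but it only covers the purely algebraic case (where every $T_\alpha$ is $V_\alpha\setminus\emptyset$). Your closing remark correctly identifies what is missing for the pc-case; note, however, that your proposed fix via Theorem~\ref{StablyEmbbed} and Lemma~\ref{DefinabilityOfClosure} is heavier machinery than needed. The paper's direct compactness argument, applied after the already-stated observation that every pc-set is fpc-definable, is the shortest path, and it is what the subsequent theorem on definability of Morley rank in families actually relies on, since the family $\{H_\alpha\}$ there is a pc-presentation rather than an algebraic one.
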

\begin{proof}
For each $\alpha$ in $D$, there is a formula $\psi(x,z)$ such that for some $c \in K$, $T_\alpha$ is defined by $\psi(x,c)$. There are only countably many fpc-formulas. The conclusion follows by a standard compactness argument.
\end{proof}

\begin{thm}
Suppose $ \{ X_b\}_{b \in Y}$ is a definable family of subsets of $K^n$. Then for each ordinal \(\rho\), the set \( \big\{ b \in Y :  \gr(X_b)= \rho\big\} \) is definable.
\end{thm}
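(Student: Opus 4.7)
The plan is to reduce the problem to showing that $\{b \in Y : \gr_K(X_b) = k\}$ and $\{b \in Y : \gr_F(X_b) = l\}$ are definable for all $k, l \in \nn$. Since $\gr(X_b) = \omega \cdot \gr_K(X_b) + \gr_F(X_b)$ and both components take values in $\nn$, any relevant $\rho$ has the form $\omega \cdot k + l$, and $\{b : \gr(X_b) = \rho\}$ is the intersection of the two sets above (and is empty if $\rho \geq \omega^2$, which is definable trivially).

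First I would produce a uniformly definable pc-presentation of the family. Applying Theorem~\ref{QuanRed} and Lemma~\ref{APsimple3} together with a compactness argument across $Y$, there exist finitely many fpc-formulas $\psi_1(x, z), \ldots, \psi_m(x, z)$ and a definable set $E \subseteq F^k \times Y$ yielding a definable family of pc-sets $\{T_{\alpha, b}\}_{(\alpha, b) \in E}$ such that $X_b = \bigcup_{(\alpha, b) \in E} T_{\alpha, b}$ for every $b \in Y$. The $K$-part is then immediate: by Corollary~\ref{Definability}(1) the function $(\alpha, b) \mapsto \mr_K(T_{\alpha, b})$ is definable, so $\gr_K(X_b) = \max_\alpha \mr_K(T_{\alpha, b})$ is a definable function of $b$.

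For $\gr_F$, I would execute the construction from the proof of Proposition~\ref{essdispre} uniformly across $b$. Each operation appearing there (splitting by $\capdot$ and $\dotminus$, identifying the primary index set, checking $\sim$ or $\simperp$) is definable in families by Corollary~\ref{Definability}, and the double induction is controlled by the pair $(r, l)$ with $r = \gr_K(X_b)$ bounded by $n$ and $l$ the maximal $K$-Morley degree on the ``bad'' portion of the primary index set, which is uniformly bounded since only finitely many $\psi_i$ occur. This yields a definable family $\{T'_{\alpha, b}\}_{(\alpha, b) \in E'}$ whose $b$-fibers are essentially disjoint pc-presentations of $X_b$. Then, using Corollary~\ref{Definability}(5)--(6) together with the fact that $F$ is stably embedded (Theorem~\ref{StablyEmbbed}), the resulting definable equivalence relation on the primary index subset of $\hat{E}'_b \subseteq F^k$ is definable in the field $F$, and elimination of imaginaries in $\ACF$ gives a uniformly definable family $\{\widetilde{E}'_b\}_{b \in Y}$ of subsets of some $F^{k'}$ whose $b$-fiber is a primary index quotient of the $b$-th essentially disjoint pc-presentation. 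Hence $\gr_F(X_b) = \mr_F(\widetilde{E}'_b)$ is definable in $b$ by Lemma~\ref{DOE1}(1).

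The main obstacle is the uniformity claim for the refinement procedure: the construction in Proposition~\ref{essdispre} is non-canonical and recursive, so I would need to verify that every case split and every choice of representative can be made uniformly across $Y$, and that the depth of the double induction is uniformly bounded. Both points should follow from the fact that the starting pc-presentation involves only finitely many fpc-formulas and that $r$ and $l$ are accordingly bounded by constants independent of $b$, so that the entire procedure can be phrased as a single definable assignment $b \mapsto \{T'_{\alpha, b}\}_\alpha$. Once this uniformity is in place, the remaining invocations of Lemma~\ref{DOE1} and Corollary~\ref{Definability} finish the argument without further difficulty.
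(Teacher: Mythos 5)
Your proposal takes a genuinely different route from the paper, and it is worth contrasting the two. You try to \emph{uniformize the construction} of Proposition~\ref{essdispre}: run the double induction on $(r,l)$ simultaneously across all fibers $X_b$, using Corollary~\ref{Definability} to keep each refinement step (the $\capdot$/$\dotminus$ splits, the identification of $\hat D_1,\hat D_2$, the quotient by $\sim$) a definable operation, and leaning on the uniform bound on $l$ coming from the finitely many fpc-formulas. This is plausible in outline, but it is exactly the part you flag as the ``main obstacle,'' and you do not actually establish it: the construction in Proposition~\ref{essdispre} is recursive and non-canonical, and showing that every case split, every choice of disjoint combination, and every invocation of the inductive hypothesis (applied to $X^-$ and to the refined family) can be carried out by a single $L$-formula in the parameter $b$ is substantially more work than you acknowledge. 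The paper sidesteps this entirely by replacing a uniform \emph{construction} with a uniform \emph{verification}: for each countable choice $\mathscr C$ of defining formulas, it defines a relation $R_{\mathscr C}(b)$ asserting the \emph{existence} of parameters $(\alpha,\beta,\gamma,c,\ldots)$ such that the induced data $(D,\hat D,\widetilde D,\pi,\{T_{i,c'}\})$ literally satisfies the first-order conditions (a)--(e) defining an essentially disjoint pc-presentation with primary index quotient. These conditions are definable precisely because of Corollary~\ref{Definability}, and Proposition~\ref{essdispre} guarantees each $b$ is covered by some $\mathscr C$; compactness reduces to finitely many $\mathscr C$, and the rank is then read off from the witnesses. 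This buys you cleanliness: no recursion needs to be unrolled or made uniform, because existence of good witnesses is asserted, not produced. Your $K$-rank computation is fine as stated and matches the paper's; it is the $F$-rank step where you would need to carry out the uniformization in detail, and I would encourage you instead to reformulate it in the existential/verification style of the paper, which turns the hard part into a routine check of definability of conditions.
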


\begin{proof}
Suppose $ \{ X_b\}_{b \in Y}$ is as stated. Let $L_{\mathrm{r}}$ be the language of rings. For each choice $\mathscr{C}$ of an $L$-formula $\eta(u,x,z)$, an $L$-formula $\delta(u,z)$, an $L$-formula $\hat{\delta}(u,z)$,   an $L_{\mathrm{r}}$-formula $\tilde{\delta}(u',s)$,  an $L$-formula $\phi(u,u',z)$, fpc-formulas $\psi_1(x,z'), \ldots, \psi_e(x,z')$ (with $e \in \nn^{\geq 1}$), we will define the relation 
$$R_{\mathscr{C}}\ \subseteq\ F^{m} \times  F^{m}\times F^{k} \times K^n \times Y \times K^{l} .$$
The relation $R_{\mathscr{C}}(\alpha, \beta, \gamma, a, b, c)$ holds if and only if for  $H_\alpha$ defined by $\eta(\alpha,x,c)$, $D$ defined by ${\delta}(u,c)$, $\hat{D}$ defined by $\hat{\delta}(u,c)$,  $\widetilde{D}$ defined by $\tilde{\delta}(u',\gamma)$, $\pi$ defined by $\phi(u,u',c)$, and  $T_{i,c'}$ defined by $\psi_i( x,c')$, we have the following:
\begin{enumerate}[(a)]
\item $X_b =\bigcup_{\alpha \in D} H_\alpha$;
\item for each  $\alpha \in D$,  there are $c' \in K^{l'}$ and $i \in \{1, \ldots, e \} $ such that $H_\alpha = T_{i,c'}$;
\item $\alpha$ is in $\hat{D}$ if and only if for all $\beta \in D$,  $c' \in K^{l'}$, $i \in \{1, \ldots, e \} $, $d' \in K^{l'}$, $j \in \{1, \ldots, e\} $ with  $H_\alpha = T_{i,c'}$, $H_\beta = T_{j,d'}$, we have $\mr_K(T_{i,c'}) \geq \mr_K(T_{j,d'})$;
\item if $\alpha, \beta$ are in $\hat{D}$, for all $\beta \in D$,  $c' \in K^{l'}$, $i \in \{1, \ldots, e \} $, $d' \in K^{l'}$, $j \in \{1, \ldots, e \} $ with  $H_\alpha = T_{i,c'}$, $H_\beta = T_{j,d'}$, then we have either $T_{i,c'} \sim T_{j,d'}$ or $T_{i,c'} \simperp T_{j,d'}$;
\item $\pi$ is a surjective function from $\hat{D}$ to $\widetilde{D}$; $\pi(\alpha) =\pi(\beta)$ if and only if for all $c' \in K^{l'}$, $i \in \{1, \ldots, e \} $, $d' \in K^{l'}$, $j \in \{1, \ldots, e \} $ with  $H_\alpha = T_{i,c'}$, $H_\beta = T_{j,d'}$, we have $T_{i,c'} \sim T_{j,d'}$.
\end{enumerate}
We note that (a) and (b) can clearly be translated into statements involving $\eta(u,x,z)$, $\delta(u,z)$, $\hat{\delta}(u,z)$  $\tilde{\delta}(u',s)$, $\phi(u,u',z)$ and $\psi_1(x,z'), \ldots, \psi_e(x,z')$. For (c), (d) and (e), we can do so with the further use of Corollary~\ref{Definability}. Hence, $R_{\mathscr{C}}$ is definable. Let $R^5_{\mathscr{C}}$ be the projection of $R_{\mathscr{C}}$ on $Y$. Then $R^5_{\mathscr{C}}$ is also definable. Intuitively, $R^5_{\mathscr{C}}(b)$ means $X_b$ has an essentially disjoint pc-presentation given by $\mathscr{C}$ with some parameters.

For each $b \in Y$, there is one choice  $\mathscr{C}$ as above such that $R^5_{\mathscr{C}}(b)$.  Also, there are only countably many such choices $\mathscr{C}$. By a standard compactness argument,  $Y$ is covered by the union of $R^5_{\mathscr{C}}$ for finitely many such choices $\mathscr{C}$. We can reduce the problem to the case where $Y$ is covered by $R^5_{\mathscr{C}}$ for one choice $\mathscr{C}$ as above. 
Suppose an ordinal $\rho = \omega\cdot \rho_{{}_K} + \rho_{{}_F}$ is given where $\rho_{{}_K}, \rho_{{}_F}$ are natural number. With the notation as in the definition of $R_{\mathscr{C}}$, we have $\gr(X_b) = \rho$ if and only if $r_F(\widetilde{D}) = \rho_{{}_F} $ and $r_K(T_{i,c'}) = \rho_{{}_K}$ under the condition $T_{i,c'} = H_\alpha $ for some $\alpha \in \hat{D}$. The former is definable by the fact that Morley rank is definable in families in a model of ACF and the latter is definable as a consequence of Corollary~\ref{Definability}.
\end{proof}

\begin{cor}
Suppose $ \{ X_b\}_{b \in Y}$ is a definable family of subsets of $K^n$. Then there are ordinals $\rho_1, \ldots, \rho_k$ such that for all \(b\), for some $i \in \{1, \ldots, k\}$, $\gr(X_b) =\rho_i$.
\end{cor}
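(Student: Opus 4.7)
The plan is to extract from the proof of the preceding theorem a uniform bound on both the ``$K$-part'' and the ``$F$-part'' of $\gr(X_b)$, after which only finitely many geometric ranks can occur. First, observe that for any definable $X\subseteq K^n$ one automatically has $\gr_K(X)\leq n$, since the $K$-closure of each $T_\alpha$ in any pc-presentation of $X$ lies inside $K^n$. Hence $\gr_K(X_b)$ takes values only in $\{0,1,\ldots,n\}$ as $b$ ranges over $Y$, independent of $b$, and the task reduces to bounding $\gr_F(X_b)$ uniformly.

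For that, I would revisit the reduction carried out in the proof of the preceding theorem. After a standard compactness argument, the family $\{X_b\}_{b\in Y}$ is covered by finitely many definable pieces $R^5_{\mathscr{C}_1},\ldots,R^5_{\mathscr{C}_N}$, one for each of finitely many fixed syntactic choices $\mathscr{C}_j=(\eta_j,\delta_j,\hat{\delta}_j,\tilde{\delta}_j,\phi_j,\psi_{1,j},\ldots,\psi_{e_j,j})$ of formulas. Crucially, in each $\mathscr{C}_j$ the formula $\tilde{\delta}_j(u',s)$ has a \emph{fixed} tuple $u'$ of first-sort variables of some length $l_j$, chosen before any parameters are instantiated. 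For $b\in R^5_{\mathscr{C}_j}$, the primary index quotient $\widetilde{D}$ of the essentially disjoint pc-presentation of $X_b$ supplied by $\mathscr{C}_j$ is therefore a subset of $F^{l_j}$, so $\gr_F(X_b)=\mr_F(\widetilde{D})\leq l_j$.

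Combining the two bounds yields that $\gr(X_b)=\omega\cdot\gr_K(X_b)+\gr_F(X_b)$ lies in the finite set
\[
\bigl\{\omega\cdot p+q : 0\leq p\leq n,\ 0\leq q\leq \max\nolimits_j l_j\bigr\},
\]
which may be listed as the required $\rho_1,\ldots,\rho_k$. The only subtle point I expect is checking that the $l_j$ really are constants independent of $b$, i.e.\ that the compactness reduction in the preceding proof genuinely bounds the arity of $u'$ in each $\mathscr{C}_j$; this follows directly from the definition of $\mathscr{C}_j$ as a finitary syntactic choice made before parameters are plugged in. Apart from this bookkeeping point, the argument is routine once the preceding theorem is in hand.
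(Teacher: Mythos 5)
Your proof is correct, but it takes a genuinely different route from the paper's. The paper treats the preceding theorem as a black box: each set $\{b \in Y : \gr(X_b) = \rho\}$ is definable by some formula $\varphi_\rho(y)$, and this description persists under elementary extensions by the preservation lemma; since $\gr_K(X_b) \leq n$ forces $\gr(X_b) < \omega\cdot(n+1)$, a standard compactness argument (the type $\{y \in Y\} \cup \{\neg\varphi_\rho(y) : \rho < \omega\cdot(n+1)\}$ cannot be finitely satisfiable) shows finitely many $\rho$ suffice. You instead reopen the proof of the preceding theorem and extract an explicit bound: the finite cover $R^5_{\mathscr{C}_1}, \ldots, R^5_{\mathscr{C}_N}$ already produced there yields $\gr_F(X_b) \leq \max_j l_j$, where $l_j$ is the arity of $u'$ in the choice $\mathscr{C}_j$, and combined with the trivial bound $\gr_K(X_b) \leq n$ this gives a concrete finite list of candidate ranks. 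Both routes are sound; yours is more constructive in that it produces $\rho_1,\ldots,\rho_k$ explicitly from the syntactic data of the family, at the cost of depending on the internals of the earlier proof, whereas the paper's argument is shorter and uses the earlier theorem only through its statement.
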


\begin{proof}
This is a consequence of the preceding theorem Lemma~\ref{Preservative} and a standard application of compactness.
\end{proof}

\begin{prop}
If $X \subseteq K^n$ and $ X'\subseteq K^{n'}$ are definable, $\gr(X) = \omega\cdot \rho_{{}_K} + \rho_{{}_F}$ and $  \gr(X') = \omega\cdot \rho'_{{}_K} + \rho'_{{}_F}$ with $\rho_{{}_K}, \rho_{{}_F}, \rho'_{{}_K}, \rho'_{{}_F} \in \nn$,  then  $\gr(X\times X') = \omega\cdot (\rho_{{}_K}+\rho'_{{}_K}) +\rho_{{}_F}+\rho'_{{}_F}$.
\end{prop}

\begin{proof}

We give a proof by induction on geometric rank of $X$ and $X'$. The case where $\gr(X)=\gr(X')=0$ is clear. Suppose we have proven the proposition for all $Y, Y'$ such that $\gr(Y)\leq \gr(X)$ and $\gr(Y') \leq \gr(X')$ and at least one of the equalities does not hold.  Let $\{T_\alpha\}_{ \alpha \in D}$ be an essentially disjoint pc-presentation of $X$ with primary index quotient $\widetilde{D}$. Likewise, we define $\{T'_{\alpha'}\}_{ \alpha' \in D'}$ and $\widetilde{D}'$. Then $\{T_\alpha \times T'_{\alpha'}\}_{ (\alpha, \alpha') \in D\times D'}$ is a fiberwise product of $\{T_\alpha\}_{ \alpha \in D}$ and  $\{T'_{\alpha'}\}_{ \alpha' \in D'}$, and hence an essentially disjoint pc-presentation of $X \times X'$. We note that, by $(11)$ of Lemma~\ref{lies}, we have 
$$\max_{(\alpha, \alpha') \in D\times D'}\gr_K(T_\alpha \times T'_{\alpha'})\ =\ \max_{\alpha\in D} \gr_K(T_\alpha) +  \max_{\alpha' \in D'} \gr_K(T'_{\alpha'}).$$
Also, by part (12) of Lemma~\ref{lies}, a primary index quotient of $\{T_\alpha \times T'_{\alpha'}\}_{ (\alpha, \alpha') \in D\times D'}$ can be chosen to be $\widetilde{D} \times \widetilde{D}'$. The desired conclusion follows.
\end{proof}

\section*{Acknowledgments}
\noindent
The authors thank Lou van den Dries for numerous discussions and comments on this paper.

\bibliographystyle{amsplain}
\bibliography{8.Research}

\end{document}